\numberwithin{equation}{section}
\newtheorem{theorem}{\textbf{Theorem}}[section]
\newtheorem{theorem*}{\textbf{Theorem}}
\newtheorem{thmx}{Theorem}
\newtheorem{corollaryx}[thmx]{\textbf{Corollary}}
\newtheorem{definition}[theorem]{\textbf{Definition}}
\newtheorem{proposition}[theorem]{\textbf{Proposition}}
\newtheorem{lemma}[theorem]{\textbf{Lemma}}
\newtheorem{question}[theorem]{\textbf{Question}}
\newtheorem{corollary}[theorem]{\textbf{Corollary}}
\newtheorem{remark}[theorem]{\textbf{Remark}}
\newtheorem{example}[theorem]{\textbf{Example}}
\newtheorem{definition/proposition}[theorem]{\textbf{Definition/Proposition}}
\providecommand{\customgenericname}{}
\newcommand{\newcustomtheorem}[2]{%
	\newenvironment{#1}[1]
	{%
		\renewcommand\customgenericname{#2}%
		\renewcommand\theinnercustomgeneric{##1}%
		\innercustomgeneric
	}
	{\endinnercustomgeneric}
}
\def\N{{\mathbb N}}
\def\R{\mathbb{R}}
\def\Z{{\mathbb Z}}
\def\C{{\mathbb C}}
\def\D{{\mathbb D}}
\def\Q{{\mathbb Q}}
\newcommand{\CP}{\mathbb{C}\mathbb{P}}
\newcommand{\RP}{\mathbb{R}\mathbb{P}}
\def\cA{{\mathcal A}}
\def\cB{{\mathcal B}}
\def\cC{{\mathcal C}}
\def\cH{{\mathcal H}}
\def\cJ{{\mathcal J}}
\def\cL{{\mathcal L}}
\def\cM{{\mathcal M}}
\def\cN{{\mathcal N}}
\def\cO{{\mathcal O}}
\def\cP{{\mathcal P}}
\def\cR{{\mathcal R}}
\def\cS{{\mathcal S}}
\def\cT{{\mathcal T}}
\def\cU{{\mathcal U}}
\def\cV{{\mathcal V}}
\def\bF{{\bm F}}
\def\bH{{\bm H}}
\def\rd{{\rm d}}
\def\la{\langle\,}
\def\ra{\,\rangle}
\def\st{\: \big| \:}
\DeclareMathOperator{\Ima}{im}
\DeclareMathOperator{\ind}{ind}
\DeclareMathOperator{\Hom}{Hom}
\DeclareMathOperator{\Id}{id}
\DeclareMathOperator{\sign}{sign}
\DeclareMathOperator{\supp}{supp}
\DeclareMathOperator{\coker}{coker}
\DeclareMathOperator{\rank}{rank}
\DeclareMathOperator{\End}{End}
\DeclareMathOperator{\virdim}{virdim}
\title{Symplectic fillings of asymptotically dynamically convex manifolds I}
\author{Zhengyi Zhou}
\begin{document}
	\maketitle
\begin{abstract}
We consider exact fillings with vanishing first Chern class of asymptotically dynamically convex (ADC) manifolds. We construct two structure maps on the positive symplectic cohomology and prove that they are independent of the filling for ADC manifolds. The invariance of the structure maps implies that the vanishing of symplectic cohomology and the existence of symplectic dilations are properties independent of the filling for ADC manifolds. Using them, various topological applications on symplectic fillings are obtained, including the uniqueness of diffeomorphism types of fillings for many contact manifolds. We use the structure maps to define the first symplectic obstructions to Weinstein fillability. In particular, we show that for all dimension $4k+3, k\ge 1$, there exist infinitely many contact manifolds that are exactly fillable, almost Weinstein fillable but not Weinstein fillable. The invariance of the structure maps generalizes to strong fillings with vanishing first Chern class. We show that any strong filling with vanishing first Chern class of a class of manifolds, including $(S^{2n-1},\xi_{std}), \partial(T^*L \times \C^n)$ with $L$ simply connected, must be exact and have unique diffeomorphism type.  As an application of the proof, we show that the existence of symplectic dilation implies uniruledness. In particular any affine exotic  $\C^n$ with non-negative log Kodaira dimension is a symplectic exotic $\C^{n}$.
\end{abstract}
\tableofcontents
\section{Introduction}
One natural question in symplectic topology is understanding symplectic fillings of a contact manifold. One aspect of the question is understanding the existence of symplectic fillings. Contact obstructions to the existence of symplectic fillings were first discovered by Eliashberg \cite{eliashberg1990filling}. There are various obstructions to fillings of different flavors, c.f. \cite{massot2013weak} and references therein. There are also topological obstructions to the existence of almost Weinstein fillings \cite{bowden2014topology}. Another aspect of the question is understanding the uniqueness of symplectic fillings. The first result along this line is by Gromov \cite{gromov1985pseudo} that exact fillings of the standard contact $3$-sphere are unique. In \cite{eliashberg1990filling}, Eliashberg proved fillings of $(S^3,\xi_{std})$ are diffeomorphic to blow-ups of the ball. Shortly after, McDuff \cite{mcduff1990structure} generalized the result to lens space $L(p,1)$. In dimension $3$, several uniqueness or finiteness results of symplectic fillings were obtained on $T^3$ \cite{wendl2010strongly}, lens space $L(p,q)$ \cite{lisca2008symplectic} and $S^*\Sigma_g$ \cite{sivek2017fillings}. Only the results on $S^3,L(p,1),T^3$ obtained uniqueness of symplectic fillings, while other results are about topological types of fillings. The dimension $3$ case is special since we have more tools like intersection theory of holomorphic curves and Seiberg-Witten theory. In higher dimensions,  Eliashberg-Floer-McDuff \cite{eliashberg1991on,mcduff1991symplectic} proved that any exact filling of $(S^{2n-1},\xi_{std})$ is diffeomorphic to the ball $B^{2n}$.  The Eliashberg-Floer-McDuff method was generalized by
Oancea-Viterbo \cite{oancea2012topology} to obtain homological information for symplectic aspherical fillings of simply connected subcritically fillable contact manifolds. Barth-Geiges-Zehmisch \cite{barth2016diffeomorphism} extracted refined homological information and showed that symplectic aspherical fillings of simply connected subcritically fillable contact manifolds have unique diffeomorphism types via h-cobordism. 

The symplectic aspect of the uniqueness in higher dimensions remains largely unknown. However there are some evidences. Seidel-Smith \cite{seidel2006biased} showed that any exact filling of $(S^{2n-1},\xi_{std})$ has vanishing symplectic cohomology. We \cite{zhouvanishing} showed that any exact filling with vanishing first Chern class of a simply connected flexibly fillable contact manifold has vanishing symplectic cohomology. It turns out those contact manifolds are asymptotically dynamically convex (ADC) in the sense of Lazarev \cite{lazarev2016contact}\footnote{In fact, all contact manifolds in the first paragraph are ADC, when $c_1=0$.}. The ADC condition is a generalization of the index-positive condition introduced in \cite[\S 9.5]{cieliebak2018symplectic}. A contact manifold $Y^{2n-1}$ is called index-positive, if there is a non-degenerate contact form so that every Reeb orbit $\gamma$ has positive degree, i.e. $\mu_{CZ}(\gamma)+n-3>0$. There are many natural ADC contact manifolds, e.g. boundaries of cotangent bundles of manifolds of dimension at least $4$, boundaries of flexible Weinstein domains, and links of terminal singularities (Remark \ref{rmk:ADC}). The ADC property is a condition on the Conley-Zehnder index, which is suitable for Floer theoretic study. The importance of index-positive/ADC is that positive symplectic cohomology is independent of the filling, hence a contact invariant \cite{cieliebak2018symplectic,lazarev2016contact} via neck-stretching. Combining invariance of positive symplectic cohomology and vanishing of symplectic cohomology, the tautological long exact sequence of symplectic cohomology yields that any exact filling with vanishing first Chern class of a simply connected flexibly fillable contact manifold has the same cohomology group as the flexible filling. 

The above result serves as a basic prototype of studying symplectic fillings of ADC manifolds: We first prove some invariance results on the Floer theory of fillings of ADC manifolds, then we infer invariant symplectic or topological properties from there. The key point in this paper is that the invariance is not limited to some Floer cohomology like positive symplectic cohomology, but also structure maps on those Floer cohomology. The substance of this paper is constructing two structure maps and proving their invariance w.r.t. fillings for ADC manifolds. Then we will derive various symplectic and topological applications from them on both uniqueness and existence aspects of symplectic fillings.

\subsection{Invariance of restriction and persistence of vanishing}
Let $W$ be an exact filling of $Y$, the first structure map $\delta_{\partial}$ is the composition of $\delta:SH^*_+(W)\to H^{*+1}(W)$ and the restriction $H^{*+1}(W) \to H^{*+1}(Y)$, where $\delta$ is the connecting map in the tautological long exact sequence $\ldots \to SH^*(W)\to SH^*_{+}(W) \to H^{*+1}(W)\to \ldots$. In the following, we will restrict to topologically simple fillings of ADC manifolds, i.e. those fillings $W$ such that $c_1(W)=0$ and $\pi_1(Y)\to \pi_1(W)$ is injective (when $Y$ is strongly ADC (Definition \ref{def:ADC}), we only require $c_1(W)=0$\footnote{Roughly speaking, strongly ADC requires that Reeb orbits are contractible in addition to the asymptotically dynamically convex condition.}). Our first theorem is the following.

\begin{thmx}\label{thm:A}
Let $Y$ be a (strongly) ADC contact manifold. Then  $\delta_{\partial}:SH^*_+(W)\to H^{*+1}(Y)$ is independent of topologically simple exact fillings.
\end{thmx}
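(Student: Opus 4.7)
The plan is to produce a cochain-level model for $\delta_\partial$ whose count can be localized near a collar of $\partial W = Y$, and then invoke the neck-stretching compactness argument that underlies the filling-invariance of $SH^*_+$ for ADC manifolds in order to conclude that this count depends only on $Y$.

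First I would set up the chain model. On the completion $\widehat{W}$, pick an admissible Hamiltonian $H$ that is $C^2$-small Morse on the compact part and linear in the cylindrical coordinate with slope larger than the largest Reeb period of interest, together with a cylindrical almost complex structure. Constant $1$-periodic orbits generate a subcomplex quasi-isomorphic to $C^*(W)$, non-constant orbits generate $SC^*_+(W)$, and the connecting homomorphism $\delta$ counts rigid Floer cylinders from a Reeb orbit to a critical point of $H$. To implement the restriction $H^{*+1}(W) \to H^{*+1}(Y)$ at the chain level, arrange the Morse function so that its critical points in a collar of $Y$ represent, via the collar identification, classes in $H^*(Y)$; then $\delta_\partial(\gamma)$ is literally the count of Floer cylinders from $\gamma$ to these collar critical points. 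Equivalently, one can adopt a PSS-style model where the output is a cycle on $Y$, obtained by evaluating at $+\infty$ along a radially-pushed-out chain in the cylindrical end.

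For the comparison between two topologically simple exact fillings $W_1$ and $W_2$, I would apply a neck-stretching family of almost complex structures on each $\widehat{W}_i$ along a copy of $Y$ sitting in the cylindrical end, following Cieliebak--Oancea and Lazarev. SFT-type compactness produces, in the limit, holomorphic buildings consisting of cylinders in the symplectization $\R \times Y$ and a remaining Floer cylinder in $\widehat{W}_i$. The ADC inequality $\mu_{CZ}(\gamma) + n - 3 > 0$ on small-action Reeb orbits, combined with $c_1(W_i) = 0$ and (in the non-strongly-ADC case) the $\pi_1$-injectivity of $Y \hookrightarrow W_i$ so that Conley--Zehnder indices and relative homotopy classes of curves in $W_i$ agree with those computed in $Y$, forces any rigid contribution whose bottom piece enters $W_i$ to have strictly positive virtual dimension. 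Hence the only surviving configurations have a trivial bottom piece in $W_i$, so $\delta_\partial$ is computed entirely from holomorphic cylinders in $\R \times Y$ together with Morse data on $Y$, neither of which depends on the filling. The identification $\delta_\partial^{W_1} = \delta_\partial^{W_2}$ under the canonical isomorphism $SH^*_+(W_1) \cong SH^*_+(W_2)$ follows.

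The main obstacle is the index and action bookkeeping needed to rule out non-trivial contributions from the interior of $W_i$: a priori a Floer cylinder in $W_i$ from a Reeb orbit to a collar critical point can be topologically complicated, and one has to coordinate the growing neck length with the increasing slope of $H$ and with the action threshold beyond which ADC delivers the index bound. Making this uniform, and verifying that the resulting chain homotopies between the two models also land in filling-independent data, is the technical heart of the argument; the topological simplicity hypotheses enter precisely to promote the asymptotic index statements about Reeb orbits on $Y$ to global statements about the cylinders in $\widehat{W}_i$ under consideration.
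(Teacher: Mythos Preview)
Your strategy is right in spirit and close to the paper's, but the neck you name would make the argument fail. Stretching ``along a copy of $Y$ sitting in the cylindrical end'' means along some $Y\times\{r_0\}$ with $r_0>1$; the region below still contains all of $W_i$ together with your collar output data, so nothing is localized away from the filling, and in any case the ADC hypothesis gives no index control on Reeb orbits of $r_0\alpha$. The ADC definition supplies a \emph{decreasing} sequence $\alpha_1>\alpha_2>\cdots$, realized as contact hypersurfaces $Y_i\subset Y\times(0,1-\epsilon)$ going \emph{into} the filling, and it is along these that the paper stretches. Both $W_1$ and $W_2$ share the collar $Y\times(0,1]$, so the region outside $Y_i$ (the exact cobordism from $Y_i$ to $Y$ glued to $Y\times[1,\infty)$) is literally the same for both. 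If a rigid curve crossed $Y_i$, the fully stretched limit would have a component in this common cobordism with extra negative punctures at Reeb orbits $\gamma_j$ of $Y_i$; topological simplicity makes these $\gamma_j$ contractible in $Y_i$, and then the index of that component is $|y|-|x|-\sum(\mu_{CZ}(\gamma_j)+n-3)<1$, so after quotienting translation its expected dimension is negative, contradicting regularity of the split almost complex structure. Thus the curves live entirely in the common region---not in $\R\times Y$ as you write, but in a cobordism shared by both completions, which is what is needed.

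The paper's chain model is also more specific than ``arrange critical points in a collar.'' It takes $\bH\equiv 0$ on $W$ (so constant orbits form a Morse--Bott family), fixes a Morse function $h$ on the hypersurface $\partial W\times\{1-\epsilon\}$, and defines a moduli space $\cP_{p,y}$ of pairs consisting of a Floer plane $u:\C\to\widehat W$ and a half-infinite $\nabla h$-trajectory on $\partial W\times\{1-\epsilon\}$ meeting at $u(0)$. A one-parameter homotopy $\cH_{p,y}$ through finite-length $\nabla f$-trajectories (for an auxiliary admissible Morse $f$ on $W$) shows this count represents $\delta_\partial$. The point is that both the Hamiltonian orbits (at $r>1$) and the $h$-constraints (at $r=1-\epsilon$) already sit outside every $Y_i$. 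The naturality you correctly flag as the main obstacle is handled by an inductive construction: choose $J^i$ so that $NS_{i,0}(J^i)$ is regular for the relevant SFT-limit moduli spaces, find $\epsilon_i$ so small that all zero-dimensional $\cM_{x,y}$ and $\cP_{p,y}$ stay outside $Y_i$ for $R<\epsilon_i$, and arrange $J^{i+1}=NS_{i,\epsilon_i/2}(J^i)$ on $W^i$; the continuation maps $C^{D_i}_+\to C^{D_{i+1}}_+$ then become literal inclusions, and the entire direct system computing $\delta_\partial$ is identified for the two fillings.
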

\begin{remark}
	Unless specified, our coefficient can be any ring with a default setting of $\Z$.
\end{remark}

Since whether $1\in \Ima \delta$ is equivalent to whether $1$ is mapped to $0$ in the unital map $H^*(W)\to SH^*(W)$,  $1\in \Ima \delta_{\partial}$ is equivalent to $SH^*(W)=0$. This reproves the vanishing result in \cite{zhouvanishing}. Moreover, unlike the proof based on the formal properties of symplectic cohomology in \cite{zhouvanishing}, the proof here explains the background geometry to some extent by finding a persistent holomorphic curve.  Moreover, we have the following finer invariance result on the topology of the filling. 
\begin{corollaryx}\label{cor:B}
	 Let $Y$ be a (strongly) ADC contact manifold, then $SH^*(W)=0$ is a property independent of topologically simple exact fillings. In that case, $H^*(W)\to H^*(Y)$ is independent of topologically simple exact fillings,  i.e. for two topologically simple exact fillings $W,W'$, we have an isomorphism $H^*(W)\simeq H^*(W')$ such that the following commutes,
	 $$
	 \xymatrix{ H^*(W) \ar[r] \ar[d]^{\simeq} & H^*(Y) \ar[d]^{=}\\
	 	H^*(W') \ar[r] & H^*(Y)
	 }
	 $$
\end{corollaryx}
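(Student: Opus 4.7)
The plan is to derive both assertions of the corollary formally from Theorem A, combined with the tautological long exact sequence and the invariance of $SH^*_+$ itself under the ADC hypothesis (Cieliebak--Oancea, Lazarev).

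For the first assertion, I would invoke the remark immediately preceding the corollary: the condition $SH^*(W)=0$ is equivalent to $1_W\in\Ima\delta$, which for connected $W,Y$ is in turn equivalent to $1_Y\in\Ima\delta_\partial$, since $\delta_\partial$ is the composition of $\delta$ with the restriction $H^0(W)\to H^0(Y)$, the latter being the identity $\Z\to\Z$ sending $1_W$ to $1_Y$. Theorem A says $\delta_\partial$ is filling-independent, i.e.\ it agrees after the $SH^*_+$-invariance isomorphism. Hence the property $1_Y\in\Ima\delta_\partial$ is filling-invariant, and therefore so is $SH^*(W)=0$.

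For the second assertion, take two topologically simple exact fillings $W,W'$ with $SH^*(W)=SH^*(W')=0$. Collapsing the tautological long exact sequence under this vanishing forces the connecting map $\delta_W:SH^{*-1}_+(W)\to H^*(W)$ (and similarly $\delta_{W'}$) to be an isomorphism in every degree. Let $\phi:SH^*_+(W)\to SH^*_+(W')$ denote the $SH^*_+$-invariance isomorphism. I would then define
$$
\psi:=\delta_{W'}\circ\phi\circ\delta_W^{-1}:H^*(W)\xrightarrow{\ \simeq\ }H^*(W'),
$$
and verify commutativity of the desired square: for $\alpha=\delta_W(x)\in H^*(W)$ with $x\in SH^{*-1}_+(W)$, writing $\operatorname{res}_W,\operatorname{res}_{W'}$ for the restrictions to $H^*(Y)$,
$$
\operatorname{res}_{W'}(\psi(\alpha))=\delta_{\partial,W'}(\phi(x))=\delta_{\partial,W}(x)=\operatorname{res}_W(\alpha),
$$
where the middle equality is exactly Theorem A. The degree-$0$ piece is already forced by connectedness, with $\psi$ the identity $\Z\to\Z$.

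The only substantive input is Theorem A itself; the rest is formal bookkeeping with the tautological long exact sequence and the known invariance of $SH^*_+$. Consequently I do not anticipate a real obstacle here — the entire geometric difficulty of the corollary has been absorbed into Theorem A, whose proof (via persistent holomorphic curves under neck-stretching) is the actual substance.
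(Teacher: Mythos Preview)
Your proposal is correct and follows the same approach as the paper: the paper's proof is a terse two-line argument (``$1\in\Ima\delta_\partial$ is invariant by Theorem~A, hence so is $SH^*(W)=0$; then Theorem~A gives the invariance of $H^*(W)\to H^*(Y)$''), and you have simply unpacked the second step by explicitly writing down the isomorphism $\psi=\delta_{W'}\circ\phi\circ\delta_W^{-1}$ and checking the square commutes. One minor remark: the $SH^*_+$-invariance isomorphism $\phi$ compatible with $\delta_\partial$ is established within the proof of Theorem~A itself (via the identified neck-stretching diagram), so you need not cite it as a separate input from Cieliebak--Oancea/Lazarev.
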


The invariance of $H^*(W)\to H^*(Y)$ is a very strong topological constraint, especially when it is injective. In particular, by the universal coefficient theorem, Corollary \ref{cor:B} implies \cite[Theorem 1.2(a)]{barth2016diffeomorphism} if the exact filling is topologically simple. Combining with Theorem \ref{thm:C} below yields a Floer theoretic proof of exact and $c_1=0$ fillings of simply connected subcritically fillable contact manifolds have unique diffeomorphism type by the h-cobordism argument in \cite[\S 5]{barth2016diffeomorphism}. Moreover, we can extract the topology condition required for the h-cobordism argument, to reach the following uniqueness result for some only Liouville fillable contact manifolds (Corollary \ref{cor:ob}) and flexible fillable contact manifolds.
\begin{thmx}\label{thm:diff}
	Exact fillings $W$ with vanishing first Chern class of the following contact manifolds $Y$ with $\dim Y\ge 5$ have unique diffeomorphism types. 
	\begin{enumerate}
		\item\label{f1} $Y=\partial(V\times \C)$ simply connected, where $V$ is a simply connected Liouville domain such that $c_1(V)=0$ and $\dim V >0$.
		\item $Y$ is the boundary of the flexible cotangent bundle $\text{Flex}(T^*Q)$ for simply connected $Q$ with $\chi(Q)=0$.
	\end{enumerate}
\end{thmx}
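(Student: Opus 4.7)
The plan is to combine Corollary \ref{cor:B} with the $h$-cobordism strategy of Barth-Geiges-Zehmisch \cite{barth2016diffeomorphism}, using the invariance results to supply the topological data needed to extend their $h$-cobordism argument beyond the subcritical setting.

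I would first identify the reference fillings and check the standing hypotheses: take $W_0 := V \times \D^2$ in case (\ref{f1}) and $W_0 := \text{Flex}(T^*Q)$ in case (2). In both cases the boundary $Y$ is ADC --- boundaries of subcritical Liouville domains by \cite{cieliebak2018symplectic}, boundaries of flexible Weinstein domains by \cite{lazarev2016contact} --- and $SH^*(W_0)=0$ via the subcritical vanishing theorem in case (\ref{f1}) and the vanishing for flexible Weinstein fillings in case (2). Under our hypotheses both references have $c_1 = 0$.

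Next, observe that $Y$ is simply connected in both cases: assumed in case (\ref{f1}), and in case (2) a consequence of $Q$ simply connected with $\dim Q \geq 3$ (forced by $\dim Y \ge 5$). Hence the topological simplicity hypothesis reduces to $c_1(W)=0$, which is in force. Corollary \ref{cor:B} then yields $SH^*(W)=0$ together with a commuting square of restriction maps identifying $H^*(W)\to H^*(Y)$ with $H^*(W_0)\to H^*(Y)$ through an isomorphism $H^*(W)\simeq H^*(W_0)$. In particular $H_1(W;\Z)=0$; combined with simple connectivity of $Y$ and a handle-theoretic argument, this upgrades to $\pi_1(W)=1$. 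Feeding the cohomological matching and simple connectivity into the construction in \cite[\S 5]{barth2016diffeomorphism} produces an $h$-cobordism between $W$ and $W_0$ relative to $Y$, and Smale's $h$-cobordism theorem --- applicable because $\dim W \ge 6$ --- promotes this to a diffeomorphism $W \cong W_0$.

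The main obstacle is this final topological step: extracting the precise hypotheses that make the Barth-Geiges-Zehmisch $h$-cobordism construction apply outside the subcritical regime. The condition $\chi(Q)=0$ in case (2) should be exactly what is needed to control the top-index handles of $\text{Flex}(T^*Q)$ so that the handle manipulations in \cite{barth2016diffeomorphism}, originally designed for subcritical fillings, remain valid in the flexible case; verifying this carefully, and likewise checking that $V$ simply connected is the correct analog in case (\ref{f1}), is the heart of the argument.
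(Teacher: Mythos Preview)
Your overall architecture---Corollary~\ref{cor:B} plus the $h$-cobordism strategy of \cite{barth2016diffeomorphism}---is exactly the paper's. But two of your inputs are not actually available as you state them.

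First, in case~(\ref{f1}) you cite \cite{cieliebak2018symplectic} for ADC of $\partial(V\times\C)$. That reference treats subcritical \emph{Weinstein} fillings, and $V\times\C$ is not Weinstein when $V$ is only Liouville (which is all the theorem assumes). The paper supplies this separately as Theorem~\ref{thm:product}: $\partial(V\times\C)$ is ADC for any Liouville $V$ with $c_1(V)=0$, proved by explicit analysis of Reeb dynamics on a family of contact hypersurfaces. Without it your argument does not start.

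Second, your step ``$H_1(W;\Z)=0$ plus $\pi_1(Y)=1$ and a handle-theoretic argument gives $\pi_1(W)=1$'' is a genuine gap. The filling $W$ is an arbitrary exact filling; you have no handle decomposition to exploit, and cohomological data alone does not kill a perfect fundamental group. The paper handles this with Theorem~\ref{thm:fund}, proved via symplectic cohomology of covering spaces (Proposition~\ref{prop:indlocal}): the surjectivity of $SH^{-1}_+(\widetilde{W})\to H^0(\widetilde{Y})$, which is invariant across fillings, forces $\pi_1(W')/\pi_1(Y)$ to have one element. This is a symplectic argument, not a topological one.

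On your ``main obstacle'': the paper packages the BGZ construction via the notion of \emph{good domain} (Proposition~\ref{prop:homology}), namely a domain containing a collar copy of itself together with a submanifold $V\subset\partial W$ whose inclusion into $W$ is a cohomology equivalence. Both $V\times\D$ (take $V\times\{1\}$) and $T^*Q$ with $\chi(Q)=0$ (take a section $Q\to ST^*Q$) are good, and the invariance of $H^*(W)\to H^*(Y)$ then yields a homology cobordism directly---so $\chi(Q)=0$ is needed precisely to produce the section, not to control handles. Van~Kampen then upgrades homology cobordism to $h$-cobordism once $\pi_1(W)=1$ is in hand.
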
 
\begin{remark}
	The contact boundary $Y:=\partial(V\times \C)$ was also considered in \cite[\S 2.6]{barth2016diffeomorphism}, where Barth-Geiges-Zehmisch proved that for every symplectic aspherical filling $W$ of $Y$, we must have that $H_*(Y) \to H_*(W)$ is surjective. Using Corollary \ref{cor:B}, if we assume $c_1(V)=0$ and $W$ is topologically simple, then $H^*(W)\to H^*(Y)$ is invariant and injective, see Corollary \ref{cor:iso}.
\end{remark}	
\begin{remark}
	$SH^*(W)=0$ is a restrictive condition, the major classes of examples are flexible Weinstein domains \cite{bourgeois2012effect,murphy2018subflexible}, $V\times \C$ for Liouville domains $V$ \cite{oancea2006kunneth}. These two classes provide many examples with ADC boundaries as long as the first Chern class vanishes. Recall that if $V\subset W$ is an exact subdomain, the Viterbo transfer map $SH^*(W)\to SH^*(V)$ is a unital ring map. Since the $0$ ring is the terminal object in the category of unital rings, a symplectic manifold $W$ with $SH^*(W)=0$ should be understood as the simplest symplectic manifold and the contact boundary has the best chance of having unique exact fillings.
\end{remark}
Theorem \ref{thm:A} also holds for symplectic cohomology with local systems, and Corollary \ref{cor:B} holds if the ring structure still exists. In particular, those results can be applied to cotangent bundles considered in \cite{albers2017local}, whose symplectic cohomology without local system does not vanish. In particular, we have the following invariance result for cotangent bundles.

\begin{thmx}\label{thm:local}
	Let $Q$ be an manifold such that the Hurewicz map $\pi_2(Q)\to H_2(Q)$ is nonzero and $ST^*Q$ is ADC (e.g. $\dim Q\ge 4$), then we have the following.
	\begin{enumerate}
		\item\label{l1} $H^*(W;\C) \to H^*(ST^*Q;\C)$ is independent of the topologically simple exact filling $W$ as long as $H^2(W;\Z/p)\to H^2(ST^*Q;\Z/p)$ is surjective for every prime $p$.
		\item\label{l2} If $\pi_1(Q)=0$, and $Q$ is spin, then the independence of  $H^*(W;\C) \to H^*(ST^*Q;\C)$ for topologically simple $W$ holds as long as $H^2(W)\to H^2(ST^*Q)$ is nonzero. If $Q$ is not spin, we need assume $\Ima (H^2(W;\Z/2)\to H^2(ST^*Q;\Z/2))$ contains $\pi^*w_2(Q)|_{ST^*Q}$, where $\pi:T^*Q\to Q$ is the projection and $w_2(Q)\in H^2(Q;\Z/2)$ is the second Stiefel-Whitney class of $Q$.
		\item\label{l3} If in addition, we have $\chi(Q)=0$, then the rational homotopy type of $W$ with same conditions above is $T^*Q$.
	\end{enumerate}	
\end{thmx}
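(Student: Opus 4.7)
The plan is to bootstrap from the local system analogues of Theorem~\ref{thm:A} and Corollary~\ref{cor:B}, indicated in the paragraph preceding the theorem, combined with the vanishing of twisted symplectic cohomology of cotangent bundles established in \cite{albers2017local}. From a class $\alpha$ in the nonzero image of the Hurewicz map $\pi_2(Q)\to H_2(Q)$, one produces a rank-one unitary local system $\eta_\beta$ on the free loop space $\Lambda(ST^*Q)$ by transgressing the pullback to $T^*Q$ of a suitable class $\beta \in H^2(Q;\C)$ pairing nontrivially with $\alpha$. The key input from \cite{albers2017local} is that the twisted symplectic cohomology $SH^*(T^*Q;\eta_\beta)=0$ for every such $\beta$.

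The surjectivity hypotheses in the three parts of the theorem are arranged so that, via universal coefficients, $\beta$ (corrected by $\pi^*w_2(Q)$ when $Q$ is not spin) lifts from $H^2(Y)$ to $H^2(W)$ for every topologically simple exact filling $W$, so that $\eta_\beta$ extends across $W$. Once extended, the construction of $\delta_\partial$ and the persistent-curve neck-stretching argument behind Theorem~\ref{thm:A} go through with local coefficients, yielding that $\delta_\partial^{\eta_\beta}\colon SH^*_+(W;\eta_\beta)\to H^{*+1}(Y;\eta_\beta)$ is independent of $W$. Applied to the standard filling $W=DT^*Q$, the vanishing $SH^*(DT^*Q;\eta_\beta)=0$ forces $1\in \Ima\delta_\partial^{\eta_\beta}$, and invariance propagates this to every topologically simple $W$, so $SH^*(W;\eta_\beta)=0$ for all such $W$. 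The local system version of Corollary~\ref{cor:B} then gives independence of the restriction $H^*(W;\eta_\beta)\to H^*(Y;\eta_\beta)$.

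For parts (1) and (2), I would vary $\beta$ over the family of admissible classes guaranteed by the surjectivity hypothesis: two fillings $W, W'$ with matching twisted restriction maps for sufficiently many $\eta_\beta$ must also agree on the untwisted $H^*(\,\cdot\,;\C)$, since over $\C$ the restriction map is detected by its specializations at a Zariski-dense family of unitary characters. For part (3), the extra hypothesis $\chi(Q)=0$ gives a section of $ST^*Q\to Q$, and $Q$ is a deformation retract of $T^*Q$, so the invariant restriction $H^*(W;\Q)\to H^*(Y;\Q)$ combined with Sullivan minimal model theory forces the rational homotopy type of $W$ to coincide with that of $T^*Q$. The main obstacle will be the bookkeeping in the local system variant of Theorem~\ref{thm:A}: checking that the persistent holomorphic curve and neck-stretching limits are compatible with nontrivial twisted coefficients, and matching the obstruction-theoretic lifting criterion for $\beta$ with the precise surjectivity hypotheses in parts (1)-(3).
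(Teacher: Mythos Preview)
Your overall strategy matches the paper's: use the local-system analogue of Theorem~\ref{thm:A} (Proposition~\ref{prop:indloc}), invoke the vanishing result of \cite{albers2017local} for $T^*Q$, and check that the surjectivity hypotheses allow the relevant local system on $L_0(ST^*Q)$ to extend to $L_0W$. Part~\eqref{l3} is also handled as you describe, via the section coming from $\chi(Q)=0$.

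The main gap is in your treatment of parts~\eqref{l1} and~\eqref{l2}. You write the target of $\delta_\partial^{\eta_\beta}$ as $H^{*+1}(Y;\eta_\beta)$ and then propose a Zariski-density argument over a family of characters to recover the untwisted restriction map. This is based on a misunderstanding: the local systems $\rho$ in play are \emph{admissible} in the sense of Definition~\ref{def:localsystem}, i.e.\ trivial on constant loops. Consequently the zero-length part of the twisted symplectic cochain complex is the ordinary Morse complex $C_0(f)\otimes\C$, and $\delta_{\partial,\rho}$ lands in the \emph{untwisted} $H^{*+1}(Y;\C)$ (see the statement of Proposition~\ref{prop:indloc}). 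So a single local system $\rho$ with $SH^*(T^*Q;\rho)=0$ already gives $1\in\Ima\delta_{\partial,\rho}$, hence $SH^*(W;\rho)=0$ for every admissible $W$, and then the argument of Corollary~\ref{cor:B} applies verbatim to yield invariance of $H^*(W;\C)\to H^*(Y;\C)$. No variation over $\beta$ and no Zariski-density step is needed.

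A secondary point: you take $\beta\in H^2(Q;\C)$, but the hypotheses of parts~\eqref{l1} and~\eqref{l2} are stated in terms of $H^2(\,\cdot\,;\Z/p)$ and $H^2(\,\cdot\,;\Z)$. The paper (following \cite{albers2017local}) works with local systems induced from $\Z/p\subset\C^\times$, represented by classes in $H^2_{inv}(\widetilde{W};\Z/p)$; the surjectivity of $H^2(W;\Z/p)\to H^2(ST^*Q;\Z/p)$ is precisely what guarantees the class coming from $H^2(T^*Q;\Z/p)$ extends to $W$. In case~\eqref{l2}, simple connectivity of $Q$ means any nonzero class in $H^2(ST^*Q;\Z/p)$ works, and nonzero $H^2(W)\to H^2(ST^*Q)$ implies nonzero mod $p$ for some $p$. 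You should match your lifting argument to these discrete coefficients rather than $\C$.
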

We also study the symplectic cohomology of covering spaces and prove an analogous statement to Theorem \ref{thm:A}, which implies the following theorem. It can be viewed as a generalization of \cite[Theorem 1.2(b)]{barth2016diffeomorphism}.
\begin{thmx}\label{thm:C}
Assume $Y$ is an  ADC contact manifold, with a topologically simple exact filling $W$ such that $SH^*(W) = 0$ (integer coefficient without local systems) and $\pi_1(Y) \to \pi_1(W)$ is an isomorphism.  Then $\pi_1(Y)\to \pi_1(W')$ is an isomorphism for any other topologically simple exact filling $W'$. If $Y$ is strongly ADC with the same property and  $\pi_1(Y)$ is abelian, then $\pi_1(Y) \to \pi(W')$ is an isomorphism for any other topologically simple exact filing $W'$.\footnote{Note that in the strongly ADC context, topologically simple filling only requires $c_1=0$, hence it is a stronger conclusion compared to the ADC case above.}
\end{thmx}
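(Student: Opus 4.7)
The plan is to upgrade the invariance of $\delta_\partial$ from Theorem~\ref{thm:A} to covering spaces (equivalently, symplectic cohomology with a local system valued in $\Z[\pi_1(W')/\pi_1(Y)]$) and then deduce the fundamental-group conclusion from a vanishing of covering-space symplectic cohomology. For any topologically simple exact filling $W'$, the injection $\pi_1(Y) \hookrightarrow \pi_1(W')$ determines a connected covering $\widehat{W'} \to W'$ with $\pi_1(\widehat{W'}) = \pi_1(Y)$; one of its boundary components is tautologically $Y$ (the trivial double coset), while any other components are covers of $Y$ indexed by the nontrivial double cosets in $\pi_1(Y)\backslash \pi_1(W')/\pi_1(Y)$. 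On the $W$-side, the hypothesis $\pi_1(Y)=\pi_1(W)$ makes the analogous covering $\widehat{W}=W$ itself, with $\partial \widehat{W}=Y$.

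The first main step would be a covering-space analog of Theorem~\ref{thm:A}: by running neck-stretching over $Y$ and lifting the Hamiltonians, almost complex structures, and the persistent holomorphic curve of that proof to these canonical covers, one shows that the $Y$-component of $\delta_\partial$, viewed as a map $SH^*_+(\widehat{W'}) \to H^{*+1}(Y)$, is independent of $W'$ and agrees with $\delta_\partial^{W}\colon SH^*_+(W) \to H^{*+1}(Y)$. Since $SH^*(W)=0$, the unit $1 \in H^0(Y)$ lies in the image of $\delta_\partial^W$, so invariance transfers this to $\delta_\partial^{\widehat{W'}}$. Connectedness of $\widehat{W'}$ gives $H^0(\widehat{W'})=\Z$ with restriction to $H^0(Y)=\Z$ an isomorphism, so the tautological long exact sequence on $\widehat{W'}$ upgrades the preceding relation to the existence of $\alpha \in SH^*_+(\widehat{W'})$ with $\delta(\alpha)=1\in H^0(\widehat{W'})$; the ring structure then forces $SH^*(\widehat{W'})=0$.

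The final step is to rule out extra boundary components of $\widehat{W'}$, which is equivalent to $\pi_1(Y)=\pi_1(W')$. Each extra component $Y_i$ is a cover of $Y$, hence ADC with $c_1=0$, and neck-stretching $\widehat{W'}$ near $Y_i$ combined with $SH^*(\widehat{W'})=0$ produces a persistent holomorphic curve with a positive puncture on $Y_i$; the Reeb-orbit constraints coming from such a curve together with $\pi_1(\widehat{W'}) = \pi_1(Y)$ are incompatible with $Y_i$ being nonempty, yielding the desired conclusion in both the finite and the infinite degree cases. For the strongly ADC case with $\pi_1(Y)$ abelian, where topological simplicity only requires $c_1(W')=0$, the same framework is run on the maximal abelian covers of $Y$ and $W'$; both injectivity and surjectivity of $\pi_1(Y)\to \pi_1(W')$ follow by comparing abelian covers and using that the contractibility assumption built into strong ADC keeps the relevant covers of $Y$ ADC. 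I expect the principal obstacle to be the covering-space analog of Theorem~\ref{thm:A}, especially arranging the neck-stretching and persistent-curve analysis on a possibly noncompact cover; a secondary subtlety is verifying that $SH^*(\widehat{W'})=0$ together with the Reeb-orbit constraints actually forbids any extra contact boundary component.
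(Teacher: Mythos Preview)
Your covering-space strategy is the right idea and is exactly what the paper does (it proves a lifted version of Theorem~\ref{thm:A} as Proposition~\ref{prop:indlocal}). But two of your steps diverge from the paper, and one of them contains a genuine gap.

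\textbf{Choice of cover and the final contradiction.} You pass to the cover $\widehat{W'}$ with $\pi_1(\widehat{W'})=\pi_1(Y)$; its boundary is $Y$ together with possibly nontrivial covers $Y_i$ of $Y$, and you then try to rule out the $Y_i$ by a vague ``Reeb-orbit constraint'' after showing $SH^*(\widehat{W'})=0$. That step~3 is where the gap lies: knowing $SH^*(\widehat{W'})=0$ and neck-stretching near $Y_i$ does not by itself exclude an extra boundary component, and you do not specify what constraint is being violated. The paper sidesteps this entirely by passing to the \emph{universal} cover $\widetilde{W'}$. Its boundary is $|\pi_1(W')/\pi_1(Y)|$ copies of $\widetilde{Y}$, and one compares it with the \emph{disjoint union} of that many copies of $\widetilde{W}$, which has the \emph{same} boundary. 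Proposition~\ref{prop:indlocal} then applies directly: on the disconnected side $\delta_\partial$ surjects onto $H^0(\partial)=\prod\Z$, while on the connected side it factors through $H^0(\widetilde{W'})=\Z$. That is the whole contradiction --- no ring structure on covering-space $SH^*$ and no analysis of individual boundary components is needed. (Incidentally, even with your cover the same $H^0$ trick works once you observe that the $Y$-block of $\delta_\partial$ hits $(1,0,\dots)$ while the map factors through the diagonal; your detour through $SH^*(\widehat{W'})=0$ and step~3 is unnecessary.)

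\textbf{The strongly ADC case.} Your sketch via maximal abelian covers is too vague to evaluate; in particular you do not explain how injectivity of $\pi_1(Y)\to\pi_1(W')$ is obtained when it is not assumed. The paper's argument is different and shorter: since $SH^*(W)=0$, Corollary~\ref{cor:B} makes $H^*(W')\to H^*(Y)$ agree with $H^*(W)\to H^*(Y)$, hence $H^1(W')\to H^1(Y)$ is an isomorphism and $H^2(W')\to H^2(Y)$ is an isomorphism on torsion. Universal coefficients then gives $H_1(Y)\xrightarrow{\sim} H_1(W')$, and as $\pi_1(Y)$ is abelian the composite $\pi_1(Y)\to\pi_1(W')\to H_1(W')$ is an isomorphism, so $\pi_1(Y)\to\pi_1(W')$ is injective; one then invokes the ADC case already proved.
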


The results above can be put under one theme: understand whether the symplectic filling is unique. It is conjectured that exact fillings of flexibly fillable contact manifolds are unique. Since Theorem \ref{thm:A}-\ref{thm:C} can be applied to a larger class of contact manifolds in addition to flexibly fillable contact manifolds, see \S \ref{s6}. They suggest that contact manifolds with unique exact fillings may go beyond flexibly fillable contact manifolds.

For $(S^{3},\xi_{std})$, Eliashberg \cite{eliashberg1990filling} and McDuff \cite{mcduff1990structure} showed that symplectic fillings of the standard contact $3$-sphere are symplectic blow-ups of the standard ball. Hence it has a unique exact filling. However the procedure of blow-up destroys both exactness and $c_1=0$. Hence in this special case, one can trade the exactness condition to $c_1=0$ condition and still has the uniqueness of fillings, in particular exactness is equivalent to $c_1=0$.   In higher dimension, the exactness/symplectic asphericity plays an important role in \cite{barth2016diffeomorphism,oancea2012topology} while $c_1=0$ is not required or used. But if we study it using Floer theory, non-exactness only adds technical difficulties and can be overcome by a dimension argument \cite{hofer1995floer} when $c_1=0$. However $c_1=0$ plays a fundamental rule, since the ADC property is an index property. Therefore, we can generalize Eliashberg-McDuff's result in the $c_1=0$ direction to higher dimensions as follows.
\begin{thmx}\label{thm:D}
	Let $(Y,\xi)$ be a tamed asymptotically dynamically convex (TADC) manifold (Definition \ref{def:TADC}) with one topologically simple exact filling $W$, such that $SH^*(W;\Q)=0$. Assume $H^2(W;\Q)\to H^2(Y;\Q)$ is injective and $H^1(W;\Q)\to H^1(Y;\Q)$ is surjective. Then any topologically simple strong filling of $Y$ is exact. 
\end{thmx}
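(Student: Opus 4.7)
Let $W'$ be any topologically simple strong filling of $Y$, with symplectic form $\omega'$. My goal is to show $W'$ is an exact (Liouville) filling, which means I need both $[\omega'] = 0 \in H^2(W';\R)$ and the existence of a global primitive whose restriction to $Y$ is a contact form. Since $\omega'|_Y = d\alpha$ for some contact form $\alpha$, the class $[\omega']$ automatically restricts to zero in $H^2(Y;\R)$; so the first condition would follow at once from injectivity of $H^2(W';\R)\to H^2(Y;\R)$. The second condition will follow, once $\omega' = d\lambda'$ globally, from surjectivity of $H^1(W';\R)\to H^1(Y;\R)$: that lets me kill the class $[\lambda'|_Y - \alpha]$ by subtracting a suitable closed $1$-form on $W'$, after which a final adjustment by an exact form makes the primitive restrict to $\alpha$ exactly.

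The strategy is to bootstrap these two cohomological properties from $W$ to $W'$ via the Floer-theoretic invariance. To that end I would invoke the promised generalization of Theorem \ref{thm:A} to topologically simple strong fillings with $c_1=0$ in the TADC setting (announced in the abstract): the map $\delta_\partial^{W'}:SH^*_+(W';\Q)\to H^{*+1}(Y;\Q)$ is independent of the filling. Because $SH^*(W;\Q) = 0$ for the reference exact filling, the tautological long exact sequence forces $1\in H^0(Y;\Q)$ to lie in $\Ima\delta_\partial^W$; by the invariance the same is true for $\delta_\partial^{W'}$. The unital ring structure on $SH^*(W';\Q)$ (with suitable Novikov-$\Q$ coefficients accommodating the non-exactness) then forces $SH^*(W';\Q) = 0$, because the unit hitting zero collapses the whole ring. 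The strong-filling analog of Corollary \ref{cor:B} now supplies an isomorphism $H^*(W;\Q)\simeq H^*(W';\Q)$ that commutes with restriction to $H^*(Y;\Q)$.

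Once this invariant isomorphism is in hand the remaining steps are elementary. Injectivity of $H^2(W;\Q)\to H^2(Y;\Q)$ and surjectivity of $H^1(W;\Q)\to H^1(Y;\Q)$ transport across the isomorphism to the corresponding statements for $W'$, and flat base change extends both to $\R$ coefficients. The injectivity gives $[\omega'] = 0$ in $H^2(W';\R)$, hence $\omega' = d\lambda'$ for some global $\lambda'$. Then $\lambda'|_Y - \alpha$ is closed on $Y$, and surjectivity of $H^1(W';\R)\to H^1(Y;\R)$ produces a closed $\beta$ on $W'$ with $[\beta|_Y] = [\lambda'|_Y - \alpha]$; writing the difference as $df$ for a function $f$ on $Y$, extending $f$ to $\tilde f$ on $W'$, and replacing $\lambda'$ by $\lambda' - \beta - d\tilde f$ yields a Liouville primitive whose restriction to $Y$ equals $\alpha$.

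The main obstacle is the middle step: the extension of the invariance of $\delta_\partial$ and of the vanishing consequence $SH^*=0$ to strong (non-exact) fillings with $c_1=0$. In the exact setting neck-stretching tightly controls Floer trajectories, but for a strong filling the symplectic form is no longer globally exact, so energy estimates must be maintained through Novikov coefficients, and the $c_1=0$ and TADC hypotheses have to be used to keep index behavior under control. The delicate point is ensuring that the unital ring structure on $SH^*(W';\Q)$ survives this Novikov setup well enough that the standard implication ``$1\in\Ima\delta_\partial\Rightarrow SH^*=0$'' still goes through. Once this is granted, everything else is a transfer of hypotheses along the invariant isomorphism together with standard Liouville-geometry adjustments near the boundary.
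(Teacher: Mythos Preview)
Your proposal is correct and follows essentially the same route as the paper: invoke the strong-filling invariance of $\delta_\partial$ (the paper's Theorem~\ref{thm:CY}), deduce $SH^*(W';\Lambda)=0$ via the unital ring map (the paper's Corollary~\ref{cor:vanish}, citing Ritter for the Novikov-coefficient ring structure), transfer the injectivity/surjectivity hypotheses along the resulting isomorphism $H^*(W;\Lambda)\to H^*(Y;\Lambda)\cong H^*(W';\Lambda)\to H^*(Y;\Lambda)$, and then carry out the elementary Liouville adjustment you describe. The technical concern you flag---that the implication ``$1\in\Ima\delta_\partial\Rightarrow SH^*=0$'' survives the Novikov setup---is exactly what the paper addresses in \S\ref{s8}, and your outline of the final primitive-adjustment is more explicit than the paper's terse ``they imply that there exists a one-form $\beta$ on $W'$ such that $\omega=\rd\beta$ and $\beta=\alpha$ near $Y$.''
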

The strategy of proving Theorem \ref{thm:D} is showing the invariance of $H^*(W;\Q)\to H^*(Y;\Q)$ like Corollary \ref{cor:B} for strong fillings. Then exactness is a consequence of such invariance. TADC maifolds are more general than index-positive manifolds, but more restricted than ADC manifolds. Examples of TADC manifolds that Theorem \ref{thm:D} can be applied are  boundaries of $\C^n (n\ge 2), T^*M\times \C, W \times \C$, where $W$ is the Milnor fiber of $\sum{x_i^{a_i}}=0,a_i\in \N$ with $\sum \frac{1}{a_i}\ge 1$, and products among them. In particular Theorem \ref{thm:D} implies that any strong filling of $(S^{2n-1},\xi_{std})$ with vanishing first Chern class must be exact\footnote{Although in this special case, proving $H^*(W;\Q)=H^*(B^{2n};\Q)$ is enough. The general case requires the invariance of $H^*(W;\Q)\to H^*(Y;\Q)$.}, hence is diffeomorphic to $B^{2n}$. Theorem \ref{thm:D} can also be applied to non-Weinstein example, e.g. $\partial (V\times \C)$, where $V$ is the exact but not Weinstein domain in \cite{massot2013weak}, since $\partial (V\times \C)$ is TADC by Theorem \ref{thm:product}. Another source of examples are cotangent bundles with local systems, we can get exactness from $c_1=0$ and similar conditions in Theorem \ref{thm:local}, see Theorem \ref{thm:final}.

\begin{remark}\label{rmk:thmD}
	Theorem \ref{thm:D} is expected to hold for ADC manifolds. In the general ADC case, we need to stretch along expanding contact hypersurfaces since non-exact filling may not contain the whole negative end of the symplectization of $Y$. Then we need more functoriality than we are able to get.  However, the expanding issue indicates using a SFT description may be a better way to prove the generalization, see Remark \ref{rmk:SFT} and Remark \ref{rmk:SFT2}.
\end{remark}
\begin{remark}\label{rmk:comp}
	It is worthwhile to compare our method with the method in \cite{barth2016diffeomorphism,mcduff1991symplectic,oancea2012topology}, where the method can be  summarized as finding a ``homological foliation" by rational curves in a partial compactification of the filling. Such method has the benefit of only assuming exactness/symplectic asphericity. Our method is from a very different perspective, which essentially only assumes $c_1=0$ as in Theorem \ref{thm:D}, while the exactness in assumptions of previous theorems are only for the simplicity of the setup. Therefore we cover different aspects of the uniqueness of filling, i.e. $c_1=0$ versus symplectic asphericity. Moreover, we are able to extract some symplectic invariance through the study of symplectic cohomology. By \cite{barth2016diffeomorphism} exact filling $W$ of subcritically fillable contact manifold $Y$ must have $c_1(W)=0$ if $c_1(Y)=0$ when $\dim W \ge 6$. Combining Theorem \ref{thm:D} and Remark \ref{rmk:thmD}, it suggests that for fillings of subcritically fillable contact manifolds, exactness is equivalent to $c_1(W)=0$. In particular, if there is any procedure of modifying a filling, exactness and $c_1(W)=0$ must be destroyed at the same time.  This is the case for blow-up.
\end{remark}
\begin{remark}
	The condition of $c_1=0$ is necessary for the results above to hold. For example, the once blow-up of the standard ball $B^{2n}$, i.e. the total space $\cO(-1)$ of the degree $-1$ bundle over $\CP^{n-1}$, has non-vanishing symplectic cohomology \cite{ritter2014floer} and $H^*(\cO(-1))\to H^*(S^{2n-1})$ is different from $H^*(B^{2n})\to H^*(S^{2n-1})$. By the Viterbo transfer map, such phenomena persist for all exact domains considered above after once blow-up.
\end{remark}

\subsection{Persistence of dilation} The second structure map is related to the symplectic dilation introduced by Seidel-Solomon \cite{seidel2012symplectic}. To explain the structure map, first recall that symplectic cohomology is equipped with a degree $-1$ BV operator $\Delta$. Then a symplectic dilation is an element $x\in SH^1(W)$ such that $\Delta(x)=1$. The existence of symplectic dilation puts strong restrictions on Lagrangians that can be embedded exactly \cite{seidel2014disjoinable,seidel2012symplectic}. On the cochain level, $\Delta$ also respects the splitting into positive and zero symplectic cohomology. Therefore we have a well-defined degree $-1$ map $\Delta_+: SH^*_+(W)\to SH^{*-1}_+(W)$. Then there is a well-defined degree $-1$ map $\Delta_{\partial}:\ker \Delta_+\to \coker \delta_{\partial}$. When $Y$ is ADC and $W$ is topologically simple exact, we have that $\Delta_+$ is independent of the filling. Moreover, by Theorem \ref{thm:A}, $\delta_{\partial}$ is independent of the filling. Then our second main result is the following. 
\begin{thmx}\label{thm:E}
	Let $Y$ be an ADC manifold, then for any two topologically simple exact fillings $W_1,W_2$, we have an isomorphism $\Gamma:SH^*_+(W_1)\to SH^*_+(W_2)$, such that 
\begin{enumerate}
	\item $\delta_{\partial}\circ \Gamma=\delta_{\partial}$,
	\item $\Delta_+\circ \Gamma=\Gamma \circ \Delta_+$,
	\item $\Delta_{\partial}\circ \Gamma=\Delta_{\partial}$.
\end{enumerate}
\end{thmx}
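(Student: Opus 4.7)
The plan is to construct $\Gamma$ by refining the neck-stretching argument already used to establish Theorem A, and then to upgrade it to respect the BV operator. Stretching the neck along $Y$ in a common symplectization neighborhood of $W_1$ and $W_2$ (or inserting $W_2$ inside $W_1$ via an auxiliary cobordism filled by the symplectization) yields a chain-level continuation map that for ADC manifolds becomes an isomorphism on the positive-action sub-quotient, because the Conley--Zehnder index estimate together with $c_1(W_i) = 0$ and the $\pi_1$-injectivity built into topological simplicity rules out any short Reeb orbits participating in the count below a given action threshold. The isomorphism thus constructed is the $\Gamma$ we want, and property (1) is essentially a restatement of Theorem A applied to this specific $\Gamma$: the moduli spaces defining $\delta_{\partial}$ (Floer half-cylinders with one positive puncture, capped by the unit) admit the same neck-stretching argument, and the breaking analysis identifies $\delta_{\partial}^{W_2} \circ \Gamma$ with $\delta_{\partial}^{W_1}$ on the nose.

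For property (2), I would promote the construction of $\Gamma$ to an $S^1$-equivariant one by using parametrized moduli spaces of Floer cylinders with a free marked point. The BV operator $\Delta$ is defined by counting Floer cylinders equipped with a free $S^1$-marker, and on the positive sub-complex it descends to $\Delta_+$ because $\Delta$ preserves the action filtration up to homotopy. A standard TQFT-type cobordism between the two concatenations gives a chain homotopy $\Gamma \circ \Delta \simeq \Delta \circ \Gamma$, where the SFT compactness of the relevant 1-parameter family again uses ADC to exclude breakings with asymptotics of small index. Passing to cohomology and then to the positive part yields $\Delta_+ \circ \Gamma = \Gamma \circ \Delta_+$; in particular $\Gamma$ restricts to an isomorphism $\ker \Delta_+(W_1) \to \ker \Delta_+(W_2)$.

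Property (3) is then formal. By definition $\Delta_{\partial}$ is assembled from a cochain-level null-homotopy of $\Delta_+$ on its kernel followed by $\delta_{\partial}$ modulo $\Ima\, \delta_{\partial}$; since $\Gamma$ commutes with $\Delta_+$ up to a controlled chain homotopy by (2) and intertwines $\delta_{\partial}$ by (1), the induced map on $\coker\, \delta_{\partial}$ satisfies $\Delta_{\partial} \circ \Gamma = \Delta_{\partial}$ once one checks that the auxiliary chain homotopy in (2) projects trivially to the cokernel, which follows because its image lies in $\Ima\, \delta_{\partial}$ after a further action-filtration argument.

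The main obstacle I expect is carrying out the $S^1$-equivariant neck-stretching of step (2) while simultaneously respecting the action filtration that separates $SH^*_0$ from $SH^*_+$. The BV operator is not strictly compatible with the action splitting on the cochain level, and the naive chain homotopy producing $[\Gamma,\Delta] \simeq 0$ mixes positive and zero pieces; reconciling this with the ADC-based breaking analysis requires a careful bookkeeping of which broken configurations carry a free $S^1$-factor and which acquire it from the marker becoming asymptotic, and this is the technical heart of the argument.
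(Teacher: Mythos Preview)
Your overall strategy matches the paper's: neck-stretch along a sequence of contact hypersurfaces $Y_i$ in the common collar of both fillings, use the ADC index bound to confine all rigid moduli spaces to the symplectization part, and thereby identify the cochain-level data for $W_1$ and $W_2$. The paper does this in Proposition~\ref{prop:ind2}, producing almost complex structures for which the zero-dimensional $\cM_{x,y}$, $\cM^{\Delta}_{x,y}$, $\cP_{p,y}$, $\cP^{\Delta}_{p,y}$, and the continuation moduli $\cN_{x,y}$ are literally the same for both fillings; $\Gamma$ is then the tautological identification of the resulting direct systems, and (1)--(2) follow because $\delta_{\partial}$ and $\Delta_+$ are computed from this common data.

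Your treatment of (3), however, has a real gap. You call it ``formal,'' arguing that since $\Gamma$ intertwines $\Delta_+$ and $\delta_{\partial}$, it automatically respects $\Delta_{\partial}$ up to something landing in $\Ima\,\delta_{\partial}$. But $\Delta_{\partial}$ is not built from $\Delta_+$ and $\delta_{\partial}$ alone: the cochain-level representative of $\phi$ in \eqref{eqn:phi} involves the cross-terms $\Delta_{+,0}$ and $d_{+,0}$, which count curves with a point constraint at a critical point of a Morse function $f$ on the \emph{interior} of the filling. These moduli spaces are filling-dependent, and there is no map of full complexes $C(W_1)\to C(W_2)$ restricting to the identity on $C_0$, since $C_0(f_1)$ and $C_0(f_2)$ live on different manifolds. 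The paper resolves this not by a homotopy argument but by first \emph{rewriting} $\Delta_{\partial}$ through alternative moduli spaces $\cP^{\Delta}_{p,y}$ and $\cP_{p,y}$ in which the point constraint is placed on the hypersurface $\partial W\times\{1-\epsilon\}$ via a Morse function $h$ on the boundary (\S\ref{sub:shrink} for $\delta_\partial$, and the analogous construction for $\Delta_\partial$). A homotopy moduli space $\cH^{\Delta}$ establishes that this agrees with the original definition (Proposition~\ref{prop:equal}), and only \emph{then} can neck-stretching be applied to these boundary-only moduli spaces. Regarding your flagged obstacle: in the paper's setup $\Delta$ already respects the $0/+$ splitting since $\bH\equiv 0$ on $W$; the genuine technical issue is rather that twisting a single Hamiltonian by $\theta\in S^1$ destroys the monotonicity needed for the integrated maximum principle, which is why the paper uses two Hamiltonians $\bH_\pm$ with $\bH_-\ge\bH_+$ to define the BV moduli spaces.
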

The property of whether $1\in \Ima \Delta_{\partial}$ is closely related to the existence of symplectic dilation. We have the following Corollary. A stronger version concerning symplectic dilation on exact fillings can be found in Corollary \ref{cor:dilation}. 
\begin{corollaryx}\label{cor:F}
	Let $Y$ be an ADC contact manifold of dimension $\ge 5$. Then the existence of symplectic dilation is independent of Weinstein fillings. 
\end{corollaryx}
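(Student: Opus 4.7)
The plan is to translate the existence of a symplectic dilation on $W$ into the condition $[1] \in \Ima \Delta_\partial$ in degree $0$, and then invoke the filling invariance supplied by Theorem~\ref{thm:E}.

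The key input for the reformulation is that the BV operator $\Delta$ vanishes on the image of the unital map $u\colon H^*(W) \to SH^*(W)$, since constant loops are fixed by the $S^1$-reparametrization defining $\Delta$. Using the tautological long exact sequence
$$\cdots \to H^*(W) \xrightarrow{u} SH^*(W) \xrightarrow{\pi} SH^*_+(W) \xrightarrow{\delta} H^{*+1}(W) \to \cdots$$
together with the compatibility $\pi \circ \Delta = \Delta_+ \circ \pi$, I would establish by diagram chase the equivalence: $W$ admits a symplectic dilation if and only if $[1] \in H^0(Y)$ lies in the image of $\Delta_\partial$ in degree $0$. For the forward direction, given $X \in SH^1(W)$ with $\Delta X = 1 = u(1_W)$, the class $\bar X = \pi(X) \in SH^1_+(W)$ satisfies $\Delta_+(\bar X) = \pi(\Delta X) = \pi(u(1_W)) = 0$, since $\pi \circ u = 0$; unwinding the Massey-type construction of $\Delta_\partial$ then shows that $\Delta_\partial(\bar X)$ is represented by $1 \in H^0(Y)$ in $\coker \delta_\partial$. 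The reverse direction inverts this construction, using $\Delta \circ u = 0$ to lift a preimage of $[1]$ back to a class $X \in SH^1(W)$ with $\Delta X = 1$.

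For an ADC contact manifold $Y$ of dimension $\geq 5$, any Weinstein filling $W^{2n}$ (with $n \geq 3$) is topologically simple in the sense required by Theorem~\ref{thm:E}: $c_1(W) = 0$ is the standing hypothesis needed for a $\Z$-grading on $SH^*$, and injectivity of $\pi_1(Y) \to \pi_1(W)$ follows from standard handle-theoretic general-position arguments since Weinstein handles have index $\leq n$. By Theorem~\ref{thm:E}(1) and (3), the isomorphism $\Gamma\colon SH^*_+(W_1) \to SH^*_+(W_2)$ intertwines both $\delta_\partial$ and $\Delta_\partial$, so the subset $\Ima \Delta_\partial \subset \coker \delta_\partial$ is preserved under $\Gamma$, and whether $[1]$ lies in this image does not depend on the Weinstein filling. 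Combined with the reformulation, this yields the corollary.

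The main obstacle is the reformulation step: a careful chain-level verification that $\Delta_\partial$ detects precisely the obstruction to lifting $1 \in SH^0(W)$ along $\Delta\colon SH^1(W) \to SH^0(W)$. One must keep track of the interaction between $\delta$ and $\Delta$ at the cochain level, correctly interpret the cokernel quotient in the target of $\Delta_\partial$, and verify that the ambiguity in the construction matches exactly what is quotiented out by $\Ima \delta_\partial$. By contrast, the topological simplicity of Weinstein fillings and the direct application of Theorem~\ref{thm:E} are essentially routine.
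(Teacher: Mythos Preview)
Your approach is close to the paper's, but the key reformulation has a genuine gap. The claimed equivalence ``$W$ admits a dilation $\Leftrightarrow [1]\in \Ima\Delta_\partial$'' fails in the reverse direction. If $\Delta_\partial(x)=[1]$ for some $x\in\ker\Delta_+\subset SH^1_+(W)$, you need to lift $x$ to $SH^1(W)$, which requires $\delta(x)=0\in H^2(W)$. Nothing in the hypothesis $\Delta_\partial(x)=[1]$ forces this; indeed the paper explicitly notes (after Theorem~\ref{thm:unirule}) that $1\in\Ima\Delta_\partial$ is \emph{potentially weaker} than the existence of a dilation. The correct characterization is Proposition~\ref{prop:dilation}: a dilation exists iff $1\in\phi\bigl(\ker\delta\cap\ker\Delta_+\bigr)$, with the extra constraint $x\in\ker\delta$. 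Your ``invert the construction using $\Delta\circ u=0$'' does not address this lifting obstruction.

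The paper's proof (Corollary~\ref{cor:dilation}) avoids this by not reducing to an abstract property of $\Ima\Delta_\partial$. Starting from a dilation on $W_1$, it produces a \emph{specific} $x\in\ker\delta\cap\ker\Delta_+$ with $\phi(x)=1$; in particular $\delta_\partial(x)=0$. It then transports $x$ via $\Gamma$ and uses Theorem~\ref{thm:E}(1) to get $\delta_\partial(\Gamma x)=0$ on $W_2$, and Theorem~\ref{thm:E}(3) to get $\Delta_\partial(\Gamma x)=[1]$. The Weinstein hypothesis on $W_2$ enters precisely here: since $\dim W_2\ge 6$, the restriction $H^2(W_2)\to H^2(Y)$ is injective, so $\delta_\partial(\Gamma x)=0$ forces $\delta(\Gamma x)=0$, and now Proposition~\ref{prop:dilation} applies. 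You invoke Theorem~\ref{thm:E}(1) only to say $\Ima\Delta_\partial$ is preserved, which discards exactly the information ($\delta_\partial(x)=0$) needed to close the argument.
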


The vanishing of symplectic cohomology and the existence of symplectic dilations can be understood as the first two levels of indications of the complexity of symplectic manifolds. In fact, there exists a whole hierarchy of structures after them called higher dilations, all of them have associated structure maps similar to $\delta_{\partial},\Delta_{\partial}$, which are also independent of the topologically simple exact filling for  ADC manifolds. Details of the construction will appear in the sequel paper \cite{higher}. 

\begin{remark}\label{rmk:SFT}
	Following \cite{bourgeois2009exact}, positive symplectic cohomology should be understood as the non-equivariant linearized contact homology. When $Y$ is ADC, positive symplectic cohomology can be viewed as non-equivariant cylindrical contact homology, since the augmentation from filling is trivial by degree reason.  $\delta_{\partial},\Delta_{\partial}$ should have an equivalent description using SFT on $Y$. In particular, when the analytic foundation for the full SFT is completed \cite{hofer2017application}, one should be able to strengthen results in this paper to contact manifolds admitting a Reeb flow without a degree zero orbit and its asymptotic version based on the same argument. From SFT point of view, for any linearized non-equivariant contact homology $HC_*(Y)$, one should be able to define a map $HC_*(Y)\to H^{n+1-*}(Y)$ by counting holomorphic curves (with one positive puncture and multiple negative punctures) with one marked point mapped to $Y\times \{0\}$ along with the augmentation. When the augmentation is from a filling, then the map is the composition $SH^*_+(W)\to H^{*+1}(W)\to H^{*+1}(Y)$. It will imply Theorem \ref{thm:A}, since ADC contact manifolds should have no non-trivial augmentation by degree reason. Roughly speaking, the set of fillings is closely related to the set of augmentations. The theme of this paper can be summarized as: if a contact manifold admits a unique augmentation, then many Floer theoretic properties of the filling are independent of the filling. It is possible to prove such claim by reformulising constructions in this paper using SFT. 
\end{remark}

\subsection{Obstructions to Weinstein fillings and cobordisms}
One natural question in the study of symplectic fillings is understanding the difference between exact fillability and Weinstein fillability. In dimension $3$, exact fillable but not Weinstein fillable manifold was found by Bowden \cite{bowden2012exactly}. In higher dimension, such examples were found by Bowden-Crowley-Stipsicz \cite{bowden2014topology}. Their obstruction is topological in nature and their examples are exactly fillable, but not almost Weinstein fillable. Hence the next question we could ask is whether the topological obstruction is sufficient, or whether there is a contact manifold with exact filling and almost Weinstein filling, but no Weinstein filling.  In \S \ref{s6}, we answer the question by proving the following. 
\begin{thmx}\label{thm:I}
	Let $k\ge 1$, there exist infinitely many pairwise non-contactomorphic $4k+3$ dimensional contact manifolds, such that they are exactly fillable, almost Weinstein fillable, but not Weinstein fillable.
\end{thmx}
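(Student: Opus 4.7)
The plan is to exhibit contact manifolds $Y_{k,m}$ of dimension $4k+3$ as boundaries of products $V_m\times \D^2$, where $V_m$ is a simply connected Liouville-but-not-Weinstein domain of dimension $4k+2$ with $c_1(V_m)=0$ and with nontrivial cohomology in some degree $d_m>2k+2$. Exact fillability is witnessed by $V_m\times \D^2$ itself; almost Weinstein fillability is established separately using obstruction theory. To rule out honest Weinstein fillings, I apply Corollary \ref{cor:B}: since $SH^*(V_m\times \D^2)=0$ by K\"unneth, the restriction map $H^*(W)\to H^*(Y_{k,m})$ is determined for every topologically simple exact filling $W$. If this map is nonzero in the chosen degree $d_m>2k+2$, then no Weinstein filling can exist, because a Weinstein domain of dimension $4k+4$ has vanishing cohomology above degree $2k+2$.

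For the building block I take $V_m$ from the family of simply connected Liouville-but-not-Weinstein domains of dimension $4k+2$ produced by Massot-Niederkr\"uger-Wendl and their higher dimensional refinements, chosen with $c_1=0$ and with nontrivial cohomology in a prescribed degree $d_m\in(2k+2,4k+1]$; iterated boundary-connected sums with additional such blocks give a sequence $V_m$ whose $d_m$-th cohomology rank varies with $m$. Set $W_{k,m}:=V_m\times \D^2$ and $Y_{k,m}:=\partial W_{k,m}$. Then $W_{k,m}$ is a topologically simple exact filling with $SH^*(W_{k,m})=0$, and by the product theorem for (strongly) ADC manifolds referenced in the introduction, $Y_{k,m}$ is simply connected and (strongly) ADC. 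For almost Weinstein fillability, the almost complex manifold $V_m\times \D^2$ endows $Y_{k,m}$ with an almost contact structure bounding an almost complex filling; the Bowden-Crowley-Stipsicz obstructions to promoting this to an almost Weinstein filling are characteristic-class invariants that vanish for these $Y_{k,m}$, since the high-index handles in the almost complex decomposition can be traded, by smooth handle-slides using $\pi_1=0$, for a handle decomposition in indices at most $2k+2$.

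Now suppose $W$ is a Weinstein filling of $Y_{k,m}$. One first reduces to the case $c_1(W)=0$: since $c_1(Y_{k,m})=0$ and $Y_{k,m}$ is simply connected, either the Weinstein structure can be altered to kill $c_1(W)$, or one invokes the $\Z/N$-coefficient version of Corollary \ref{cor:B} with $N$ dividing $c_1(W)$; in either case $W$ is topologically simple and Corollary \ref{cor:B} produces an isomorphism $H^*(W)\cong H^*(W_{k,m})$ intertwining the restriction maps to $H^*(Y_{k,m})$. A direct computation, using the long exact sequence of the pair $(W_{k,m},Y_{k,m})$ together with Poincar\'e-Lefschetz duality on $W_{k,m}$, shows that $H^{d_m}(W_{k,m})\to H^{d_m}(Y_{k,m})$ has nontrivial image for the chosen $V_m$; therefore $H^{d_m}(W)\to H^{d_m}(Y_{k,m})$ is nonzero, contradicting $H^{>2k+2}(W)=0$ for the Weinstein $W$ of dimension $4k+4$.

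Pairwise non-contactomorphism of the $Y_{k,m}$ follows because, by Corollary \ref{cor:B}, the rank of the restriction $H^{d_m}(W)\to H^{d_m}(Y_{k,m})$ is a contact invariant of $Y_{k,m}$, and my construction makes this rank increase with $m$. The main obstacle I foresee is the synchronized construction of the building block $V_m$ realizing simultaneously (i) simple connectivity, (ii) $c_1=0$, (iii) Liouville-but-not-Weinstein status, (iv) (strongly) ADC behavior of $\partial(V_m\times \D^2)$, and (v) the survival of a cohomology class of degree strictly above $2k+2$ under restriction to $\partial(V_m\times \D^2)$. Each feature is achievable in isolation via MNW-type constructions, but securing all five together, and simultaneously verifying the vanishing of the Bowden-Crowley-Stipsicz almost Weinstein obstruction on the resulting $Y_{k,m}$, requires careful bookkeeping of the constructions and of the relevant long exact sequences.
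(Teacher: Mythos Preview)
Your proposal has two genuine gaps that make the approach unworkable as stated.

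\textbf{The building blocks do not exist.} You invoke ``simply connected Liouville-but-not-Weinstein domains produced by Massot--Niederkr\"uger--Wendl.'' All known Liouville-but-not-Weinstein domains (McDuff, Geiges, MNW) are of the form $M\times[0,1]$ with $M$ a closed aspherical manifold; in particular they are $K(\pi,1)$'s and never simply connected. No simply connected examples are known, so the entire construction rests on an unavailable ingredient.

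\textbf{The almost Weinstein argument is self-contradictory.} Even granting such $V_m$, your handle-trading argument fails. You require $H^{d_m}(V_m)\neq 0$ for some $d_m>2k+2$; hence $H^{d_m}(V_m\times\D^2)\neq 0$. But a handle decomposition of $V_m\times\D^2$ with all indices $\le 2k+2$ would force $H^i(V_m\times\D^2)=0$ for $i>2k+2$. So the very cohomology class you need to obstruct Weinstein fillings also obstructs $V_m\times\D^2$ from being almost Weinstein. Smale's handle trading cannot remove handles carrying nontrivial homology. Almost Weinstein fillability would therefore require exhibiting a \emph{different} almost complex manifold bounding $Y_{k,m}$ (as almost contact manifolds) that admits a half-dimensional handle decomposition; your argument does not do this.

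The paper's proof navigates both issues differently. It uses the genuinely non-simply-connected MNW domain $V\simeq M\times[0,1]$ (with $M$ a torus bundle over a torus), takes $V\times\C^n$ rather than $V\times\C$, and attaches a subcritical handle to obtain $W$. The non-Weinstein-fillability follows from Corollary~\ref{rmk:weinstein} since $H^{2n+3}(V)\neq 0$. For almost Weinstein fillability the paper does \emph{not} claim $W$ itself is almost Weinstein; instead it verifies the Bowden--Crowley--Stipsicz bordism criterion by showing the structure map $W\to BU$ is nullhomotopic and that the Postnikov factorization of $W$ restricts to that of $Y$, so that $W$ itself exhibits the required null-bordism. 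This last step is a delicate homotopy computation using that $V$ is a $K(\pi,1)$ to analyze $\pi_*(\widetilde Y)\to\pi_*(\widetilde W)$ on universal covers via relative Hurewicz.
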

To prove Theorem \ref{thm:I}, we need new obstructions to Weinstein fillability beyond  topological obstructions.  Using Theorem \ref{thm:A} and \ref{thm:E}, for ADC contact manifolds, $\Ima \delta_{\partial},\Ima \Delta_{\partial}$ contain a nontrivial element of grading higher than $\frac{1}{2}\dim W$ are symplectic obstructions to the existence of Weinstein fillings (Corollary \ref{cor:ob} and Corollary \ref{cor:ob2}).  Such obstructions, to our best knowledge, are the first symplectic obstructions to Weinstein fillability. This answers a Wendl's question  \cite[Question 14]{wendl} on the existence of obstructions to Weinstein fillability of contact structures in higher dimensions. With such obstructions, in addition to proving Theorem \ref{thm:I}, we give simple constructions of many exactly fillable, but not Weinstein fillable manifolds in dimension $\ge 7$. Hence they exist in abundance.

We can also use similar ideas to study symplectic cobordism. Corollary \ref{cor:B} and Theorem \ref{thm:E} show that whether $1 \in \Ima \delta_{\partial},\Ima \Delta_{\partial}$ are actually contact invariants for ADC manifolds. Hence they can be used to develop obstructions to symplectic cobordisms. In particular, we have the following.
\begin{corollaryx}\label{cor:J}
       Let $Y^{2n-1}$ be an ADC contact manifold with a Weinstein filling $W$ such that $c_1(W) = 0$ for $n\ge 3$. Let $V$ be a Weinstein domain. If one of following conditions holds, then there is no Weinstein cobordism from $\partial V$ to $Y$.
       \begin{enumerate}
       	\item If $1\in \Ima \delta_{\partial}$ for $W$, and $1\notin \Ima \delta_{\partial}$ for $V$. 
       	\item If $1\in \Ima \Delta_{\partial}$ for $W$, and $1\notin \Ima \Delta_{\partial}$ for $V$. 
       	\item If $W$ admits a symplectic dilation, and $V$ does not admit a symplectic dilation.
       \end{enumerate} 
\end{corollaryx}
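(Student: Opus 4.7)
The plan is to argue by contradiction. Suppose a Weinstein cobordism $X$ from $\partial V$ to $Y$ exists; then $V' := V \cup_{\partial V} X$ is itself a Weinstein filling of $Y$. The strategy combines two functoriality principles: the ADC-invariance established in Theorems~\ref{thm:A} and~\ref{thm:E} (together with Corollaries~\ref{cor:B} and~\ref{cor:F}) transports the structural property of $W$ across to the other filling $V'$ of the same contact manifold $Y$, and the Viterbo transfer $SH^*(V') \to SH^*(V)$, which is a unital ring map commuting with the BV operator $\Delta$ and compatible with the tautological long exact sequence, then transports it down to the exact subdomain $V \subset V'$. The resulting property on $V$ will contradict the hypothesis.

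The first step is to check that $V'$ qualifies as a topologically simple exact filling of $Y$. Since $V$, $W$, and $X$ all have vanishing first Chern class (implicit in the definition of $\delta_\partial$ and $\Delta_\partial$ on $W$ and $V$), a Mayer--Vietoris argument gives $c_1(V') = 0$. In the strongly ADC regime no further condition is needed; in the general ADC setting one must additionally arrange $\pi_1(Y) \hookrightarrow \pi_1(V')$, which should follow from the handle structure of the Weinstein cobordism $X$ (critical indices at most $n$) together with $\pi_1(Y) \hookrightarrow \pi_1(W)$.

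Case~(1) reduces to the equivalence $1 \in \Ima \delta_\partial \Leftrightarrow SH^* = 0$: Corollary~\ref{cor:B} promotes $SH^*(W) = 0$ to $SH^*(V') = 0$, and unitality of the Viterbo transfer forces $SH^*(V) = 0$, whence $1 \in \Ima \delta_\partial^V$, contradicting the hypothesis. Case~(3) is parallel: Corollary~\ref{cor:F} transports a symplectic dilation from $W$ to $V'$, and its Viterbo image is a dilation on $V$, since the transfer is unital and commutes with $\Delta$. Case~(2) is the subtlest: Theorem~\ref{thm:E} and the intertwining relation $\Delta_\partial \circ \Gamma = \Delta_\partial$ yield $1 \in \Ima \Delta_\partial^{V'}$, and one then checks that Viterbo transfer induces a morphism between the triples $(\ker \Delta_+, \delta_\partial, \coker \delta_\partial)$ on $V'$ and on $V$, sending the unit in $H^0(Y) = H^0(V')$ to the unit in $H^0(\partial V)$ via the composite $H^0(V') \to H^0(V) \to H^0(\partial V)$, so that $1 \in \Ima \Delta_\partial^V$.

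I expect the main obstacle to be the functoriality verification in case~(2): the codomain $\coker \delta_\partial$ lives in $H^{*+1}(Y)$ on the $V'$-side and in $H^{*+1}(\partial V)$ on the $V$-side, and these two boundary cohomologies are linked only indirectly through the inclusions $\partial V \hookrightarrow V \hookrightarrow V'$. One must verify that the Viterbo transfer is compatible with the composite $SH^*_+ \xrightarrow{\delta} H^{*+1}(-) \to H^{*+1}(\text{boundary})$ and with the BV operator simultaneously, which is not formal and requires tracing cochain-level constructions through the neck-stretching setup used to prove Theorems~\ref{thm:A} and~\ref{thm:E}. A secondary technical point is confirming the topological simplicity of $V'$ outside the strongly ADC setting, where the $\pi_1$-condition must be extracted from the Weinstein handle decomposition of $X$ rather than invoked automatically.
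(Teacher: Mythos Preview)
Your overall strategy matches the paper's: argue by contradiction, glue the putative Weinstein cobordism $U$ to $V$ to obtain another Weinstein filling $V'=V\cup U$ of $Y$, invoke the ADC-invariance results to transfer the property from $W$ to $V'$, and then use the Viterbo transfer to push it down to $V$.

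Two points are worth highlighting. First, your verification that $V'$ is topologically simple is more complicated than needed. Since $V'$ is itself Weinstein of dimension $2n\ge 6$, it is built from $Y$ by attaching handles of index $\ge n\ge 3$; hence $\pi_1(Y)\to\pi_1(V')$ is an isomorphism and $H^2(V')\to H^2(Y)$ is injective, so $c_1(V')=0$ follows from $c_1(Y)=0$ (which in turn follows from $c_1(W)=0$). No Mayer--Vietoris is required, and this also handles the $\pi_1$-condition you flagged as a secondary technical point.

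Second, and more importantly, the obstacle you identify in case~(2)---comparing $\coker\delta_\partial$ living in $H^{*+1}(Y)$ with $\coker\delta_\partial$ living in $H^{*+1}(\partial V)$---is real if you insist on working with $\Delta_\partial$ throughout, but the paper sidesteps it. The key observation is that in degree $0$ both $\coker\delta$ and $\coker\delta_\partial$ are spanned by the unit, so $1\in\Ima\Delta_\partial$ is equivalent to $1\in\Ima\phi$, where $\phi:\ker\Delta_+\to\coker\delta$ is the map of \eqref{eqn:phi} landing in a quotient of $H^*(\text{filling})$ rather than $H^*(\text{boundary})$. The Viterbo transfer is directly compatible with $\phi$ by Proposition~\ref{prop:viterbo}, and the relevant map on targets is the ordinary restriction $H^*(V')\to H^*(V)$, which sends unit to unit. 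This is precisely the content of the argument for case~\eqref{corb2} in Theorem~\ref{thm:corb}, to which the paper's proof of Corollary~\ref{cor:J} defers.
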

A stronger version of Corollary \ref{cor:J} concerning  obstructions to exact cobordisms can be found in Theorem \ref{thm:corb}. Note that usually, an algebraic obstruction to cobordism will come from SFT type invariants \cite{latschev2011algebraic}, which is difficult to define and compute. However, our obstruction is based on symplectic cohomology, hence is relatively easy to define and compute compared to the SFT. As explained in Remark \ref{rmk:SFT}, it can be understood as an easy case of some SFT obstructions. In \S \ref{s7}, we use Corollary \ref{cor:J} to give many pairs of contact manifolds that admit almost Weinstein cobordisms but no Weinstein cobordism in every dimension $\ge 5$.

\subsection{Constructions of ADC manifolds} ADC contact manifolds exist in abundance. Moreover, subcritical and flexible surgeries preserve the ADC property by the work of Lazarev \cite{lazarev2016contact}. In order to provide examples to Theorem \ref{thm:I}, we prove two more constructions of ADC manifolds, which bear independent interests.
\begin{thmx}\label{thm:K}
	Let $V$ be an exact domain such that $c_1(V)=0,\dim V >0$, then $\partial(V\times \C)$ is ADC. If $V,W$ are two exact ADC domains (Definition \ref{def:ADCfilling}) of dimension at least $4$ with vanishing first Chern classes, then $\partial(V\times W)$ is ADC. 
\end{thmx}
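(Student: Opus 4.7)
\textbf{Proof plan for Theorem~\ref{thm:K}.} The ADC condition (Definition~\ref{def:ADC}) requires, for every threshold $D>0$, a non-degenerate contact form whose closed Reeb orbits of action at most $D$ satisfy $\mu_{CZ}(\gamma)+n-3>0$. I would prove both statements by Morse-Bott analysis on a family of smoothed contact hypersurfaces in the product, using $c_1=0$ to fix a reference trivialization of $\ker\alpha$ along Reeb orbits.

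For $\partial(V\times\C)$: equip $V\times\C$ with the product Liouville primitive $\lambda := \lambda_V + \tfrac{1}{2}(x\,dy - y\,dx)$ and consider smoothings of the star-shaped domains $U_\rho = V\times\rho\D^2$, parametrized by $\rho>0$. Following the stabilization analysis of Oancea \cite{oancea2006kunneth} (and the related Morse-Bott picture in \cite{bourgeois2009exact}), the Reeb orbits on $\partial U_\rho$ organize into Morse-Bott families: each family is parametrized by a subset of $V$ and carries a winding number $k\ge 1$ in the $\C$-factor. After a Morse perturbation by a function $f : V \to \R$, a direct computation (using the product trivialization afforded by $c_1=0$) yields the Conley-Zehnder index at a critical point $p$ of $f$ in the form
\[
\mu_{CZ}(\gamma_{p,k}) = 2k + i_f(p) - m,
\]
where $m = \tfrac12 \dim V$ and $i_f$ is the Morse index. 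Hence the degree $\mu_{CZ}+n-3 = 2k+i_f(p)-2$ (with $n = m+1$) is strictly positive unless $k = 1$ and $i_f(p) = 0$.

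To verify ADC at threshold $D$, I would scale $\lambda_V$ and choose the smoothing parameter $\rho$ and Morse function $f$ so that every Reeb orbit appearing in the action window either already satisfies $2k+i_f(p) > 2$ or is the exceptional $(1,0)$-orbit, which can be pushed out of the window by placing the unique Morse minimum of $f$ at a point deep in the interior of $V$ where the $\C$-disk factor is large, so that its action (proportional to $\pi(\rho^2-\phi_V(p))$) exceeds $D$.

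For $\partial(V\times W)$ with $V,W$ both ADC of dimension $\ge 4$: a parallel setup with Liouville primitive $\lambda_V + \lambda_W$ and smoothed boundary of $V\times W$ yields Morse-Bott Reeb orbits that decompose according to whether they project non-trivially to $V$, to $W$, or to both. The Conley-Zehnder index on mixed orbits decomposes as a sum of factor contributions plus a smoothing correction, and the ADC hypothesis on each factor provides, for any $D$, scales on $\lambda_V,\lambda_W$ making the factor-wise degrees positive in the window of interest. The dimension hypothesis $\dim V,\dim W\ge 4$ ensures these individual positive contributions combine to strictly positive degree on mixed orbits as well, which is where the hypothesis is actually used.

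The main technical obstacle is the precise Conley-Zehnder arithmetic for the Morse-Bott perturbed orbits on the smoothed corner, and, for the first statement, the geometric handling of the exceptional $(k,i_f)=(1,0)$ orbit; together these require a delicate simultaneous choice of the smoothing profile, the Morse function on $V$, and the Liouville scalings.
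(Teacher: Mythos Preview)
Your approach to $\partial(V\times\C)$ has a genuine gap: you only account for Reeb orbits that wind in the $\C$-factor ($k\ge 1$), but the smoothed boundary of $V\times\D$ necessarily contains a region diffeomorphic to $\partial V\times\D$, and at the center of that disk the Reeb flow is (a reparametrization of) the Reeb flow of $(\partial V,\lambda_V)$ itself, with winding zero. Since $V$ is \emph{not} assumed ADC, these orbits can have arbitrarily negative Conley--Zehnder index, and your formula $\mu_{CZ}=2k+i_f(p)-m$ does not see them at all. The paper's mechanism (Proposition~\ref{prop:cap}) is that the contact distribution along such an orbit twists in the normal $\C$-direction, contributing an extra $2\lfloor D\rho/(2\pi)\rfloor+1$ to $\mu_{CZ}$; by shrinking the disk (sending $\rho\to\infty$ in the paper's normalization) this correction dominates any negativity of $\mu_{CZ}(\gamma)$ on $\partial V$ while simultaneously making the contact form smaller, as ADC requires.

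Your handling of the $(k,i_f)=(1,0)$ orbit is separately problematic: pushing its action above $D_i$ would require \emph{enlarging} the disk as $D_i\to\infty$, but the ADC definition demands a \emph{decreasing} sequence of contact forms, hence shrinking hypersurfaces. In fact no such maneuver is needed: with the correct sign (the paper obtains $\mu_{CZ}(p,\phi_k)=n-\operatorname{ind}_{1/f}(p)+2k$, the Morse contribution entering with the opposite sign to yours), the orbits over critical points already have degree at least $2n-\dim_M V\ge 1$. The entire difficulty lies in the $\partial V$-type orbits you omitted. For $\partial(V\times W)$ your outline is in the right spirit---the paper (Proposition~\ref{prop:prod}, Theorem~\ref{thm:prod}) also decomposes orbits into three types and computes the mixed index as $\mu_{CZ}(\gamma_1)+\mu_{CZ}(\gamma_2)+\tfrac12$---but it additionally needs Lemma~\ref{lemma:long} to rule out spurious non-constant Hamiltonian orbits on the $V\times\partial W$ and $\partial V\times W$ pieces, which your sketch does not address.
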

When $V$ is a Weinstein domain, $V\times \C$ is a subcritical Weinstein domain, then the theorem above follows from \cite{lazarev2016contact,yau2004cylindrical}. Repeatedly using subcritical and flexible surgeries and Theorem \ref{thm:K}, we have a lot of examples of ADC contact manifolds, and many of them have either vanishing symplectic cohomology or symplectic dilation, hence Theorem \ref{thm:A} and \ref{thm:E} can be applied.

\subsection{Uniruledness}
At last, we discuss a byproduct of proofs of Theorem \ref{thm:A} and \ref{thm:E}. Uniruledness in the symplectic setting was studied by McLean \cite{mclean2014symplectic}, an exact domain $W$ is called $(k,\Lambda)$-uniruled iff for every point $p$ in the interior $W^0$ there is a proper holomorphic curve in $W^0$ passing through $p$ with area at most $\Lambda$ and the domain Riemann surface $S$ has the property that $\rank H_1(S;\Q)\le k-1$. McLean showed that the algebraic uniruledness for affine varieties is rather a symplectic property. Hence it is reasonable to look for symplectic characterization of uniruledness. 
\begin{thmx}\label{thm:G}
	Let $W$ be an exact domain and there exists a symplectic dilation, then $W$ is $(1,\Lambda)$-uniruled, for some $\Lambda \in \R_+$.
\end{thmx}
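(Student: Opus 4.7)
The plan is to exploit the chain-level meaning of a symplectic dilation to produce an algebraically nonzero count of Floer cylinders through every interior point, and then to degenerate this count to a genuine holomorphic plane. Fix an admissible Hamiltonian $H$ on $\hat W$ and a Floer cocycle $x\in CF^1(H)$ representing the dilation, so that at the chain level $\Delta x=\one+d\beta$, where $\Delta$ counts Floer cylinders carrying an $S^1$-family of marked points and $\one$ is the PSS representative of the unit. For a generic $p\in W^0$, add the constraint $u(z)=p$ to the marked point of the cylinders defining $\Delta$, and pair the identity $\Delta x=\one+d\beta$ with the point class $[p]^{\vee}\in SH^{2n}(W)$, realised via a Morse function peaking at $p$. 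This yields
\[
\#\bigl\{(u,z)\ :\ u \text{ Floer cylinder from }x\text{ to }\one,\ u(z)=p\bigr\}\ =\ \langle\one,[p]^{\vee}\rangle\ =\ \pm 1,
\]
so the constrained moduli space is nonempty for every interior $p$.

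Next I would degenerate this count to pseudoholomorphic curves. Rescale $H\mapsto \varepsilon H$ (equivalently, neck-stretch along $Y\subset \hat W$) and let $\varepsilon\to 0$. The Hofer action of the Floer cylinders through $p$ is bounded by the action of $x$, uniformly in $p$ and $\varepsilon$. By SFT/Gromov compactness the cylinders converge to a holomorphic building whose top-level piece is a nodal finite-energy punctured $J$-holomorphic curve in the completion $\hat W$ with positive asymptotics along Reeb orbits of $Y$. Let $C_0$ be the irreducible component of the top level carrying the preimage of $p$. Restricted to $W^0$ the map $C_0\to W^0$ is proper because its punctures escape into the cylindrical end, and its symplectic area is controlled by a constant $\Lambda$ depending only on $x$, independent of the point $p$.

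Finally the topology of $C_0$ must be pinned down. Since the Floer domain is a cylinder with a single marked point, and SFT breaking of a cylinder produces trees of punctured spheres, $C_0$ has genus zero. An index/transversality count with the point constraint then forces $C_0$ to have a single positive puncture: the second Floer asymptote of the original cylinder is absorbed by a symplectization level, so $C_0$ inherits exactly one Reeb-orbit puncture. Thus $C_0$ is a plane with $\rank H_1(C_0;\Q)=0$, and $W$ is $(1,\Lambda)$-uniruled. The main obstacle will be this extraction step: ruling out limits in which the marked point slides onto a symplectization level, and in which further breaking equips the component through $p$ with additional positive punctures. This is precisely the kind of delicate neck-stretching with point constraint and transversality bookkeeping that underlies the persistent holomorphic curves appearing in the proofs of Theorems \ref{thm:A} and \ref{thm:E}, which is why the statement is noted in the introduction as a byproduct of those arguments.
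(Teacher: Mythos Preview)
Your approach differs substantially from the paper's, and the degeneration step contains the gap you yourself flag. The paper avoids neck-stretching and SFT compactness altogether. The key observation is that in the cascades model of \S\ref{s2} the Hamiltonian $\bH$ (resp.\ $\bH^\theta_{s,t}$) is \emph{identically zero} on $W$, so the Floer equation there is the genuine Cauchy--Riemann equation. The relation $1\in\Ima\phi$ from Proposition~\ref{prop:dilation} then directly produces, for a regular $J$, a solution $u:\C\to\widehat W$ of either the $\Delta_{+,0}$-equation or the $d_{+,0}$-equation with $u(0)=m$, the unique Morse minimum of $f$. No limit is taken: the restriction of $u$ to $u^{-1}(W_\delta^0)$ is already a $J_\delta$-holomorphic curve through $m$, where any prescribed convex $J_\delta$ on $W_\delta$ is extended to an admissible $J$ via Proposition~\ref{prop:extension}. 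The topology is handled not by index counting but by the integrated maximum principle (Lemma~\ref{lemma:max}) applied at $r=1-\epsilon$: the connected component of $0$ in $u^{-1}(\{r\le 1-\epsilon\})$ is forced to be a disk, and then the ordinary maximum principle for $\widehat\phi\circ u$ rules out any loop in the smaller preimage $S\subset u^{-1}(W_\delta^0)$ bounding a subdisk that exits $W_\delta$. Universality over points and the uniform energy bound come from varying the admissible Morse function so that its minimum sits at any prescribed $p\in W^0$, with the action threshold $D_i$ fixed once and for all via Proposition~\ref{prop:cascades}.

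By contrast, your argument hinges on two unjustified steps. First, the pairing with $[p]^\vee\in SH^{2n}(W)$ realised by a Morse maximum at $p$ is problematic: an admissible Morse function on a Liouville domain has $\partial_r f>0$ on $\partial W$, so it has no interior maximum, and $H^{2n}(W)=0$. The paper's mechanism is the opposite one: $1\in H^0$ is represented by the minimum. Second, and more seriously, after SFT breaking of a cylinder there is no reason the component through $p$ carries a single positive puncture; a tree of punctured spheres can allot several Reeb asymptotics to the filling component containing the point constraint, and the sentence ``an index/transversality count forces a single puncture'' is not an argument. The paper's route sidesteps this entirely by never leaving the domain $\C$ and using the maximum principle to certify that the relevant preimage component is a disk.
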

The specific value of $\Lambda$ is not relevant, as it is not an invariant w.r.t. the homotopy of Liouville forms. The key property needed in order to use results from \cite{mclean2014symplectic} is that we have such a $\Lambda$ that will bound areas of curves from above. The existence of symplectic dilation is very far from being equivalent to $(1,\Lambda)$-uniruledness. In fact, the existence of $k$-dilation in \cite{higher} will also imply uniruledness. On the other hand, if $W$ is not $1$-uniruled, then $SH^*(W) \ne 0$. By \cite{mclean2014symplectic}, the algebraic uniruledness is equivalent to symplectic uniruledness. Since the log Kodaira dimension provides obstruction to algebraic uniruledness for affine varieties, we have the following corollary, which provides a simple proof of the existence of exotic Stein $\C^n$ for $n\ge 3$ by taking  complex exotic $\C^n$ with non-negative log Kodaira dimension, which exists in abundance \cite{zaidenberg1996exotic}.
\begin{corollaryx}\label{cor:H}
	Let $V$ be an affine variety of non-negative log Kodaira dimension, then $SH^*(V)\ne 0$ and there is no symplectic dilation. In particular, any complex exotic $\C^n$ with non-negative log Kodaira dimension is a symplectic exotic $\C^n$.
\end{corollaryx}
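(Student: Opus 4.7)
The plan is to derive both conclusions from Theorem \ref{thm:G} together with McLean's theorem \cite{mclean2014symplectic}, which identifies symplectic $(1,\Lambda)$-uniruledness with algebraic uniruledness for smooth affine varieties, plus the classical birational fact that an affine variety of non-negative log Kodaira dimension is not algebraically uniruled. Equip $V$ with the Liouville structure coming from its affine embedding, so that McLean's theorem applies directly.

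For the absence of symplectic dilation, I would argue by contradiction: if $V$ admitted a symplectic dilation, then Theorem \ref{thm:G} would give some $\Lambda \in \R_+$ for which $V$ is $(1,\Lambda)$-uniruled; McLean's theorem would upgrade this to algebraic uniruledness of $V$, contradicting the log Kodaira hypothesis.

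For the non-vanishing $SH^*(V) \ne 0$, I would first make precise the parallel statement flagged in the discussion after Theorem \ref{thm:G}: if $SH^*(W) = 0$ for an exact domain $W$, then $W$ is $(1,\Lambda)$-uniruled for some $\Lambda$. The geometric input is essentially the same as in Theorem \ref{thm:G}: vanishing of $SH^*$ forces $1 \in \Ima \delta$, producing a one-parameter family of finite-energy holomorphic planes through every interior point with uniformly bounded Hofer energy, and Gromov compactness then extracts, through every interior point, a proper holomorphic curve in $V^0$ of bounded area whose domain has trivial first Betti number. The contrapositive, together with McLean and the log Kodaira hypothesis, again yields $SH^*(V) \ne 0$.

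The only care point, rather than a genuine obstacle, is checking that $(1,\Lambda)$-uniruledness is invariant under Liouville homotopy at the cost of changing $\Lambda$ (immaterial here), so that one can freely switch between the affine Liouville form used in McLean's theorem and the convenient Liouville structure used for the Floer-theoretic construction of the rational curve. The ``in particular'' assertion is then immediate: standard $(\C^n,\omega_{std})$ has vanishing symplectic cohomology and admits a symplectic dilation, so any smooth affine variety diffeomorphic to $\C^n$ with non-negative log Kodaira dimension cannot be symplectomorphic to the standard $\C^n$; such complex exotic $\C^n$ exist in abundance by \cite{zaidenberg1996exotic}.
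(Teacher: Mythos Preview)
Your proposal is correct and follows essentially the same route as the paper. The paper derives Corollary~\ref{cor:H} from Theorem~\ref{thm:nonvanishing}, whose proof is exactly the contrapositive argument you give: Theorem~\ref{thm:unirule} (the body-text version of Theorem~\ref{thm:G}, which already includes both the $SH^*=0$ case and the dilation case) shows that either hypothesis forces $(1,\Lambda)$-uniruledness, and then \cite[Theorem~2.5, Lemma~7.1]{mclean2014symplectic} rules this out when the log Kodaira dimension is non-negative. Your separate sketch for the $SH^*=0$ implication is thus already contained in Theorem~\ref{thm:unirule}; note however that the mechanism there is not Gromov compactness but rather a direct construction of the plane from a rigid element of $\cM_{m,x}$ for each choice of admissible Morse function, with the uniform area bound coming from the filtration level $D_i$ at which $1$ is hit.
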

In particular, any affine variety admitting a Calabi-Yau compactification has nonzero symplectic cohomology hence is not displaceable by \cite{kang2014symplectic}. Similar results are investigated in \cite{umut}.

\subsection*{Organization of the paper} 
In \S \ref{s2}, we review symplectic cohomology and give a different treatment of the cochain complex on the zero action part. In \S \ref{s3}, we construct an alternative description of $\delta_{\partial}$ and  prove Theorem \ref{thm:A}-\ref{thm:C} via neck-stretching. Since we will compare cochain complexes on two different fillings, naturality is very important. We carry out a detailed discussion on naturality in \S \ref{s2} and \S \ref{s3}. In \S \ref{s4}, we review the BV operator and symplectic dilation and construct $\Delta_{\partial}$. Then we prove theorem \ref{thm:E} and show that $\Delta_+, \Delta_{\partial}$ are invariants of exact domains up to exact symplectomorphisms. In \S \ref{s5}, we discuss uniruledness and prove Theorem \ref{thm:G} and its corollaries. In \S \ref{s6}, we prove Theorem \ref{thm:K} and use the symplectic obstructions introduced in \S \ref{s3} and \S \ref{s4} to prove Theorem \ref{thm:I}. In \S \ref{s7}, we discuss obstructions to symplectic cobordisms. In \S \ref{s8}, we generalize the construction to strong fillings with vanishing first Chern class and finish the proof of Theorem \ref{thm:D}. We explain our orientation conventions in Appendix \ref{app}.

\subsection*{Acknowledgements}
The author is supported  by  the National Science Foundation under Grant No. DMS-1638352.  It is a great pleasure to acknowledge the Institute for Advanced Study for its warm hospitality.
The author would like to thank Katrin Wehrheim and Kai Zehmisch for helpful comments and is grateful to Oleg Lazarev and Mark McLean for answering many questions. The author also thanks referees' helpful comments and suggestions. This paper is dedicated to the memory of Chenxue.

\section{Symplectic Cohomology}\label{s2}
In this section, we review the basics of symplectic cohomology. However, on the zero action part, we will use a Morse-Bott construction following \cite{diogo2019symplectic}. Such modification is important to the proof of our main theorem and applications to uniruledness. We will first focus on the theory of exact fillings, the situation for non-exact fillings will be discussed in \S \ref{s8}. We assume our contact manifolds and  fillings are connected throughout the paper unless specified otherwise.
\subsection{Hamiltonian-Floer cohomology using cascades}
Let $W$ be a Liouville domain with a Liouville form $\lambda$. Then we have the completion $\widehat{W}:=W\cup \partial W \times [1,\infty) $ with the completed Liouville form $\widehat{\lambda}$. Unless specified otherwise, the Reeb flow on the contact boundary $(\partial W,\lambda|_{\partial W})$ is non-degenerate throughout this paper. Let $\cS$ denote the length spectrum of the Reeb orbits, i.e. the set of periods of periodic orbits. Given a time-dependent Hamiltonian $H$ on $\widehat{W}$, the symplectic action of a \textit{contractible} loop $x:S^1\to \widehat{W}$ is defined by
\begin{equation}\label{eqn:action}
\cA_H(x):=  -\int_{S^1} x^*\widehat{\lambda}+\int_{S^1} H_t\circ x(t) \rd t.
\end{equation}
Symplectic cohomology is defined as the Floer cohomology of \eqref{eqn:action} for a Hamiltonian $H=r^2, r\gg 0$ \cite{seidel2006biased}. Alternatively, symplectic cohomology can be defined as the direct limit of Floer cohomology of $H=Dr, r\gg 0$ for $D\rightarrow \infty$ \cite{cieliebak2018symplectic,ritter2013topological}. In this paper, we will fix one special Hamiltonian, and our convention for Hamiltonian vector field is $\omega(\cdot, X_H)= \rd H$. First we consider a Hamiltonian $H=h(r)$ such that $H=0$ on $W$ and $H=h(r)$ when $r>1$ such that $h''(r)>0$ and $\lim_{r\to \infty}h'(r)=\infty$. Then the periodic orbits of $X_H$ are all points in $W$, and $S^1$ families of non-constant periodic orbits corresponding to Reeb orbits $\gamma$ on $\partial W$ shifted to the level $r$, where $r$ is given by $h'(r)=\int_{\gamma} \lambda|_{\partial W}$. The action of such orbit is then given by $-rh'(r)+h(r)$, which is a strictly decreasing function since $h''(r)>0$. Then following \cite{bourgeois2009symplectic}, we can put a small time-dependent perturbation supported near each $S^1$ family of non-constant periodic orbits, such that the 
periodic orbits of the perturbed Hamiltonian are points in $W$ and pairs of non-degenerate non-constant orbits near each $S^1$ family of non-constant periodic orbits of $H$. We will fix one such perturbed Hamiltonian and call it $\bH$. Let $\cP^*(\bH)$ denote the set of non-constant \textit{contractible} periodic orbits of $\bH$, then $\bH$ has the following properties.
\begin{enumerate}
	\item $\bH=0$ on $W$.
	\item $\bH = h(r)$, such that $h''(r)>0$ on $\partial W \times (1,\rho]$ for some $\rho$ and $h'(\rho)<\min \cS$.
	\item There are non-empty disjoint intervals $(a_i,b_i)$ moving towards infinity with $(a_0,b_0)=(1,\rho)$, such that $\bH|_{\partial W \times (a_i,b_i)}$ is a function $f(r)$ with $f''(r)>0$ and $f'(r)\notin \cS$.
	\item There exists $0=D_0<D_1 \ldots$ converging to $\infty$, such that all periodic orbits of action $\ge-D_i$ are contained in $W^i:=\{r\le a_i\}$. 
\end{enumerate}
\begin{definition}\label{def:admissible}
A Morse function $f$ on $W$ is admissible if $\partial_r f > 0$ on $\partial W$ and $f$ has a unique local minimum. The class of admissible Morse functions is denoted by $\cM(W)$.
\end{definition}
\begin{remark}\label{rmk:MB}
 $\bH$ is not strictly Morse-Bott, since the critical points of $\cA_H$ are not Morse-Bott non-degenerate along $\partial W \subset W$. We will see in Proposition \ref{prop:compact} that such points are invisible to our moduli spaces, also see \cite{diogo2019symplectic}.
\end{remark}

On the symplectization $\partial W \times (0,\infty)$, a compatible almost complex structure is called cylindrical convex if $J$ preserves $\xi=\ker \lambda|_{\partial W}$ and $J(r\partial_r) = R_{\lambda}$, where $R_{\lambda}$ is the Reeb vector field on $(\partial W, \lambda|_{\partial W})$.
\begin{definition}\label{def:convex}
	  A time-dependent almost complex structure $J:S^1\to \End(T\widehat{W})$ is admissible iff the following holds.
	  \begin{enumerate} 
	  	\item $J$ is compatible with $\rd\widehat{\lambda}$ on $\widehat{W}$.
	  	\item $J$ is cylindrical convex on $\partial W \times (a_i,b_i)$.
	  	\item $J$ is $S^1$-independent on $W$.
	  \end{enumerate}
	  The class of admissible almost complex structure is denoted by $\cJ(W)$. 
\end{definition}
For the Hamiltonian $\bH$, we are almost in a Morse-Bott case. We adopt the cascades treatment \cite{diogo2019symplectic} of the Morse-Bott situation, hence we pick any admissible Morse function $f$ on $W$. Let $\cC(f)$ denote the set of critical points of $f$. Let $g$ be a metric on $W$. Then we define the following three moduli spaces. 
\begin{eqnarray}
M_{x,y} & := &\{u:\R_s\times S^1_t \to \widehat{W}\st\partial_s u + J(\partial_t u -X_\bH)=0, \lim_{s\to -\infty} u=x, \lim_{s\to \infty} u = y \}/\R; \label{eqn:1}\\
M_{p,q} & := & \{\gamma:\R_s \to W\st \frac{\rd}{\rd s} \gamma + \nabla_g f=0, \lim_{s\to -\infty}\gamma =p, \lim_{s\to \infty}\gamma = q \}/\R; \label{eqn:2}\\
M_{p,y} &:= & \left\{\begin{array}{l} u:\C \to \widehat{W},\\ \gamma:(-\infty,0] \to W\end{array} \left| \begin{array}{l}\partial_s u + J(\partial_t u -X_\bH)=0, \partial_s \gamma + \nabla_g f=0,\\
\displaystyle\lim_{s\to -\infty}\gamma =p, u(0)=\gamma(0),\lim_{s\to \infty} u = y,
\end{array}\right.\right\}/\R;  \label{eqn:mix}
\end{eqnarray}
where $p,q\in \cC(f),x,y \in \cP^*(\bH)$. In \eqref{eqn:mix}, the $\R$ action is the dilation preserving $0$, which under the polar coordinate $z=e^{2\pi(s+it)}$ is the translation in $s$. Since $\partial_r f > 0$ on $\partial W$, the Floer equation is well-defined on $\C$, because the condition $u(0) = \gamma(0)$ implies $u(0) \in W^0$ (the interior of $W$), where $X_\bH=0$ near it. In particular, the Floer equation is the Cauchy-Riemann equation near $0\in \C$. There will be no $M_{x,q}$, for otherwise the Floer part $u$ will have negative energy.
\begin{remark}
	The Morse-Bott situation here is much simpler than \cite{diogo2019symplectic}, since we only have one non-isolated critical manifold $W$ and its action is the maximum among all critical points. Therefore only three types of moduli spaces above need to be considered, i.e. there is no cascades with multiple levels. Moreover, when one end is asymptotic to a point in $W^0$, i.e. in \eqref{eqn:mix}, the equation degenerates to the Cauchy-Riemann equation, hence there is no new analysis.
\end{remark}

The Gromov-Floer compactification of them is standard and was considered in \cite{diogo2019symplectic}, the only place that needs attention is \eqref{eqn:mix}. In particular, the curve $u$ in \eqref{eqn:mix} could break at a point in $W$. Since we choose $J$ to be convex near $\partial W$ and $u(0)\in W^0$, then the integrated maximum principle below implies that the component breaking at a point in $W$ is contained in $W$. But since $W$ is exact, such configuration can not exist in the compactification. Moreover, since $J$ is cylindrical on $\partial W \times (a_i,b_i)$, the integrated maximum principle prevent curves from escaping to infinity. The following is a special form of the integrated maximum principle of Abouzaid-Seidel \cite{abouzaid2010open}, we recall it from \cite{cieliebak2018symplectic} and state it for strong fillings, since we will use it in \S \ref{s8}.
\begin{lemma}[{\cite[Lemma 2.2]{cieliebak2018symplectic}}]\label{lemma:max}
	Let $(W,\omega)$ be a strong filling of $(Y,\alpha)$ with completion $(\widehat{W},\widehat{\omega})$. Let $H:\widehat{W}\to \R$ be a Hamiltonian such that $H=h(r)$ near $r=r_0$. Let $J$ be a $\widehat{\omega}$-compatible almost complex structure that is cylindrical convex on $Y\times(r_0,r_0+\epsilon)$ for some $\epsilon>0$. If both ends of a Floer cylinder $u$ (i.e. the $u$ component in \eqref{eqn:1} and \eqref{eqn:mix}) are contained inside $Y\times \{r_0\}$, then $u$ is contained inside $Y\times \{r_0\}$. This also holds for $H_s$ depending on $s\in \R$ if $\partial_s H_s \le 0$ on $r>r_0$ and $H_s=h_s(r)$ on $(r_0,r_0+\epsilon)$ such that $\partial_s(r h'_s(r)-h_s(r))\le 0$ and $J_s$ is cylindrical convex on $Y\times (r_0,r_0+\epsilon)$.
\end{lemma}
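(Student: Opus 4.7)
The strategy is the integrated maximum principle of Abouzaid--Seidel. Arguing by contradiction, suppose $r \circ u$ exceeds $r_0$ somewhere. By Sard's theorem applied to $r \circ u$, choose a regular value $r_1 \in (r_0, r_0+\epsilon)$; then $\Sigma' := u^{-1}(Y \times [r_1,\infty))$ is a compact subsurface of the domain cylinder with smooth, nonempty boundary. Moreover, $\Sigma'$ and the boundary integrals below all lie in the collar where $H=h(r)$, $J$ is cylindrical convex, and $\widehat\omega = d(r\alpha)$.

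Using the Floer equation $\partial_s u + J(\partial_t u - X_H)=0$ and the convention $\widehat\omega(\cdot,X_H)=dH$, one computes $|\partial_s u|_J^2\, ds\wedge dt = u^*\widehat\omega - dH_t(\partial_s u)\, ds\wedge dt$. Combined with the identity $d(H_t\circ u\, dt) = dH_t(\partial_s u)\, ds\wedge dt$ and $u^*\widehat\omega = d(r\cdot u^*\alpha)$ on $\Sigma'$, Stokes' theorem gives
\[
0 \le E(u|_{\Sigma'}) = \int_{\partial\Sigma'}\bigl(r_1\, u^*\alpha - h(r_1)\, dt\bigr).
\]

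The main step is to show the boundary integrand is strictly negative. At $p \in \partial\Sigma'$ pick local conformal coordinates so that $\Sigma'$ is $\{s \ge 0\}$; then the outward normal is $-\partial_s$ and the induced positive tangent is $\tau = -\partial_t$. Since $r\circ u$ is constant along $\partial\Sigma'$, we have $du(\partial_t) = bR_\alpha + \eta$ with $\eta \in \xi$; cylindrical convexity gives $JR_\alpha = -r\partial_r$, so the Floer equation yields
\[
du(\partial_s) = (b-h'(r_1))\, r_1\, \partial_r - J\eta.
\]
Regularity of $r_1$ forces $\partial_s(r\circ u)|_p>0$, hence $b>h'(r_1)$. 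Therefore
\[
\bigl(r_1\, u^*\alpha - h(r_1)\, dt\bigr)(\tau) = -r_1 b + h(r_1) < -\bigl(r_1 h'(r_1) - h(r_1)\bigr) \le 0,
\]
where the last inequality uses $(rh'(r)-h(r))' = rh''(r) \ge 0$ together with the standing normalization that $h$ vanishes below the collar, so $rh'(r)-h(r) \ge 0$ throughout. Pointwise strict negativity and compactness of $\partial\Sigma'$ give $E<0$, contradicting $E\ge 0$; hence $\Sigma' = \emptyset$ and $r\circ u \le r_0$ everywhere.

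For the $s$-dependent version, the identity $|\partial_s u|^2\, ds\wedge dt = d(u^*\lambda - H\circ u\, dt) + \partial_s H\, ds\wedge dt$ produces an extra term $\int_{\Sigma'} \partial_s H_s\, ds\, dt$ on the right-hand side, which is nonpositive on $r>r_0$ by hypothesis; the condition $\partial_s(rh_s'-h_s)\le 0$ propagates the pointwise inequality $r_1 h_s'(r_1)-h_s(r_1)\ge 0$ from the $s\to -\infty$ data, so the same contradiction goes through. The one genuine subtlety is tracking the orientation convention and the Floer sign so that the key inequality $b>h'(r_1)$ points the right way; once that is in hand, the rest is a standard energy--Stokes computation.
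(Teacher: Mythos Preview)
Your overall strategy is exactly the paper's: the paper does not reprove the lemma in detail but points to the Cieliebak--Oancea integration argument, noting that one only needs exactness outside $r_0$ and the hypotheses on $(r_0,r_0+\epsilon)$, applied at regular values $r_1>r_0$. Your write-up fills in those details.

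There is, however, a genuine gap in your pointwise boundary estimate. You conclude
\[
-r_1 b + h(r_1) < -(r_1 h'(r_1)-h(r_1)) \le 0
\]
by invoking $h''\ge 0$ and ``$h$ vanishes below the collar''. Neither is a hypothesis of the lemma: the statement only assumes $H=h(r)$ near $r=r_0$, with no convexity or normalization on $h$. So the last inequality is unjustified, and in fact the full integrand $r_1 u^*\alpha - h(r_1)\,dt$ need not be pointwise nonpositive.

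The fix is that you do not need pointwise control of that term. Since $\partial\Sigma'$ bounds $\Sigma'$ in the domain cylinder and $dt$ is closed, Stokes gives $\int_{\partial\Sigma'} dt = 0$, hence also $\int_{\partial\Sigma'} h'(r_1)\,dt = 0$. Thus
\[
E(u|_{\Sigma'}) \;=\; \int_{\partial\Sigma'}\bigl(r_1\,u^*\alpha - h(r_1)\,dt\bigr)
\;=\; r_1\int_{\partial\Sigma'}\bigl(u^*\alpha - h'(r_1)\,dt\bigr),
\]
and your computation $b>h'(r_1)$ shows \emph{this} integrand is pointwise strictly negative on $\tau$. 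That yields $E<0$ and the contradiction, with no assumption on $rh'(r)-h(r)$.

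The same correction applies in the $s$-dependent case: the hypothesis $\partial_s(r h_s' - h_s)\le 0$ is not there to ``propagate'' pointwise positivity of $r h_s' - h_s$ (which is never asserted). Writing the boundary term as
\[
r_1\int_{\partial\Sigma'}\bigl(u^*\alpha - h_s'(r_1)\,dt\bigr) \;+\; \int_{\partial\Sigma'}\bigl(r_1 h_s'(r_1)-h_s(r_1)\bigr)\,dt,
\]
the first integral is $<0$ as before, and the second equals, by Stokes on the domain,
\[
\int_{\Sigma'} \partial_s\bigl(r_1 h_s'(r_1)-h_s(r_1)\bigr)\,ds\wedge dt \;\le\; 0.
\]
Together with the bulk term $\int_{\Sigma'}(\partial_s H_s)\circ u \le 0$ this gives the contradiction. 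With this adjustment your argument is complete and matches the paper's reference.
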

The original statement of \cite[Lemma 2.2]{cieliebak2018symplectic} requires that $W$ is exact and the conditions on $H$ and $J$ hold on a neighborhood of the hypersurface $r=r_0$. However, their proof is based on integration of the curve outside $r_0$. Moreover precisely, we can choose a sequence of regular values $r_i>r_0$ of $\pi_{\R}\circ u$ such that $r_i\to r_0$. Then the integration argument in \cite[Lemma 2.2]{cieliebak2018symplectic} implies that the curve is inside every $r=r_i$ hypersurface as long the symplectic manifold is exact outside $r=r_0$ and the conditions on $H$ and $J$ hold on the $(r_0,r_0+\epsilon)$ for some $\epsilon>0$.

\begin{proposition}\label{prop:compact}
	The Gromov-Floer compactification of the moduli spaces \eqref{eqn:1}-\eqref{eqn:mix} above are the following.
	\begin{enumerate}
		\item\label{compact1} $\cM_{x,y}:= \cup_{z_1,\ldots,z_k \in \cP^*(\bH)} M_{x,z_1}\times \ldots \times M_{z_k,y}$, for $x,y \in \cP^*(\bH)$.
		\item\label{compact2} $\cM_{p,q}:= \cup_{r_1,\ldots, r_k \in \cC(f)} M_{p,r_1}\times \ldots \times M_{r_k,q}$, for $p,q \in \cC(f)$.
		\item\label{compact3} $\cM_{p,y}:= \cup_{\substack{r_1,\ldots, r_j \in \cC(f) \\ z_1,\ldots,z_k \in \cP^*(\bH)}} M_{p,r_1}\times \ldots \times M_{r_j,z_1}\times \ldots  \times M_{z_k,y}$, for $y\in \cP^*(\bH)$ and $p\in \cC(f)$.
	\end{enumerate}
\end{proposition}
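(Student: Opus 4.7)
The argument is a standard Gromov--Floer compactness in the Morse--Bott setting, handled via the cascades formalism as presented in \cite{diogo2019symplectic}. The three structural inputs I would use throughout are the exactness of $(\widehat{W},\rd\widehat{\lambda})$, the integrated maximum principle (Lemma \ref{lemma:max}), and a strict action monotonicity that separates the Morse--Bott manifold $W$ from the non-constant orbits in $\cP^*(\bH)$.

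First, I would establish uniform energy and $C^0$ bounds. The Floer energy of an element of $M_{x,y}$ is $\cA_{\bH}(x)-\cA_{\bH}(y)$; for $M_{p,q}$ the Morse energy is $f(p)-f(q)$; for $M_{p,y}$ the total energy is their sum, once one observes that $\bH=0$ on $W$ makes $u(0)\in W^0$ a constant loop of action $0$. Because $J$ is cylindrical convex and $\bH=h(r)$ with $h'(r)\notin\cS$ on each annulus $\partial W\times(a_i,b_i)$, Lemma \ref{lemma:max} prevents any Floer component with asymptotes in $W^i=\{r\le a_i\}$ from leaving $W^i$. Combined with the action-level stratification $W^0\subset W^1\subset\cdots$ built into the choice of $\bH$, this confines every map in our moduli spaces to a compact subset of $\widehat{W}$. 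Exactness of $(\widehat W,\rd\widehat\lambda)$ then rules out $J$-holomorphic sphere bubbling.

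Armed with these bounds, standard Gromov--Floer compactness in the Morse--Bott form produces a limiting configuration that is a cascade: a concatenation of Floer cylinders whose asymptotes lie in either $\cP^*(\bH)$ or the Morse--Bott critical manifold $W$, joined by gradient flow segments of $f$ on $W$. For $M_{p,q}$ this reduces immediately to the usual Morse-theoretic breaking at intermediate critical points, yielding part (2). The step I expect to be the main obstacle is ruling out cascades that break at the Morse--Bott manifold $W$ in positions not present in parts (1) and (3), and correctly identifying the mixed piece in part (3).

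For this I would rely on a direct action computation. An orbit $x\in\cP^*(\bH)$ is a small perturbation of an orbit at some level $r>1$, with $\cA_\bH(x)$ arbitrarily close to $h(r)-rh'(r)$; using $h(1)=h'(1)=0$ and $h''>0$, one checks that $\tfrac{d}{dr}(h(r)-rh'(r))=-rh''(r)<0$, so this quantity is strictly negative, while any constant loop in $W$ has action exactly $0$. Strict decrease of the symplectic action along non-constant Floer cylinders then forbids a Floer cylinder from $\cP^*(\bH)$ to $W$. This immediately excludes Morse--Bott cascades of the form $x\to p\in W\to p'\to y$ in $\cM_{x,y}$, so only breakings at intermediate orbits in $\cP^*(\bH)$ remain, giving part (1). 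The same monotonicity constrains the cascade for $\cM_{p,y}$ to the shape of part (3): a chain of gradient flow lines of $f$ on the $\gamma$ side, one mixed Floer piece recognised as an element of $M_{r_j,z_1}$, and standard Floer cylinders running down to $y$. Near the marked point, the condition $u(0)\in W^0$ together with $\bH=0$ there reduces the Floer equation to the Cauchy--Riemann equation, so removable singularities and exactness guarantee that any bubble or Floer component with both ends on $W$ is constant, contributing no new component. This gives precisely the three forms of compactification stated in the proposition.
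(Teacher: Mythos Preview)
Your argument has the right ingredients and matches the paper's for parts~(\ref{compact1}) and~(\ref{compact2}). There is however a gap in your treatment of part~(\ref{compact3}): you invoke ``standard Gromov--Floer compactness in the Morse--Bott form'' to produce cascades whose Floer components have asymptotes in $W$, but as Remark~\ref{rmk:MB} notes, $\cA_{\bH}$ is \emph{not} Morse--Bott nondegenerate along $\partial W\subset W$. Consequently one cannot quote the compactness theorems of \cite{diogo2019symplectic} directly to conclude that a limit component has a well-defined point asymptote in $W$; a priori only the limit \emph{set} of such a component is contained in $W$, and the cascade description you assume is not yet justified.

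The paper handles this by reversing the order of your last two steps. Rather than first producing a Morse--Bott cascade and then pruning pieces via exactness, it works with the auxiliary moduli space $\cB_x$ of planes with $u(0)\in W_\delta$ and argues directly: if any nontrivial limit component $u$ has its positive-end limit set contained in $W$, then since $u(0)\in W_\delta\subset W^0$ one may apply Lemma~\ref{lemma:max} at $r=1$ to force $u\subset W$; there $\bH\equiv 0$, so $u$ is genuinely $J$-holomorphic, extends over $\CP^1$ by removal of singularities, and is constant by exactness. Thus no such component occurs, and all breakings take place at nondegenerate orbits in $\cP^*(\bH)$ or at critical points of $f$, where ordinary (non-Morse--Bott) compactness applies and yields exactly the stated strata. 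Your final sentence already contains this step; the point is that it must do the work of \emph{establishing} compactness, not merely pruning a cascade assumed to exist.
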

\begin{proof}
	The claim for \eqref{compact2}  is the standard result in Morse theory. In \eqref{compact1}, there are no other breakings due to action reasons. Curves in $\cM_{x,y}$ will not escape to infinity, since Lemma \ref{lemma:max} can be applied to $r\in (a_i,b_i)$ for some $i$ big enough. For \eqref{compact3}, there exists $\delta>0$, such that on $\partial W \times [1-\delta,1]$ we have $\partial_r f>0$. Then the curve $u$ in \eqref{eqn:mix} has the property that $u(0)\in W_{\delta}:=W\backslash(\partial W\times (1-\delta,1])$. Then for $x\in \cP^*(\bH)$, we can consider the following moduli space which will also be used later,
	\begin{equation}\label{eqn:B}
	B_{x}:=\left\{u:\C \to \widehat{W} \st \partial_s u + J(\partial_t u -X_\bH)=0, 
	\displaystyle\lim_{s\to \infty} u = x, u(0)\in W_{\delta}\right\}/\R.
	\end{equation} 
	Then we claim the Gromov-Floer compactification of $B_{x}$ is given by
	$$
	\cB_x:=\bigcup_{x_1,\ldots,x_k\in \cP^*(\bH)} B_{x_1}\times M_{x_1,x_2}\times \ldots \times M_{x_k,x}
	$$
	This is because if there is a limit curve with a nontrivial component $u$ breaks in $W$. Since $\cA_H$ is not Morse-Bott along $\partial W$,  hence it may not be true that $\lim_{s\to \infty} u$ exists. However we still have the limit set of $u$ for $s\to \infty$ is contained in $W$. Since we can apply Lemma \ref{lemma:max} to $r=1$. Hence $u$ is contained in $W$, then the equation degenerates to the Cauchy-Riemann equation and $u$ extends to $\CP^1$ by removal of singularity for $J$-curves. Due to the exactness assumption, there is no such nontrivial curve. It is clear that the compactness of $\cB_x$ implies the claim on \eqref{compact3}.
\end{proof}	

If $c_1(W) = 0$, then there is a $\Z$-grading assigned to elements of $\cC(f)$, $\cP^*(\bH)$ by 
$$|p|=\ind p, \quad p \in \cC(f); \qquad |x|=n-\mu_{CZ}(x), \quad x \in \cP^*(\bH).$$
The convention here is consistent with the grading rule in \cite{ritter2013topological}, i.e. the unit will have grading $0$. For each component of $\cM_{a,b}$, we can assign a well-defined virtual dimension $\virdim \cM_{a,b}:=|a|-|b|-1$. In general, the above assignment defines a $\Z/2$-grading.
\begin{remark}
	Most of the results in this paper except those in \S \ref{s5} require a $\Z$-grading, hence we will impose the $c_1(W)=0$ condition in most situations. To get a $\Z$-grading on symplectic cohomology generated by contractible orbits, one only needs $c_1(W)|_{\pi_2(W)}=0$. One special case of such condition is $c_1(W)$ is torsion. All results in paper holds for this generalized condition. However, it seems that such generalization does not add many new examples. Hence we state all results in the simplified condition $c_1(W)=0$.
\end{remark}

The following transversality is standard in symplectic cohomology. If we allow the Morse function to be generic, then transversality is easy to see. Using generic Morse function is enough for this paper, but for simplicity, we show transversality for any admissible Morse function $f$, this is made possible by that the evaluation map on the universal moduli spaces is submersive \cite[\S 3.4]{mcduff2012j}. 
\begin{proposition}\label{prop:transversality}
For every admissible Morse function $f$, let $g$ be a metric on $W$ such that $(f,g)$ is a Morse-Smale pair. Then there exists a subset $\cJ^{\le1}_{reg}(\bH,f,g)\subset \cJ(W)$ of second category (in particular it is dense), such that for every $J\in \cJ^{\le1}_{reg}(\bH,f,g)$, $\cM_{x,y}$ is a compact manifold of dimension $|x|-|y|-1$ with boundary when $|x|-|y|\le 2$ for $x,y\in \cC(f)\cup \cP^*(\bH)$ and $\partial \cM_{x,y}=\cup_{z\in \cC(f)\cup \cP^*(\bH)}\cM_{x,z}\times \cM_{z,y}$. Here $\le 1$ indicates that transversality holds for moduli spaces up to dimension $1$.
\end{proposition}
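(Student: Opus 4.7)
The plan is to set up the universal moduli space over a suitable Banach space completion $\cJ^\ell(W)$ of $\cJ(W)$ (using Floer's $C^\epsilon$ norm, which respects being $S^1$-independent on $W$, cylindrical convex on the shells $\partial W\times(a_i,b_i)$, and compatible with $\rd\widehat\lambda$ everywhere) and then apply the Sard-Smale theorem. Since transversality is only claimed for $\cM_{x,y}$ with $|x|-|y|\le 2$, only finitely many strata per pair $(x,y)$ are relevant, so a countable intersection of the resulting residual sets is residual. I treat the three types of moduli spaces in turn.

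The Morse trajectory spaces $\cM_{p,q}$ are transverse directly from the Morse-Smale hypothesis on $(f,g)$. For the Floer cylinders $\cM_{x,y}$ with $x,y\in\cP^*(\bH)$, the linearization $D_u$ of the Floer operator is Fredholm of index $|x|-|y|$, and the universal operator $D_u\oplus D_J$ is surjective provided $u$ has a somewhere-injective point lying in the open region where $J$ is allowed to vary freely with $S^1$-dependence, namely the complement of $W$ and of the convex shells $\bigcup_i\partial W\times(a_i,b_i)$. Both asymptotics $x,y$ lie in that open complement by the construction of $\bH$ (orbits exist only where $h'(r)\in\cS$, while on the shells $f'(r)\notin\cS$); hence any non-trivial cylinder enters the free region on an open subset, and a standard somewhere-injective argument produces surjectivity.

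For the mixed moduli space $\cM_{p,y}$, I would realize it as the fiber product of the unstable manifold $W^u(p;f,g)$ with the space of Floer planes $u\colon\C\to\widehat W$ asymptotic to $y$, along the evaluation map $u\mapsto u(0)$. Since $X_\bH\equiv 0$ on $W$, the Floer equation reduces to $\bar\partial_J u=0$ in a neighborhood of $0\in\C$, so the evaluation at $0$ is submersive on the universal moduli space of such planes by the standard evaluation-submersion argument for $J$-holomorphic curves, see \cite[\S 3.4]{mcduff2012j}. Intersecting transversally with $W^u(p)$ (a Morse-Smale submanifold of $W$) yields the smooth manifold structure on $\cM_{p,y}$ of the expected dimension $|p|-|y|-1$.

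The main obstacle is the somewhere-injective point analysis in the Floer case with restricted $J$-variation: one must verify that the forbidden regions (where $J$ is $S^1$-independent on $W$, or constrained to be cylindrical convex on the shells) do not exhaust the image of $u$. This is handled by the observation above that the asymptotics of any cylinder in $\cP^*(\bH)$ lie in the free region, so the image meets the free region on an open subset. Once transversality is in hand for all relevant strata (finitely many per pair $(x,y)$ with $|x|-|y|\le 2$), the compactness statement of Proposition \ref{prop:compact} together with standard gluing upgrades the strata to a manifold-with-boundary structure with $\partial\cM_{x,y}=\bigcup_{z\in\cC(f)\cup\cP^*(\bH)}\cM_{x,z}\times\cM_{z,y}$.
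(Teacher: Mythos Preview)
Your proposal is correct and follows essentially the same route as the paper: Morse--Smale handles $\cM_{p,q}$; the Floer cylinders $\cM_{x,y}$ are standard; and the mixed spaces $\cM_{p,y}$ are treated as a fiber product of the unstable manifold of $p$ with the space of Floer planes, using the evaluation--submersion argument of \cite[\S 3.4]{mcduff2012j} (this is exactly what the paper does, noting that near $0\in\C$ the equation is genuinely $J$-holomorphic so the McDuff--Salamon argument applies verbatim). The only technical difference is that you work directly in Floer's $C^\epsilon$ completion, whereas the paper uses $C^l$ almost complex structures and then invokes Taubes' argument \cite[Theorem~5.1]{floer1995transversality} to pass to smooth $J$; the paper's own footnote acknowledges your route as an equivalent alternative.
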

\begin{proof}
	The situation for $\cM_{p,q}, p,q \in \cC(f)$ follows from the Morse-Smale assumption, and the situation for $\cM_{x,y}, x,y \in \cP^*(\bH)$ is standard on symplectic cohomology. Hence we need to prove transversality for $\cM_{p,y}, y\in \cP^*(\bH), p\in \cC(f)$.  We consider the uncompactified moduli space $B_y$ in \eqref{eqn:B}.  Using the standard argument by universal moduli spaces and $u$ is somewhere injective \cite{floer1995transversality}, the universal moduli space
	\begin{equation}
	B_{y,\cJ^l}:=\{u:\C \to \widehat{W},J\in \cJ^l(W)| \partial_su+J(\partial_tu-X_{\bH})=0,\lim_{s\to \infty}u=y,u(0)\in W^0\}/\R.
	\end{equation}
	 is cut out transversely, where $\cJ^l(W)$ is the Banach manifold of $C^l$ admissible almost complex structures. Moreover, by \cite[Proposition 3.4.2, Lemma 3.4.3]{mcduff2012j}, $ev_0:B_{x,\cJ^l}\to W^0, u\mapsto u(0)$ is transverse to all stable manifolds of $\nabla_gf$. Then by the Sard-Smale theorem, there is a second category subset of $\cJ^l(W)$, such that the uncompactified $M_{p,y}$ is cut out transversely. Then using the argument of Taubes \cite[Theorem 5.1]{floer1995transversality}, we get a second category subset $\cJ^{\le 1}_{reg}(f,g)$ of smooth admissible almost complex structures where transversality holds\footnote{An alternative approach to get smooth almost complex structures is by considering smooth complex structure with derivative controlled by a sequence $\epsilon_0,\epsilon_1,\ldots$, see \cite[\S 5]{floer1988unregularized}. For results in this paper, using $C^l$ almost complex structures is also sufficient.}. Since the breaking in $\cM_{p,y}$ is either a Morse breaking at a critical point of $f$ or a Floer breaking at a non-constant periodic orbit of $H$, in particular no new gluing analysis is required. This finishes the proof. 
\end{proof}

Moreover, $\cM_{*,*}$ can be equipped with orientations following \cite{floer1993coherent}. We use $\pm$ instead of $\cup$ to indicate the relation on orientations in a union. For example, $\cM_{*,*}$ is oriented such that $\partial \cM_{x,y}=\sum \cM_{x,z}\times \cM_{z,y}$, i.e. the boundary orientation is the product orientation. We explain our orientation convention in Appendix \ref{app}. In the following, we will use the coherent orientations without proof. 

Using the almost complex structure $J$ in Proposition \ref{prop:transversality}, we can define the following cochain complexes by counting $\cM_{*,*}$.
\begin{enumerate}
	\item $C(\bH,J,f)$ is a free $\Z$-module generated by $\cP^*(\bH)\cup \cC (f)$.
	\item $C_0(\bH,J,f)$ is a free $\Z$-module generated by $\cC(f)$.
	\item $C_+(\bH,J,f)$ is a free $\Z$-module generated by $\cP^*(\bH)$.
\end{enumerate}
The differentials are defined by 
$$d y = \sum_x \# \cM_{x,y} x.$$
Since each Floer cylinder has non-negative energy, we have that $\cM_{x,y}\ne \emptyset$ implies that $\cA_{\bH}(x)\ge \cA_{\bH}(y)$. Hence there are finitely many $x$ such that $\cM_{x,y}\ne \emptyset$ for a fixed $y$. In particular, the differential is well-defined. $C_0(\bH,J,f)$ is the Morse complex of $f$ on $W$ and a subcomplex of $C(\bH,J,f)$, hence we abbreviate it to $C_0(f)$. We abbreviate the quotient complex $C_+(\bH,J,f)$ to $C_+(\bH,J)$ since it does not depend on $f$ (even the regularity requirement for $J$ to define $C_+(\bH,J)$ does not depend on $f$). 
\begin{remark}
   In this paper, the default coefficient is $\Z$, since usually the $\Z$-coefficient theory carries more information. However, all results in this paper hold for any coefficient except for the results in \S \ref{s8}, where we need to use the Novikov coefficient over $\Q$. 
\end{remark}
By construction there is a tautological short exact sequence,
\begin{equation}\label{eqn:short}
 0 \to C_0(f) \to C(\bH,J,f) \to C_+(\bH,J) \to 0,
\end{equation}
We use $d_0,d_+$ to denote the differential on $C_0(f)$ and $C_+(\bH,J)$ respectively. Let $d_{+,0}$ denote the map from $C_+(\bH,J)$ to $C_0(f)$ in the definition of $d$ for $C(\bH,J,f)$. Then $d_{+,0}$ is a cochain map $C_+(\bH,J) \to C_0(f)[1]$, and it induces the connecting map $\delta:H^*(C_+(\bH,J))\to H^{*+1}(C_0(f))$ in the induced long exact sequence. $d_{+,0}$ is defined by counting $\cM_{p,y}$ for $y\in \cP^*(\bH)$ and $p\in \cC(f)$. Moreover, since the differential always increases action, for every $i\in \N$, we can define $C^{D_i}(\bH,J,f)$ and $C^{D_i}_+(\bH,J)$ to be the subcomplexes of $C(\bH,J,f),C_+(\bH,J)$ generated by orbits with action $\ge -D_i$, or equivalently those contained in $W^i$. Moreover, by Lemma \ref{lemma:max}, the curve appears in the differential of $C^{D_i}_{(+)}$ is contained in $W^i$. We will call it the length filtration, in the exact case, it coincides the action filtration. Then we have
$$ \varinjlim H^*(C^{D_i}(\bH,J,f))=  H^*(C(\bH,J,f)), \qquad \varinjlim H^*(C_+^{D_i}(\bH,J))=  H^*(C_+(\bH)),$$
similarly for the tautological long exact sequence.

The next proposition shows that what we defined is a model for the symplectic cohomology.
\begin{proposition}\label{prop:cascades}
	There are isomorphisms $SH^*(W) \to H^*(C(\bH,J,f))$ and $SH_+^*(W) \to H^*(C_+(\bH,J))$, such that the following long exact sequences commutes,
	$$
	\xymatrix{
		\ldots \ar[r] & H^*(W) \ar[r] \ar[d] & SH^*(W) \ar[r] \ar[d] & SH^*_+(W) \ar[r] \ar[d] & H^{*+1}(W) \ar[r] \ar[d] & \ldots \\
		\ldots \ar[r] & H^*(C_0(f)) \ar[r]  & H^*(C(\bH,J,f)) \ar[r] & H^*(C_+(\bH,J)) \ar[r]  & H^{*+1}(C_0(f)) \ar[r] & \ldots \\
	}
	$$
\end{proposition}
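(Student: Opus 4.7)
The plan is to compare the cascade complex $C(\bH,J,f)$ with the ordinary Floer complex of a genuinely non-degenerate perturbation of $\bH$, and then invoke the standard equivalence of the latter with $SH^*(W)$. Concretely, I would choose an admissible Hamiltonian $H_\ep$ which agrees with $\bH$ outside a small neighborhood of $W$ in $\widehat{W}$ and restricts to $\ep \tilde f$ on $W^0$, where $\tilde f$ smoothly extends $f$ while preserving admissibility. For $\ep > 0$ sufficiently small, the contractible $1$-periodic orbits of $X_{H_\ep}$ are exactly $\cC(f) \subset W^0$ (with actions of order $\ep$) together with $\cP^*(\bH)$ (with actions bounded above by some $-c < 0$, using $h'(\rho) > 0$ and convexity of $h$). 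For small enough $\ep$, these two action windows are disjoint, and the action filtration produces a short exact sequence of cochain complexes
$$0 \to CF_0(H_\ep,J) \to CF(H_\ep,J) \to CF_+(H_\ep,J) \to 0,$$
where $CF_0$ is generated by $\cC(f)$ and $CF_+$ by $\cP^*(\bH)$.

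Next, I would establish a chain-level isomorphism $CF(H_\ep,J) \cong C(\bH,J,f)$ that respects these short exact sequences. The identification $CF_+(H_\ep,J) \cong C_+(\bH,J)$ is essentially tautological: the generators agree, and Lemma \ref{lemma:max} confines Floer cylinders between non-constant orbits to the region where $H_\ep = \bH$, so the differentials coincide. The identification $CF_0(H_\ep,J) \cong C_0(f)$ is the standard Floer-equals-Morse comparison for $C^2$-small Hamiltonians on an exact domain, whose Floer trajectories reduce to negative $\nabla_g f$-lines. The crucial identification is of the connecting differential $d_{+,0}$: I would argue that Floer cylinders of $(H_\ep,J)$ running from $y \in \cP^*(\bH)$ at $+\infty$ to $p \in \cC(f)$ at $-\infty$ converge, as $\ep \to 0$, to mixed cascade configurations consisting of a Floer cylinder of $(\bH,J)$ with positive end at $y$ and negative end at some $q \in W^0$, pre-composed with a negative gradient flow line of $f$ from $p$ to $q$, i.e.\ to elements of $M_{p,y}$ in \eqref{eqn:mix}. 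Conversely, every such cascade glues back uniquely to an $H_\ep$-trajectory for $\ep$ small, which is the Morse-Bott cascade argument carried out in the closely related setting of \cite{diogo2019symplectic}. Coherent orientations are compatible by construction.

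Given the chain-level isomorphism, I would finish by invoking the standard theory of symplectic cohomology. The slopes $h'(r)$ on the intervals $(a_i,b_i)$ tend to $\infty$ and avoid $\cS$, so the truncated subcomplexes $C^{D_i}(\bH,J,f)$ correspond to the action-truncated Floer complexes of $H_\ep$ at slope $D_i$; taking the direct limit in $i$, together with taking $\ep \to 0$, yields $\varinjlim H^*(C^{D_i}(\bH,J,f)) \cong SH^*(W)$ in Seidel's direct-limit model, and analogously for the positive part. Commutativity of the long exact sequence diagram then follows from naturality of the continuation maps and the fact that the Morse subcomplex $C_0(f)$ computes $H^*(W)$ for any $f \in \cM(W)$.

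The main obstacle is the cascade convergence/gluing for the mixed moduli $M_{p,y}$. One must show uniform $C^0$ control on $H_\ep$-trajectories as $\ep \to 0$ so that no energy is lost and, critically, that no break occurs at a point of $\partial W$, where $\cA_{\bH}$ fails to be Morse-Bott (Remark \ref{rmk:MB}). The admissibility condition $\partial_r f > 0$ on $\partial W$ together with the integrated maximum principle forces the limiting break point $q$ to lie in $W^0$, where the Floer equation degenerates to the Cauchy-Riemann equation and removal of singularities applies, exactly as in the compactness argument of Proposition \ref{prop:compact}. This is the technical core; I would quote \cite{diogo2019symplectic} for the detailed analysis rather than redo it.
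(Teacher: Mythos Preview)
Your strategy---compare to a non-degenerate perturbation $H_\ep$ and identify the complexes via an $\ep\to0$ cascade degeneration---is a valid route, but it differs from the paper's. The paper does not attempt a chain-level isomorphism. Instead it builds a \emph{continuation map} $\Phi:C(H,J_1)\to C(\bH,J,f)$ from a non-degenerate $H$ (equal to $\bH$ on $r\ge\rho$, a $C^2$-small Morse function on $W$) along a monotone homotopy $H_s$; the continuation moduli $\cN_{p,y}$ for $p\in\cC(f)$ are themselves of cascade type (a gradient flowline of $f$ meeting a Floer plane at $u(0)$), so $\Phi$ lands directly in the cascade complex without any degeneration analysis. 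Since $H_s=H=\bH$ for $r\ge\rho$, $\Phi$ preserves the length filtration; on each positive graded piece it induces the identity, while on the zero piece Lemma~\ref{lemma:max} confines everything to $r\le\rho$, where the small-Hamiltonian Floer/Morse comparison applies. A spectral-sequence comparison then yields the quasi-isomorphism. This completely sidesteps the $\ep\to0$ gluing you flag as the main obstacle.

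One correction to your argument: the identification $CF_+(H_\ep,J)\cong C_+(\bH,J)$ is not tautological via Lemma~\ref{lemma:max}. The integrated maximum principle bounds curves from \emph{above} in $r$; it does not prevent Floer cylinders between non-constant orbits from entering $W$, where $H_\ep\ne\bH$. You therefore need the $\ep\to0$ limit (or a separate continuation argument) for $d_+$ as well---this step is easy since both asymptotics are non-degenerate and no cascade forms, but it is not free. With that repair your approach goes through, at the cost of importing the degeneration/gluing package that the paper's continuation-map argument deliberately avoids.
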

\begin{proof}
	We prove the isomorphism using a continuation argument from a non-degenerate Hamiltonian. We use another Hamiltonian $H$, such that $H=\bH$ on $\partial W \times [\rho,\infty)$, $H=h(r)$ with $h''(r)>0$ on $\partial W \times[1,\rho]$ and $h'(\rho)<\min \cS$. Moreover, $H\le \bH$ everywhere and $H$ is a $C^2$ small admissible Morse function on $W$. Then for a generic choice of $J_1$, the Floer cohomology of $\cA_{H}$ defines $SH^*(W)$ and $SH^*_+(W)$. Then we choose a homotopy $H_s$ such that $H_s=\bH$ when $s<0$ and $H_s=H$ when $s>1$, $H_s=\bH=H$ when $r\ge \rho$ and $\partial_sH_s\le 0$. Let $J_s$ be a homotopy of admissible almost complex structures such that $J_s=J,s<0$ and $J_s=J_1,s>1$. Let $\cP(H)$ denote the set of \textit{contractible} periodic orbits of $H$, then we have $\cP^*(H)=\cP^*(\bH)$ and $\cP(H)=\cP^*(H)\cup \cC(H|_W)$. Then we define the following moduli spaces, 
	\begin{eqnarray}
	N_{x,y} &:=& \{u\st\partial_s u + J_s(\partial_t u -X_{H_s})=0, \lim_{s\to -\infty} u=x, \lim_{s\to \infty} u = y \}, x \in \cP^*(\bH),y\in \cP^*(H).\\
	N_{p,y} &:=&\left\{\begin{array}{l} u:\C \to \widehat{W},\\ \gamma:(-\infty,0] \to W\end{array} \left| \begin{array}{l}\partial_s u + J_s(\partial_t u -X_{H_s})=0, \partial_s \gamma + \nabla_g f=0,\\
	\displaystyle \lim_{s\to -\infty}\gamma =p, u(0)=\gamma(0),\lim_{s\to \infty} u = y
	\end{array}\right.\right\}, p\in \cC(f), y\in \cP(H). 
	\end{eqnarray}
	Since we assume $\partial_sH\le 0$, $N_{x,y}\ne \emptyset$ implies that $\cA_{\bH}(x)\ge \cA_{H}(y)$ by energy reasons. Therefore if $x\in \cP^*(\bH), y\in \cC(H|_W)$, then $N_{x,y}=\emptyset$. By the same reason in Proposition \ref{prop:compact}, we have the following compactification,
	$$\cN_{a,b}=\bigcup_{\substack{a_1\in \cC(f)\cup \cP^*(\bH),\\  b_1\in \cP(H)}} \cM_{a,a_1}\times N_{a_1,b_1}\times \cM_{b_1,b}.$$
	Similarly, we can find a generic $J_s$ such that transversality holds for all moduli spaces of dimension $\le 1$. This would yield a cochain map $\Phi$ from $C(H,J)$ to $C(\bH,J,f)$. Note that $\Phi$ preserves the length filtration and induces isomorphism on the first page of the spectral sequence. This is because on the action between $-D_{i+1}$ and $-D_i$ part, $\Phi$ induces identity on the first page, since $H_s=\bH=H$ there. This shows $\Phi$ is a quasi-isomorphism on the positive symplectic cohomology. On the part with action close to $0$, the contribution to $\Phi$ is from $\cN_{p,y}$ for $y\in \cC(H|_W)$, then we can apply Lemma \ref{lemma:max} to $r=\rho$. Hence everything is contained in $W\cup \partial W \times [1,\rho]$. Then following \cite{floer1995transversality}, for $H_s|_{W\cup \partial W \times [1,\rho]}$ sufficiently $C^2$-small, there is exists regular $J_s$ independent of $t\in S^1$. Then the moduli space can be identified with the usual continuation map in Morse theory. This shows $\Phi$ is a quasi-isomorphism and induces the commutative diagram of exact sequences.
\end{proof}
\begin{remark}\label{rmk:reverse}
	If we consider the continuation map from $C(\bH,J,f)$ to $C(H,J_1)$, then we need to flow in the direction of $\nabla_g f$ from $p\in \cC(f)$. Thus the degenerate $\partial W$ may not be invisible anymore. This could cause problem in compactness and Fredholm setups. As pointed out in \cite[\S 5]{diogo2019symplectic}, the difficulty can be overcome by choosing a particular homotopy. 
\end{remark}

\subsection{Naturality of the construction}\label{ss:natural}
Since we will need to change almost complex structure in the neck-stretching process and compare things in two different domains, to keep track of the naturality of maps we write down, it is important to specify the choice of almost complex structure and the regularity requirement. To emphasize this aspect, we will always spell out the almost complex structure when needed. 

In order to prove $d^2=0$, we need transversality for moduli spaces up to dimension $1$. However, to define $d_+$ or $d$, we only need transversality for moduli spaces up to dimension $0$. Therefore we have another two larger second category subsets of $\cJ(W)$: $\cJ_{reg,+}(\bH)\supset \cJ_{reg}(\bH,f,g)\supset \cJ^{\le 1}(\bH,f,g)$ so that $d_+$ and $d$ are defined respectively. Similarly we define $\cJ^{D_i}_{reg,+}(\bH)$, $\cJ^{D_i}_{reg}(\bH,f,g)$, i.e. sets of admissible almost complex structures such that $d_+,d$ are defined on $C_+^{D_i}(\bH,J)$ and $C^{D_i}(\bH,J,f)$ respectively. Due to the a priori energy control, $\cJ^{D_i}_{reg}$ are open.

\begin{proposition}\label{prop:lowreg}
	Let $J \in  \cJ_{reg,+}(\bH)$ or $\cJ_{reg}(\bH,f,g)$, then $d_+,d$ are differentials respectively.
\end{proposition}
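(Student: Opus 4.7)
The plan is to reduce the statement to the standard situation, where transversality holds for moduli spaces of both dimension $0$ and dimension $1$, via a small perturbation argument. The key point is that $\cJ_{reg}(\bH,f,g)$ is strictly larger than the subset $\cJ^{\le 1}_{reg}(\bH,f,g)$ of Proposition \ref{prop:transversality}, but the two agree as far as dimension-$0$ counts are concerned after an arbitrarily small perturbation.

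Fix $y \in \cC(f) \cup \cP^*(\bH)$; I want to show that the coefficient of each generator $x$ in $d^2 y$ vanishes. The nonnegative Floer energy together with the inclusion $\cC(f) \subset W$ forces $\cA_{\bH}(x) \geq \cA_{\bH}(y)$ whenever a broken trajectory from $y$ to $x$ exists, so only finitely many orbits contribute, all contained in some $W^i$. It therefore suffices to prove $(d|_{C^{D_i}(\bH,J,f)})^2 = 0$ for each $i$, and similarly for $d_+$ on $C_+^{D_i}(\bH,J)$.

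Next, given $J \in \cJ_{reg}(\bH,f,g)$, I would find $J' \in \cJ^{\le 1}_{reg}(\bH,f,g)$ arbitrarily close to $J$, which is possible because the latter set is of second category by Proposition \ref{prop:transversality}. The dimension-$0$ moduli spaces $\cM_{x,z}(J)$ with $x,z \in W^i$ are compact, oriented, transversely cut out $0$-manifolds. Since the linearized operators along these moduli spaces are surjective, the implicit function theorem, combined with the uniform length-filtration compactness (via Lemma \ref{lemma:max} to rule out escape to infinity for a $C^0$-small homotopy of almost complex structures), gives an orientation-preserving bijection between $\cM_{x,z}(J)$ and $\cM_{x,z}(J')$ for all orbits in $W^i$, once $J'$ is sufficiently close to $J$. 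Consequently, the differentials $d$ and $d_+$ computed using $J$ and $J'$ agree on $C^{D_i}(\bH,\cdot,f)$ and $C_+^{D_i}(\bH,\cdot)$, respectively.

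Finally, $d^2 = 0$ and $d_+^2 = 0$ hold for $J'$ by the standard argument: the compactified $1$-dimensional moduli spaces $\cM_{x,y}(J')$ are oriented manifolds with boundary equal to $\bigcup_z \cM_{x,z}(J') \times \cM_{z,y}(J')$ by Proposition \ref{prop:transversality}, and the signed count of boundary points vanishes. Transporting this identity back through the bijection of dimension-$0$ moduli spaces yields $(d|_{C^{D_i}(\bH,J,f)})^2 = 0$, and taking the direct limit over $i$ concludes the proof. The main obstacle is the cobordism step identifying dimension-$0$ counts for $J$ and $J'$; this is where the length filtration and Lemma \ref{lemma:max} are essential, since without a uniform energy or confinement bound one could not guarantee that no new curves appear or disappear under the perturbation.
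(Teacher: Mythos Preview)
Your proposal is correct and follows essentially the same approach as the paper: restrict to the filtered piece $C^{D_i}$, pick a nearby $J'\in\cJ^{\le 1}_{reg}(\bH,f,g)$, show the dimension-$0$ counts agree, and import $d^2=0$ from $J'$. The only stylistic difference is that the paper packages your ``implicit function theorem plus compactness'' step as a single cobordism: it notes that $\cJ^{D_i}_{reg,+}(\bH)$ is \emph{open} (by the a priori energy bound), chooses $J'$ in an open neighborhood $\cU\subset\cJ^{D_i}_{reg,+}(\bH)$ of $J$, and then takes a path $J_s$ entirely inside $\cU$, so that the parametric moduli space $\bigcup_s\cM_{x,y,J_s}$ is a compact $1$-manifold with boundary $\cM_{x,y,J}\sqcup\cM_{x,y,J'}$ (no breaking, since each $J_s$ is regular and breaking would force a piece of negative expected dimension). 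This is exactly the content of your bijection argument, stated as a cobordism rather than as two separate halves.
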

\begin{proof}
	We will show that $d_+^2=0$ on $C^{D_i}_+(\bH,J)$, the other cases are similar. Since we have an a priori energy bound, by compactness, there exists an open neighborhood $\cU\subset \cJ(W)$ of $J$, such that $\cU\subset \cJ^{D_i}_{reg,+}(\bH)$. Since $\cJ^{\le 1}_{reg}(\bH,f,g)$ is dense,  we choose $J'\in \cU \cap \cJ^{\le 1}_{reg}(\bH,f,g)$. Let $d_{+,J'}$ be the differential defined using $J'$, then $d_{+,J'}^2=0$. We claim $d_+=d_{+,J'}$. Let $J_s, s\in [0,1]$ be a smooth path connecting $J$ and $J'$ in $\cU$, we can consider the moduli space $\cup_{s}\cM_{x,y,J_s}$ for $|x|-|y|=1$. The regularity of each $J_s$ implies it is a compact manifold with boundary $\cM_{x,y,J}$ and $\cM_{x,y,J'}$, since other boundary will involve $\cM_{x',y',J_s}$ with $|x'|-|y'|\le 0$, which is empty by regularity of $J_s$. This proves the claim. 
\end{proof} 
\begin{remark}
	Using $\cJ_{reg}$ instead of $\cJ^{\le 1}_{reg}$ is important in the neck-stretching argument. The index room provided by the ADC condition only allows us to argue that moduli spaces $\cM_{x,y}$ up to dimension zero stays completely in the symplectization of the boundary and are regular after neck-stretching.  This forces us to use $J\in \cJ_{reg}$ instead of $\cJ^{\le 1}_{reg}$, see Remark \ref{rmk:regular} for details.
\end{remark}

Next we recall the continuation map when varying $J$. Given $J_1,J_2\in \cJ_{reg}(\bH,f,g)$ and homotopy $J_s\in \cJ(W)$ such that $J_s=J_2$ for $s<0$ and $J_s=J_1$ for $s>1$. Then we can define the following moduli spaces.
\begin{enumerate}
	\item For $x,y\in \cP^*(\bH)$, $\cN_{x,y}$ is defined to be the compactification of the following
	$$\{u\st\partial_s u + J_s(\partial_t u -X_{\bH})=0, \lim_{s\to -\infty} u=x, \lim_{s\to \infty} u = y \}.$$
	\item For $p\in \cC(f), y\in \cP^*(\bH)$, $\cN_{p,y}$ is defined to the compactification of the following
	$$\left\{\begin{array}{l} u:\C \to \widehat{W},\\ \gamma:(-\infty,0] \to W\end{array} \left| \begin{array}{l}\partial_s u + J_s(\partial_t u -X_{\bH})=0, \partial_s \gamma + \nabla_g f=0,\\
	\displaystyle \lim_{s\to -\infty}\gamma =p, u(0)=\gamma(0), \lim_{s\to \infty} u = y\end{array}\right.\right\}.$$
\end{enumerate}
Then for generic choice of $J_s$, the moduli space $\cN_{a,b}$ is a compact manifold with boundary of dimension $|a|-|b|$ whenever $|a|-|b|\le 1$. Then boundary configuration of $\cN_{x,y}$ implies the following cochain map
$$\Phi_{J_s}:C^*(\bH,J_1,f)\to C^*(\bH,J_2,f), \quad \left\{\begin{array}{lr} y \mapsto \displaystyle \sum_{\substack{|a|-|y|=0,\\ a \in \cP^*(\bH)\cup \cC(f)}}\#\cN_{a,y}a, & y\in \cP^*(\bH),\\
q \mapsto q, & q \in \cC(f). \end{array}\right.$$ 
Similarly for $J_1,J_2\in \cJ_{reg,+}(\bH)$, for generic choice of $J_s$, we defined the following cochain map.
$$\Phi_{J_s,+}: C^*_+(\bH,J_1,f) \to C^*_+(\bH,J_2,f), \quad y \mapsto \sum_{\substack{|x|-|y|=0,\\ y \in \cP^*(\bH)}}\#\cN_{x,y}x.$$ 
Then we have the following standard result on the naturality of the construction.
\begin{proposition}\label{prop:natural}
	Cochain morphisms $\Phi_{J_s}$ and $\Phi_{J_s,+}$ up to homotopy are independent of the choice of $J_s$. Moreover, it is functorial with respect to concatenation of homotopies up to homotopy.
\end{proposition}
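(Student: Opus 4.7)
The plan is to follow the standard parametric moduli space argument in Floer theory, adapted to the cascades setup used above. For the first statement (independence of $J_s$), I would fix two admissible homotopies $J_s^0,J_s^1$ between $J_1$ and $J_2$ and pick a smooth $2$-parameter family $J_s^\tau$, $\tau\in[0,1]$, interpolating between them through admissible homotopies that are stationary in $s$ outside $[0,1]$. One then considers the parametric moduli spaces
\[
\cN_{a,b}^{\mathrm{par}} := \bigcup_{\tau\in[0,1]} \cN_{a,b,J_s^\tau}
\]
for $a,b\in \cP^*(\bH)\cup\cC(f)$ in the two relevant flavors (pure Floer cylinders, and mixed curves with a descending Morse trajectory attached at $0\in\C$). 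For generic $J_s^\tau$ these carry the structure of a manifold with boundary of dimension $|a|-|b|+1$ whenever this is $\le 1$, with the same a priori length filtration and the same integrated maximum principle as before preventing escape to infinity and preventing breakings at points of $W^0$ from being nontrivial.

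Next I would read off the chain homotopy from the boundary of the $1$-dimensional components. The Gromov--Floer compactness of the parametric family produces exactly four types of boundary strata: the endpoints $\tau=0$ and $\tau=1$ contribute $\Phi_{J_s^0}$ and $\Phi_{J_s^1}$; breakings on the positive end are of the form $\cN_{a,c,J_s^\tau}^{\mathrm{par},(0)}\times \cM_{c,b}$ and contribute $K\circ d$; breakings on the negative end are of the form $\cM_{a,c}\times \cN_{c,b,J_s^\tau}^{\mathrm{par},(0)}$ and contribute $d\circ K$; where $K(y):=\sum_{|a|-|y|=-1}\#\cN_{a,y}^{\mathrm{par},(0)}\cdot a$ counts the zero-dimensional parametric moduli. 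A signed count of the boundary then gives $\Phi_{J_s^1}-\Phi_{J_s^0}=dK+Kd$, and exactly the same argument with the Morse-flow endpoints omitted handles $\Phi_{J_s,+}$.

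For functoriality under concatenation, given homotopies $J_s$ from $J_1$ to $J_2$ and $J_s'$ from $J_2$ to $J_3$, I would define the $R$-stretched concatenation $J_s^R$ for $R\in[0,\infty)$ by inserting a segment of length $R$ on which the homotopy is constantly equal to $J_2$, with $J_s^0$ homotopic to the naive concatenation $J_s\#J_s'$. A standard gluing/compactness argument identifies the $R\to\infty$ limit of moduli spaces for $J_s^R$ with broken configurations corresponding to $\Phi_{J_s',+}\circ \Phi_{J_s,+}$ (resp.\ $\Phi_{J_s'}\circ \Phi_{J_s}$); combining this with the homotopy-invariance argument of the previous paragraph applied to the family $\{J_s^R\}_{R\in[0,\infty]}$ yields the desired chain homotopy $\Phi_{J_s'\,\#\,J_s}\simeq \Phi_{J_s'}\circ\Phi_{J_s}$.

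The only step that requires genuine care is parametric transversality for the mixed moduli space $\cN_{p,y}^{\mathrm{par}}$, since the Floer equation is coupled to a gradient half-trajectory through the incidence $u(0)=\gamma(0)$. This is handled as in Proposition~\ref{prop:transversality}: the universal parametric moduli space is cut out transversely by standard arguments, and the submersivity of the evaluation map $ev_0:u\mapsto u(0)$ on the universal moduli space established via \cite[\S 3.4]{mcduff2012j} allows a Sard--Smale argument to yield a second-category set of regular $J_s^\tau$; Taubes' trick then upgrades to smooth homotopies. No new gluing analysis is needed beyond what was already used to define the $\Phi_{J_s}$ themselves, since all boundary strata are of types already treated in Proposition~\ref{prop:compact}.
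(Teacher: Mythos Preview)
Your proposal is correct and is precisely the standard homotopy argument the paper has in mind; the paper itself omits the proof entirely, simply citing \cite{audin2014morse} and noting that since $\bH$ is fixed the analytic setup is identical to Propositions~\ref{prop:compact} and~\ref{prop:transversality}. Your write-up supplies exactly the details the paper declines to give, including the care about parametric transversality for the mixed moduli space, so there is nothing to add or correct.
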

This proposition follows from a standard homotopy argument, c.f. \cite{audin2014morse}. Since we are not varying $\bH$, the analytic setups are similar to Proposition \ref{prop:compact} and Proposition \ref{prop:transversality}. Hence we omit the proof. By Proposition \ref{prop:natural}, we may suppress $J_s$ in $\Phi,\Phi_+$. Another observation is that $\Phi(C^{D_i}(\bH,J,f))\subset C^{D_i}(\bH,J,f)$, similarly for $C^{D_i}_+(\bH,J)$, we use $\Phi^{D_i},\Phi^{D_i}_+$ to denote the restrictions, they also satisfy Proposition \ref{prop:natural}. Similar to Proposition \ref{prop:lowreg}, we only need regularity of $J_s$ for moduli spaces up to dimension $0$ to get well-defined $\Phi^{D_i},\Phi^{D_i}_+$. The key property we need in the neck-stretching is the following.
\begin{lemma}\label{lemma:local}
	Assume we have a smooth family $J_s:[0,1]\to \cJ^{D_i}_{reg}(\bH,f,g)$ or $\cJ^{D_i}_{reg,+}(\bH)$. Then $\Phi^{D_i}:C^{D_i}(\bH,J_a,f)\to C^{D_i}(\bH,J_b,f)$ or $\Phi^{D_i}_+:C^{D_i}_+(\bH,J_a,f)\to C^{D_i}_+(\bH,J_b,f)$ are identities (up to homotopy)  for any $0\le a, b \le 1$ respectively. 
\end{lemma}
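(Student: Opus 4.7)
My plan is a standard local-to-global argument exploiting compactness of $[a,b]$, openness of the regular set, and the functoriality of continuation maps (Proposition \ref{prop:natural}). The uniform energy bound on Floer cylinders with action window $[-D_i,0]$ implies that $\cJ^{D_i}_{reg}(\bH,f,g)$ and $\cJ^{D_i}_{reg,+}(\bH)$ are open in $\cJ(W)$; by compactness of the image of the path, I can partition $[a,b]$ into finitely many subintervals $[t_j,t_{j+1}]$ such that each sub-path $J_s|_{[t_j,t_{j+1}]}$ lies inside a convex neighborhood $\cU_j$ of $J_{t_j}$ in $\cJ^{D_i}_{reg}$, chosen as small as desired. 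Functoriality up to chain homotopy then makes $\Phi^{D_i}$ for the full path chain homotopic to the composition of the continuation maps over the subintervals, so it suffices to prove the lemma for a path entirely contained in a small neighborhood $\cU$ of a fixed $J_0 \in \cJ^{D_i}_{reg}$.

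For such a local path, I would first verify that the two cochain complexes at $J_a$ and $J_b$ are literally equal. Choose any $J'\in \cU \cap \cJ^{\le 1}_{reg}$ (a dense subset of $\cU$). Replicating the argument in the proof of Proposition \ref{prop:lowreg}, for any smooth path $\sigma$ inside $\cU$ joining $J_a$ (respectively $J_b$) to $J'$, the parameterized $1$-manifold $\bigcup_s \cM_{x,y,\sigma_s}$ with $|x|-|y|=1$ has boundary only at the two endpoints: any interior stratum would require some $\cM_{x',y',\sigma_s}$ with $|x'|-|y'|\le 0$, which is empty by the regularity of each $\sigma_s \in \cJ^{D_i}_{reg}$. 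Consequently $d_{J_a}=d_{J'}=d_{J_b}$, so the identity map is a well-defined chain map on the shared underlying module and the statement $\Phi^{D_i}\simeq \Id$ is meaningful.

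To build an explicit chain homotopy $\Phi^{D_i}\simeq \Id$, I consider a smooth two-parameter family $\{J^\tau_s\}_{\tau \in [0,1]}$ of homotopies from $J_a$ to $J_b$ inside $\cU$, where $J^1_s = J_s$ and $J^0_s$ is a concatenation of arbitrarily short arcs passing through a generic $J^\ast \in \cU \cap \cJ^{\le 1}_{reg}$. The parameterized moduli $\bigcup_\tau \cN_{x,y,J^\tau_s}$ of virtual dimension $1$, together with its Morse analogues, yields a chain homotopy between $\Phi^{D_i}_{J^1_s}$ and $\Phi^{D_i}_{J^0_s}$. The latter reduces by functoriality to a composition of continuations along tiny arcs, each of which is the identity: by Gromov compactness, the rigid dim-$0$ solutions in $\cN_{x,y}$ for a sufficiently short path are small deformations of trivial cylinders at periodic orbits of $\bH$, which are regular at non-degenerate orbits and contribute $+1$ on the diagonal of $\Phi$; off-diagonal contributions are excluded because the corresponding $\R$-quotient $M_{x,y}$ has virtual dimension $-1$ and is empty by the regularity hypothesis. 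The main technical obstacle lies in controlling compactness and transversality for these two-parameter moduli, but every potential breaking stratum again reduces to a dim $\le 0$ moduli space at fixed $J_s$, which is handled by Proposition \ref{prop:compact} and Proposition \ref{prop:transversality}, so no new analytic input is required. The argument for $\Phi^{D_i}_+$ in the $\cJ^{D_i}_{reg,+}$ case is identical after dropping the Morse-theoretic pieces.
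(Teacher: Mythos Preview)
Your proposal is correct and follows the same underlying strategy as the paper: reduce to a local statement via compactness of $[a,b]$ and functoriality (Proposition~\ref{prop:natural}), then argue that for a sufficiently short homotopy the only rigid continuation solutions are the trivial cylinders over periodic orbits. The paper's execution is more direct: it simply takes the explicit one-parameter family of homotopies $J_{\epsilon,s}:=J_{a+\epsilon\rho(s)}$, notes that at $\epsilon=0$ this is the constant homotopy (so the continuation map is literally the identity via constant cylinders, which are automatically regular), and then invokes compactness from the uniform energy bound on $C^{D_i}$ to conclude that for all small $\epsilon$ the homotopy stays regular with no new rigid solutions. Your detour through an auxiliary generic $J^\ast\in\cU\cap\cJ^{\le 1}_{reg}$ and the two-parameter family $J^\tau_s$ is unnecessary: once you invoke Proposition~\ref{prop:natural} you are free to pick any homotopy, and the constant-to-short-path deformation the paper uses avoids that extra layer entirely.
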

\begin{proof}
	We prove the $C^{D_i}$ case, as the $C_+^{D_i}$ case is similar. It is sufficient to prove the following: for any $a\in [0,1]$,  there exists a $\delta>0$ such that for every $|b-a|<\delta$, we have $\Phi^{D_i}$ is homotopic to identity form the chain complex using $J_a$ to the one using $J_b$. Let $\rho(s)$ be a smooth function such that $\rho(s)=0$ for $s\le 0$ and $\rho(s)=1$ for $s\ge 1$. Then we have a family of homotopies of almost complex structures $J_{\epsilon,s}=J_{a+\epsilon\rho(s)}$ for $\epsilon$ in a neighborhood of $0$.  In the following, by a regular homotopy of almost complex structures, we mean a homotopy of almost complex structures such that moduli spaces up to dimension $0$ in the definition of continuation map are cut out transversely. Then $J_{0,s}\equiv J_a$ is a regular homotopy of almost complex structures and the continuous map is identity corresponding to constant cylinders over periodic orbits. Since there is a universal energy bound when restricted on $C^{D_i}$, by compactness, there exists $\delta>0$ such that if $|\epsilon|<\delta$, $J_{\epsilon,s}$ is a regular homotopy. It is clear the trivial cylinders on periodic orbits contributes to the continuation map. But there are no other contribution, due to compactness and that $J_{0,s}$ has no other contributions to the continuation map.
\end{proof}
\begin{remark}
	From Proposition \ref{prop:lowreg}, $d$ is constant on each $J_s$. Lemma \ref{lemma:local} says the obvious identification is natural, i.e. it is the continuation map, which has the functorial property in Proposition \ref{prop:natural}.
\end{remark}
In Proposition \ref{prop:lowreg}, we know that $d_{+}$ on $C^{D_i}_+$ is defined using $J\in \cJ^{D_i}_{reg,+}(\bH)$. We still need to verify that $d_{+,0}$ is well-defined on $C^{D_i}_+$ for later application. Following the proof of Proposition \ref{prop:lowreg}, let $\cU\subset \cJ(W)$ be an open neighborhood of $J$ that is contained in $\cJ^{D_i}_{reg,+}(\bH)$. Pick any $J'\in \cU\cap \cJ_{reg}(\bH,f,g)$, then $d_{+,0,J'}$ is defined, and by Proposition \ref{prop:lowreg} it is a cochain map using $d_+$ defined by $J$ since $d_+$ is locally constant.
\begin{proposition}\label{prop:lowreg2}
	$d_{+,0}$ above is well-defined on $C^{D_i}_+$ up to homotopy.
\end{proposition}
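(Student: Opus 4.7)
The plan is to define $d_{+,0}$ by transferring from a slightly perturbed almost complex structure and then to prove independence via the standard parametrized moduli argument. Fix $J\in \cJ^{D_i}_{reg,+}(\bH)$. By the openness of $\cJ^{D_i}_{reg,+}(\bH)$ (which follows from the a priori energy bound and Gromov compactness in the $C^{D_i}$ action window), there is an open neighborhood $\cU\subset \cJ^{D_i}_{reg,+}(\bH)$ of $J$. Density of $\cJ^{\le 1}_{reg}(\bH,f,g)$ supplies a $J'\in \cU\cap \cJ_{reg}(\bH,f,g)$. With such a $J'$, the full differential $d_{J'}$ on $C^{D_i}(\bH,J',f)$ satisfies $d_{J'}^2=0$, and its off-diagonal block is automatically a cochain map
\[
d_{+,0,J'}:(C^{D_i}_+(\bH,J'),d_{+,J'})\to (C_0(f),d_0)[1].
\]
Applied inside $\cU$, the argument of Proposition \ref{prop:lowreg} in fact shows that $d_+$ is locally constant on $\cU$, so $d_{+,J'}=d_{+,J}$ as differentials on the free module $C^{D_i}_+$. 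Hence one may declare $d_{+,0}:=d_{+,0,J'}$; it is a cochain map out of $(C^{D_i}_+(\bH,J),d_+)$.

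For homotopy-independence of this definition, let $J'_1,J'_2\in \cU\cap \cJ_{reg}(\bH,f,g)$ and pick a smooth path $J_s$ between them inside $\cU$. A Sard--Smale argument along the $s$-direction produces a generic such path for which the parametrized moduli $\bigcup_s \cM_{p,y,J_s}$ are cut out transversely in every virtual dimension $|p|-|y|\le 1$; crucially, no additional $s$-dependent regularity is needed for the cylinder pieces $\cM_{z,y,J_{s_0}}$ appearing in breakings at a fixed $s_0$, since $J_{s_0}\in\cU\subset \cJ^{D_i}_{reg,+}(\bH)$ already provides transversality for cylinders between non-constant orbits. Define
\[
h:C^{D_i}_+(\bH,J)\to C_0(f),\qquad h(y):=\sum_{|p|=|y|}\#\Bigl(\bigcup_s \cM_{p,y,J_s}\Bigr)\,p,
\]
counting signed zero-dimensional parametrized mixed curves. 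The codimension-one boundary of the one-dimensional parametrized space $\bigcup_s \cM_{p,y,J_s}$ with $|p|=|y|+1$ consists of three strata: the endpoints at $s\in\{0,1\}$, contributing $d_{+,0,J'_1}$ and $d_{+,0,J'_2}$; Morse breakings of the gradient half-line at an intermediate critical point $r$ of $f$, contributing $d_0\circ h$; and Floer breakings of the disc at an intermediate non-constant orbit $z$, contributing $h\circ d_+$. Equating the signed boundary count to zero gives $d_{+,0,J'_2}-d_{+,0,J'_1}=\pm(d_0 h+h d_+)$, i.e.\ the desired chain homotopy.

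The main technical point, and the reason one cannot quote Proposition \ref{prop:lowreg} verbatim, is that $d_{+,0}$ involves mixed Morse--Floer curves of type $\cM_{p,y}$ rather than ordinary Floer cylinders, and we only have regularity of $J$ (and later of the path $J_s$) with respect to cylinder-type moduli. The saving observation is that each Floer-type breaking of the parametrized $\cM_{p,y}$-moduli factors through a cylinder $\cM_{z,y,J_{s_0}}$ between non-constant orbits, whose regularity is exactly what membership in $\cJ^{D_i}_{reg,+}(\bH)$ supplies; the only genuinely new transversality needed is for the mixed parametrized moduli spaces in the $s$-direction, which costs nothing beyond choosing a generic path $J_s$ inside $\cU$.
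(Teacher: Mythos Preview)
Your argument is correct, but it follows a different route than the paper's. The paper does not build a parametrized moduli space over $[0,1]$ at all: instead it takes an ordinary continuation homotopy $J_s$ (over $\R$) between $J'$ and $J''$, so that the full continuation map $\Phi=\Phi_++\Phi_{+,0}+\Id_{C_0}$ is a chain map. Writing out the off-diagonal component of $\Phi\circ d_{J''}=d_{J'}\circ\Phi$ gives
\[
\Phi_{+,0}\circ d_{+,J''}+d_{+,0,J''}=d_{+,0,J'}\circ\Phi_{+}+d_0\circ\Phi_{+,0},
\]
and then the paper invokes Lemma~\ref{lemma:local} (together with Proposition~\ref{prop:natural}) to conclude that $\Phi_+$ is homotopic to the identity. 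Combined with $d_{+,J'}=d_{+,J''}=d_+$ from Proposition~\ref{prop:lowreg}, this yields the chain homotopy algebraically, with $\Phi_{+,0}$ (corrected by the homotopy witnessing $\Phi_+\simeq\Id$) playing the role of $h$.

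Your approach instead constructs the homotopy $h$ directly by counting rigid points in the parametrized family $\bigcup_{s\in[0,1]}\cM_{p,y,J_s}$, with the path kept inside $\cU$ so that every cylinder piece $\cM_{z,y,J_{s_0}}$ in a Floer breaking is already transverse. This is more geometric and self-contained: it avoids the detour through Lemma~\ref{lemma:local} and the algebraic manipulation of the continuation relation. The trade-off is that you must argue transversality and gluing for the parametrized mixed moduli spaces (including the fiber-product boundary strata over $s_0$), whereas the paper recycles the continuation machinery already set up in \S\ref{ss:natural}. Both arguments ultimately hinge on the same fact---that $d_+$ is locally constant on $\cU$---but they package it differently: the paper hides it inside $\Phi_+\simeq\Id$, while you use it to identify the Floer-breaking contribution with $h\circ d_+$.
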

\begin{proof}
	Let $J''\ne J'\in \cU \cap \cJ_{reg}(\bH,f,g)$, then there exists a regular homotopy $J_s$ connecting $J''$ and $J'$, then the continuation map gives the following relation
	$$\Phi_{+,0}\circ d_{+,J''}+d_{+,0,J''}=d_{+,0,J'}\circ \Phi_{+}+d_0\circ \Phi_{+,0}.$$
	By Lemma \ref{lemma:local} and Proposition \ref{prop:natural}, $\Phi_+$ is homotopic to identity.
	By the argument in Proposition \ref{prop:lowreg}, $d_{+,J'}=d_{+,J''}=d_+$. This implies that $d_{+,0,J''}$ and $d_{+,0,J'}$ are homotopic as cochain maps from $C_+^D(\bH,J)\to C_0(f)[1]$. 
\end{proof}

The above discussion shows that, down to an action lower bound, we only need regularity for moduli spaces up to dimension $0$ to define differentials or cochain maps up to homotopy.
\section{Invariance of restriction map and persistence of vanishing}\label{s3}
In this section we prove Theorem \ref{thm:A}, the strategy of the proof is contained in following picture, which we will explain in detail.
\begin{figure}[H]
	\begin{tikzpicture}
	\draw (0,0) to [out=90,in=180] (0.25,0.5);
	\draw (0.25,0.5) to [out=0,in=90] (0.5,0);
	\draw (0.5,0) to [out=270,in=0] (0.25,-0.5);
	\draw (0.25,-0.5) to [out=180,in=270] (0,0);
	\draw (0.25,-0.5) -- (-1,-0.5);
	\draw (0.25,0.5) -- (-1,0.5);
	\draw (-1.5,0) to [out=90,in=180] (-1,0.5);
	\draw (-1,-0.5) to [out=180,in=270] (-1.5,0);
	\draw[->] (-1.5,0) to [out=180, in=250](-2,1);
	\draw (-2,1) to [out=70, in=220](-1.5,1.5);
	\fill (-1.5,1.5) circle[radius=1pt];
	\node at (-1.5, 1.7) {$q$};
	\node at (-2.5,1) {$\nabla_g f$};
	\draw[dotted] (-0.5,5) -- (-0.5,-1);
	\draw[->] (-1.5,1.5) -- (-1, 1.5);
	\draw (-1,1.5) -- (-0.5, 1.5); 
	\draw[->] (-0.5,1.5) -- (-0.5, 2); 
	\draw (-0.5,2) -- (-0.5, 2.5);
	\fill (-0.5,2.5) circle[radius=1pt];
	\node at (-0.3, 2.5) {$p$};
	\node at (0, 2) {$\nabla_{g_{\partial}} h$};
	\node at (-0.5, 4) {$\partial W$};
	\node at (-0.5, -1.5) {$l=\infty$};
	
	\draw (4,0) to [out=90,in=180] (4.25,0.5);
	\draw (4.25,0.5) to [out=0,in=90] (4.5,0);
	\draw (4.5,0) to [out=270,in=0] (4.25,-0.5);
	\draw (4.25,-0.5) to [out=180,in=270] (4,0);
	\draw (4.25,-0.5) -- (3,-0.5);
	\draw (4.25,0.5) -- (3,0.5);
	\draw (2.5,0) to [out=90,in=180] (3,0.5);
	\draw (3,-0.2) to [out=180,in=270] (2.5,0);
	\draw (3,-0.2) to [out=0, in=180]  (3,-0.5);
	\draw[->] (2.5,0) to [out=180, in=250](2,1);
	\draw (2,1) to [out=70, in=220](3.5,1.5);
	\draw[dotted] (3.5,5) -- (3.5,-1);
	\node at (3.5, 4) {$\partial W$};
	\draw[->] (3.5,1.5) -- (3.5, 2); 
	\draw (3.5,2) -- (3.5, 2.5);
	\fill (3.5,2.5) circle[radius=1pt];
	\node at (3.7, 2.5) {$p$};
	\node at (3.5,-1.5) {$0<l<\infty$};

	\draw (8,0) to [out=90,in=180] (8.25,0.5);
	\draw (8.25,0.5) to [out=0,in=90] (8.5,0);
	\draw (8.5,0) to [out=270,in=0] (8.25,-0.5);
	\draw (8.25,-0.5) to [out=180,in=270] (8,0);
	\draw (8.25,-0.5) -- (7,-0.5);
	\draw (8.25,0.5) -- (7,0.5);
	\draw (6.5,0.25) to [out=90,in=180] (7,0.5);
	\draw (7,0) to [out=180,in=270] (6.5,0.25);
	\draw (7,0) to [out = 0, in = 0] (6.2, -0.25);
	\draw (6.2, -0.25) to [out=180, in=90] (6, -0.35);
	\draw (6,-0.35) to [out=270, in=180](7,-0.5);
	\draw[dotted] (6.5,5) -- (6.5,-1);
	\node at (6.5, 4) {$\partial W$};
	\draw[->] (6.5,0.25) -- (6.5, 1.5); 
	\draw (6.5,1.5) -- (6.5, 2.5);
	\fill (6.5,2.5) circle[radius=1pt];
	\node at (6.7, 2.5) {$p$};
	\node at (6.5,-1.5) {$l=0$};
	
	\draw (14,0) to [out=90,in=180] (14.25,0.5);
	\draw (14.25,0.5) to [out=0,in=90] (14.5,0);
	\draw (14.5,0) to [out=270,in=0] (14.25,-0.5);
	\draw (14.25,-0.5) to [out=180,in=270] (14,0);
	\draw (14.25,-0.5) -- (13,-0.5);
	\draw (14.25,0.5) -- (13,0.5);
	\draw (12.5,0) to [out=90,in=180] (13,0.5);
	\draw (13,-0.5) to [out=180,in=270] (12.5,0);
	\draw[dotted] (12.5,5) -- (12.5,-1);
	\node at (12.5, 4) {$\partial W$};
	\draw[->] (12.5,0) -- (12.5, 1.5); 
	\draw (12.5,1.5) -- (12.5, 2.5);
	\fill (12.5,2.5) circle[radius=1pt];
	\node at (12.7, 2.5) {$p$};
	\node at (12.5,-1.5) {After neck-streching along $Y$};
	\draw[dotted] (10.5,5) -- (10.5,-1);
	\node at (10.5, 4) {$Y$};
	\end{tikzpicture}
	\caption{Pictorial proof of Theorem \ref{thm:A}.}\label{fig:pic}
\end{figure}
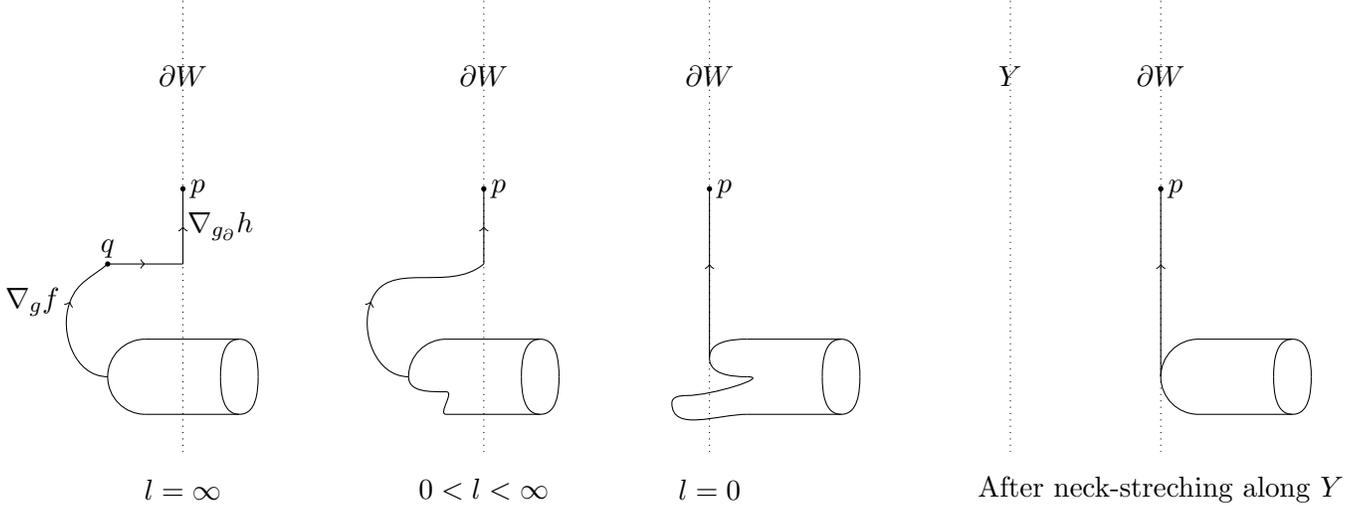

\subsection{Morse description of the restriction $H^*(W) \to H^*(\partial W)$}\label{sub:shrink}
In this section, we assume admissible Morse function $f$ satisfies $\partial_r f>0$ on $\partial W \times [1-\epsilon,1]$. Let $h$ be a Morse function on $\partial W\times \{1-\epsilon\}$. We pick Riemannian metrics $g, g_\partial$ on $W$ and $\partial W\times \{1-\epsilon\}$ respectively. Then we can define the following moduli spaces 
$$R_{p,q}:= \left\{\begin{array}{l} \gamma_1:(-\infty,0]\to \partial W\times \{1-\epsilon\}\\
\gamma_2:[0,\infty)\to W  \end{array}  \left| \begin{array}{l}
\frac{\rd}{\rd s}\gamma_1 + \nabla_{g_{\partial}} h = 0, \frac{\rd}{\rd s}\gamma_2 + \nabla_{g} f = 0, \\
\displaystyle\lim_{s\to -\infty} \gamma_1 = p,  \gamma_1(0)=\gamma_2(0), \lim_{s\to \infty} \gamma_2 = q \end{array}\right. \right\},$$
where  $p \in \cC(h),q\in \cC(f)$. By adding broken flow lines in $W$ and on $\partial W\times \{1-\epsilon\}$, we have that $R_{p,q}$ admits a compactification $\cR_{p,q}$. Then we have the following.
\begin{proposition}[{\cite[Proposition 2.2.8]{kronheimer2007monopoles}}]\label{prop:restriction}
	Given $f,h$ as above, there exist generic metrics $g,g_\partial$, such that the unstable manifolds of $\nabla_gf$ intersect stable manifolds of $\nabla_{g_{\partial}}h$ transversely in $W$. Then $\cR_{p,q}$ is a manifold with boundary of dimension $|p|-|q|$ if $|p|-|q|\le 1$. When $\dim \cR_{p,q} = 1$, $$\partial \cR_{p,q} = -\sum_{*\in \cC(h)} \cM_{p,*}\times \cR_{*,q} +\sum_{*\in \cC(f)} \cR_{p,*}\times \cM_{*,q}.\footnote{That is the boundary orientation on $\cM_{p,*}\times \cR_{*,q}$ is the opposite of the product orientation and the boundary orientation on $\cR_{p,*}\times \cM_{*,q}$ is the product orientation. }$$ Then counting $\cR_{p,q}$ defines a cochain map $R:C(f)\to C(h)$ between Morse cochain complexes. And on cohomology, it is the restriction map $H^*(W)\to H^*(\partial W\times \{1-\epsilon\})=H^*(\partial W)$. 
\end{proposition}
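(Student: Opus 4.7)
Since the proposition is cited as a known result (Kronheimer--Mrowka, Proposition 2.2.8), the plan is to outline the standard Morse-theoretic fibered-product argument. The moduli space $R_{p,q}$ is, tautologically, the intersection of (the image in $W$ of) the unstable manifold $U(q)\subset W$ of $q$ under $-\nabla_g f$ with the stable manifold $S(p)\subset \partial W\times\{1-\epsilon\}$ of $p$ under $-\nabla_{g_\partial} h$, taken inside the slice $\partial W\times\{1-\epsilon\}$. Since $\partial_r f>0$ on $\partial W\times[1-\epsilon,1]$, the negative gradient flow pushes the whole slice inward, so $U(q)$ meets $\partial W\times\{1-\epsilon\}$ in a submanifold of dimension $\ind q$, while $S(p)$ has codimension $\ind p$ there; hence the expected dimension of $R_{p,q}$ is $|p|-|q|$.

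\textbf{Step 1 (Transversality).} I would run the standard Sard--Smale argument on a Banach space of perturbations of $(g,g_\partial)$ supported away from the critical points: one obtains a second-category set for which every finite collection of unstable manifolds of $\nabla_g f$ intersects every stable manifold of $\nabla_{g_\partial} h$ transversely in $\partial W\times\{1-\epsilon\}$. For such generic $(g,g_\partial)$, $R_{p,q}$ is a smooth manifold of dimension $|p|-|q|$. Note that the Morse--Smale condition for $(f,g)$ on $W$ and for $(h,g_\partial)$ on $\partial W\times\{1-\epsilon\}$ can be arranged simultaneously, since the perturbations needed for the mixed transversality can be made disjoint from the bulk perturbations.

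\textbf{Step 2 (Compactness and boundary.)} Energy along each flow line is bounded by the differences of $f$- and $h$-values at the critical points, so a sequence in $R_{p,q}$ either converges smoothly or Morse-breaks. There are exactly two kinds of ends: either $\gamma_1$ breaks at some $*\in\cC(h)$, giving $\cM_{p,*}\times\cR_{*,q}$, or $\gamma_2$ breaks at some $*\in\cC(f)$, giving $\cR_{p,*}\times\cM_{*,q}$. Standard Morse gluing (the matching condition $\gamma_1(0)=\gamma_2(0)$ is a transverse fibered product, so the gluing analysis is identical to the usual one for broken flow lines) shows these ends are realized and produces $\cR_{p,q}$ as a compact $1$-manifold with boundary in the top-dimensional case. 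The opposite sign on the first boundary term is forced by the fibered-product orientation convention of Appendix \ref{app}.

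\textbf{Step 3 (Identification with restriction.)} The boundary formula gives $R\circ d_W = d_{\partial W}\circ R$, so $R$ descends to a cohomology map $H^*(W)\to H^*(\partial W\times\{1-\epsilon\})$. To identify it with the topological restriction, I would use a homotopy of the mixed flow-line picture through a thickened slab $\partial W\times[1-\epsilon,1-\epsilon']$: degenerating the interior leg $\gamma_2$ to length zero exhibits $R$ as the pullback of a Morse cocycle by the inclusion $\partial W\hookrightarrow W$, up to the canonical Morse-to-singular comparison isomorphisms $HM^*(f)\cong H^*(W)$, $HM^*(h)\cong H^*(\partial W)$. Alternatively, choosing $f$ so that $f|_{\partial W\times\{1-\epsilon\}}=h$ and applying the standard Morse-theoretic pushforward formula reduces $R$ to the cochain-level restriction directly.

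\textbf{Main obstacle.} The analysis is entirely standard since no bubbling or sphere/disk compactness issues arise in finite-dimensional Morse theory. The only subtle point is bookkeeping the signs in the boundary formula; this is precisely why the paper refers to Appendix \ref{app} for orientation conventions, and in particular why $\cM_{p,*}\times\cR_{*,q}$ carries the opposite of the product orientation. Once the coherent orientation scheme is fixed, Step 2 and the chain-map property of Step 3 are formal.
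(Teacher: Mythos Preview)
The paper does not give its own proof of this proposition; it simply cites Kronheimer--Mrowka and moves on. Your outline is exactly the standard fibered-product/Morse-breaking argument one finds there, and the overall structure (Sard--Smale transversality for the pair of metrics, compactness via broken flow lines on either side of the matching point, identification of the induced map with topological restriction) is correct and complete for this level of statement.

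One small slip worth fixing: you have the stable and unstable manifolds swapped. Since $\gamma_2(s)\to q$ as $s\to+\infty$ along $-\nabla_g f$, the point $\gamma_2(0)$ lies on the \emph{stable} manifold $W^s(q;-\nabla_g f)$, of dimension $\dim W-\ind q$; its trace on the slice $\partial W\times\{1-\epsilon\}$ therefore has dimension $\dim W-1-\ind q$, i.e.\ codimension $\ind q$ there. Likewise $\gamma_1(0)\in W^u(p;-\nabla_{g_\partial}h)$, which has dimension $\ind p$. The transverse intersection then has dimension $\ind p-\ind q=|p|-|q|$, so your final answer is right but the intermediate labels and the stated dimension of ``$U(q)\cap(\partial W\times\{1-\epsilon\})$'' are reversed. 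This is also why the paper phrases the hypothesis as ``unstable manifolds of $\nabla_g f$'' rather than of $-\nabla_g f$. The slip is purely terminological and does not affect Steps~2 and~3.
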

In the following we fix $(f,g,h,g_{\partial})$ such that Proposition \ref{prop:restriction} holds. Therefore on the complex level, the composition of $SH^*_+(W) \to H^{*+1}(W) \to H^{*+1}(\partial W)$ is given by the composition of $C_+(\bH,J) \to C_0(f)[1] \to C(h)[1]$, with the map  defined by counting the moduli space $\cR_{p,q}\times \cM_{q,y}$ for $y\in \cP^*(\bH),p\in \cC(h),q\in \cC(f)$, which is the $l=\infty$ part of Figure \ref{fig:pic}. Next we show that the middle $C_0(f)[1]$ can be bypassed. First, we define the following moduli space 
$$P_{p,y}:=\left\{\begin{array}{l} u:\C \to \widehat{W} \\ \gamma:(-\infty,0] \to \partial W\times \{1-\epsilon\}
\end{array}\left| \begin{array}{l}\partial_s u + J(\partial_t u -X_{\bH})=0, \frac{\rd}{\rd s} \gamma + \nabla_{g_\partial} h=0,\\
	\displaystyle\lim_{s\to -\infty}\gamma = p, u(0)=\gamma(0), \lim_{s\to \infty} u = y
\end{array}\right.\right\}/\R, \quad p \in \cC(h), y \in \cP^*(\bH),
$$
where the $\R$-dilation acts on the $u$ part. The equation makes sense, since on $\partial W\times \{1-\epsilon\}$ we have $\bH=0$. Then we have a compactification $\cP_{p,y}$ and when transversality holds, it defines a map from $C_+(\bH,J) \to C(h)[1]$, which is the $l=0$ part of Figure \ref{fig:pic}. In the following, we will show that it is homotopic to the composition $C_+(\bH,J) \to C_0(f)[1] \to C(h)[1]$. To such purpose, we define the the following moduli space for $p \in \cC(h), y \in \cP^*(\bH)$ involving finite time flow lines of $\nabla_gf$,
$$H_{p,y}:=\left\{\begin{array}{l}
u:\C \to \widehat{W},\\
l>0,\\
\gamma_1:(-\infty,0] \to \partial W\times \{1-\epsilon\},\\
\gamma_2:[0,l]\to W
\end{array} \left|\begin{array}{l}\partial_s u + J(\partial_t u -X_\bH)=0, \\
\frac{\rd}{\rd s} \gamma_1 + \nabla_{g_{\partial}} h=0,
\\ \frac{\rd}{\rd s} \gamma_2 + \nabla_{g} f=0,\\
\displaystyle \lim_{s\to -\infty}\gamma_1 =p, \gamma_1(0) = \gamma_2(0),\\ \displaystyle
 u(0)=\gamma_2(l), \lim_{s\to \infty} u = y
\end{array}\right.\right\}/\R,
$$
where the $\R$-dilation acts on the $u$ part. In addition to configurations of breaking at an orbit in $\cP^*(\bH)$ or a point in $\cC(f)$, the compactification of $H_{p,y}$ also contains $\cR_{p,q}\times \cM_{q,y}$ corresponding to $l=\infty$, i.e. a Morse breaking of the middle gradient flow line in $W$, and $\cP_{p,y}$ corresponding to $l=0$ on the nose. In particular, we have the following. 
\begin{proposition}\label{prop:boundary1}
	$H_{p,y}$ has a compactification $$\cH_{p,y}:= H_{p,y}\cup_{q\in \cC(f)} \cR_{p,q} \times \cM_{q,y}\cup \cP_{p,y} \cup_{q,\in \cC(f), x\in \cP^*(\bH)} \cM_{p,q}\times H_{q,x}\times \cM_{x,y}.$$
	For a generic choice of $J$, any $p,y$ such that $|p|-|y|\le 2$, we have $\cH_{p,y}$ is a manifold with boundary of dimension $|p|-|y|-1$. And $$\partial \cH_{p,y}= \sum_{q\in \cC(f)} \cR_{p,q} \times \cM_{q,y}- \cP_{p,y}+\sum_{x\in \cP^*(H)} \cH_{p,x}\times \cM_{x,y} +\sum_{q\in \cC(h)} \cM_{p,q}\times \cH_{q,y}.$$  
\end{proposition}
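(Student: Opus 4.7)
The plan is to treat $H_{p,y}$ as a fibered product of three pieces glued by evaluation maps: a negative Morse half-trajectory of $h$ on the hypersurface $\partial W\times\{1-\epsilon\}$, a Morse segment of $f$ of variable length $l\in(0,\infty)$ starting on that hypersurface and ending in $W^0$, and a Floer half-cylinder $u:\C\to\widehat W$ with asymptote $y$ and a marked point at $0$. I would follow the cascades pattern already used in Propositions \ref{prop:compact} and \ref{prop:transversality}, with the new ingredient being the degeneration of the length parameter $l$.

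\textbf{Step 1 (compactness).} The Floer component $u$ is confined to a compact region by Lemma \ref{lemma:max} applied at $r=b_i$ for $i$ large; exactness of $W$ rules out $J$-sphere bubbling, and a Floer breaking of $u$ at a constant orbit in $W$ is excluded by the argument used for $B_x$ in Proposition \ref{prop:compact} (the max principle at $r=1$ forces such a component into $W$, removal of singularity produces a $J$-sphere, absent by exactness). Thus the only limit at the positive end of $u$ is Floer breaking at some $x\in\cP^*(\bH)$, giving the stratum $\cH_{p,x}\times\cM_{x,y}$. Morse breaking of $\gamma_1$ at $q\in\cC(h)$ produces $\cM_{p,q}\times\cH_{q,y}$. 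For the new parameter, as $l\to\infty$ the segment $\gamma_2$ Morse-breaks at some $q\in\cC(f)$: the left infinite piece together with $\gamma_1$ is an element of $\cR_{p,q}$, while the right piece together with $u$ is an element of $\cM_{q,y}$, yielding $\cR_{p,q}\times\cM_{q,y}$. As $l\to 0$ the segment $\gamma_2$ collapses, so $u(0)=\gamma_1(0)\in\partial W\times\{1-\epsilon\}$, and the configuration is exactly an element of $\cP_{p,y}$. No interior concentration occurs because the hypothesis $\partial_r f>0$ on $[1-\epsilon,1]$ makes $\gamma_2$ leave $\partial W\times\{1-\epsilon\}$ transversely into $W^0$ for every $l>0$.

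\textbf{Step 2 (transversality).} Following Proposition \ref{prop:transversality}, on the universal moduli space of Floer half-cylinders with asymptote $y$ the evaluation $ev_0:(u,J)\mapsto u(0)$ is a submersion onto $W^0$ by \cite[Proposition 3.4.2]{mcduff2012j}. For generic $J$ the map $(u,\gamma_2,l)\mapsto(\gamma_2(0),\,u(0)-\gamma_2(l))$ is then transverse to $W^s_h(p)\times\{0\}$, where $W^s_h(p)\subset\partial W\times\{1-\epsilon\}$ is the stable manifold of $p$ for $\nabla_{g_\partial}h$. Sard–Smale plus Taubes' trick (exactly as at the end of Proposition \ref{prop:transversality}) yields a smooth generic $J$ for which all strata of expected dimension $\le 1$ are regular; in particular $\dim\cH_{p,y}=|p|-|y|-1$ when $|p|-|y|\le 2$. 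No new gluing analysis is needed, since every stratum is either a standard Morse/Floer breaking or a boundary of the parameter interval.

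\textbf{Step 3 (orientations and signs).} With the conventions of Appendix \ref{app}, $\cH_{p,y}$ is oriented as the fibered product, and the factor $(0,\infty)$ for $l$ is oriented by $\partial_l$. At $l=\infty$ this is the outward normal, so the stratum $\cR_{p,q}\times\cM_{q,y}$ appears with $+$ (and the product orientation coming from the factorization of the gluing). At $l=0$ the vector $\partial_l$ is the \emph{inward} normal, which introduces the sign $-\cP_{p,y}$. The Floer-breaking stratum $\cH_{p,x}\times\cM_{x,y}$ and the Morse-breaking stratum $\cM_{p,q}\times\cH_{q,y}$ carry the standard coherent product orientations. The most delicate point I expect is precisely this $l=0$ sign, because the $l$-direction is genuinely new relative to the cascades setups of Section \ref{s2}; getting it right is what makes $\cP_{p,y}$ and the composition through $C_0(f)$ into chain-homotopic cochain maps in the next subsection.
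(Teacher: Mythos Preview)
Your proposal is correct and follows essentially the same approach as the paper. In fact, the paper does not provide an explicit proof of this proposition; it treats the compactness and transversality as routine extensions of the pattern established in Propositions~\ref{prop:compact} and~\ref{prop:transversality}, and defers the orientation analysis to Appendix~\ref{app} (Proposition~\ref{prop:ori1}\eqref{o4}), where the orientation on $\cH_{p,y}$ is declared via the fiber product rule with the Morse part oriented by $\langle \partial_l\rangle \oplus S_p$. Your three steps reconstruct exactly this argument, including the key observations that $l\to\infty$ produces the $\cR_{p,q}\times\cM_{q,y}$ stratum with $\partial_l$ outward (hence $+$) and $l\to 0$ produces $\cP_{p,y}$ with $\partial_l$ inward (hence $-$).
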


To keep track of the regularity of almost complex structures, we introduce the following notations.
\begin{enumerate}
	\item $\cJ^{\le 1}_{reg,P}(\bH, h,g_{\partial})$ is the set of regular admissible $J$ for moduli spaces $\cP_{p,y}$ of dimension up to $1$. And  $\cJ^{D_i}_{reg,P}(\bH, h,g_{\partial})$ is the set of regular admissible $J$ for moduli spaces $\cP_{p,y}$ of dimension up to $0$ and action down to $-D_i$.  
	\item $\cJ^{\le 1}_{reg,H}(\bH,f,g,h,g_{\partial})$ is the set of regular admissible $J$ for moduli spaces $\cH_{p,y}$ of dimension up to $1$.  
\end{enumerate}
Then all of them are of second category, and $\cJ^D_{reg}$ are open. By looking at the potential boundary configurations, we have various relations among $\cJ_{reg}$, e.g.  $\cJ^{\le 1}_{reg,P}(\bH,h,g_{\partial})\subset \cJ^{D_i}_{reg,+}(\bH)$, $\cJ^{\le 1}_{reg,H}(\bH,f,g,h,g_{\partial})\subset \cJ^{D_i}_{reg}(\bH,f,g)\cap \cJ^{D_i}_{reg,P}(\bH,h,g_{\partial})$. 

An instant corollary of Proposition \ref{prop:boundary1} is that if $J\in \cJ^{\le 1}_{reg}(\bH,f,g)\cap \cJ^{\le 1}_{reg,P}(\bH,f,g)\cap  \cJ^{\le 1}_{reg,H}(\bH,f,g,h,g_{\partial})$, then the composition of $C_+(\bH,J)\to C_0(f)[1]\to C(h)[1]$ is homotopic to $P:C_+(\bH,J)\to C(h)[1]$ defined by counting $\cP_{p,y}$. The following proposition is in the same spirit of Proposition \ref{prop:lowreg2}. Since $P$ is defined on $C^{D_i}_+$ if $J\in \cJ^{D_i}_{reg,P}(\bH, h,g_{\partial})$ and $d_{+,0}$ is defined if $J\in \cJ^{D_i}_{reg,+}(\bH)$, the following proposition shows that they are the same up to homotopy for such $J$ of low regularity.
\begin{proposition}\label{prop:lowreg3}
	Let $J\in \cJ^{D_i}_{reg,+}(\bH)\cap \cJ^{D_i}_{reg,P}(\bH,h,g_{\partial})$, then $P$ is defined. In this case, $R\circ d_{+,0}$ is homotopic to $P$ on $C^{D_i}_+(\bH,J,f)$. $P$ is compatible with continuation maps on $C_+^{D_i}$ up to homotopy, i.e. the following is commutative up to homotopy,
	$$
	\xymatrix{
		C_+(\bH,J_1) \ar[r]^P\ar[d]^{\Phi_+} & C(h)[1]\ar[d]^{=}\\
		C_+(\bH,J_2) \ar[r]^P & C(h)[1]}
	$$
\end{proposition}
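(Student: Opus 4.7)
The plan mirrors Proposition \ref{prop:lowreg2}: use the openness of the filtered low-regularity classes $\cJ^{D_i}_{reg,+}(\bH)$ and $\cJ^{D_i}_{reg,P}(\bH,h,g_{\partial})$ together with the density of the high-regularity class $\cJ^{\le 1}_{reg,H}(\bH,f,g,h,g_{\partial})$ to reduce the chain homotopy $R\circ d_{+,0}\simeq P$ to the fully regular case of Proposition \ref{prop:boundary1}, and then transport back via continuation-type maps that are homotopic to the identity on $C_+^{D_i}$ by Lemma \ref{lemma:local}.

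First, whenever $J\in \cJ^{D_i}_{reg,P}(\bH,h,g_{\partial})$ the map $P\colon C^{D_i}_+(\bH,J)\to C(h)[1]$ is defined, since the count is over a $0$-dimensional transversely cut out piece of $\cP_{p,y}$ with a priori action bounds, and no escape to infinity or breaking at $\partial W$ can occur (Lemma \ref{lemma:max} together with the argument of Proposition \ref{prop:compact}). Given $J$ in the stated intersection, choose an open neighborhood $\cU\subset \cJ^{D_i}_{reg,+}(\bH)\cap \cJ^{D_i}_{reg,P}(\bH,h,g_{\partial})$ of $J$, which exists by the usual Gromov-compactness argument, and pick $J'\in \cU\cap \cJ^{\le 1}_{reg,H}(\bH,f,g,h,g_{\partial})$ using that the latter is of second category. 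For $J'$, counting the one-dimensional pieces of $\cH_{p,y}$ together with the boundary identity in Proposition \ref{prop:boundary1} produces a chain homotopy witnessing $R\circ d_{+,0,J'}\simeq P_{J'}$. Proposition \ref{prop:lowreg2} already gives $d_{+,0,J}\simeq d_{+,0,J'}$; the parallel statement $P_J\simeq P_{J'}$ follows from the parametrized moduli space $\cP_{p,y,J_s}$ along a path in $\cU$, whose codimension-one boundary consists of $P_J$, $-P_{J'}$, a Morse breaking in the $h$-factor, and a Floer breaking at an orbit of $\bH$. As in Lemma \ref{lemma:local}, a universal energy bound on $C^{D_i}_+$ shows that the constant path in $\cU$ yields only the trivial half-plane contributions and hence the identity; generic perturbation then gives the desired chain homotopy. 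Concatenating the three homotopies yields $R\circ d_{+,0,J}\simeq P_J$ on $C^{D_i}_+(\bH,J)$.

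For the naturality with $\Phi_+$, given $J_1,J_2\in \cJ^{D_i}_{reg,+}(\bH)\cap \cJ^{D_i}_{reg,P}(\bH,h,g_{\partial})$ and a homotopy $J_s$, consider the parametrized moduli space $\cP_{p,y,J_s}$ (with no $\R$-quotient on the $u$-component, but with $J_s$ fixed on the $s$-coordinate of the Floer equation). For a dense set of homotopies this space is transverse up to dimension one, and its codimension-one boundary consists of $P_{J_1}$ and $-P_{J_2}\circ \Phi_{+}$-type terms together with the usual Morse and Floer breakings, yielding the chain identity $P_{J_2}\circ \Phi_{+}\simeq P_{J_1}$ on $C_+^{D_i}$; since $C(h)[1]$ carries no $J$-dependence, this is the required commutativity. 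The main obstacle is the careful enumeration of codimension-one boundary strata for $\cP_{p,y,J_s}$ and $\cH_{p,y}$ and the verification, via cylindrical convexity and Lemma \ref{lemma:max}, that no additional breakings at the hypersurface $\partial W$ or at infinity arise; once this bookkeeping is done, no new transversality or gluing analysis is required beyond what was already used in Proposition \ref{prop:boundary1}.
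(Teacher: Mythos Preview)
Your proposal is correct and follows essentially the same route as the paper: pick a nearby $J'$ in the high-regularity class, use Proposition~\ref{prop:boundary1} there, and transport back via Proposition~\ref{prop:lowreg2} and local constancy; the compatibility with $\Phi_+$ is handled by the $s$-dependent version of $\cP_{*,*}$ without an $\R$-quotient, exactly as the paper does.

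One small simplification the paper makes: rather than running a parametrized moduli space to conclude $P_J\simeq P_{J'}$, it observes directly (as in Proposition~\ref{prop:lowreg}) that $P$ is \emph{locally constant} on $\cU$, i.e.\ $P_J=P_{J'}$ on the nose. Since every $J_s$ in the path lies in $\cJ^{D_i}_{reg,P}$, any extra boundary stratum of the parametrized space would involve a negative-dimensional piece and is therefore empty. Your phrase ``trivial half-plane contributions'' is a slight slip---there are no constant solutions in $\cP_{p,y}$ since $y$ is a nonconstant orbit---but the conclusion you draw is still correct for this reason.
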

\begin{proof}
	There exists an open neighborhood $\cU\subset \cJ(W)$ of $J$ contained in $\cJ^{D_i}_{reg,+}(\bH)\cap \cJ^{D_i}_{reg,P}(\bH,h,g_{\partial})$. Then using $J'\in \cU \cap \cJ^{\le 1}_{reg}(\bH,f,g)\cap \cJ^{\le 1}_{reg,P}(\bH,f,g)\cap  \cJ^{\le 1}_{reg,H}(\bH,f,g,h,g_{\partial})$, we have that $P$ defines a cochain map homotopic to $R\circ d_{+,0,J'}$. By Proposition \ref{prop:lowreg2}, $d_{+,0,J'}$ is well-defined up to homotopy for different $J'$. Finally, similar to Proposition \ref{prop:lowreg}, $d_+,P$ are locally constant. The compatibility with continuation map is standard, where the homotopy is defined by considering moduli spaces similar to $\cP_{*,*}$ but with an $s$-dependent almost complex structure and without quotienting the $\R$-action (which does not exist). 
\end{proof}

All above discussions about naturality lead to the following proposition that will be used in the proof of Theorem \ref{thm:A}.
\begin{proposition}\label{prop:compute}
	Let $J^i\in \cJ^{D_i}_{reg,+}(\bH)\cap \cJ^{D_i}_{reg,P}(\bH,h,g_{\partial})$, then the direct limit of the following computes $SH^*_+(W)\to H^{*+1}(\partial W)$:
	$$\left\{H^*(C^{D_1}_+(\bH,J_1)) \to H^*(C^{D_2}_+(\bH,J_2))\to \ldots \right\} \to H^{*+1}(\partial W),$$
	where horizontal arrows in the bracket are  continuation maps and  maps $H^*(C_+^{D_i}(\bH,J_i))\to H^{*+1}(\partial W)$ are defined by $P$.
\end{proposition}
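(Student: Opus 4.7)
The plan is to combine Proposition \ref{prop:lowreg3} with a direct-limit argument. By Proposition \ref{prop:lowreg3}, for each $J_i \in \cJ^{D_i}_{reg,+}(\bH)\cap \cJ^{D_i}_{reg,P}(\bH,h,g_\partial)$, the map $P: C^{D_i}_+(\bH,J_i) \to C(h)[1]$ is well-defined up to homotopy, and on cohomology it equals $R_* \circ (\delta_{D_i})_*$, where $\delta_{D_i}$ is the connecting map of the truncated tautological short exact sequence \eqref{eqn:short} restricted to action at least $-D_i$ and $R_*$ is the restriction map from Proposition \ref{prop:restriction}. So at each finite level we already know the $P$-induced map factors through the cohomology of the filling.

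First I would assemble the truncated complexes into a directed system. For each $i$, choose a smooth homotopy of admissible almost complex structures between $J_i$ and $J_{i+1}$. Since $\bH$ is fixed, any continuation cylinder has energy equal to the action difference of its asymptotes, so the resulting continuation map $\Phi_+ : C_+(\bH,J_i) \to C_+(\bH,J_{i+1})$ preserves the length/action filtration and restricts to $\Phi_+^{D_i}: C^{D_i}_+(\bH,J_i) \to C^{D_i}_+(\bH,J_{i+1})$. Composing with the tautological inclusion $C^{D_i}_+(\bH,J_{i+1}) \hookrightarrow C^{D_{i+1}}_+(\bH,J_{i+1})$ gives the structure maps of the directed system, and its cohomological colimit is $SH^*_+(W)$ by Proposition \ref{prop:cascades}. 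To ensure that such continuation and the truncated $P$-maps are simultaneously well-defined, I would pick these homotopies of almost complex structures inside the open neighborhoods furnished by Propositions \ref{prop:lowreg}--\ref{prop:lowreg3}; density of the higher-regularity subset makes this possible.

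Next I would verify the compatibility of $P$ with the directed system. The tautological inclusions commute with $P$ on the nose since $P$ is defined by identical moduli spaces at each truncation level, and by the second assertion of Proposition \ref{prop:lowreg3}, $P$ commutes with continuation maps up to chain homotopy. Therefore the collection $\{P_* : H^*(C^{D_i}_+(\bH,J_i)) \to H^{*+1}(\partial W)\}$ assembles into a morphism of directed systems from $\{H^*(C^{D_i}_+(\bH,J_i))\}$ to the constant system $H^{*+1}(\partial W)$, and hence descends to a well-defined map on the colimit.

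Finally, since at each truncation level $P_* = R_* \circ (\delta_{D_i})_*$, passing to the direct limit identifies $\varinjlim (\delta_{D_i})_*$ with the connecting map $\delta: SH^*_+(W) \to H^{*+1}(W)$ of the long exact sequence in Proposition \ref{prop:cascades}, so the limiting map is $R_* \circ \delta$, which is precisely the composition $SH^*_+(W) \to H^{*+1}(W) \to H^{*+1}(\partial W)$ asserted in the proposition. The main obstacle is not any single compatibility but ensuring that the low-regularity choices $J_i$ can be interpolated by homotopies staying inside the relevant $\cJ^{D_i}_{reg}$-type open sets; this is precisely the content of Propositions \ref{prop:lowreg}--\ref{prop:lowreg3} and underlies why the proposition reduces to a formal direct-limit statement.
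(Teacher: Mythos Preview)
Your argument is essentially correct and uses the same ingredients as the paper, but the paper organizes them differently and more efficiently. Rather than building the directed system with the varying $J_i$ and then separately verifying (i) that its colimit is $SH^*_+(W)$ and (ii) that the $P$-maps are compatible, the paper first fixes a \emph{single} sufficiently regular $J$ so that the constant-$J$ system
\[
\left\{H^*(C^{D_1}_+(\bH,J)) \to H^*(C^{D_2}_+(\bH,J)) \to \ldots\right\} \to H^{*+1}(\partial W)
\]
manifestly computes $SH^*_+(W)\to H^{*+1}(\partial W)$ (this is immediate from Proposition~\ref{prop:cascades} and the definition of $\delta_\partial$), and then observes that this diagram is isomorphic to the one with the $J_i$ via continuation maps, using Proposition~\ref{prop:lowreg3} and functoriality (Proposition~\ref{prop:natural}). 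This single comparison handles both issues at once.

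Two small remarks on your write-up. First, your appeal to Proposition~\ref{prop:cascades} to identify the colimit of the $J_i$-system with $SH^*_+(W)$ is not quite direct: that proposition is stated for a fixed $J$, so you are implicitly invoking exactly the comparison-to-a-fixed-$J$ step that the paper makes explicit. Second, your ``main obstacle'' paragraph is misleading: the homotopies between $J_i$ and $J_{i+1}$ do \emph{not} need to stay inside $\cJ^{D_i}_{reg}$-type sets. Continuation maps are defined for generic homotopies regardless, and their compatibility with $P$ is the second assertion of Proposition~\ref{prop:lowreg3}; the path-in-regular-set argument is relevant to Lemma~\ref{lemma:local}, not here.
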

\begin{proof}
	Pick a regular enough $J$, such that the direct limit of $\left\{H^*(C^{D_1}_+(\bH,J)) \to H^*(C^{D_2}_+(\bH,J))\to \ldots \right\} \to H^{*+1}(\partial W)$ computes $SH^*_+(W)\to H^{*+1}(\partial W)$. By Proposition \ref{prop:lowreg3} and functoriality of continuation maps, this diagram is isomorphic to the sequence in the claim by continuation maps.
\end{proof}

\subsection{Independence}\label{sub:ind}
Asymptotically dynamically convex (ADC) contact manifolds was introduced in \cite{lazarev2016contact}. Let $(Y,\xi)$ be a $2n-1$ dimensional contact manifold with a contact form $\alpha$, then we define
$$\cP^{<D}(\alpha):=\left\{\gamma\left| \gamma \text{ is a contractible Reeb orbit and } \int_{\gamma}\alpha<D \right.\right\}$$
If $c_1(\xi) = 0$, then for any contractible non-degenerate Reeb orbit $x$, there is an associated Conley-Zehnder index $\mu_{CZ}(x) \in \Z$. The degree of $x$ is defined to be $\deg(x):=\mu_{CZ}(x)+n-3$.
\begin{definition}\label{def:ADC}
	Let $(Y,\xi)$ be a contact manifold. $Y$ is called $k$-ADC iff there exists a sequence of contact form $\alpha_1>\ldots > \alpha_i >\ldots$ and real numbers $D_1 < \ldots < D_i < \ldots \to \infty$, such that all (contractible) Reeb orbits in $\cP^{<D_i}(\alpha_i)$ are non-degenerate and have degree greater than $k$. $Y$ is called strongly $k$-ADC if in addition, all Reeb orbits of $\alpha_i$ with period smaller than $D_i$ are contractible. We will abbreviate (strongly) $0$-ADC manifolds to (strongly) ADC manifolds.
\end{definition}
\begin{remark}
	In general, when $c_1(\xi)=0$, choosing a trivialization of $\det_{\C}\xi$ will assign a Conley-Zehnder index to every Reeb orbit. The Conley-Zehnder index of $\gamma$ is independent of trivializations if $\gamma$ is annihilated by $H^1(Y;\Z)=[Y,S^1]$ (which is the space of trivializations up to homotopy), i.e. $\beta(\gamma)=0$ for any $\beta \in H^1(Y;\Z)$ in the pairing $H^1(Y;\Z)\otimes H_1(Y;\Z) \to \Z$. The reason of considering topologically simple filling $W$ is to make sure a Reeb orbit $\gamma$ contractible in $W$ is assigned with a well-defined Conley-Zehnder index using only the boundary. From this point of view, one can consider a slight generalization of ADC manifolds and their topologically simple fillings, i.e. $\cP^{<D}$ now stands for orbits  annihilated by $H^1(Y;\Z)$ with period $<D$ and topologically simple filling now requires $H^1(W;\Z)\to H^1(Y;\Z)$ is injective and $c_1(W)=0$. Most results in the paper hold in such setting too.
\end{remark}

\begin{example}\label{ex:ADC}
	Contact manifolds admitting flexible Weinstein fillings with vanishing first Chern class are ADC by the work of Lazarev \cite{lazarev2016contact} and are strongly ADC for subcritically fillable contact manifolds. Moreover, Lazarev showed that ADC is preserved under flexible surgery\footnote{Some extra conditions need to be satisfied when attaching a $2$-handle, c.f. \cite[Theorem 3.17]{lazarev2016contact}.}. Cotangent bundle $T^*M$ is ADC whenever $\dim M \ge 4$. Quantization bundles over a positive monotone symplectic manifold are ADC, whenever the degree of the bundle is not too big. 
\end{example}

In the following, we introduce an analogous definition for filling, which will be used in the construction of ADC manifolds in \S \ref{s6}. Assume contact manifold $(Y,\xi)$ has a symplectic filling $W$ with $c_1(W)=0$. Let $x$ be a non-degenerate Reeb orbit, if $x$ is contractible in $W$, then a canonical Conley-Zehnder index can be assigned.  
\begin{definition}\label{def:ADCfilling}
	Let $(W,\lambda)$ be a Liouville domain with $c_1(W) = 0$. $W$ is called $k$-ADC if there exist positive functions on $\partial W$
	$f_1>\ldots > f_i >\ldots$ and real numbers $D_1 < \ldots < D_i < \ldots \to \infty$, such that all contractible (in $W$) Reeb orbits of $(\partial W, f_i\lambda)$ with period smaller than $D_i$ are non-degenerate and have degree greater than $k$. $W$ is called strongly $k$-ADC if in addition all Reeb orbits of $f_i\lambda_i$ with period smaller than $D_i$ are contractible in $W$. In particular if $W$ is ADC, then $\partial W$ is also ADC. 
\end{definition}
\begin{example}
	In \S \ref{s6}, we show that $V\times \C$ is always ADC for any  Liouville domain $V$ with $c_1(V)=0$ and $\dim V>0$. $V\times W$ is always ADC, given $V,W$ are both ADC Liouville domains of dimension $\ge 4$. They provide more examples of ADC contact manifolds.
\end{example}
\begin{example}
	It is possible that $\partial W$ is ADC but $W$ is not ADC, the source of this discrepancy is that we can have Reeb orbits that are non-contractible in the boundary but are contractible in the filling and have low SFT degree. For example, $S^1$ is ADC, but the $\D$ disk is not ADC.  
\end{example}

\begin{remark}\label{rmk:ADC}
	Dynamical convexity was introduced in \cite{hofer1998dynamics} on $(S^3,\xi_{std})$ as a substitute of the geometric convexity. For $(S^{2n-1},\xi_{std})$, a contact form is dynamical convex if the minimal Conley-Zehnder index is $n+1$, as it is the case for convex hypersurfaces in $\R^{2n}$. Note that this is the lowest degree that is nontrivial in the cylindrical contact homology of $(S^{2n-1},\xi_{std})$. Following this idea,  Abreu-Macarini \cite{abreu2017dynamical} defined dynamical convex for a larger class of contact manifolds, as a property of contact forms. Although with similar names, Lazarev's asymptotically dynamically convexity has a very different motivation, which is a generalization of index-positive in \cite{cieliebak2018symplectic} and is related to the existence of nice contact forms introduced in \cite{eliashberg2000introduction}. It is clear that ADC is equivalent to $\sup_{\substack{\alpha_1>\alpha_2>\ldots,\\D_1<D_2<\ldots \to \infty }}(\inf_{x\in \cP^{<D_i}(\alpha_i),i\in \N}\deg(x))>0$. Such number is an invariant of the contact topology. A similar number was defined by McLean \cite{mclean2016reeb} and shown to be equal to twice the minimal discrepancy for a large class of isolated singularities, when it is nonnegative.  In particular, the link is ADC if the singularity is terminal.
\end{remark}

From the first glance of $\cP_{p,y}$, $\cP_{p,y}$ has some chance to be independent of the filling since both $p,y$ only depend on the contact boundary. However, the curve $u$ in $\cP_{p,y}$ may rely on the filling. If we have the ADC property, a neck-stretching argument implies that $\cP_{p,y}$ actually does not see the interior of $W$. Neck-stretching argument was used to show independence of $SH_+^*(W)$ in \cite{cieliebak2018symplectic} for index-positive convex manifolds. It is easy to show $H^*(C^{D_i}_+(\bH,J))\to H^{*+1}(\partial W)$ is independent of filling for any $D_i$ by neck-stretching. But we also need some naturality of the independence. In the case of independence of $SH^*_+(W)$, naturality was discussed carefully in \cite[Proposition 3.8]{lazarev2016contact} for ADC manifolds. In the following, we give a simplified treatment. Since in our case $\bH$ will not change and is already constant on $W$, we can bypass the Viterbo transfer map in the proof of \cite[Proposition 3.8]{lazarev2016contact}.

Let $(Y,\alpha)$ be an ADC contact manifold with two topologically simple fillings $W_1,W_2$ with fixed Hamiltonians $\bH_1=\bH_2=\bH$ outside $W_1,W_2$ as in \S \ref{s2}. Note that $W_1, W_2$ both contain the negative end of symplectization $(Y\times (0,1), \rd(r\alpha))$. Since $Y$ is ADC, for every $i\in \N_+$, there exist contact type surfaces $Y_i\subset Y\times (0,1-\epsilon)\subset W_1,W_2$, such that $Y_i$ lies outside of $Y_{i+1}$ and contractible Reeb orbits of contact form $r\alpha|_{Y_i}$ has the property that the degree of an orbit is greater than $0$ if the period of the orbit is smaller than $D_i$. 
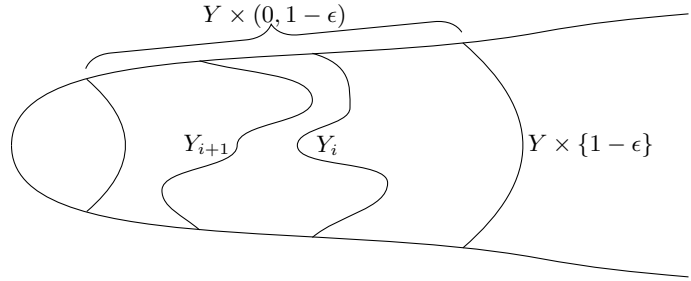
\begin{figure}[H]
	\begin{center}
	\begin{tikzpicture}[yscale=0.5]
	\draw (0,0) to [out=90, in =200] (7,3);
	\draw (7,3) to [out=20, in=190] (9,3.5);
	\draw (0,0) to [out=270, in =160] (7,-3);
	\draw (7,-3) to [out=340, in=170] (9,-3.5);
	\draw (6,2.73) to [out=300, in=60](6,-2.73);
	\draw (1,1.76) to [out=300, in=60](1,-1.76);
	\draw (4,2.45) to [out=330, in=90] (4.5,1) to [out=270, in=90] (3.8,0) to  [out=270, in=90](5,-1) to [out=270, in=60](4,-2.45);
	\draw (2.5,2.25) to [out=330, in=90](4,1.2) to [out=270, in=90](3,0) to [out=270, in=90](2,-1.2) to [out=270, in=120] (2.5,-2.25);
	\node at (4.2,0) {\footnotesize$Y_i$};
	\node at (2.6,0) {\footnotesize$Y_{i+1}$};
	\node at (7.7,0) {\footnotesize$Y\times \{1-\epsilon\}$};
	\draw [decorate,decoration={brace,amplitude=10pt,raise=4pt},yshift=0pt]
	(1,1.76) -- (6,2.73) node [black,midway,yshift=0.6cm] {\footnotesize$Y\times (0,1-\epsilon)$};
	\end{tikzpicture}
	\end{center}
    \caption{$Y_i\subset \widehat{W}_*$}
\end{figure}

Neck-stretching near $Y_i$ is given by the following. Assume $Y_i\times [1-\epsilon_i,1+\epsilon_i]_{r_i}$ does not intersect each other for some small $\epsilon_i$. Assume $J|_{Y_i\times [1-\epsilon_i,1+\epsilon_i]_{r_i}}=J_0$, where $J_0$ is independent of $S^1$ and $r_i$ and $J_0(r_i\partial_{r_i})=R_i,J_0\xi_i=\xi_i$, where $\xi_i=\ker r\alpha|_{Y_i}$. Then we pick a family of diffeomorphism $\phi_R:[(1-\epsilon_i)e^{1-\frac{1}{R}}, (1+\epsilon_i)e^{\frac{1}{R}-1}]\to [1-\epsilon_i,1+\epsilon_i]$ for $R\in (0,1]$ such that $\phi_1=\Id$ and $\phi_R$ near the boundary is linear with slope $1$. Then the stretched almost complex structure $NS_{i,R}(J)$ is defined to be $J$ outside $Y_i\times [1-\epsilon_i,1+\epsilon_i]$ and is $(\phi_R\times \Id)_*J_0$ on $Y_i\times [1-\epsilon_i,1+\epsilon_i]$. Then $NS_{i,1}(J)=J$ and $NS_{i,0}(J)$ gives almost complex structures on the completions of the cobordism between $Y$ and $Y_i$, the filling of $Y_i$, and  the symplectization $Y_i\times \R_+$. Since we need to stretch along different contact surfaces, we assume the $NS_{i,R}(J)$ have the property that $NS_{i,R}(J)$ will modify the almost complex structure near $Y_{i+1}$ to a cylindrical almost complex structure for $R$ from $1$ to $\frac{1}{2}$ and for $R\le \frac{1}{2}$, we only keep stretching along $Y_i$. We use $\cJ^{D_i}_{reg,SFT}(\bH,h,g_{\partial})$ to denote the set of admissible regular $J$, i.e. almost complex structures satisfying Definition \ref{def:convex} on the completion of the cobordism between $Y$ and $Y_i$ and asymptotic (in a prescribed way as in the stretching process) to $J_0$ on the negative cylindrical end, such that the following two moduli spaces in the picture up to dimension $0$ with action of the positive end $\ge -D_i$ are cut out transversely. It is an open dense set.
\begin{figure}[H]
\begin{center}
\begin{tikzpicture}
\draw[rounded corners](0,0) -- (0,4) -- (2,4) -- (2,6);
\draw[rounded corners](3,6)--(3,5)--(5,5)--(5,3);
\draw[rounded corners](4,3)--(4,4)--(3,4)--(3,0);
\draw[rounded corners](2,0)--(2,3)--(1,3)--(1,0);

\draw (0,0) to [out=270,in=180] (0.5,-0.25);
\draw (0.5,-0.25) to [out=0,in=270] (1,0);
\draw[dotted] (0,0) to [out=90,in=180] (0.5,0.25);
\draw[dotted] (0.5,0.25) to [out=0,in=90] (1,0);

\draw (2,0) to [out=270,in=180] (2.5,-0.25);
\draw (2.5,-0.25) to [out=0,in=270] (3,0);
\draw[dotted] (2,0) to [out=90,in=180] (2.5,0.25);
\draw[dotted] (2.5,0.25) to [out=0,in=90] (3,0);

\draw (4,3) to [out=270,in=180] (4.5,2.75);
\draw (4.5,2.75) to [out=0,in=270] (5,3);
\draw[dotted] (4,3) to [out=90,in=180] (4.5,3.25);
\draw[dotted] (4.5,3.25) to [out=0,in=90] (5,3);

\draw (2,6) to [out=270,in=180] (2.5,5.75);
\draw (2.5,5.75) to [out=0,in=270] (3,6);
\draw (2,6) to [out=90,in=180] (2.5,6.25);
\draw (2.5,6.25) to [out=0,in=90] (3,6);

\node at (2.5,6) {$y$};
\node at (4.5,3) {$x$};
\node at (0.5,0) {$\gamma_1$};
\node at (2.5,0) {$\gamma_2$};

\draw[rounded corners](8,0) -- (8,4) -- (10,4) -- (10,6);
\draw[rounded corners](11,6)--(11,5)--(13,5)--(13,3);
\draw[rounded corners](12,3)--(12,4)--(11,4)--(11,0);
\draw[rounded corners](10,0)--(10,3)--(9,3)--(9,0);

\draw (8,0) to [out=270,in=180] (8.5,-0.25);
\draw (8.5,-0.25) to [out=0,in=270] (9,0);
\draw[dotted] (8,0) to [out=90,in=180] (8.5,0.25);
\draw[dotted] (8.5,0.25) to [out=0,in=90] (9,0);

\draw (10,0) to [out=270,in=180] (10.5,-0.25);
\draw (10.5,-0.25) to [out=0,in=270] (11,0);
\draw[dotted] (10,0) to [out=90,in=180] (10.5,0.25);
\draw[dotted] (10.5,0.25) to [out=0,in=90] (11,0);

\draw (10,6) to [out=270,in=180] (10.5,5.75);
\draw (10.5,5.75) to [out=0,in=270] (11,6);
\draw (10,6) to [out=90,in=180] (10.5,6.25);
\draw (10.5,6.25) to [out=0,in=90] (11,6);

\draw[dotted] (12,3) to [out=270,in=180] (12.5,2.75);
\draw[dotted] (12.5,2.75) to [out=0,in=270] (13,3);
\draw[dotted] (12,3) to [out=90,in=180] (12.5,3.25);
\draw[dotted] (12.5,3.25) to [out=0,in=90] (13,3);

\draw (12,3) to [out=270,in=180] (12.5,2);
\draw (12.5,2) to [out=0,in=270] (13,3);

\draw[->] (12.5,2) -- (13.25,2);
\node at (13.4,2.25) {$\nabla_{g_{\partial}}h$};
\draw (13.25,2) -- (14,2);
\draw (14,2) circle[radius=1pt];
\fill (14,2) circle[radius=1pt];

\node at (10.5,6) {$y$};
\node at (14.2,2.2) {$p$};
\node at (8.5,0) {$\gamma_1$};
\node at (10.5,0) {$\gamma_2$};

\draw[dashed] (-0.5,0)--(0,0);
\draw[dashed] (1,0)--(2,0);
\draw[dashed] (3,0)--(8,0);
\draw[dashed] (9,0)--(10,0);
\draw[dashed] (11,0)--(14,0);
\node at (6,0.2) {$Y_i\times \{0\}$};

\draw[dashed] (11.5,2) -- (15,2);
\node at (13,1.6) {$\partial W\times \{1-\epsilon\}$};
\end{tikzpicture}
\end{center}
\caption{Moduli spaces for the definition of  $\cJ^{D_i}_{reg,SFT}(\bH,h,g_{\partial})$.}\label{fig:SFT}
\end{figure}
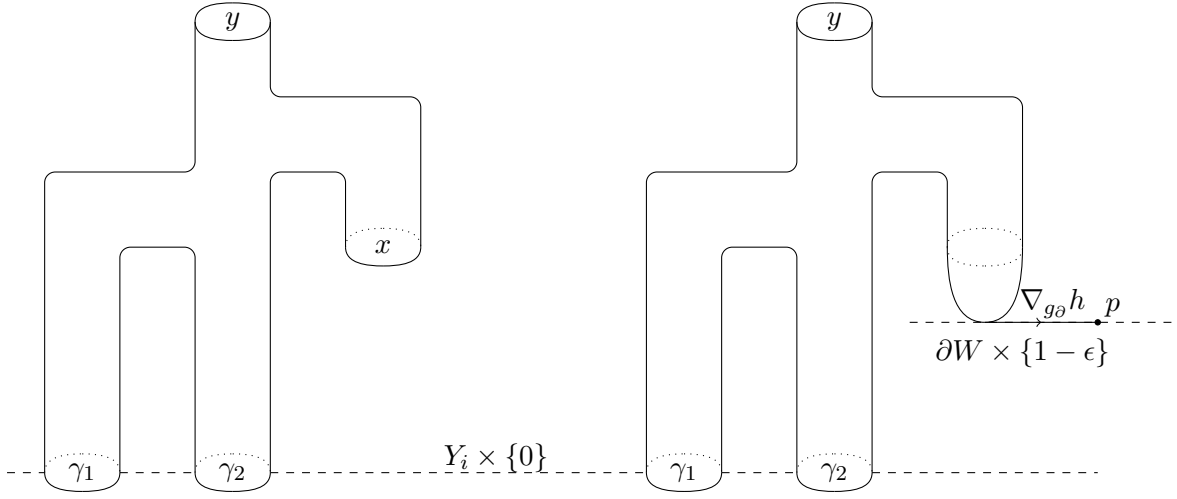

\begin{proposition}\label{prop:ind}
With setups above, there exist admissible $J^1_1,J^2_1,\ldots,J^1_2,J^2_2,\ldots$ on $\widehat{W_1}$ and $\widehat{W_2}$ respectively and positive real numbers $\epsilon_1, \epsilon_2,\ldots <\frac{1}{2}$ such that the following holds.
\begin{enumerate}
	\item\label{ns1} For $R<\epsilon_i$ and any $R'$, $NS_{i,R}(J^i_*), NS_{i+1,R'}(NS_{i,R}(J^i_*))\in \cJ^{D_i}_{reg,+}(\bH)\cap \cJ^{D_i}_{reg,P}(\bH,h,g_{\partial})$. Moreover, for such almost complex structures,  all zero dimensional moduli spaces $\cM_{x,y}$ and $\cP_{p,y}$ are the same for both $W_1,W_2$ and contained outside $Y_i$ for $x,y\in \cP^*(\bH)$ with action $\ge - D_i$, $q\in C(h)$.   
	\item\label{ns2} $J^{i+1}_*=NS_{i,\frac{\epsilon_i}{2}}(J^i_*)$ on $W^i_*$. 
\end{enumerate}
\end{proposition}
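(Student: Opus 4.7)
The plan is to proceed by induction on $i$, using SFT-type neck-stretching compactness together with the ADC index estimate to force limit curves to stay outside $Y_i$, and then perturb to achieve regularity within an open set compatible with all previous choices.

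For the base case $i=1$, pick any admissible $J$ on each $\widehat{W_*}$ that is cylindrical near $Y_1,Y_2,\ldots$ as required, and consider the stretched family $NS_{1,R}(J)$ for $R \to 0$. Apply SFT compactness to the moduli spaces $\cM_{x,y}$ and $\cP_{p,y}$ with positive asymptote of action $\ge -D_1$ and virtual dimension $\le 0$. Any subsequential limit splits into pieces living in the symplectization $Y_1 \times \R$, the cobordism between $Y$ and $Y_1$ (the top piece), and the filling of $Y_1$ (the bottom piece). Any component mapping into the filling of $Y_1$ is an SFT plane/cylinder with only positive punctures asymptotic to contractible Reeb orbits of $Y_1$ with action $<D_1$; by the ADC hypothesis each such orbit has degree $>0$. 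A standard index count then shows such components have strictly positive virtual dimension, and gluing them back contributes net positive virtual codimension to the total curve. Since the total virtual dimension is $\le 0$, such limits are generically empty, so for $R$ below some threshold $\epsilon_1$, all relevant curves live outside $Y_1$, hence in the common cobordism piece between $Y$ and $Y_1$ that is shared by $W_1$ and $W_2$.

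Regularity is arranged as follows. The sets $\cJ^{D_1}_{reg,+}(\bH)$, $\cJ^{D_1}_{reg,P}(\bH,h,g_\partial)$ and $\cJ^{D_1}_{reg,SFT}(\bH,h,g_\partial)$ are open and dense (openness uses the action/length filtration and a priori energy bounds). Choose $J^1_*$ inside a common open regular set; since regularity is an open condition and the stretched structures $NS_{1,R}(J^1_*)$ converge to an SFT-regular limit, after possibly shrinking $\epsilon_1$ we have $NS_{1,R}(J^1_*) \in \cJ^{D_1}_{reg,+}(\bH) \cap \cJ^{D_1}_{reg,P}(\bH,h,g_\partial)$ for all $R < \epsilon_1$. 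The stretching along $Y_2$ inside the region we have already frozen to be cylindrical leaves the relevant curves unchanged (they sit outside $Y_1 \supset Y_2$), giving the statement for $NS_{2,R'}(NS_{1,R}(J^1_*))$ as well. Crucially, by construction $J^1_1$ and $J^1_2$ may be chosen to coincide on the common piece $\{r \ge 1-\epsilon\} \cup (\text{cobordism from } Y \text{ to } Y_1)$, which together with the localization of all zero-dimensional moduli to that common region yields the equality of $\cM_{x,y}$ and $\cP_{p,y}$ for the two fillings.

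For the inductive step, set $J^{i+1}_* := NS_{i,\epsilon_i/2}(J^i_*)$ on $W^i_*$ (this enforces (\ref{ns2})), and extend it arbitrarily as an admissible structure cylindrical near $Y_{i+1}, Y_{i+2},\ldots$. Apply the same argument with $D_{i+1}$ and $Y_{i+1}$ in place of $D_1$ and $Y_1$: SFT compactness plus ADC with threshold $D_{i+1}$ rules out limits entering past $Y_{i+1}$, and a small perturbation of the extension (away from $W^i_*$) lands inside the open dense regular set. Choose $\epsilon_{i+1} < 1/2$ so that $NS_{i+1,R}(J^{i+1}_*)$ is regular in the required sense for $R < \epsilon_{i+1}$, proving (\ref{ns1}) at level $i+1$.

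The main obstacle is the SFT index count certifying that no limit curve enters the interior bounded by $Y_i$: one must combine the grading $|x|=n-\mu_{CZ}(x)$, the ADC degree bound $\mu_{CZ}+n-3>0$ for orbits of period $<D_i$, and the Fredholm index formula for SFT planes/cylinders to conclude that every closed-off component has positive expected dimension, so its appearance in a $0$-dimensional moduli space is generically excluded. Everything else (openness of regularity, the freezing trick for $J^{i+1}_*$ on $W^i_*$, and identification of moduli across the two fillings) is a bookkeeping consequence once this index estimate is in place.
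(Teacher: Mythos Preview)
Your overall strategy---neck-stretching along $Y_i$, SFT compactness, and an ADC index count to exclude limits entering the filling, followed by an inductive perturbation---is the same as the paper's. But the index argument is pointed at the wrong side of the broken building, and this is where your write-up becomes imprecise.

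You argue that each component landing in the filling of $Y_1$ has \emph{positive} virtual dimension and that this forces the total limit to be empty. Positive virtual dimension of a lower piece does not make it empty, and for multi-punctured spheres in the filling the index need not be positive at all (only planes have index exactly $\deg(\gamma)>0$). The paper instead analyzes the \emph{top} piece $u$, the unique connected component living in the cobordism from $Y_i$ to $Y$ (connectedness uses \cite[Lemma~2.4]{cieliebak2018symplectic}). Its Fredholm index is computed directly from its asymptotics,
\[
\ind(u)=|x|-|y|-\sum_j\bigl(\mu_{CZ}(\gamma_j)+n-3\bigr)<1,
\]
because each $\gamma_j$ has positive degree. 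After quotienting the $\R$-translation the expected dimension is negative, and since $J$ was chosen so that $NS_{i,0}(J)\in\cJ^{D_i}_{reg,SFT}$ (regularity for precisely these top pieces), such $u$ cannot exist. This is what the set $\cJ^{D_i}_{reg,SFT}$ is designed for, and it is crucial that the top piece sits in the region common to $W_1$ and $W_2$, so a single choice of SFT-regular data works for both fillings simultaneously.

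Two further points you skipped. First, to apply the ADC degree bound to the $\gamma_j$ you need them contractible in $Y_i$; a priori they are only contractible in $W_*$, and it is the \emph{topologically simple} hypothesis ($\pi_1(Y)\hookrightarrow\pi_1(W_*)$) that forces contractibility in $Y$, hence in $Y_i$. Second, the uniformity of $\epsilon_i$ over all $R'\in[0,1]$ for the double stretch $NS_{i+1,R'}(NS_{i,R}(J^i_*))$ needs a compactness argument over $R'$ (the paper covers $[0,1]$ by finitely many intervals on which the ``outside $Y_i$ and regular'' property persists); your remark that the curves ``sit outside $Y_1\supset Y_2$'' is morally right but leaves this step implicit.
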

\begin{proof}
	We prove the proposition by induction. We first choose a $J^1$ such that $NS_{1,0}(J^1)\in \cJ^{D_1}_{reg,SFT}(\bH,h,g_{\partial})$. Assume $\cM_{x,y}$ is not contained outside $Y_1$ in the stretching process. Then a limit curve $u$ outside $Y_1$ has one component by \cite[Lemma 2.4]{cieliebak2018symplectic}\footnote{Note that our symplectic action has the opposite sign compared to \cite[Proposition 9.17]{cieliebak2018symplectic}.}. Moreover, by the argument in \cite[Lemma 2.4]{cieliebak2018symplectic}, $u$ can only asymptotic to Reeb orbits $\gamma_j$ that are contractible in $W_*$ on $Y_1$ with period smaller than $D_1$. Since $W_*$ is topological simple, $\gamma_j$ is contractible in $Y_1$.
	Note that $\ind(u)=|y|-|x|-\sum(\mu_{CZ}(\gamma_j)+n-3) < 1$ as $\mu_{CZ}(\gamma_j)+n-3>0$. Since we need to quotient the $\R$-action, the expected dimension of the moduli space of such $u$ is negative. By our regularity assumption on $NS_{1,0}(J^1)$, $u$ is cut out transversely, which contradicts its existence. Hence for $R$ close to $0$, using $NS_{1,R}(J^1)$, we have $\cM_{x,y}$ lives outside $Y_1$. Since $NS_{1,0}(J^1)\in \cJ^{D_1}_{reg,SFT}(\bH,h,g_{\partial})$ and every curve in $\cM_{x,y}$ lives outside $Y_1$, we have $NS_{1,R}(J^1)\in \cJ^{D_1}_{reg,+}(\bH)$ for $R$ small. Similarly, we have the same property for $\cP_{x,q}$ and $NS_{1,R}(J^1)\in \cJ^{D_1}_{reg,H}(\bH,h,g_{\partial})$ for $R$ small. The argument can be applied to stretching on both $Y_1,Y_2$ and a compactness argument shows that for every $R'\in [0,1]$, there exists $\epsilon_{R'}>0$ and $\delta_{R'}>0$ such that the same regular and outside property hold for $NS_{2,\delta}(NS_{1,\epsilon})(J^1)$ for $\epsilon < \epsilon_{R'}$ and $|\delta-R'|<\delta_{R'}$. Therefore Claim \eqref{ns1} holds for some $\epsilon_1$. Since $NS_{2,0}(NS_{1,0}(J^1))$ has the property that all curves in Figure \ref{fig:SFT} with $x,y\in C_+^{D_1}$ must be contained outside $Y_1$, i.e. curves in $\cM_{x,y}, \cP_{p,y}$ (which are viewed as limit curves after stretching). Then we may assume $\epsilon_1$ is small enough such that $NS_{2,0}(NS_{1,\frac{\epsilon_1}{2}}(J^1))\in \cJ^{D_1}_{reg,SFT}(\bH,h,g_{\partial})$. Therefore we can perturb $NS_{1,\frac{\epsilon_1}{2}}(J^1)$ outside $W^1_*$ near orbits in $W^2_*\backslash W^1_*$ to $J^{2}$ such that $NS_{2,0}(J^2)\in \cJ^{D_2}_{reg,SFT}(\bH,h,g_{\partial})$, this will not influence the previous regular property for periodic orbits with action down to $-D_1$ by the integrated maximum principle (Lemma \ref{lemma:max}). Then we can keep the induction going. It is clear the construction can be made on both $W_1,W_2$ yielding the same $\cM_{x,y}$ and $\cP_{p,y}$.  
\end{proof}
\begin{remark}\label{rmk:regular}
	From the proof above, it is clear that we can not guarantee $NS_{i,R}(J^i)$ in $\cJ^{\le1}_{reg,+}(\bH)$ for all $R$ small unless we assume $1$-ADC. Moreover, there is no guarantee for $NS_{i,R}(J^i)$ in $\cJ_{reg}(\bH,f,g)$, that is why we need Proposition \ref{prop:lowreg2} and Proposition \ref{prop:lowreg3}.
\end{remark}

\begin{proof}[Proof of Theorem \ref{thm:A}]
Using the almost complex structures from Proposition \ref{prop:ind}, we have the following sequence for both fillings
\begin{equation}\label{eqn:seq}
\left\{ C^{D_1}_+(NS_{1,\frac{\epsilon_1}{2}}J^1)\to C^{D_2}_+(NS_{2,\frac{\epsilon_2}{2}}J^2) \to  C^{D_3}_+(NS_{3,\frac{\epsilon_3}{2}}J^3) \ldots    \right\} \to C(h)[1],
\end{equation}
where each complex  and the map $P$ to $C(h)[1]$ are identified with each other for both fillings. Therefore it suffices to show the continuation map $C^{D_i}_+(NS_{{i},\frac{\epsilon_{i}}{2}}J^i) \to  C^{D_{i+1}}_+(NS_{{i+1},\frac{\epsilon_{i+1}}{2}}J^{i+1})$
is naturally identified. The continuation map is decomposed into continuation maps $\Phi:C^{D_i}_+(NS_{{i},\frac{\epsilon_{i}}{2}}J^i)\to C^{D_i}_+(NS_{i+1,\frac{\epsilon_{i+1}}{2}}(NS_{{i},\frac{\epsilon_{i}}{2}}J^i))$ and $\Psi:C^{D_i}_+(NS_{i+1,\frac{\epsilon_{i+1}}{2}}(NS_{{i},\frac{\epsilon_{i}}{2}}J^i)) \to C_+^{D_{i+1}}(NS_{i+1,\frac{\epsilon_{i+1}}{2}}J^{i+1})$. Then $\Phi$ is identity by Lemma \ref{lemma:local} using homotopy $NS_{i+1,s}(NS_{i,\frac{\epsilon_i}{2}})(J^i)$  for $s\in [\frac{\epsilon_{i+1}}{2}, 1]$. Since $J^{i+1}$ is the same as $NS_{i,\frac{\epsilon_i}{2}}(J^i)$ inside $W^i$, then the integrated maximum principle implies that $\Psi$ is composition $C^{D_i}_+(NS_{{i+1},\frac{\epsilon_{i+1}}{2}}(NS_{i,\frac{\epsilon_i}{2}}J^i)) \stackrel{\Id}{\to} C^{D_i}_+(NS_{i+1,\frac{\epsilon_{i+1}}{2}}J^{i+1})\hookrightarrow C^{D_{i+1}}_+(NS_{{i+1},\frac{\epsilon_{i+1}}{2}}J^{i+1})$, which is the inclusion, hence it is the same for both $W_1,W_2$. Therefore continuation maps in \eqref{eqn:seq} are inclusions, hence the whole diagram can be identified, and Proposition \ref{prop:compute} implies the theorem. 
\end{proof}
\begin{proof}[Proof of Corollary \ref{cor:B}]
	If $SH^*(W)=0$, then $1\in \Ima \delta_{\partial}$. Then by Theorem \ref{thm:A}, for any other topologically simple exact filling $W'$, we have $1\in \Ima \delta_{\partial}$. Then $SH^*(W')=0$ and Theorem \ref{thm:A} implies the invariance of $H^*(W;\Z)\to H^*(Y;\Z)$. 
\end{proof}

Using Theorem \ref{thm:A}, we derive the following obstruction to Weinstein fillability. 
\begin{corollary}\label{cor:ob}
	Let $Y$ be a $2n-1$ dimensional ADC contact manifold and $n\ge 3$. If $Y$ admits a topologically simple exact filling $W$ such that a non-trivial element of grading greater than $n$ is in the image of $SH_+^*(W) \to H^{*+1}(Y)$, then $Y$ does not admit Weinstein fillings.
\end{corollary}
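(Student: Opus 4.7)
The plan is to combine Theorem \ref{thm:A} with the elementary topological fact that any $2n$-dimensional Weinstein domain has the homotopy type of a CW complex of dimension $\le n$, so that its integral cohomology vanishes above degree $n$.

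Assume for contradiction that $Y$ admits a Weinstein filling $W'$. The Weinstein handle decomposition produces critical points of index at most $n$, and consequently $H^{k}(W';\Z)=0$ for every $k>n$. By construction $\delta_\partial^{W'}$ factors as
\[
SH_+^*(W') \xrightarrow{\ \delta\ } H^{*+1}(W') \longrightarrow H^{*+1}(Y),
\]
so the image of $\delta_\partial^{W'}$ in $H^{*+1}(Y)$ is trivial in every grading exceeding $n$. This is the whole topological input.

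To bring Theorem \ref{thm:A} to bear, one needs $W'$ to be topologically simple. Since $Y$ is ADC we have $c_1(\xi)=0$ and therefore $c_1(W')|_{Y}=0$; promoting this to $c_1(W')=0$, together with the injectivity of $\pi_1(Y)\to\pi_1(W')$, can be arranged by a preliminary modification of the Weinstein structure (for example by handle cancellations, which are available since $2n\ge 6$). In the strongly ADC case the condition further simplifies, as topological simplicity then only requires $c_1(W')=0$. Once $W'$ is topologically simple, Theorem \ref{thm:A} identifies $\delta_\partial^{W}$ with $\delta_\partial^{W'}$ under the canonical contact-invariant isomorphism of $SH_+^*$, so their images in $H^{*+1}(Y)$ coincide. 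Combined with the vanishing above, this forces the image of $\delta_\partial^{W}$ to be trivial in gradings $>n$, contradicting the hypothesis.

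The one nontrivial step is therefore the reduction to a topologically simple Weinstein filling; once that is established the result is essentially a one-line dimension count placed against the filling-invariance of $\delta_\partial$. I expect this topological reduction to be the main obstacle, whereas the Floer-theoretic content is entirely packaged in Theorem \ref{thm:A}.
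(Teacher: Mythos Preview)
Your overall strategy matches the paper's: assume a Weinstein filling $W'$, use that $H^{k}(W')=0$ for $k>n$, and invoke Theorem~\ref{thm:A} to identify $\delta_\partial^{W}$ with $\delta_\partial^{W'}$, reaching a contradiction. That part is fine.

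The gap is in your treatment of topological simplicity. You describe this as ``the one nontrivial step'' and propose to achieve it by ``a preliminary modification of the Weinstein structure (for example by handle cancellations).'' This is both unnecessary and incorrect as stated: handle cancellations do not change the diffeomorphism type of $W'$, so they cannot alter $c_1(W')$ or $\pi_1(Y)\to\pi_1(W')$. The point you are missing is that \emph{any} Weinstein filling $W'$ of $Y$ is automatically topologically simple once $n\ge 3$. Viewing $W'$ as built from a collar of $Y$ by attaching handles, the dual handles have index $\ge n\ge 3$; attaching handles of index $\ge 3$ does not affect $\pi_1$, so $\pi_1(Y)\to\pi_1(W')$ is an isomorphism. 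For $c_1$, Lefschetz duality gives $H^2(W',Y)\cong H_{2n-2}(W')$, which vanishes since $2n-2>n$; hence $H^2(W')\to H^2(Y)$ is injective, and $c_1(W')|_Y=c_1(\xi)=0$ forces $c_1(W')=0$. This is exactly how the paper argues, in one line, and it is why the hypothesis $n\ge 3$ appears. Once you supply this observation, the proof is complete and identical to the paper's.
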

\begin{proof}
	Assume otherwise that $Y$ admits a Weinstein filling $W'$. Since $n\ge 3$ and $W'$ is built from $Y$ by attaching index $k\ge n\ge 3$ handles, we have $c_1(W')=0$ and $\pi_1(Y)\to \pi_1(W')$ is isomorphism, i.e. $W'$ is topologically simple. Then $SH_+^*(W')\to H^{*+1}(W')\to H^{*+1}(Y)$ is isomorphic to $SH_+^*(W)\to H^{*+1}(Y)$. Since $H^*(W')$ is supported in degree $\le n$, we arrive at a contradiction. 
\end{proof}

When the obstruction in Corollary \ref{cor:ob} does not vanish for one contact manifold, it is easy to construct infinitely many obstructed examples by the following.
\begin{proposition}\label{prop:many}
	Let $Y$ be a contact manifold, such that conditions in Corollary \ref{cor:ob} hold. For any ADC contact manifold $Y'$ with a topologically simple exact filling $W'$, the contact connected sum $Y\# Y'$ is not Weinstein fillable.
\end{proposition}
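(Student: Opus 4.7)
The plan is to exhibit $W\natural W'$, the Liouville boundary connected sum of $W$ and $W'$, as a topologically simple exact filling of $Y\# Y'$ and to show that the obstruction of Corollary \ref{cor:ob} is inherited from $W$. First I would verify topological simplicity: $c_1(W\natural W')=0$ follows from the additivity of $c_1$ under boundary connected sum, while the injectivity $\pi_1(Y\# Y')\hookrightarrow \pi_1(W\natural W')$ follows from van Kampen applied to the free-product decompositions $\pi_1(Y\# Y')\cong \pi_1(Y)*\pi_1(Y')$ and $\pi_1(W\natural W')\cong \pi_1(W)*\pi_1(W')$, together with the hypothesized injectivity on each pair $(Y,W)$, $(Y',W')$. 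Moreover, $Y\# Y'$ is ADC of dimension $2n-1\ge 5$, because it is produced from $Y\sqcup Y'$ by attaching a single subcritical Weinstein $1$-handle, and ADC is preserved under subcritical (more generally, flexible) surgery by Lazarev's theorem (Example \ref{ex:ADC}).

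Next I would promote the distinguished class $\alpha\in SH^*_+(W)$, whose image $\delta_\partial^W\alpha\in H^{*+1}(Y)$ is nontrivial of grading $>n$, to a class in $SH^*_+(W\natural W')$ with nontrivial $\delta_\partial$-image of grading $>n$ in $H^{*+1}(Y\# Y')$. The key input is the invariance of $SH^*$ and $SH^*_+$ under subcritical handle attachment, compatibly with the tautological long exact sequence. Combined with the Mayer--Vietoris splittings $H^k(W\natural W')\cong H^k(W)\oplus H^k(W')$ for $k\ge 1$ and $H^k(Y\# Y')\cong H^k(Y)\oplus H^k(Y')$ for $1\le k\le 2n-2$ (the top degree being handled separately via orientability), this should yield a decomposition
$$
SH^*_+(W\natural W')\;\cong\; SH^*_+(W)\oplus SH^*_+(W'),
$$
compatible both with the connecting maps $\delta$ and with the boundary restriction. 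Hence $\tilde\alpha:=(\alpha,0)$ satisfies $\delta_\partial^{W\natural W'}(\tilde\alpha)=(\delta_\partial^W\alpha,0)\ne 0$ in $H^{*+1}(Y\# Y')$ in grading $>n$. Applying Corollary \ref{cor:ob} to the filling $W\natural W'$ then delivers the desired non-Weinstein-fillability of $Y\# Y'$.

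The main obstacle is justifying the subcritical-surgery compatibility of the $SH^*_+$ splitting with both $\delta$ and the boundary restriction. I would approach this either by invoking Cieliebak's subcritical handle attachment theorem (sharpened to track the tautological long exact sequence and hence the positive part), or, more hands-on, by working directly within the Morse--Bott cascade framework of \S\ref{s2}: if the handle region is chosen sufficiently thin, every Reeb orbit of $\partial(W\natural W')$ below any prescribed action truncation can be assumed to avoid the handle, so that the neck-stretching description of $\delta_\partial$ through the moduli spaces $\cP_{p,y}$ in \S\ref{sub:shrink}--\S\ref{sub:ind} literally decomposes as the disjoint union of the analogous moduli spaces for $W$ and $W'$. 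The top-grading edge case (when $\delta_\partial\alpha$ lives in $H^{2n-1}(Y)$) can be dispatched by a direct orientability computation identifying the restriction of the top class of $W$ inside $W\natural W'$.
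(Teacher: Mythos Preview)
Your plan is workable, but the paper's proof is considerably more economical and avoids the ``main obstacle'' you flag. Rather than splitting $SH^*_+(W\natural W')$ compatibly with both $\delta$ and the boundary restriction, the paper works upstairs in ordinary cohomology. The hypothesis gives a class $\alpha\in H^k(W)$ (with $k>n$) that lies in $\Ima\delta$, equivalently $\alpha\mapsto 0$ in $SH^k(W)$. Viewing $\alpha$ in $H^k(W\natural W')$ via Mayer--Vietoris, Cieliebak's subcritical-handle invariance (\cite{cieliebak2002handle}) gives directly that $\alpha\mapsto 0$ in $SH^k(W\natural W')$, hence $\alpha\in\Ima\delta$ for $W\natural W'$. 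One then only needs that $\alpha$ restricts nontrivially to $H^k(Y\# Y')$, which is elementary for $k<2n-1$. So the only compatibility required is that of Cieliebak's isomorphism with the unit map $H^*\to SH^*$, not with the full tautological sequence or with $\delta_\partial$; this sidesteps exactly the point you identify as the main obstacle.

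On the top-degree edge case: your proposal to ``dispatch by a direct orientability computation'' is vague, and in fact the case is vacuous. The paper observes that $k=2n-1$ cannot occur: since $\delta_\partial$ factors through $H^{2n-1}(W)\to H^{2n-1}(Y)$, a nonzero image in $H^{2n-1}(Y)\cong\Z$ would force $H^{2n-1}(W;\R)\to H^{2n-1}(Y;\R)$ to be surjective, contradicting Stokes' theorem. This is both cleaner and necessary, since for $k=2n-1$ the Mayer--Vietoris splitting of $H^k(Y\# Y')$ you rely on fails, and the restriction $H^{2n-1}(W\natural W';\R)\to H^{2n-1}(Y\# Y';\R)$ is zero anyway.
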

\begin{proof}
	By assumption, the image of $SH^{*}_+(W) \to H^{*+1}(Y)$ contains an element $\alpha'$ of grading $k \ge n+1$. That is the image of $SH^*_+(W)\to H^{*+1}(W)$ contains an element $\alpha$, which restricts to $\alpha'$.  That is $\alpha$ is mapped to $0$ in $SH^k(W)$. Note that $k$ can not be $2n-1$, for otherwise it will imply that $H^{2n-1}(W;\R)\to H^{2n-1}(Y;\R)$ is surjective, which contradicts the Stokes' theorem. By \cite{cieliebak2002handle}, if we view $\alpha \in H^k(W\natural W')$, then $\alpha$ is mapped to $0$ in $SH^{k}(W\natural W')$. This implies that $\alpha$ is in the image of $SH^{k-1}_+(W\natural W') \to H^{k}(W\natural W')$, since $n+1\le k < 2n-1$, we have $\alpha$ restricted to the boundary $Y\#Y'$ is represented by $\alpha'$ and non-zero. This implies that $\delta_{\partial}$ contains a nontrivial element of degree $k$ for $W\natural W'$. By \cite{lazarev2016contact}, $Y\natural Y'$ is ADC, and it is direct to check that $W\natural W'$ is topologically simple. Then by Corollary \ref{cor:ob}, $Y\#Y'$ is not Weinstein fillable.
\end{proof}

\subsection{Symplectic cohomology for covering spaces}
Before proving Theorem \ref{thm:C}, we need to introduce symplectic cohomology for covering spaces.  Let $W$ be an exact domain and $\pi:\widetilde{W} \to W$ a covering (not necessarily connected). The idea is lifting all geometric data to the covering space to define a Floer theory. We define following two sets
\begin{eqnarray*}
\widetilde{\cP}^*(\bH) & := & \{(x,a)|x\in \cP^*(\bH),a \in \widehat{\widetilde{W}}, \pi(a)=x(0) \},\\
\widetilde{\cC}(f) & := & \{(p,a)|p\in \cC(f), a \in \widehat{\widetilde{W}}, \pi(a)=p \}.
\end{eqnarray*}
Then we can define $\cM_{(x,a),(y,b)}$ for $(x,a),(y,b) \in \widetilde{\cP}^*(\bH), \widetilde{\cC}(f)$ as follows, if $(x,a),(y,b)\in \widetilde{\cP}^*(\bH)$, 
$$\cM_{(x,a),(y,b)}:=\{ u\in\cM_{x,y}|u(s,0)\text{ lifts to a path from $a,b$ in $\widehat{\widetilde{W}}$}\},$$
other cases are similar. Hence for regular $J$, $\cM_{(x,a),(y,b)}$ is always diffeomorphic to some connected components of $\cM_{x,y}$ with the same orientation. In particular, by Proposition \ref{prop:transversality}, $\cM_{(x,a),(y,b)}$ is always a compact manifold with boundary of dimension $|x|-|y|-1$ when $|x|-|y|\le 2$ and $\partial \cM_{(x,a),(y,b)}=\sum_{(z,c)}\cM_{(x,a),(z,c)}\times \cM_{(z,c),(y,b)}$. Then we can use them to define a cochain complex $\widetilde{C}^*(\bH,J,f)$. However, the generator set is infinite even with an action bound, and since we are trying define the cohomology of the covering space, the cochain complex is the \textit{direct product} in the fiber direction, i.e. $\oplus_x \prod_a\Z\la(x,a) \ra$. The differential on a generator is again defined by  
$$\rd(y,b)=\sum_{(x,a)} \#\cM_{(x,a),(y,b)}(x,a).$$
The compactness of $\cM_{x,y}$ implies that for any $b$, there are at most finitely many $a$ such that $\cM_{(x,a),(y,b)}\ne \emptyset$. Therefore the differential is well-defined on the complex. We can similarly define $\widetilde{C}^*_+(\bH,J), \widetilde{C}^*_0(f)$. By definition, the cohomology $H^*(\widetilde{C}_0(f))$ is the cohomology of the universal cover  $H^*(\widetilde{W})$, since the cochain complex is dual to the Morse homology complex with $\Z[\pi_1(W)]$ local system, which computes the homology of the universal cover \cite[\S 3.H]{hatcher2002algebraic}. Moreover, there is always a cochain morphism $C^*_{(+/0)}(\bH,J,f)\to \widetilde{C}^*_{(+/0)}(\bH,J,f)$ defined by sending $x\to \prod_{a} (x,a)$ corresponding to the pull-back  to the covering.  We define $SH^*(\widetilde{W}), SH^*_+(\widetilde{W})$ to be the cohomology of $\widetilde{C}^*(\bH,J,f),\widetilde{C}^*_+(\bH,J)$. Then we have a long exact sequence,
$$\ldots \to H^{*}(\widetilde{W})\to SH^*(\widetilde{W})\to SH^*_+(\widetilde{W}) \to H^{*+1}(\widetilde{W})\to \ldots.$$
Moreover, the natural maps $SH_{(+/0)}^*(W) \to SH_{(+/0)}^*(\widetilde{W})$ are compatible with the long exact sequence. We can lift everything discussed before to $\widetilde{W}$, hence there is the following analogue of Theorem \ref{thm:A}. 
\begin{proposition}\label{prop:indlocal}
	Under the same assumption as in Theorem \ref{thm:A}, let $\pi:\widetilde{W}\to W$ be a covering, then $\widetilde{Y}:=\partial \widetilde{W}$ is a covering of $Y$. Then $\widetilde{\delta_\partial}:SH^*_+(\widetilde{W}) \to H^{*+1}(\widetilde{Y})$ is independent of the topologically simple exact filling $W'$ and covering $\widetilde{W}'$, as long as $\partial \widetilde{W}'=\widetilde{Y}$. 
\end{proposition}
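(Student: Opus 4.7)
The plan is to mimic the proof of Theorem \ref{thm:A}, lifting every construction to the covers $\widetilde{W}$ and $\widetilde{W}'$ and exploiting the fact that both completions share the common collar $\widetilde{Y}\times(0,\infty)$ above the boundary $\widetilde{Y}$. Since the Hamiltonian $\bH$ is supported in the positive symplectization end $\partial W\times[1,\infty)$, its orbits and the perturbed generators $\cP^*(\bH)$ are canonically identified between $W$ and $W'$, and the same is true for the lifted generators $\widetilde{\cP}^*(\bH)$, which consist of pairs $(x,a)$ with $a\in\widetilde{Y}\times\{r\}$ for the appropriate $r$. Similarly, the Morse function $h$ on $\partial W\times\{1-\epsilon\}$ is intrinsic to the common boundary, so $\widetilde{\cC}(h)$ is naturally the same for both lifts.

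First I would construct the lifted Morse-to-restriction complex: replace $\cP_{p,y}$ by the lifted version $\widetilde{\cP}_{(p,a),(y,b)}$, namely those $(u,\gamma)\in\cP_{p,y}$ such that the concatenated path $\gamma\cup u(s,0)$ lifts to a path in $\widehat{\widetilde{W}}$ from $a$ to $b$. Since each such lifted moduli space is diffeomorphic to a union of connected components of the downstairs one, transversality is inherited, and counting them yields a cochain map $\widetilde{P}:\widetilde{C}^{D_i}_+(\bH,J)\to\widetilde{C}(h)[1]$. Repeating the arguments of \S\ref{sub:shrink} and Proposition \ref{prop:lowreg3} on the cover, $\widetilde{P}$ is homotopic to the lifted composition $\widetilde{R}\circ\widetilde{d}_{+,0}$ and is compatible with lifted continuation maps; the lifted analogue of Proposition \ref{prop:compute} then represents $\widetilde{\delta_{\partial}}$ as the direct limit of $\widetilde{P}$ at the successive action truncations.

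Next I apply the neck-stretching of Proposition \ref{prop:ind} downstairs, using the same almost complex structures $J^i_1,J^i_2$ simultaneously on $W$ and $W'$. Since every curve in the zero-dimensional moduli spaces $\cM_{x,y}$ with action $\geq-D_i$ and every curve in $\cP_{p,y}$ lies outside the stretched hypersurface $Y_i$, such curves are contained entirely in the shared collar $Y\times(1-\epsilon,\infty)$. Consequently each curve admits a unique lift to $\widetilde{Y}\times(1-\epsilon,\infty)$ once the input marked point is specified, and this lift is the same whether computed inside $\widetilde{W}$ or $\widetilde{W}'$. This gives a canonical orientation-preserving bijection between $\widetilde{\cM}_{(x,a),(y,b)}$ (resp.\ $\widetilde{\cP}_{(p,a),(y,b)}$) for the two covers, so that the cochain complexes $\widetilde{C}_+^{D_i}$ and the maps $\widetilde{P}$ are identified on both sides.

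Finally, I would argue that the continuation maps between successive stages $\widetilde{C}_+^{D_i}\to\widetilde{C}_+^{D_{i+1}}$ are inclusions, by the same decomposition into $\Phi$ (homotopic to identity by a lifted version of Lemma \ref{lemma:local}) and $\Psi$ (an inclusion by the integrated maximum principle applied at $r=1$): each contributing curve again lives in the shared collar and its lifts agree. Taking the direct limit then yields invariance of $\widetilde{\delta_{\partial}}$. The main obstacle I anticipate is purely bookkeeping: one must verify that the various naturality statements of \S\ref{ss:natural} pass cleanly to the covering setup, with the correct identification of base points under path lifting, and that the regularity assertions of Proposition \ref{prop:ind} suffice on the cover -- this is automatic because each lifted moduli space is a union of connected components of its downstairs counterpart, which contains all the analytic content.
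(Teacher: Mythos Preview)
Your proposal is correct and matches the paper's approach exactly: the paper does not give a separate proof of this proposition, stating only that ``we can lift everything discussed before to $\widetilde{W}$,'' and your write-up is precisely the expansion of that sentence. The key points you identify---that lifted moduli spaces are unions of components of the downstairs ones (so transversality is inherited), and that after neck-stretching the relevant curves live in the common collar over $Y$ whose cover is determined by $\widetilde{Y}$ alone---are exactly what makes the lift go through verbatim.
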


\begin{theorem}\label{thm:fund}
	Assume $Y$ is an ADC contact manifold. If $W$ is a topologically simple exact filling of $Y$ such that $\pi_1(Y) \to \pi_1(W)$ is isomorphism and $SH^*(W) = 0$. Then for any other topological simple exact filling $W'$, we have $\pi_1(Y) \to \pi_1(W')$ is an isomorphism.
\end{theorem}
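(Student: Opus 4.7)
The plan is to apply the covering-space invariance of $\delta_\partial$, Proposition \ref{prop:indlocal}, in a form permitting disconnected covers, and to extract the index $[\pi_1(W'):\pi_1(Y)]$ by comparing degree-zero images of the restriction map. First I would pass to universal covers $\widetilde{W}_0\to W$, $\widetilde{Y}_0\to Y$, and $\widetilde{W}'\to W'$. Since $\pi_1(Y)\to\pi_1(W)$ is an isomorphism by hypothesis, $\partial\widetilde{W}_0=\widetilde{Y}_0$ and both are connected; since $\pi_1(Y)\hookrightarrow\pi_1(W')$ is injective by the topological simplicity of $W'$, $\partial\widetilde{W}'=\sqcup_{i\in I}\widetilde{Y}_0$ with $|I|=[\pi_1(W'):\pi_1(Y)]$, and it suffices to show $|I|=1$.

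Next I would upgrade $SH^*(W)=0$ to $SH^*(\widetilde{W}_0)=0$. The cochain map $C^*(\bH,J,f)\to\widetilde{C}^*(\bH,J,f)$ of \S\ref{s3} sending $x\mapsto\prod_a(x,a)$ commutes with the tautological long exact sequence and sends the unit $1\in H^0(W)=\Z$ to the unit $1\in H^0(\widetilde{W}_0)=\Z$, both groups being $\Z$ since $W$ and $\widetilde{W}_0$ are connected. Because $SH^*(W)=0$ the unit of $SH^0(W)$ vanishes, so the commuting square forces the unit of $SH^0(\widetilde{W}_0)$ to vanish as well, giving $SH^*(\widetilde{W}_0)=0$. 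I would then form the disconnected cover $\widetilde{W}:=\sqcup_{i\in I}\widetilde{W}_0$ of $W$, so that $\partial\widetilde{W}=\sqcup_{i\in I}\widetilde{Y}_0=\partial\widetilde{W}'$. Applying Proposition \ref{prop:indlocal} --- extended to disconnected covers, see below --- to this common cover of $Y$ yields $\Ima\,\widetilde{\delta_\partial}^{\widetilde{W}}=\Ima\,\widetilde{\delta_\partial}^{\widetilde{W}'}$ inside $H^{*+1}(\partial\widetilde{W}')$.

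Then I would compute both sides in degree zero. On the $\widetilde{W}$-side, $SH^*(\widetilde{W})$ splits as a product of copies of $SH^*(\widetilde{W}_0)=0$ and is therefore zero, so $\Ima\,\widetilde{\delta_\partial}^{\widetilde{W}}$ equals the full image of $H^0(\widetilde{W})\to H^0(\partial\widetilde{W})$, which is the identity $\prod_{i\in I}\Z\to\prod_{i\in I}\Z$; in particular it contains the indicator $1_i$ of each single boundary component. On the $\widetilde{W}'$-side, $\widetilde{W}'$ is connected, so $\Ima(H^0(\widetilde{W}')\to H^0(\partial\widetilde{W}'))$ is only the diagonal $\Z\cdot(1,1,\ldots)\subset\prod_{i\in I}\Z$. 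Equating the two images forces each $1_i$ to lie in the diagonal, which happens only when $|I|=1$; combined with the injectivity supplied by topological simplicity this gives the desired isomorphism $\pi_1(Y)\cong\pi_1(W')$.

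The main obstacle is verifying the disconnected-cover generalization of Proposition \ref{prop:indlocal}. The key ingredients --- the neck-stretching of Proposition \ref{prop:ind}, the chain-level map $P$ of \S\ref{sub:shrink}, and the integrated maximum principle which confines low-action curves to neighborhoods of a single boundary component --- are all local near individual components of $\widetilde{Y}$, so the chain-level comparison decomposes as a product indexed by $i\in I$ and reduces to the connected case already proved. Some care is needed when $|I|$ is infinite, since the cochain complex for $\widetilde{W}$ becomes an infinite product, but this does not affect the degree-zero image comparison above.
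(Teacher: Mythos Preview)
Your proof is correct and follows essentially the same approach as the paper: both arguments compare the universal cover $\widetilde{W}'$ against a disjoint union of copies of the universal cover $\widetilde{W}_0$ of $W$, invoke Proposition~\ref{prop:indlocal} for this common boundary cover, and derive a contradiction in degree zero from the factorization of $\widetilde{\delta_\partial}^{\widetilde{W}'}$ through $H^0(\widetilde{W}')=\Z$. One minor remark: the conclusion $SH^*(\widetilde{W}_0)=0$ would require a unital ring structure on the covering-space symplectic cohomology, which the paper does not establish, but your argument only uses that $1\in H^0(\widetilde{W}_0)$ dies in $SH^0(\widetilde{W}_0)$, and this weaker statement (which you correctly deduce from the pullback square) already suffices for the degree-zero image comparison.
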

\begin{proof}
	Assume $\pi(Y) \to \pi(W')$ is only injective. Then the universal cover $\widetilde{W'}$ restricted to boundary is $\pi_1(W')/\pi_1(Y)$ copies of the universal universal cover $\widetilde{Y}$. Let $\widetilde{W}$ be the universal cover of $W$, since $\pi_1(Y)\to \pi_1(W)$ is an isomorphism, we have $\partial \widetilde{W} = \widetilde{Y}$. Since $SH^{-1}_+(\widetilde{W}) \to H^0(\widetilde{Y}) = \Z$ is surjective due to that $SH^{-1}_+(W) \to H^0(Y)$ is an isomorphism and the following commutative diagram
	$$
	\xymatrix{
		SH^{-1}_+(W) \ar[r] \ar[d] & H^0(W) \ar[r]\ar[d] & H^0(Y)\ar[d] \\
		SH^{-1}_+(\widetilde{W}) \ar[r] & H^0(\widetilde{W}) \ar[r] & H^0(\widetilde{Y}).
		}
	$$
	Since $\pi_1(W')/\pi_1(Y)$ copies of the universal universal cover $\widetilde{W}$ restricted to boundary is also $\pi_1(W')/\pi_1(Y)$ copies of $\widetilde{Y}$. Then by Proposition \ref{prop:indlocal}, we have that  $SH^{-1}_+(\widetilde{W}') \to H^{0}(\partial \widetilde{W}')=\prod_{\pi_1(W')/\pi_1(Y)}\Z$ is surjective.  However $SH^{-1}_+(\widetilde{W}') \to H^0(\partial \widetilde{W}')$ factors through $H^0(\widetilde{W}') = \Z$, which contradicts that the cardinality of $\pi_1(W')/\pi_1(Y)$  is greater than $1$. 
\end{proof}

\begin{proof}[Proof of Theorem \ref{thm:C}]
	It is sufficient to prove the strongly ADC case, as the ADC case is proven in Theorem \ref{thm:fund}. Since $\pi_1(Y) \to \pi_1(W)$ is an isomorphism, we have $H_1(Y) \to H_1(W)$ is an isomorphism. By the universal coefficient theorem. We have $H^1(W) \to H^1(Y)$ is an isomorphism and $H^2(W) \to H^2(Y)$ is an isomorphism on the torsion part. By Corollary \ref{cor:B}, we have the same thing holds on $W'$. Then using the universal coefficient theorem again, we have $H_1(Y) \to H_1(W')$ is an isomorphism. Since $\pi_1(Y)$ is abelian, this implies that $\pi_1(Y) \to \pi_1(W')$ is injective, then by Theorem \ref{thm:fund},  we have $\pi_1(Y) \to \pi_1(W')$ is an isomorphism.
\end{proof}

\subsection{Local systems}
Albers-Frauenfelder-Oancea \cite{albers2017local} found examples of cotangent bundles with vanishing symplectic cohomology after appropriately twisting by local systems. Since cotangent bundles are ADC, when base manifolds have dimension at least $4$, we discuss the analogue of Theorem \ref{thm:A} for local systems and its applications to symplectic topology in this part. The discussion here works for general local systems, however, to get interesting applications, we will only consider the following special local systems. We use $L_0W$ to denote the connected component of contractible loops in the free loop space $LW$ and let $R$ be a commutative ring.

\begin{definition}\label{def:localsystem}
	An admissible local system on a Liouville domain $W$ is a flat $R$ bundle over $L_0W$ and is trivial on the constant loops.
\end{definition}
Let $R^{\times}$ denote the group of units in $R$, then using parallel transportation, a flat $R$ bundle over $L_0W$ is represented by a class in $\Hom(\pi_1(LW),R^\times)/R^\times$, where the $R^{\times}$ action is given by conjugation. We use $\Omega_0W$ to denote the space of contractible based loops in $W$. Note that we have a short exact sequence of groups, 
$$0 \to \pi_1(\Omega_0W) \to \pi_1(L_0W)\to \pi_1(W) \to 1$$
from the fibration $\pi:L_0W \to W$, where $\pi$ is the evaluation map at the starting point. Moreover, we have
$$\pi_1(L_0W)\simeq \pi_1(\Omega_0 W)\rtimes \pi_1(W),$$
where $\pi_1(W)$ acts on $\pi_1(\Omega_0 W)$ by conjugation. As a consequence, we have
$$\Hom(\pi_1(L_0W),R^\times)\simeq \Hom_{inv}(\pi_2(W),R^\times)\times \Hom(\pi_1(W),R^\times),$$
where $\Hom_{inv}(\pi_2(W),R^\times)\subset \Hom(\pi_2(W),R^\times)$ is the subgroup of $\pi_1(W)$-invariant elements. The projection to the $\Hom(\pi_1(W),R^\times)$ corresponds to the restriction on constant loops. Therefore an admissible local system is represented by an element of $\Hom_{inv}(\pi_2(W),R^\times)/R^\times$, in other words, it can be represented by a trivial extension of a $\pi_1(W)$-invariant local system on $\Omega_0W$.  

With an admissible local system $\rho$, we can define symplectic cohomology with local system $\rho$, where the cochain complex is generated by fibers $\rho_x\simeq R $ of the local system over $x\in \cP^*(\bH)$ or constant loop $x\in \cC(f)$. The differential now needs to take the parallel transportation into account, i.e. 
\begin{equation}\label{eqn:local}
da = \sum_{x}\sum_{u\in \cM_{x,y}} \sign(u) \rho_u(a), \quad a\in \rho_y 
\end{equation}
where $\cM_{x,y}$ is a zero dimensional moduli space, and $\rho_u$ a $R$-module map $\rho_y\to\rho_x$, i.e. the parallel transportation determined by $u$ when we view $u$ as a path in $L_0W$ connecting $y$ and $x$. We use $SH^*(W;\rho)$ to denote the cohomology, such theory shares all properties of $SH^*(W)$. In particular, we have the following proposition from the same argument of Theorem \ref{thm:A}.
\begin{proposition}\label{prop:indloc}
Let $Y$ be an ADC manifold, then $\delta_{\partial,\rho}:SH_+^*(W;\rho)\to H^{*+1}(W;R)\to H^{*+1}(Y;R)$ is independent of topologically simple fillings $W$ and $\rho$, as long as $\rho|_{L_0Y}$ is fixed.
\end{proposition}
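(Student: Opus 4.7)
The plan is to rerun the proof of Theorem \ref{thm:A} with every moduli count twisted by the local system $\rho$, in the spirit of \eqref{eqn:local}. First I would upgrade the Morse--Bott cochain machinery of \S \ref{sub:shrink} to the $\rho$-twisted setting: generators $x$ are replaced by the free $R$-module $\rho_x$, and each count $\#\cM_{*,*}$ by $\sum_u \sign(u)\rho_u$, where $\rho_u$ is the parallel transport along $u$ regarded as a path in $L_0W$. Since parallel transport is multiplicative under concatenation and locally constant on families of Floer cylinders, the analogues of Proposition \ref{prop:boundary1}, Proposition \ref{prop:lowreg2} and Proposition \ref{prop:lowreg3} go through mechanically; in particular the twisted restriction map $P_\rho$ built from the mixed moduli $\cP_{p,y}$ still represents $\delta_{\partial,\rho}$ in the direct limit analogous to \eqref{eqn:seq}. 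Note that by Definition \ref{def:localsystem} the local system $\rho$ is trivial on constant loops, so the Morse complexes $C_0(f)$ and $C(h)$ remain untwisted with $R$-coefficients, matching the codomain $H^{*+1}(Y;R)$.

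Next I would apply the neck-stretching of Proposition \ref{prop:ind} verbatim to the two fillings $W_1$ and $W_2$. The ADC dimension argument only invokes the degree of low-period Reeb orbits together with their contractibility in the topologically simple filling, and is insensitive to coefficients, so every zero-dimensional component of $\cM_{x,y}$ and $\cP_{p,y}$ with action $\ge -D_i$ still lies in the completion of the positive cobordism between $Y_i$ and $Y$. This cobordism is boundary-intrinsic and identical for $W_1$ and $W_2$, so the underlying moduli spaces admit the canonical identification already used in the proof of Theorem \ref{thm:A}.

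The main new point, which I expect to be the only real obstacle, is checking that the parallel-transport weights $\rho_u$ agree under this identification. Here admissibility does the work: by the splitting $\Hom(\pi_1(L_0W),R^\times)\simeq \Hom_{inv}(\pi_2(W),R^\times)\times \Hom(\pi_1(W),R^\times)$ recorded before Definition \ref{def:localsystem}, an admissible $\rho$ has trivial $\Hom(\pi_1(W),R^\times)$ component, so its holonomy along a path in $L_0W$ depends only on the $\pi_2$-class swept out. Because each neck-stretched curve $u$ is confined to the common cobordism between $Y_i$ and $Y$, which deformation retracts onto $Y$, the loop family $s\mapsto u(s,\cdot)$ is homotopic rel endpoints to a path in $L_0Y$; hence $\rho_u$ equals the parallel transport of $\rho|_{L_0Y}$ along that boundary-intrinsic class, which by hypothesis agrees on $W_1$ and $W_2$. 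The continuation cylinders of Lemma \ref{lemma:local} and of the proof of Theorem \ref{thm:A} lie in the same common cobordism and contribute identical weights, so after passing to the direct limit the twisted tower is naturally identified on $W_1$ and $W_2$, proving the proposition.
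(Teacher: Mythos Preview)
Your proposal is correct and follows essentially the same approach as the paper, which simply states that the proposition follows ``from the same argument of Theorem~\ref{thm:A}.'' You have correctly identified and addressed the one genuinely new ingredient: that the parallel-transport weights $\rho_u$ are boundary-intrinsic because the zero-dimensional neck-stretched curves land outside $Y_i$, hence in a region homotopy equivalent to $Y$, so $\rho_u$ is computed entirely by $\rho|_{L_0Y}$.

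One small remark: the detour through the splitting $\Hom(\pi_1(L_0W),R^\times)\simeq \Hom_{inv}(\pi_2(W),R^\times)\times \Hom(\pi_1(W),R^\times)$ and the claim that holonomy ``depends only on the $\pi_2$-class swept out'' is unnecessary and slightly misleading for \emph{paths} rather than loops. The cleaner and sufficient observation is the one you make immediately afterwards: since the curves lie in the completion of the cobordism between $Y_i$ and $Y$, and this space deformation retracts onto $Y$, the inclusion $L_0Y\hookrightarrow L_0(\text{cobordism})$ is a homotopy equivalence, so the flat bundle $\rho$ restricted there is determined up to canonical isomorphism by $\rho|_{L_0Y}$. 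That alone forces the weights to match, without any appeal to the $\pi_2$ description.
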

To illustrate the necessity of the ADC condition, we consider the cotangent circle bundle $ST^*S^2\simeq \RP^3$, then the minimal Conley-Zehnder index is $1$, which makes it not ADC. By \cite{albers2017local}, there is a local system on $L_0T^*S^2$, such that $SH^*(T^*S^2,\rho)=0$. However, the local system on $L_0\RP^3$ is trivial, hence it has a trivial extension to $L_0T^*S^2$, whose symplectic cohomology is not zero. As a consequence, $\delta_{\partial,\rho}$ depends on $\rho$ in this non-ADC case.

To make Proposition \ref{prop:indloc} useful as Theorem \ref{thm:A}, we will explain that $SH^*(W;\rho)$ is a ring and $H^*(W;R)\to SH^*(W;\rho)$ is a ring map. The product structure on symplectic cohomology is defined by counting holomorphic curves on $3$-punctured spheres, with two positive puncture and one negative puncture, c.f. \cite{ritter2013topological}. We fix a trivialization of $\rho$ on constant loops. Given any such curve $u$ with positive ends asymptotic to $x,y$ and negative end asymptotic to $z$, it determines a module morphism $\rho_u:\rho_x\otimes_{R}\rho_y\to \rho_z$ as follows,
\begin{equation}\label{eqn:product}
\rho_x\otimes_R \rho_y \stackrel{\rho_{u_1}\otimes_R \rho_{u_2}}{\longrightarrow} \rho_{x'}\otimes_R \rho_{y'}\stackrel{m}{\to} \rho_{x'\star y'} \stackrel{\rho_{u_3}}{\longrightarrow} \rho_z.
\end{equation}
where $x',y'$ are the based loops with the same base point as the following picture shows, $x'\star y'$ is the concatenation of $x'$ and $y'$, $u_1,u_2,u_3$ are the cylinders represented by the colored region and $m:\rho_{x'}\otimes_R \rho_{y'}\to \rho_{x'\star y'}$ is the conical isomorphism in \cite[Lemma 1]{albers2017local} if we fix a trivialization of $\rho$ on constant loops.

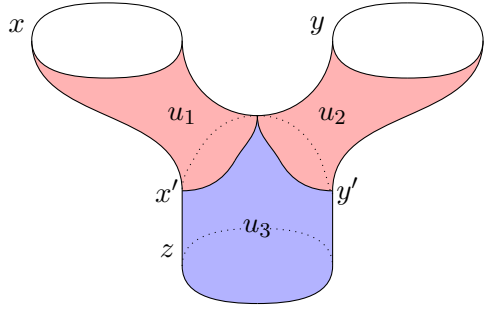
\begin{figure}[H]
	\begin{tikzpicture}
	\path [fill=red!30] (0,0) to [out=270,in=180] (1,-0.5)
	to [out=0,in=270] (2,0) to [out=270,in=180] (3,-1) 
    to [out=270, in=60] (2.7,-1.6) to [out=240, in=0] (2,-2)
    to [out=90,in =270] (0,0); 
	\node at (2,-1) {$u_1$};
	
	\path [fill=red!30] (6,0) to [out=270,in=0] (5,-0.5)
	to [out=180,in=270] (4,0) to [out=270,in=0] (3,-1) 
	to [out=270, in=120] (3.3,-1.6) to [out=300, in=180] (4,-2)
	to [out=90,in =270] (6,0); 
	\node at (4,-1) {$u_2$};
		
	\path [fill=blue!30] (2,-2) to (2,-3)
	to [out=270,in=180] (3,-3.5) to [out=0,in=270] (4,-3) 
	to (4,-2) to [out=180, in=300] (3.3,-1.6)
	to [out=120,in =270] (3,-1) to [out=270, in=60] (2.7,-1.6)
	to [out=240, in=0] (2,-2);
	\node at (3,-2.5) {$u_3$};
	
	\draw (0,0) to [out=90, in=180] (1,0.5);
	\draw (1,0.5) to [out=0, in=90] (2,0);
	\draw (2,0) to [out=270, in=0] (1,-0.5);
	\draw (1,-0.5) to [out=180, in=270] (0,0);
    \node at (-0.2,0.2) {$x$};

	\draw (4,0) to [out=90, in=180] (5,0.5);
	\draw (5,0.5) to [out=0, in=90] (6,0);
	\draw (6,0) to [out=270, in=0] (5,-0.5);
	\draw (5,-0.5) to [out=180, in=270] (4,0);
	\node at (3.8,0.2) {$y$};
	
	\draw (0,0) to [out=270, in=90] (2,-2);
	\draw (2,-2) to (2,-3);
	
	\draw[dotted] (2,-3) to [out=90, in=180] (3,-2.5);
	\draw[dotted] (3,-2.5) to [out=0, in=90] (4,-3);
	\draw (4,-3) to [out=270, in=0] (3,-3.5);
	\draw (3,-3.5) to [out=180, in=270] (2,-3);
	\node at (1.8,-2.8) {$z$};
	
	\draw (6,0) to [out=270, in=90] (4,-2);
	\draw (4,-3) to (4,-2);	
	
	\draw (2,0) to [out=270, in=180] (3,-1);
	\draw (4,0) to [out=270, in=0] (3,-1);
	
	\draw (3,-1) to [out=270, in=60] (2.7,-1.6);
	\draw (2.7,-1.6) to [out=240, in=0] (2,-2); 
	\node at (1.8,-2) {$x'$};
	\draw[dotted] (3,-1) to [out=180, in=60] (2.2,-1.5);
	\draw[dotted] (2.2,-1.5) to [out=240, in=90] (2,-2); 
	
	\draw (3,-1) to [out=270, in=120] (3.3,-1.6);
	\draw (3.3,-1.6) to [out=300, in=180] (4,-2); 
	\node at (4.2,-2) {$y'$};
	\draw[dotted] (3,-1) to [out=0, in=120] (3.8,-1.5);
	\draw[dotted] (3.8,-1.5) to [out=300, in=270] (4,-2); 
	\end{tikzpicture}
	\caption{Parallel transportation on pair of pants}\label{fig:pants}
\end{figure}

\begin{proposition}\label{prop:pants}
	Given a pair-of-pants curve $u$, $\rho_u$ in \eqref{eqn:product} is well-defined and only depends on the homotopy class of $u$.
\end{proposition}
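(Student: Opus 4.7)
The plan is to separate the claim into two parts: independence of the chosen decomposition $(u_1,u_2,u_3,x',y')$ of a fixed pair-of-pants $u$, and invariance under homotopy of $u$ (with asymptotic ends $x,y,z$ fixed). Both reduce to the statement that parallel transport in the flat bundle $\rho$ over $L_0W$ depends only on the homotopy class of a path in $L_0W$, combined with the $\pi_1(W)$-invariance built into Definition \ref{def:localsystem}, which makes the concatenation isomorphism $m$ insensitive to the basepoint used.

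First I would fix notation: view each $u_i$ as a path $[0,1]\to L_0W$ joining the asymptotic loop at its cylindrical end to the based loop $x'$, $y'$, or $x'\star y'$ traced out along the cut circles in the domain of $u$. Since $\rho$ is flat, each $\rho_{u_i}$ is a well-defined $R$-module map depending only on the rel-endpoints homotopy class of $u_i$ in $L_0W$. Then $\rho_u$ as defined in \eqref{eqn:product} is automatically continuous in the decomposition data. The key subtlety is that the isomorphism $m:\rho_{x'}\otimes_R\rho_{y'}\to\rho_{x'\star y'}$ of \cite[Lemma 1]{albers2017local} requires a choice of basepoint; the $\pi_1(W)$-invariance of $\rho$ ensures that translating this basepoint by any loop in $W$ produces a conjugate isomorphism compatible with the corresponding change of $\rho_{u_1},\rho_{u_2},\rho_{u_3}$, so the composition is unchanged.

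For independence of the decomposition, I would argue that the space of admissible decompositions of a fixed $u$ (a choice of a point $p$ in the domain plus three disjoint embedded arcs joining $p$ to the three ends, up to isotopy) is connected. Any two decompositions can thus be joined by a continuous family, producing a rel-endpoints homotopy between each of the paths $u_i$ and $u_i'$ in $L_0W$; by flatness this forces $\rho_{u_i}=\rho_{u_i'}$, and at the moment when the basepoint jumps we use $\pi_1(W)$-invariance as above to identify the two versions of $m$. For homotopy invariance of $u$, I would transport a chosen decomposition continuously along the homotopy $u_\tau$, which again produces homotopic paths $u_{i,\tau}$ in $L_0W$ with identical endpoints (the asymptotic ends are fixed), so each $\rho_{u_i}$ is unchanged.

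The main obstacle I anticipate is the bookkeeping around the basepoint dependence of $m$: one must verify explicitly that moving the cut points and the center point $p$ in the domain of $u$ alters the three based loops $x'$, $y'$, $x'\star y'$ by simultaneous conjugation by some element of $\pi_1(W,\star)$, and that this conjugation acts trivially on $\rho_u$ precisely because $\rho$ lies in $\Hom_{inv}(\pi_2(W),R^{\times})/R^{\times}$. Once this compatibility is spelled out, both connectedness of the decomposition space and rel-endpoints homotopy invariance of parallel transport finish the proof.
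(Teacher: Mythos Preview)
Your high-level plan is sound and matches the paper's in spirit: both isolate the basepoint-dependence of the concatenation map $m$ as the only nontrivial issue. But the step you flag as ``the main obstacle'' is not carried out, and your description of it is slightly off. Moving the merge point does not alter $x',y',x'\star y'$ by \emph{conjugation} in $\pi_1(W,\star)$; rather it connects $(x',y')$ to $(x'',y'')$ by a pair of cylinders $u_x,u_y$ (paths in $L_0W$) with $u_x(s,0)=u_y(s,0)$, and one must check that $m$ intertwines these parallel transports with the transport along the concatenated cylinder $u_x\star u_y$. This is the commutative square the paper isolates. Invoking ``$\rho\in\Hom_{inv}(\pi_2(W),R^\times)$'' does not by itself give this square; one needs the specific way triviality-on-constant-loops enters.

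The paper's proof is more concrete than your continuity argument: it writes down the square, uses the homotopy lifting property of $L_0W\to W$ to extend the contracting paths defining $m$ over the cylinder, and then reduces to showing that two explicit loops in $\Omega_0W$ give the same holonomy. Triviality of $\rho$ on constant loops is used to absorb the basepoint motion into $\Ima(\pi_1(W)\to\pi_1(L_0W))$, so one may assume the cylinders $v,w$ are based at a constant loop. The remaining claim is that $\gamma_{x''}v\gamma_{x''}^{-1}\gamma_{y''}w\gamma_{y''}^{-1}$ and $(\gamma_{x''}\star\gamma_{y''})(v\star w)(\gamma_{x''}\star\gamma_{y''})^{-1}$ are homotopic in $L_0W$; since both sides live in $\Omega\Omega_0W$, this is exactly the Eckmann--Hilton argument that $\pi_2$ is abelian. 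Your connectedness-of-decompositions approach could in principle be made to work, but you would still need this algebraic core (or an equivalent), and your proposal stops short of supplying it.
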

\begin{proof}
	Since the merging of $x,y$ into $x'\star y'$ is not unique, the potential ambiguity follows from different ways of merging. More explicitly, if we have two choices of merged based loops $x',y'$ and $x'',y''$, then there are two cylinders $u_x,u_y$ from $x',y'$ to $x'',y''$ respectively, and $u_x(s,0)=u_y(s,0), \forall s$. Then we can form a concatenation of cylinders $u_x\star u_y$ from $x'\star y'$ to $x''\star y''$.  Then it suffices to prove the following commutative diagram,
	\begin{equation}\label{eqn:com}
	\xymatrix{
	\rho_{x'}\otimes_R \rho_{y'} \ar[r]^{m} \ar[d]^{\rho_{u_x}\otimes_R \rho_{u_y}} & \rho_{x'\star y'}\ar[d]^{\rho_{u_x\star u_y}} \\
	\rho_{x''}\otimes_R \rho_{y''} \ar[r]^m & \rho_{x''\star y''}
     }	
	\end{equation}
	Let $p$ be the common base point of $x',y'$, i.e. $p=x'(0)=y'(0)$, we recall $m$ from \cite[Lemma 1]{albers2017local}, 
	$$m: \rho_{x'}\otimes \rho_{y'}\to \rho_p\otimes \rho_p \to \rho_p \to \rho_{x'\star y'},$$
	where the first map is determined by two paths $\gamma_{x'},\gamma_{y'}$ of based loops from $x',y'$ to the constant loop $p$, the middle map is determined by the conical isomorphism $R\otimes_{R} R \to R$ since we fix a trivialization over constant loops, the last map is determined by the inverse of the concatenation of $\gamma_{x'}$ and $\gamma_{y'}$, which is a path from $p$ to $x'\star y'$ in the based loop space. By the homotopy lifting property of $L_0W \to W$, there exists a lift $\gamma_{x',x''}$ of $u_x(s,0)$ extending $\gamma_{x'}$, similarly for $y',y''$, then we have the following commutative diagram
	$$
	\xymatrix{
	\rho_{x'}\otimes_R \rho_{y'}  \ar@/^1pc/[rrr]^m \ar[r] \ar[d]^{\rho_{\gamma_{x',x''}}\otimes_R\rho_{\gamma_{y',y''}}} & \rho_{x'(0)}\otimes_R \rho_{y'(0)} \ar[r]\ar[d] & \rho_{x'(0)} \ar[r]\ar[d] & \rho_{x'\star y'}\ar[d]^{\rho_{\gamma_{x',x''}\star \gamma_{y',y''}}} \\
    \rho_{x''}\otimes_R \rho_{y''} \ar@/_1pc/[rrr]_m  \ar[r] & \rho_{x''(0)}\otimes_R \rho_{y''(0)} \ar[r] & \rho_{x''(0)} \ar[r] & \rho_{x''\star y''}
    }
	$$
	$\gamma_{x',x''}$ may not homotopic to $u_x$. But to prove \eqref{eqn:com}, it suffices to prove the following commutative diagram
	\begin{equation}\label{eqn:com2}
	\xymatrix{
		\rho_{x''}\otimes_R \rho_{y''} \ar[r]\ar[d]^{\rho_v\otimes_R \rho_w } & \rho_{x''\star y''}\ar[d]^{\rho_{v\star w}}\\
		\rho_{x''}\otimes_R \rho_{y''}\ar[r]  & \rho_{x''\star y''}
    }
	\end{equation}
	where $v$ and $w$ are two loops in $L_0W$ such that $v(s,0)=w(s,0)$ for $s\in S^1$. Note that the local system is trivial on $W$, we can modify $v$ by elements in $\Ima(\pi_1(W)\to \pi_1(L_0W))$ from the inclusion of constant loops without changing $\rho_v$. Therefore we can assume $v(s,0)=x''(0)$ for all $s\in S^1$. We may assume the same thing for $w$. Let $\gamma_{x''},\gamma_{y''}$ denote the path in $\Omega_0W$ connecting $x'',y''$ to $x''(0)=y''(0)$. Then it suffices to prove the following commutative diagram
		\begin{equation}
		\xymatrix{
		\rho_{x''(0)}\otimes_R \rho_{y''(0)} \ar[rrr]\ar[d]^{\rho_{\gamma_{x''}v \gamma^{-1}_{x''}}\otimes_R \rho_{\gamma_{y''}w\gamma_{y''}^{-1}} } & & & \rho_{x''(0) }\ar[d]^{\rho_{(\gamma_{x''}\star\gamma_{y''})(v\star w) (\gamma_{x''}\star \gamma_{y''})^{-1} }}\\
		\rho_{x''(0)}\otimes_R \rho_{y''(0)}\ar[rrr] & & & \rho_{x''(0)}}
        \end{equation}
	Therefore it suffices to prove that $\rho_{\gamma_{x''} v \gamma_{x''}^{-1}\gamma_{y''}w\gamma_{y''}^{-1}}=\rho_{(\gamma_{x''}\star\gamma_{y''})(v\star w) (\gamma_{x''}\star \gamma_{y''})^{-1} }$. Therefore it is enough to verify that $\gamma_{x''} v \gamma_{x''}^{-1}\gamma_{y''}w\gamma_{y''}^{-1}$ is homotopic to $(\gamma_{x''}\star\gamma_{y''})(v\star w) (\gamma_{x''}\star \gamma_{y''})^{-1} $ as a loop in $L_0W$. Note that  $(\gamma_{x''}\star\gamma_{y''})(v\star w) (\gamma_{x''}\star \gamma_{y''})^{-1} = (\gamma_{x''} v \gamma_{x''}^{-1})\star (\gamma_{y''} w \gamma_{y''}^{-1})$ and both $(\gamma_{x''} v \gamma_{x''}^{-1}),(\gamma_{y''} w \gamma_{y''}^{-1})$ represent a based loop of based loops in $\Omega\Omega_0W$, where the base is the constant loop $x''(0)$. Then the claim follows from the same proof for the fact that second homotopy groups are abelian. Note that \eqref{eqn:com} also implies that $\rho_u$ only depends on the homotopy class of $u$.
\end{proof}

As a consequence of Proposition \ref{prop:pants}, we can count pair-of-pants moduli spaces twisted by $\rho_u$ to define a ring structure on $SH^*(W;\rho)$. That $H^*(W;R)\to SH^*(W;\rho)$ is a ring map follows from the same argument for untwisted symplectic cohomology, see \cite{ritter2013topological}.
\begin{proof}[Proof of Theorem \ref{thm:local}]
	We use local systems with $R=\C$. The assumption implies that $\dim Q\ge 3$, in particular, $\pi_1(ST^*Q)\to \pi_1(T^*Q)$ is an isomorphism. We first assume that $Q$ is oriented and spin. Following \cite{albers2017local}, we consider local systems on $L_0W$ trivial on $W$ induced from $\Z/p\subset \C^*$, which are represented by $\Hom_{inv}(\pi_2(W),\Z/p)\simeq H^2_{inv}(\widetilde{W},\Z/p)$, where $\widetilde{W}$ is the universal cover of $W$ and $\pi_1(W)$ acts on the cohomology by deck transformation. By \cite[Proposition 6, 9]{albers2017local}, $H^2(W)\stackrel{\pi^*}{\to} H^2_{inv}(\widetilde{W})$ contains nontrivial images iff the Hurewicz map $\pi_2(W)\to H_2(W)$ is nonzero.  In the case of \eqref{l1}, by \cite[Theorem 1]{albers2017local}, there exists an admissible local system $\rho$ such that $SH^*(T^*Q;\rho)=0$. Moreover, such local system is represented by an element in the image of $H^2(T^*Q;\Z/p)\to H^2_{inv}(\widetilde{T^*Q};\Z/p)$ for some prime $p$. Then the local system on $L_0ST^*Q$ is represented by the restriction to $H^2_{inv}(\widetilde{ST^*Q};\Z/p)$	and it is from an element of $H^2(ST^*Q;\Z/p)$. By assumption, $H^2(W;\Z/p)\to H^2(ST^*Q;\Z/p)$ is surjective, which implies that $\rho|_{L_0ST^*Q}$ extends to $W$. Hence the claim follows from Proposition \ref{prop:indloc}. If $Q$ is non-orientable or not spin, given a local system $\rho$, we have $SH^*(T^*Q,\rho\otimes \sigma^{TQ})=H_{n-*}(LQ,\rho\otimes o_Q)$  \cite[Chapter 3]{abouzaid2013symplectic}, where $o_Q$ is the orientation local system on $Q$ and $\sigma^{TQ}$ is an admissible local system defined in \cite[Chapter 2]{abouzaid2013symplectic}, which is transgressed from $H^2(Q,\Z/2)$ (in the oriented not spin case, it is transgressed from the second Stiefel-Whitney class). Hence by assumption, this local system  $\sigma^{TQ}$ can be extended to $W$. Hence the invariance follows similarly as we can find an admissible local system $\rho$ such that $H_{n-*}(LQ,\rho\otimes o_Q)=0$ by \cite[Theorem 1]{albers2017local}. 
	
	For case \eqref{l2}, if $Q$ is spin, there exists some prime $p$ such that $H^2(W;\Z/p)\to H^2(ST^*Q;\Z/p)$ is nonzero. Since when $\pi_1(Q)=0$, the any nontrivial element of $H^2(T^*Q;\Z/p)=H^2(ST^*Q;\Z/p)$ determines a local system with vanishing symplectic cohomology, then the claim follows. For not spin manifold, $\sigma^{TQ}$ is the transgression of the second Stiefel-Whitney class, hence it can also be extended to $W$ by our condition.

	If $\chi(Q)=0$, then we have a section $s:Q\to ST^*Q$ and $Q \stackrel{s}{\to} ST^*Q \to T^*Q$ is a homotopy equivalence. Therefore the invariance of $H^*(W;\C)\to H^*(ST^*Q;\C)$ implies that $H^*(W;\Q)\to H^*(ST^*Q;\Q)\stackrel{s^*}{\to} H^*(Q;\Q)$ is an isomorphism. In particular, the composition $Q\stackrel{s}{\to} ST^*Q \hookrightarrow W$ induces a rational homotopy equivalence.       
\end{proof}
\begin{remark}
	It is possible to get invariance of $\Z/p$ cohomology for $p>2$ under the similar assumptions as in Theorem \ref{thm:local}. We can consider $\Z/p$-local systems that are trivial on constant loops, such local system is classified by $\Hom(\pi_1(L_0W), (\Z/p)^{\times})/(\Z/p)^{\times}$. Since for any $g\ne 1\in (\Z/p)^{\times}$, we have $1-g\in (\Z/p)^\times$, then \cite[Proposition 3]{albers2017local} applies to get vanishing of symplectic cohomology.
\end{remark}

\subsection{Diffeomorphism type}
In some situations, the invariance of cohomological information $H^*(W;\Z)\to H^*(Y;\Z)$ would yield information about the diffeomorphism type via h-cobordism. Such argument was used extensively in \cite{barth2016diffeomorphism}. In the following, we extract some of the topological conditions from \cite{barth2016diffeomorphism} such that the h-cobordism argument can be used. In particular, combining with the results from previous sections, we get the uniqueness of diffeomorphism type for many cases.

\begin{definition}
	An oriented manifold $W$ with boundary is good if the following conditions hold.
	\begin{enumerate}
		\item There exists $W_0$ diffeomorphic to $W$ and is contained in the collar neighborhood of $\partial W$.
		\item  There exists a locally closed manifold $V\subset \partial W$, such that $V\to \partial W \to W$ induces isomorphism on cohomology and the corresponding copy of $V_0$ on $W_0$ is homotopic to $V$ in the collar neighborhood of $\partial W$.
	\end{enumerate}
\end{definition}

\begin{example}
	The major source of good domains is the following.
	\begin{enumerate}
		\item $W=DT^*Q$ for manifold $Q$ with $\chi(Q)=0$. Then there exists a section $s:Q\to ST^*Q$. Then we may take $V=s(Q)$.
		\item $W=M\times \D$. Then $V$ may be taken as $M\times \{1\}$.
	\end{enumerate}
\end{example}

\begin{proposition}\label{prop:homology}
Let $W$ be a good domain with boundary  $Y$. If $Y$ has a another filling $W'$ such that $H^*(W')\to H^*(Y)$ is isomorphic to $H^*(W)\to H^*(Y)$, then $W'=W\cup H$, where $H$ is a homology cobordism from $Y$ to $Y$.
\end{proposition}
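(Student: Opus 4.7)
The plan is to use the goodness of $W$ to embed a diffeomorphic copy of $W$ into $W'$ so that the complement becomes a cobordism between two copies of $Y$, and then to verify that this cobordism is a homology cobordism. First, condition $(1)$ provides $W_0 \cong W$ lying inside a collar neighborhood of $\partial W$; under the identification $\partial W = Y = \partial W'$, I would insert this $W_0$ into a collar of $\partial W'$ in $W'$. This produces a decomposition $W' = W_0 \cup H$ with $W_0 \cap H = \partial W_0$, where $H$ is a compact cobordism from $Y_{\mathrm{in}} := \partial W_0$ to $Y_{\mathrm{out}} := \partial W'$, both diffeomorphic to $Y$.

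The key cohomological step will be to show that the inclusion $W_0 \hookrightarrow W'$ induces an isomorphism on integer cohomology. Here I would use the submanifold $V$ from condition $(2)$ and its corresponding copy $V_0 \subset \partial W_0$. Since $V_0$ is homotopic to $V$ inside the collar of $Y$, the inclusions $V_0, V \hookrightarrow W'$ are homotopic. By the defining property of $V$, the composition $H^*(W) \to H^*(Y) \to H^*(V)$ is an isomorphism; combined with the hypothesis that $H^*(W') \to H^*(Y)$ is isomorphic to $H^*(W) \to H^*(Y)$, this gives that $H^*(W') \to H^*(V)$ is also an isomorphism. Factoring this composition as $H^*(W') \to H^*(W_0) \to H^*(V_0)$ and noting that the second arrow is an isomorphism (it corresponds to $H^*(W) \to H^*(V)$ under the diffeomorphism $W \cong W_0$), I conclude that $H^*(W') \to H^*(W_0)$ is an isomorphism.

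With this in hand, the long exact sequence of the pair $(W', W_0)$ forces $H^*(W', W_0; \Z) = 0$. Excision identifies this relative group with $H^*(H, Y_{\mathrm{in}}; \Z)$, and the universal coefficient theorem then yields $H_*(H, Y_{\mathrm{in}}; \Z) = 0$. Since $H$ inherits an orientation from the symplectic manifold $W'$, Lefschetz duality gives $H_{n-*}(H, Y_{\mathrm{out}}; \Z) \cong H^{*}(H, Y_{\mathrm{in}}; \Z) = 0$, where $n = \dim H$. Both boundary inclusions into $H$ are therefore homology isomorphisms, so $H$ is a homology cobordism from $Y$ to $Y$, and $W' = W \cup H$ as required.

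I do not anticipate serious obstacles. The most delicate point is the book-keeping around collars: one must verify that the homotopy from $V_0$ to $V$ guaranteed in the collar of $\partial W$ by condition $(2)$ translates, after inserting $W_0$ into a collar of $\partial W'$, into a homotopy inside $W'$. This is a standard collar identification argument, and once it is in place the rest of the proof is essentially formal.
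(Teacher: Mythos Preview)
Your proposal is correct and follows essentially the same route as the paper's proof: embed $W_0\cong W$ in a collar of $\partial W'$, use the homotopy between $V$ and $V_0$ together with the hypothesis to show $H^*(W')\to H^*(W_0)$ is an isomorphism, then apply excision, the universal coefficient theorem, and Poincar\'e--Lefschetz duality to conclude the complement is a homology cobordism. The paper's argument differs only in presentation (it writes out the commutative diagram explicitly and calls the cobordism $X$), and your remark about the collar book-keeping is exactly the one subtlety the paper glosses over.
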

\begin{proof}
Since in the collar neighborhood of $Y$ in $W'$, we have a $W_0$ diffeomorphic to $W$. By deleting this $W_0$, we have cobordism $X$ from $Y$ to $Y$. We claim $H^*(W')\to H^*(W_0)$ is an isomorphism. Let $F:V\times [0,1]$ be the homotopy from $V$ to $V_0$ in the collar neighborhood. This follows from the following commutative diagram
$$
\xymatrix{H^*(W') \ar[r]^{F^*} \ar[d] & H^*(V\times [0,1]) \ar[r]^{\quad \simeq}\ar[rd]^{\simeq} & H^*(V)\\
H^*(W_0) \ar[r] & H^*(\partial W_0) \ar[r] & H^*(V_0)
}
$$
By assumption, we have $H^*(W_0)\to H^*(\partial W_0)\to H^*(V_0)$ is an isomorphism and the first arrow $H^*(W')\stackrel{F^*}{\to} H^*(V\times [0,1])\to H^*(V)$ is identified with $H^*(W')\to H^*(Y)\to H^*(V)$, hence an isomorphism. Therefore $H^*(W')\to H^*(W_0)$ is an isomorphism. Then by excision we have $H^*(W',W_0)=H^*(X,\partial W_0)=0$. Then the universal coefficient and Poincare duality implies that $H_*(X,\partial W_0)=H_*(X,\partial W')=0$. Therefore $X$ is a homology cobordism.
\end{proof}

Once we know that $Y$ is simply connected and the cobordism $X$ in Proposition \ref{prop:homology} is simply connected, the homology cobordism becomes $h$-cobordism by the relative Hurewicz theorem. Therefore, when $\dim X \ge 6$, we have $X$ is diffeomorphic to $Y\times [0,1]$. In particular, we can prove Theorem \ref{thm:diff}.

\begin{proof}[Proof of Theorem \ref{thm:diff}]
Contact manifolds in the statement of Theorem \ref{thm:diff} are ADC, where	case \eqref{f1} follows from  Theorem \ref{thm:product}. 	It follows from Corollary \ref{cor:B} and Proposition \ref{prop:homology}  that different fillings are differed by homology cobordisms. Note that by Theorem \ref{thm:fund}, we have $W$ is simply connected. Then the homology-cobordism $X$ from Proposition \ref{prop:homology} is simply connected by the Van Kampens Theorem. In particular, $X$ is an h-cobordism. Therefore the diffeomorphism type is unique.
\end{proof}

\section{Persistence of symplectic dilation}\label{s4}
Symplectic cohomology is naturally equipped rich algebraic structures, in particular, $SH^*(W)$ is a BV algebra with a degree $-1$ BV operator $\Delta$, c.f. \cite{seidel2012symplectic}. With such structure, symplectic dilation was introduced in \cite{seidel2012symplectic} as an element $x$ of $SH^1(W)$, such that $\Delta(x)=1$.
\begin{example}
	If $SH^*(W) = 0$, then $0$ is a symplectic dilation. If $W$ admits a symplectic dilation and $M$ be another Liouville domain, then $W\times M$ admits a symplectic dilation by \cite[Example 2.6]{seidel2014disjoinable}.
\end{example}

\begin{example}[{\cite[Example 6.4]{seidel2012symplectic}}]\label{ex:coeff}
	$T^*S^2$ admits a dilation with field coefficient $\bm{k}$ if $char(\bm{k}) \ne 2$, and $T^*S^n$ admits dilation for any coefficient when $n\ge 3$. $T^*\CP^n$ also admits dilation if $char(\bm{k})\ne 2$. $T^*K(\pi,1)$ does not admit dilation, in particular there is no dilation on $T^*{T^n}$. 
\end{example}

More examples with symplectic dilations are constructed by Lefschetz fibrations, since by \cite[Proposition 7.3]{seidel2012symplectic}, if the fiber $F$ of a $2n$-dimensional Lefschetz fibration $M\to \C$ contains a dilation for $n>2$, then so does $M$. By repeatedly applying this result, Seidel \cite[Example 2.13]{seidel2014disjoinable} showed that the Milnor fiber of a singularity of form $p=z_0^2+z_1^2+z_2^2 + q(z_3,\ldots,z_n)$ admits symplectic dilations.

\begin{example}\label{ex:ADCdilation}
	The link of $p=z_0^2+z_1^2+z_2^2 + z_3^{a_3}+\ldots + z^{a_n}_n=0$ with $2\le a_i \in \N$ is ADC. The associated Milnor fiber provides examples with ADC boundaries admitting symplectic dilations. The contact boundary admits a Morse-Bott contact form whose generalized Conley-Zehnder indices were computed in \cite[\S 5.3.1]{kwon2016brieskorn}. The Reeb orbits are completely classified by two natural numbers $T,N$. Let $I_T$ be the maximal subset of $ I:=\{0,1,\dots,n\}$ such that $lcm_{i\in I_T} a_i=T$ and $I_T$ has at least two elements. There are also some restrictions on $N$, see \cite{kwon2016brieskorn} for detail. Then there is a $2(\#I_{T}-2)$-dimensional family of Reeb-orbits with period $NT$ and generalized Conley-Zehnder index
	\begin{equation}\label{eqn:index}
	2\sum_{i\in I_T}\frac{NT}{a_i}+2\sum_{i\in I-I_T}\lfloor\frac{NT}{a_i}\rfloor+\#(I-I_T)-2NT.
	\end{equation}
	Then the minimal Conley-Zehnder index after a small perturbation is \eqref{eqn:index} minus $\#I_T-2$. When $T$ is even, the Conley-Zehnder index of a small perturbation is at least $NT+2N(\#I_T-3)+\#I -2 \#I_T+2$, hence the degree is at least $NT+2N(\#I_T-3)+2\#I-2\#I_T-2$ which is positive if $N>1$. When $N=1$, it is least $2\#I-6\ge 2$.  When $T\ge 3$ is odd, the degree is at least $2N\#I_T+3NT-3+2\#I-2\# I_T-2-2NT\ge NT+2\# I -5>0$. Therefore such manifold is ADC.
\end{example}

Like the vanishing of symplectic cohomology, the existence of symplectic dilation is also preserved under the Viterbo transfer map. That is, let $V\subset W$ be a Liouville subdomain, then the Viterbo transfer map $SH^*(W) \to SH^*(V)$ preserves the BV structure. In particular, if $W$ admits a symplectic dilation, so does $V$. Therefore, the existence of symplectic dilation may be viewed as an indication of the complexity of the Liouville domain, which is next to the vanishing of symplectic cohomology. The goal of this section is to prove for ADC contact manifolds, the existence of symplectic dilation is a property independent of the filling in many cases, hence measures the contact complexity. This is done by showing independence of a structure map as before, which also bears interests.
\subsection{BV operator $\Delta$}
Similar to the discussion in \S \ref{s2}, we will define $\Delta$ using $\bH$. However, to make sure Lemma \ref{lemma:max} can be applied. We need to consider two such functions $\bH_+,\bH_-$, such that the following holds.
\begin{enumerate}
	\item $\bH_+$ and $\bH_-$ satisfy same conditions of $\bH$ in \S \ref{s2}, and share the same $(a_i,b_i)$ and $D^+_i\le D^{-}_i$.
	\item $\min_{t\in S^1}\bH_-(t,x)\ge \max_{t\in S^1}\bH_+(t,x)$ for all $x\in \widehat{W}$.
	\item On each $(a_i,b_i)$, $\bH_+=f_+(r)$ and $\bH_-=f_-(r)$ and $rf'_+(r)-f_+(r) \le r f'_-(r)-f_-(r)$. 
\end{enumerate}
Roughly speaking the requirements above ask that $\bH_-$ grows faster than $\bH_+$, for example $\bH_-$ is a perturbation of $2r^2$ and $\bH_+$ is a perturbation of $r^2$.  We fix such two functions, we also fix a smooth decreasing function $\rho(s):\R \to \R$, such that $\rho(s)=1$ for $s<0$ and $\rho(s)=0$ for $s>1$. Then we define $$\bH^{\theta}_{s,t}:=\rho(s)\bH_-(t+\theta)+(1-\rho(s))\bH_+(t).$$
Then for $s<0$, we have $\bH^{\theta}_{s,t}=\bH_-(t+\theta)$ and for $s>1$ we have $\bH^{\theta}_{s,t}=\bH_+(t)$. Moreover by construction $\partial_s \bH^{\theta}_{s,t}=\rho'(s)(\bH_-(t+\theta)-\bH_+(t))\le 0$. Moreover, on region $(a_i,b_i)$, $\bH^{\theta}_{s,t}=\rho(s)f_-(r)+(1-\rho(s))f_+(r)$. Then 
\begin{eqnarray*}
	\partial_s(r\partial_r\bH^{\theta}_{s,t}-\bH^{\theta}_{s,t}) & = & \partial_s(r(\rho(s)f_-(r)+(1-\rho(s))f_+(r))'-(\rho(s)f_-(r)+(1-\rho(s))f_+(r)))\\
	& = & \rho'(s)(r f'_-(r)-f_-(r)-rf'_+(r)+f_+(r))\le 0.
\end{eqnarray*}
In particular, the conditions in Lemma \ref{lemma:max} are satisfied for $\bH^{\theta}_{s,t},\forall\theta \in S^1$ on $(a_i,b_i)$.
\begin{remark}
	The extra complexity is due to that $\bH$ depends on $t$. If we twist $\bH$ to get $\bH^{\theta}_{s,t}$ it is never true that $\partial_s\bH^{\theta}_{s,t}\le 0$. Hence Lemma \ref{lemma:max} can not be applied and the compactness proof fails. An alternative fix is using an autonomous Hamiltonian and the cascades moduli spaces to define the BV operator as in \cite{bourgeois2009symplectic}.
\end{remark}

Let $\cJ_{s,\theta}(W)$ be the set of smooth families of admissible almost complex structures $J^{\theta}_{s,t}:\R_s\times S^1_\theta \to \cJ(W)$, such that there exist $J_-,J_+\in \cJ(W)$ with $J^{\theta}_{s,t}=J_{-,t+\theta}$ when $s<0$ and $J^{\theta}_{s,t}=J_{+,t}$ when $s>1$. Let $\cJ_{s,\theta,J_-,J_+}(W)\subset \cJ_{s,\theta}(W)$ be the set of families with positive ends is given by $J_+$ and negative end is given by $J_-$. Then for $J_+\in \cJ_{reg}(\bH_+,f,g), J_-\in \cJ_{reg}(\bH_-,f,g)$, for a generic choice of $J^{\theta}_{s,t}\in \cJ_{s,\theta,J_-,J_+}(W)$, we have the following moduli spaces.
\begin{enumerate}
	\item For $x \in \cP^*(\bH_-), y\in \cP^*(\bH_+)$, $\cM^\Delta_{x,y}$ is defined to be the compactification of the  moduli space of solutions $(u,\theta)$ to the following
	\begin{equation}\label{eqn:BV1}
	\partial_s u + J^{\theta}_{s,t}(\partial_t u - X_{\bH^\theta_{s,t}}) = 0, \quad \lim_{s\to -\infty} u = x(\cdot + \theta),  \lim_{s\to \infty} u = y.
	\end{equation}
	\item  For $x \in \cP^*(\bH_+), p\in \cC(f)$, $\cM^{\Delta}_{p,x}$ is defined to be the moduli space of solutions $(u,\theta,\gamma)$ to the following
	\begin{equation}\label{eqn:BV2}
	\partial_s u + J^{\theta}_{s,t}(\partial_t u - X_{\bH^\theta_{s,t}}) = 0, \frac{\rd}{\rd s} \gamma +\nabla_g f=0, \quad \gamma(-\infty)=p,  u(0) = \gamma(0), \lim_{s\to \infty} u = x.
	\end{equation}
\end{enumerate}
Since $\partial_s \bH^{\theta}_{s,t}\le 0$, any solution $u$ to $\partial_s u + J^{\theta}_{s,t}(\partial_t u - X_{\bH^\theta_{s,t}}) = 0$ will have the property that 
\begin{equation}\label{eqn:actioncontrol}
\cA_{\bH_-}(u(-\infty)(\cdot - \theta))-\cA_{\bH_+}(u(\infty))\ge 0.
\end{equation}
As a consequence,  there is no $\cM^{\Delta}_{x,p}$ for $p\in \cC(f)$ and $x\in\cP^*(\bH_-)$. By the construction of $\bH^{\theta}_{s,t}$, Lemma \ref{lemma:max} can be applied to get compactness of $\cM^{\Delta}_{*,*}$. Therefore we have the following with a similar proof to Proposition \ref{prop:compact} and Proposition \ref{prop:transversality}.
\begin{proposition}\label{prop:delta}
	For a generic choice of $J^{\theta}_{s,t}$, we have $\cM^{\Delta}_{a,b}$ is a compact manifold with boundary of dimension $|a|-|b|+1$ when $|a|-|b|\le 0$. And $\partial \cM^{\Delta}_{a,b}:= -\sum \cM^\Delta_{a,*}\times \cM^+_{*,b}-\sum \cM^-_{a,*}\times \cM^{\Delta}_{*,b}$, where $\cM^-_{*,*},\cM^+_{*,*}$ are moduli spaces associated to $\bH_-$ and $\bH_+$. 
\end{proposition}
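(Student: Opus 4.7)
The plan is to follow the same three-step pattern used in Propositions 2.5 and 2.6 (compactness, transversality, description of the boundary), with the single new feature being the extra $S^1$-parameter $\theta$, which accounts for the shift of the dimension formula from $|a|-|b|-1$ to $|a|-|b|+1$.

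First I would address compactness. Since $\partial_s \bH^\theta_{s,t}\le 0$, every Floer solution $(u,\theta)$ of \eqref{eqn:BV1} or \eqref{eqn:BV2} satisfies the a priori action estimate \eqref{eqn:actioncontrol}, so the set of limit orbits is finite for fixed $x,y$. The $\theta$-parameter lives in the compact $S^1$, so no non-compactness arises from it. Escape to infinity is ruled out by Lemma \ref{lemma:max}: the construction of $\bH^\theta_{s,t}$ was arranged precisely so that on each slab $\partial W\times(a_i,b_i)$ one has $\partial_s \bH^\theta_{s,t}\le 0$ and $\partial_s(r\partial_r\bH^\theta_{s,t}-\bH^\theta_{s,t})\le 0$, which are exactly the hypotheses of the $s$-dependent version of the integrated maximum principle, and $J^\theta_{s,t}$ is cylindrical on these slabs by admissibility. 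For $\cM^\Delta_{p,x}$ with $p\in\cC(f)$, the only extra phenomenon is a possible breaking of the $u$-component at a point of $W$; exactly as in the proof of Proposition \ref{prop:compact}\eqref{compact3}, applying Lemma \ref{lemma:max} at $r=1$ confines such a bubble to $W$, where it degenerates to a $J$-holomorphic sphere in an exact domain and hence must be constant. The remaining breakings are standard Floer breakings, which produce either a $\cM^\Delta_{a,*}$ piece followed by a $\bH_+$-cylinder $\cM^+_{*,b}$, or a $\bH_-$-cylinder $\cM^-_{a,*}$ followed by a $\cM^\Delta_{*,b}$ piece; action monotonicity \eqref{eqn:actioncontrol} forbids two $\cM^\Delta$ pieces in a broken configuration. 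Note also that no $\cM^\Delta_{x,p}$ with $x\in\cP^*(\bH_-)$, $p\in\cC(f)$ can occur by the action inequality, so these moduli spaces do not enter the boundary description.

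Second I would prove transversality by a standard universal moduli space argument. Fix $J_-\in\cJ_{reg}(\bH_-,f,g)$ and $J_+\in\cJ_{reg}(\bH_+,f,g)$, and consider the universal moduli space over $\cJ_{s,\theta,J_-,J_+}(W)$; the $(s,\theta)$-dependence of the datum makes the linearized operator surjective at somewhere-injective solutions by the standard argument of Floer--Hofer--Salamon, and somewhere-injectivity holds by the usual trick using the asymptotic orbits together with the $\theta$-parameter. For the mixed moduli $\cM^\Delta_{p,x}$ one also needs the evaluation $u(0)$ to be transverse to stable manifolds of $\nabla_g f$; this follows from the submersivity of the evaluation map on the universal moduli space, exactly as in Proposition \ref{prop:transversality}. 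Sard--Smale then yields a comeager set of families $J^\theta_{s,t}$ for which all relevant moduli spaces up to dimension $1$ are cut out transversely. The dimension formula $|a|-|b|+1$ follows from the usual Fredholm index computation for the $u$-equation (which gives $|a|-|b|$) together with the additional $S^1$-factor coming from $\theta$; in the mixed case $\cM^\Delta_{p,x}$ the unstable manifold of $p$ contributes $|p|=\ind p$, matching $|p|-|x|+1$.

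Third I would read off the boundary of $\cM^\Delta_{a,b}$ when $|a|-|b|=0$. By the compactness discussion, the codimension-one strata are precisely the two types of broken configurations described above, and standard gluing (identical to Floer's gluing for continuation cylinders, with the $\theta$-parameter glued in trivially) shows that each such broken configuration is the endpoint of a unique boundary component. The sign conventions of Appendix \ref{app} produce the two minus signs stated in the proposition, reflecting that $\cM^\Delta$ is an ``odd'' operation of degree $-1$. The main (and essentially only) obstacle is checking that no new bubbling or escape to infinity is created by the $\theta$-twisting and $s$-dependent Hamiltonian; this is exactly what the careful arrangement of $\bH_\pm$ and $\bH^\theta_{s,t}$ in the paragraph preceding the proposition was designed to guarantee via Lemma \ref{lemma:max}, so the compactness step runs through with no new analytic input.
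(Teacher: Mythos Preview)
Your proposal is correct and matches the paper's approach exactly: the paper does not give a separate proof but simply states that Proposition \ref{prop:delta} follows ``with a similar proof to Proposition \ref{prop:compact} and Proposition \ref{prop:transversality}'', noting that the construction of $\bH^{\theta}_{s,t}$ was arranged so that Lemma \ref{lemma:max} applies. Your three-step outline (compactness via the integrated maximum principle, transversality via the universal moduli space argument, boundary via standard gluing) is precisely this, with the extra $S^1$-parameter accounted for; one minor remark is that the reason two $\cM^\Delta$-pieces cannot occur in a broken configuration is not action monotonicity per se but the standard fact that an $s$-dependent equation with fixed asymptotic data breaks into at most one $s$-dependent piece flanked by $s$-independent cylinders.
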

By this boundary configuration, $\Delta$ defines a cochain map $C(\bH_+,J_+,f)\to C(\bH_-,J_-,f)[-1]$. Moreover $\Delta$ decomposes into $\Delta_+$ and $\Delta_{+,0}$, which count $\cM^{\Delta}_{x,y}$ and $\cM^{\Delta}_{p,x}$ respectively. In particular, $\Delta_{+}$ is a cochain map  $C_+(\bH_+,J_+)\to C_+(\bH_-,J_-)[-1]$.  By \eqref{eqn:actioncontrol}, $\Delta$ maps $C^{D^+_i}(\bH_+,J_+,f)$ to $C^{D^-_i}(\bH_-,J_-,f)$. Therefore we use $\Delta^{D^+_i},\Delta^{D^+_i}_+$ and $\Delta^{D^+_i}_{+,0}$ to denote the restrictions respectively.  Lemma \ref{lemma:max} implies that curves appearing in $\Delta^{D^+_i}$ are contained in $W^i$.

Let $\delta^-$ denote the connecting map $H^*(C_+(\bH_-,J_-)) \to H^{*+1}(C_0(f))$. Then we can define a degree $1$ map 
\begin{equation}\label{eqn:phi}
\phi: \ker \Delta_+ \subset H^*(C_+(\bH_+,J_+)) \to \coker \delta^{-}, \quad x \mapsto \Delta_{+,0}(x)-d_{+,0}(b),
\end{equation}
where $x\in C_+(\bH_+,J_+),b\in C_+(\bH_-,J_-)$ such that $\Delta_+(x) = d_+(b)$.
\begin{proposition}\label{prop:welldefined}
	$\phi$ is well-defined. If $\psi$ is a cochain map on $C=C_+\oplus C_0$
	 that can be decomposed into $\psi_++\psi_0+\psi_{+,0}$, such that $\Delta \circ \psi-\psi\circ \Delta=\eta \circ d-d\circ \eta$ for $\eta=\eta_++\eta_{+,0}$. Then on cohomology we have $\phi\circ \psi_+=\psi_0\circ \phi$. 
\end{proposition}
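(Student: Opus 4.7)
The plan is to verify both parts algebraically by unpacking the cochain-map and homotopy identities along the splitting $C=C_+\oplus C_0$. The key structural input from Proposition \ref{prop:delta} is that $\Delta$ vanishes on $C_0(f)$, so on $C_+(\bH_+,J_+)$ it decomposes as $\Delta_++\Delta_{+,0}$ alone; extracting the $C_0$-component of $\Delta d=d\Delta$ on $C_+$ yields
$$\Delta_{+,0}\circ d_+=d_{+,0}\circ\Delta_++d_0\circ\Delta_{+,0},$$
which together with $d_0\circ d_{+,0}=-d_{+,0}\circ d_+$ (from $d^2=0$) is the only identity needed for the first part.

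For well-definedness, I would take a $d_+$-cocycle $x\in C_+(\bH_+,J_+)$ representing a class in $\ker\Delta_+$ together with $b\in C_+(\bH_-,J_-)$ satisfying $\Delta_+(x)=d_+(b)$. Applying the displayed identity to $x$ and using $d_+x=0$, $\Delta_+x=d_+b$, and the anticommutation relation gives $d_0(\Delta_{+,0}(x)-d_{+,0}(b))=0$. Two choices of $b$ differ by a $d_+$-cocycle $c\in C_+(\bH_-,J_-)$, and $d_{+,0}(c)$ represents $\delta^-([c])$, so the class is well-defined modulo $\Ima\delta^-$. Replacing $x$ by $x+d_+(a)$ forces $b$ to be replaced by $b+\Delta_+(a)$, and the resulting discrepancy $\Delta_{+,0}(d_+a)-d_{+,0}(\Delta_+a)$ collapses to $d_0\Delta_{+,0}(a)$ by the same identity, hence vanishes in cohomology.

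For the naturality statement I would decompose both $d\psi=\psi d$ and the homotopy $\Delta\psi-\psi\Delta=\eta d-d\eta$ along $C_+\oplus C_0$. The $C_+$-component of the homotopy applied to $x$ gives $\Delta_+\psi_+(x)=\psi_+(d_+b)-d_+(\eta_+x)=d_+(\psi_+(b)-\eta_+(x))$, so $\tilde b:=\psi_+(b)-\eta_+(x)$ is a legitimate bounding cochain and by definition $\phi(\psi_+[x])=[\Delta_{+,0}(\psi_+x)-d_{+,0}(\tilde b)]$. The $C_0$-component of the homotopy rewrites $\Delta_{+,0}(\psi_+x)$ in terms of $\psi_{+,0}(d_+b)$, $\psi_0(\Delta_{+,0}x)$, $d_{+,0}(\eta_+x)$ and $d_0(\eta_{+,0}x)$; meanwhile the $C_0$-component of $d\psi=\psi d$ on $b$ converts $\psi_{+,0}(d_+b)$ into $d_{+,0}(\psi_+b)+d_0(\psi_{+,0}b)-\psi_0(d_{+,0}b)$. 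Substituting both expansions, the expression collapses to
$$\Delta_{+,0}(\psi_+x)-d_{+,0}(\tilde b)=\psi_0\bigl(\Delta_{+,0}(x)-d_{+,0}(b)\bigr)+d_0\bigl(\psi_{+,0}(b)-\eta_{+,0}(x)\bigr).$$
The second summand is exact, and $\psi_0$ descends to $\coker\delta^-$ by the standard functoriality of the connecting map with respect to the cochain map $\psi$, so passing to cohomology gives $\phi\circ\psi_+=\psi_0\circ\phi$.

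The main obstacle is purely algebraic bookkeeping: six distinct identities (the $C_+$- and $C_0$-components of $\Delta d=d\Delta$, $\psi d=d\psi$, and $\Delta\psi-\psi\Delta=\eta d-d\eta$) must be combined in exactly the right order for the cancellations to appear. No analytic difficulty is anticipated, since the geometric input---the existence, transversality, and boundary structure of the moduli spaces defining $\Delta_+$ and $\Delta_{+,0}$---has already been established in Proposition \ref{prop:delta}.
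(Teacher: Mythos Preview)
Your approach is essentially identical to the paper's: both proofs unpack $d^2=0$, $\Delta d+d\Delta=0$, $\psi d=d\psi$, and the homotopy relation along the $C_+\oplus C_0$ splitting and carry out the same direct cancellation, arriving at the same exact error terms $d_0\Delta_{+,0}(a)$ and $d_0(\psi_{+,0}b-\eta_{+,0}x)$. One caveat: in this paper the BV operator satisfies $\Delta d+d\Delta=0$ (anticommutation), not $\Delta d=d\Delta$ as you wrote, so your displayed identity should read $\Delta_{+,0}d_+ = -d_{+,0}\Delta_+ - d_0\Delta_{+,0}$ and the bounding cochain for $x+d_+a$ becomes $b-\Delta_+a$; the remaining algebra is unaffected beyond these sign flips.
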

\begin{proof}
	Note that we have $d^2 = 0$ and $\Delta\circ d + d \circ \Delta = 0$, when we write them using the zero and positive decomposition, we have the following formula.
	\begin{eqnarray}
	d_0 \circ \Delta_{+,0}+ d_{+,0} \circ \Delta_+(x) + \Delta_{+,0}\circ d_+(x) & = & 0, \\
	d_0\circ d_{+,0}+ d_{+,0}\circ d_+ &= & 0.
	\end{eqnarray}
	We first show that $d_0\circ \phi(x) = 0$, that is 
	\begin{eqnarray*}
		d_0\circ \phi (x) & = & d_0 \circ \Delta_{+,0}(x) - d_0\circ d_{+,0}(b) \\
		& = & -d_{+,0} \circ \Delta_+(x) - \Delta_{+,0}\circ d_+(x)  - d_0\circ d_{+,0}(b) \\
		&= & -d_{+,0} \circ d_+(b) - \Delta_{+,0}\circ d_+(x)  - d_0\circ d_{+,0}(b) \\
		& = & -\Delta_{+,0}\circ d_+(x). 
	\end{eqnarray*}
	Since $d_+(x)=0$, we have $\phi(x)$ is closed. 
	
	Now we consider $x':= x + d_+(y)$, then we can choose $b':=b+\Delta_+(y)$, then we have
	\begin{eqnarray*}
		\phi(x')-\phi(x) & = & \Delta_{+,0}\circ d_+(y) - d_{+,0}\circ \Delta_+(y) \\
		& = & d_0\circ \Delta_{+,0}(y).
	\end{eqnarray*}
	Hence the difference is exact. Finally, we consider $b'=b+c$ where $d_+(c) = 0$. Then we have the difference is $d_{+,0}(c)$ which is 
	in the image of $\delta$.
	
	If we have such $\psi$ and $\eta$, then we have the following 
	\begin{eqnarray}
	\psi_+\circ d_+& = & d_+\circ \psi_+,\label{eqn:r1}\\
	\psi_{0}\circ d_{+,0}+\psi_{+,0}\circ d_+ &= & d_0\circ \psi_{+,0}+d_{+,0}\circ \psi_{+},\label{eqn:r2} \\
	\psi_+\circ \Delta_+-\Delta_+\circ\psi_+ & = &  d_+\circ\eta_+ -\eta_+\circ d_+,\label{eqn:r3}\\
	\psi_0\circ \Delta_{+,0} + \psi_{+,0}\circ \Delta_{+}-\Delta_{+,0}\circ\psi_+ & = &  d_0\circ \eta_{+,0} + d_{+,0}\circ \eta_{+} -\eta_{+,0}\circ d_+. \label{eqn:r4}
	\end{eqnarray}
	Then \eqref{eqn:r1} implies that $\psi_+$ induces a map on $H^*(C_+)$. \eqref{eqn:r2} implies that $\delta\circ \psi_+=\psi_0\circ \delta$ on  cohomology. \eqref{eqn:r3} implies that $\psi_+\circ \Delta_+=\Delta_+\circ \psi_+$ on cohomology. Lastly, for $x,b\in C_+$ with $\Delta_+(x)=d_+(b), d_+(x)=0$, we have
	\begin{eqnarray*}
		\Delta_{+}\circ \psi_+(x) & \stackrel{\eqref{eqn:r3}}{=} & \psi_+ \circ \Delta_{+}(x)-d_+\circ \eta_+(x)+\eta_{+}\circ d_+(x)\\
		& = & \psi_+\circ d_+(b)-d_+\circ \eta_+(x) \\
		& \stackrel{\eqref{eqn:r1}}{=} & d_+\circ \psi_+(b)-d_+\circ \eta_+(x).
	\end{eqnarray*}
	Therefore we have
	\begin{eqnarray*}
		\phi \circ \psi_+(x) & = & \Delta_{+,0}\circ \psi_+(x)-d_{+,0}\circ \psi_{+}(b)+d_{+,0}\circ \eta_+(x)\\
		& \stackrel{\eqref{eqn:r4}}{=} &  \psi_0\circ\Delta_{+,0}(x) + \psi_{+,0}\circ \Delta_{+}(x)- d_0\circ \eta_{+,0}(x) -d_{+,0}\circ \psi_{+}(b)\\
		&= &  \psi_0\circ\Delta_{+,0}(x) + \psi_{+,0}\circ d_{+}(b)-d_{+,0}\circ \psi_{+}(b)-d_0\circ \eta_{+,0}(x)\\
		& \stackrel{\eqref{eqn:r2}}{=} & \psi_0\circ \Delta_{+,0}(x) -\psi_0\circ d_{+,0}(b)+d_0\circ \psi_{+,0}(b)-d_0\circ \eta_{+,0}(x)\\
		& = & \psi_0\circ\phi(x) +d_0\circ \psi_{+,0}(b)-d_0\circ \eta_{+,0}(x).
	\end{eqnarray*}
	That is $\phi\circ \psi_+=\phi_0\circ \phi$ on cohomology.
\end{proof}

\subsection{Continuation maps}
There are two continuation maps we need to consider, one is the continuation from $\bH_+$ to $\bH_-$. The other is the continuation from homotopies of almost complex structures and its compatibility with $\Delta_+$ and $\phi$.

\subsubsection{Continuation map from $\bH_+$ to $\bH_-$}\label{ssb:cont} Let $\bH_s:=\rho(s)\bH_-+(1-\rho(s))\bH_+$, that is $\bH^0_{s,t}$. In particular, Lemma \ref{lemma:max} can be applied to $\bH_s$ on $(a_i,b_i)$.  Then we can consider the compactified moduli spaces of the following
\begin{eqnarray}
\{u:\R\times S^1 \to \widehat{W}\st\partial_s u + J_s(\partial_t u -X_{\bH_s})=0, \lim_{s\to -\infty} u=x, \lim_{s\to \infty} u = y \}&& x \in \cP(\bH_-),y\in \cP^*(\bH_+).\\
\left\{\begin{array}{l}u:\C \to \widehat{W},\\ \gamma:(-\infty,0]\to W \end{array}\left| \begin{array}{l}\partial_s u + J_s(\partial_t u -X_{\bH_s})=0, \partial_s \gamma + \nabla_g f=0,\\
\displaystyle\lim_{s\to -\infty}\gamma =p, u(0)=\gamma(0), \lim_{s\to \infty} u = x
\end{array}\right.\right\} &&   x \in \cP(\bH_+), p\in \cC(f). 
\end{eqnarray}
We denote them by $\cN_{x,y}$ and $\cN_{p,x}$. Then for $J_+\in \cJ_{reg}(\bH_+,f,g), J_-\in \cJ_{reg}(\bH_-,f,g)$, there exists generic homotopy $J_{s}$ from $J_-$ to $J_+$, so that the moduli spaces above are compact smooth manifolds with boundaries, when the expected dimension is $\le 1$. In particular, it defines a cochain map $\Theta:C(\bH_+,J_+,f)\to C(\bH_-,J_-,f)$ by counting $\cN_{x,*}$ when $x\in \cP^*(\bH_+)$ and is identity on $C_0(f)$. $\Theta$ respects the splitting into $C_0$ and $C_+$. Moreover $\Theta$ maps $C^{D^+_{i}}(\bH_+,J_+.f)\to C^{D^{-}_i}(\bH_-,J_-,f)$. We will show in \S \ref{sub:nondeg}, $\Theta$ is an isomorphism on cohomology. The following functoriality of the continuation map is also verified in \S \ref{sub:nondeg}.
\begin{proposition}\label{prop:natrualBV}
	Let $J^1_+,J^2_+\in \cJ^{D^+_i}_{reg}(\bH_+,f,g)$ and $J^1_-,J^2_- \in \cJ^{D^-_i}_{reg}(\bH_-,f,g)$, then the following diagram is commutative up to homotopy.
	$$
	\xymatrix{
	C_+^{D^+_i}(\bH_+,J^1_+) \ar[r] \ar[d] & C_+^{D^-_i}(\bH_-,J^1_-) \ar[d] \\
	C_+^{D^+_i}(\bH_+,J^2_+) \ar[r] & C_+^{D^{-}_i}(\bH_-,J^2_-)
    }	
	$$
	where the horizontal maps are continuation maps constructed above and vertical maps are continuation maps in Proposition \ref{prop:natural}.
\end{proposition}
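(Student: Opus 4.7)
The plan is to prove the homotopy commutativity of the square by the standard ``homotopy of homotopies'' argument: construct a one-parameter family of continuation Floer data interpolating between the two compositions, and show that the resulting parameterized moduli spaces deliver the required chain homotopy. First, I would keep a single Hamiltonian homotopy $\bH_s = \rho(s)\bH_- + (1-\rho(s))\bH_+$ for use in both $\Theta^1$ and $\Theta^2$; this is independent of the interpolation parameter $\tau$, so the estimates $\partial_s \bH_s \le 0$ and $\partial_s(r h'_s - h_s) \le 0$ on each $(a_i,b_i)$ from \S\ref{sub:shrink} (needed to invoke Lemma \ref{lemma:max}) hold uniformly. Then I would select a family $\{J_{s,\tau}\}_{\tau \in [0,1]}$ of admissible $s$-dependent admissible almost complex structures with the following property: $J_{s,0}$ realizes the composition $\Theta^2 \circ \Phi^+$ (concatenating, as $s$ decreases from $+\infty$ to $-\infty$, the $\Phi^+$-homotopy from $J^1_+$ to $J^2_+$, followed by the $\Theta^2$-homotopy from $J^2_+$ to $J^2_-$), while $J_{s,1}$ realizes $\Phi^- \circ \Theta^1$ by concatenating in the opposite order. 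Both families have the common asymptotes $J^1_+$ at $s=+\infty$ and $J^2_-$ at $s=-\infty$ for every $\tau$.

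Next I would introduce the parameterized moduli space
\[
\cH_{x,y} := \left\{(u,\tau) \st \tau \in [0,1],\ \partial_s u + J_{s,\tau}(\partial_t u - X_{\bH_s})=0,\ \lim_{s\to -\infty}u = x,\ \lim_{s\to +\infty}u = y\right\}
\]
for $x \in \cP^*(\bH_-)$, $y \in \cP^*(\bH_+)$. The uniform bound $\partial_s\bH_s\le 0$ gives the action inequality $\cA_{\bH_-}(x)\ge \cA_{\bH_+}(y)$, so that the construction restricts to a map $C^{D^+_i}_+(\bH_+,J^1_+)\to C^{D^-_i}_+(\bH_-,J^2_-)$, and Lemma \ref{lemma:max} applied on each slab $\partial W\times (a_i,b_i)$ rules out escape to infinity. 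Under full transversality for dimension $\le 1$, $\cH_{x,y}$ is a manifold of dimension $|x|-|y|+1$ whose codimension-one boundary consists of Floer breakings at an interior $\tau$ (yielding a $d\circ H + H\circ d$ contribution) together with the two $\tau$-endpoint slices (yielding $\Theta^2\circ \Phi^+ - \Phi^-\circ \Theta^1$). Counting the zero-dimensional components of $\cH_{x,y}$ thus defines the sought chain homotopy $H$.

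The main obstacle is the low-regularity hypothesis: the proposition only supplies $J^a_\pm \in \cJ^{D^\pm_i}_{reg}$, where transversality is guaranteed only for moduli spaces up to dimension $0$, while the homotopy argument requires transversality up to dimension $1$. I would resolve this exactly as in Propositions \ref{prop:lowreg2} and \ref{prop:lowreg3}: fix open neighborhoods $\cU^a_\pm \subset \cJ^{D^\pm_i}_{reg}$ of $J^a_\pm$ (they are open by the uniform energy bound), pick generic $\widetilde{J}^a_\pm \in \cU^a_\pm \cap \cJ^{\le 1}_{reg}$, and run the argument above to produce a genuine chain homotopy between $\widetilde{\Theta}^2\circ \widetilde{\Phi}^+$ and $\widetilde{\Phi}^-\circ \widetilde{\Theta}^1$. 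The local constancy argument of Lemma \ref{lemma:local} (together with its direct analogue for the $\Theta$-type continuation, obtained by the same compactness reasoning and the openness of the corresponding regular sets) identifies $\widetilde{\Phi}^\pm$ and $\widetilde{\Theta}^a$ with $\Phi^\pm$ and $\Theta^a$ up to chain homotopy, yielding the desired commutativity.

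Finally, independence of $H$ from the choices (the interpolating path $\{J_{s,\tau}\}$, the neighborhoods $\cU^a_\pm$, and the perturbations $\widetilde{J}^a_\pm$) follows from a standard two-parameter homotopy-of-homotopies argument parallel to Proposition \ref{prop:natural}: any two such choices are connected by a $(\tau,\sigma)$-family whose corresponding parameterized moduli space, under a further generic perturbation and the same maximum-principle/energy argument, supplies a homotopy between the two chain homotopies.
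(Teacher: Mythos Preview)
Your approach is correct and is essentially the same homotopy-of-homotopies argument the paper invokes. The paper's proof (at the end of \S\ref{sub:nondeg}) is a single sentence: specialize the $H^a_{s,t}$ construction used for Proposition~\ref{prop:BV} to the case $H_{\pm,s}\equiv \bH_{\pm}$, observe that Lemma~\ref{lemma:max} applies throughout, and conclude by the standard homotopy argument.

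The only substantive difference is how the Hamiltonian is handled. You fix $\bH_s$ once and for all and interpolate only the almost complex structures $J_{s,\tau}$; the paper instead varies the Hamiltonian via $H^a_{s,t}$, which for $a\to\pm\infty$ literally degenerates to the glued data realizing the two compositions. Your simplification is legitimate because, with $H_{\pm,s}\equiv \bH_{\pm}$, the paper's $H^a_{s,t}$ is just an $s$-shift of $\bH_s$, and shifts don't change moduli spaces. However, you should be explicit that your $\tau=0,1$ endpoints are the \emph{pre-glued} data (at some fixed large gluing parameter), and that identifying the resulting single continuation maps with the chain-level compositions $\Theta^2\circ\Phi^+$ and $\Phi^-\circ\Theta^1$ uses the standard gluing theorem; your phrase ``$J_{s,0}$ realizes the composition'' hides this step. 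Your careful treatment of the low-regularity hypothesis via perturbation into $\cJ^{\le 1}_{reg}$ is more than the paper spells out, but note that it is largely unnecessary here: the interior breakings of the $1$-dimensional $\cH_{x,y}$ occur only at the asymptotic ends $J^1_+,J^2_-$, where dimension-$0$ regularity (the hypothesis $\cJ^{D^\pm_i}_{reg}$) already suffices, while transversality for $\cH_{x,y}$ itself comes from a generic choice of the interpolating family $J_{s,\tau}$.
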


\subsubsection{Compatibility with homotopies of $J$}\label{ssb:comp}
Assume we have $J_{+,1},J_{+,2}\in \cJ_{reg}(\bH_+,f,g)$ and $J_{-,1},J_{-,2}\in \cJ_{reg}(\bH_-,f,g)$ and regular homotopies $J_{s,+}, J_{s,-}$ from $J_{+,2}, J_{-,2}$ to $J_{+,1},J_{-,1}$ respectively, so that the continuation map in Proposition \ref{prop:natural} is defined, and are denoted by $\Phi^+,\Phi^-$. Assume $J^{\theta}_{s,t,1}$ and $J^{\theta}_{s,t,2}$ are two homotopies with ends $J_{-,1},J_{+,1}$ and $J_{-,2}, J_{+,2}$ respectively,  such that Proposition \ref{prop:delta} holds. 

Following \cite[\S2.2.3]{abouzaid2013symplectic}, we consider a family of almost complex structures $J^{r,\theta}_{s,t}$ such that 
\begin{enumerate}
	\item When $r\ll 0$, $J^{r,\theta}_{s,t}=J_{s-r,t+\theta,-}$ if $s<0$ and 
	is $J^{\theta}_{s+r,t,1}$ if $s\ge 0$. They patch up smoothly by our definition of $J_s$ and $J^{\theta}_{s,t}$
	\item  When $r\gg 0$, $J^{r,\theta}_{s,t}=J^{\theta}_{s-r,t,2}$ if $s<0$ and 
	is $J_{s+r,t,+}$ if $s\ge 0$.
	\item For every $r$, when $s\gg 0$, $J^{r,\theta}_{s,t}=J_{+,1}$, when $s \ll 0$, $J^{r,\theta}_{s,t}=J_{t+\theta, -, 2}$.
\end{enumerate}
Then for $x\in \cP^*(\bH_-), y\in \cP^*(\bH_+)$, we can consider the moduli space of solutions $(u,\theta, r)$ to 

\begin{equation}\label{eqn:BVhomotopy}
\partial_s u + J^{\theta,r}_{s,t}(\partial_t u-X_{\bH^{\theta}_{s,t}})=0, \quad \lim_{s\to -\infty}u=x(\cdot + \theta), \lim_{s\to \infty} u = y.
\end{equation}
Let $\cT_{x,y}$ denote the compactification, we can similarly consider $\cT_{p,x}$ for $x\in \cP^*(\bH_+),p\in \cC(f)$. Then for generic a choice of $J^{r,\theta}_{s,t}$, $\cT_{*,*}$ is a compact manifold with boundary if the expected dimension is $\le 1$. Let $\Delta^1,\Delta^2$ be the BV operators defined using $J^{\theta}_{s,t,1}$ and $J^{\theta}_{s,t,2}$. Let $\eta:C(\bH_+,J_{+,1},f) \to C(\bH_-,J_{-,2},f)$ be the operator counting rigid points in $\cT_{*,*}$. Then the boundary configuration leads to the following relation
$$ \Phi^-\circ \Delta^1-\Delta^2\circ \Phi^+  = d_{J_{-,2}}\circ \eta - \eta \circ d_{J_{+,1}}.$$
Moreover, both $\Phi^+, \Phi^-$ have splittings into $\Phi^{\pm}_++\Id_{C_0}+\Phi^{\pm}_{+,0}$ and $\eta$ has splitting into $\eta_{+}+\eta_{+,0}$. That is they are in the form that Proposition \ref{prop:welldefined} can be applied.

The following propositions show that the structure we defined is the same BV operator defined in \cite{seidel2012symplectic}. We will prove them in \S \ref{sub:nondeg}.
\begin{proposition}\label{prop:BV}
	Using the identification in Proposition \ref{prop:cascades} to identify $SH^*(W)$ with $H^*(C(\bH_+,J_+,f))$, then $\Theta^{-1}\circ \Delta:H^*(C(\bH_+,J_+,f)) \to H^{*-1}(C(\bH_+,J_+,f))$ is the BV operator on $SH^*(W)$.
\end{proposition}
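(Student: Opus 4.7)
The plan is to reduce our two-Hamiltonian construction to a one-Hamiltonian BV operator and then invoke the standard identification. First I would introduce a one-Hamiltonian version $\Delta_+^{std}: C(\bH_+,J_+,f)\to C(\bH_+,J_+,f)[-1]$ defined by counting pairs $(u,\theta)$ with $\partial_s u+J^{\theta}_{s,t}(\partial_t u-X_{\bH_+(t+\rho(s)\theta)})=0$ and suitable asymptotics at $\pm\infty$ in $\cP^*(\bH_+)\cup \cC(f)$. The key point to check is that compactness still holds even though $\partial_s \bH_+(t+\rho(s)\theta)$ need not be nonpositive in general: on each cylindrical interval $(a_i,b_i)$ the Hamiltonian $\bH_+$ is autonomous, so the $\theta$-rotation acts trivially there, and Lemma \ref{lemma:max} applies exactly as in the proof of Proposition \ref{prop:delta}. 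This shows $\Delta_+^{std}$ is a well-defined cochain map, and a straightforward continuation to a standard quadratic Hamiltonian (as in Proposition \ref{prop:cascades}) identifies the induced map on $H^*(C(\bH_+,J_+,f))\simeq SH^*(W)$ with the BV operator in the sense of Seidel \cite{seidel2012symplectic}.

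Next I would show that the continuation composed with the standard BV agrees with our $\Delta$, i.e.\ that $\Delta$ is chain homotopic to $\Theta\circ \Delta_+^{std}$ as maps $C(\bH_+,J_+,f)\to C(\bH_-,J_-,f)[-1]$. The chain homotopy comes from a one-parameter family of data interpolating between our original setup (where the interpolation from $\bH_-$ to $\bH_+$ and the $\theta$-twist happen simultaneously near $s=0$) and a stretched configuration in which the $\theta$-twist is pushed to a region $s\gg R$ where the Hamiltonian is already $\bH_+$, while the Hamiltonian interpolation from $\bH_-$ to $\bH_+$ still occurs near $s=0$. Concretely I would define $\bH^{R,\theta}_{s,t}=\rho(s)\bH_-(t+\rho(s-R)\theta)+(1-\rho(s))\bH_+(t+\rho(s-R)\theta)$ and a corresponding family of admissible almost complex structures, verify that $\partial_s(r\partial_r \bH^{R,\theta}-\bH^{R,\theta})\le 0$ on each $(a_i,b_i)$ (using that $\bH_\pm$ are autonomous there so the rotation is invisible), and use Lemma \ref{lemma:max} to get a priori compactness uniformly in $R$. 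At $R=0$ we recover $\Delta$, and in the limit $R\to+\infty$ the cylinders break into a BV cylinder for $\bH_+$ followed by a continuation cylinder from $\bH_+$ to $\bH_-$, which is exactly $\Theta\circ \Delta_+^{std}$. Standard gluing then upgrades this into a chain homotopy.

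The anticipated main obstacle is the bookkeeping around the mixed moduli spaces $\cM^\Delta_{p,x}$ involving the Morse function $f$: the chain homotopy must be compatible with the splitting into $C_0\oplus C_+$ and intertwine the corresponding $\Delta_{+,0},\Theta_{+,0}$ pieces. This is handled by including the analogous mixed moduli spaces in the interpolating family (solutions $(u,\theta,\gamma,R)$ with $u(0)=\gamma(0)$, $\gamma(-\infty)=p\in \cC(f)$), together with the corresponding breaking configurations at $R=\infty$ involving Morse flow lines in $W$. The boundary count of the $1$-dimensional strata of the parametrized moduli space yields the desired chain homotopy identity $\Delta-\Theta\circ\Delta_+^{std}=d\circ K+K\circ d$ for an operator $K$ decomposing into positive and mixed pieces.

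Combining Steps 1 and 2, we obtain on cohomology $\Delta=\Theta\circ\Delta_+^{std}$, hence $\Theta^{-1}\circ\Delta=\Delta_+^{std}$, which is the standard BV operator on $SH^*(W)$ under the identification of Proposition \ref{prop:cascades}. This proves Proposition \ref{prop:BV}.
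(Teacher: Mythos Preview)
Your Step 1 contains a genuine gap that the paper explicitly anticipates. You claim that compactness for the one-Hamiltonian operator $\Delta_+^{std}$ follows from Lemma \ref{lemma:max} because $\bH_+$ is autonomous on each $(a_i,b_i)$. But Lemma \ref{lemma:max} in its $s$-dependent form requires $\partial_s H_s\le 0$ on \emph{all} of $r>r_0$, not merely on $(r_0,r_0+\epsilon)$: the integrated energy of the part of the curve outside $r_0$ picks up the bulk term $\int_\Sigma(\partial_s H_s)\circ u$, and this must be nonpositive for the argument to close. For your twisted Hamiltonian $\bH_+(t+\rho(s)\theta)$ one has $\partial_s H_s=\rho'(s)\theta\,\partial_t\bH_+$, which has no sign on the time-dependent regions between the intervals $(a_i,b_i)$ where the perturbations live. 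The paper's remark immediately following the definition of $\Delta$ says exactly this: twisting a single $\bH$ never yields $\partial_s\bH^\theta_{s,t}\le 0$, so Lemma \ref{lemma:max} cannot be applied and compactness fails. This is precisely why two Hamiltonians $\bH_\pm$ with $\bH_-\ge\bH_+$ are introduced.

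Your Step 2 also has a formula-level issue: with $\bH^{R,\theta}_{s,t}=\rho(s)\bH_-(t+\rho(s-R)\theta)+(1-\rho(s))\bH_+(t+\rho(s-R)\theta)$, setting $R=0$ gives $\rho(s)\bH_-(t+\rho(s)\theta)+(1-\rho(s))\bH_+(t+\rho(s)\theta)$, which is not the paper's $\bH^\theta_{s,t}=\rho(s)\bH_-(t+\theta)+(1-\rho(s))\bH_+(t)$. More importantly, the same compactness obstruction reappears for this family.

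The paper's proof avoids the one-Hamiltonian construction altogether. It passes from $\bH_\pm$ to their non-degenerate perturbations $H_\pm$ (as in Proposition \ref{prop:cascades}), then sandwiches these between finite-slope Hamiltonians $H^{D^\pm_i}_\pm$ on which the BV operator and continuation maps are standard, and finally compares via an explicit homotopy of homotopies $H^a_{s,t}$ (defined piecewise for $a>1$, $a<0$, $0\le a\le 1$) engineered so that $\partial_s H^a_{s,t}\le 0$ and $\partial_s(r\partial_r H^a_{s,t}-H^a_{s,t})\le 0$ hold on each $(a_i,b_i)$. This keeps the integrated maximum principle available at every stage and produces the commuting squares that identify $\Theta^{-1}\circ\Delta$ with the standard BV operator.
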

\begin{proposition}\label{prop:inv}
	$\Theta^{-1}\circ \Delta_+:H^*(C_+(\bH_+,J_+,f)) \to H^{*-1}(C_+(\bH_+,J_+,f))$ and $\phi:\ker \Delta_+\to \coker \delta $ are invariants of exact domains up to exact symplectomorphisms using the identification in Proposition \ref{prop:cascades}.
\end{proposition}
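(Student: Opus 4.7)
The plan is to prove Proposition~\ref{prop:inv} in two stages: first, for a fixed Liouville domain $W$, the structure $\Theta^{-1}\circ\Delta_+$ on $H^*(C_+(\bH_+,J_+))$ and the map $\phi\colon\ker\Delta_+\to\coker\delta$ are canonically independent of the auxiliary data $(\bH_+,\bH_-,J_+,J_-,f,g)$; second, an exact symplectomorphism $\psi\colon(W_1,\lambda_1)\to(W_2,\lambda_2)$ transports any admissible quintuple on one side to one on the other, yielding a cochain-level isomorphism that intertwines every operator in sight.

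For the first stage, fix two admissible quintuples indexed by $i=1,2$ and consider the resulting differentials $d^i$, BV operators $\Delta^i$, and continuation maps $\Theta^i$. The construction of \S\ref{ssb:comp} supplies continuation cochain maps $\Phi^\pm$ between the $\bH_\pm$ complexes of the two quintuples, each decomposing as $\Phi^\pm_++\Phi^\pm_0+\Phi^\pm_{+,0}$, together with an operator $\eta=\eta_++\eta_{+,0}$ such that $\Phi^-\circ\Delta^1-\Delta^2\circ\Phi^+=d^2\circ\eta-\eta\circ d^1$. Adapting the proof of Proposition~\ref{prop:welldefined} from endomorphisms of a single complex to cochain maps between two complexes of the required form, we conclude that $\phi$ is intertwined on cohomology by $\Phi^+_+$ and $\Phi^-_0$. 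Proposition~\ref{prop:natrualBV} shows that $\Theta$ commutes up to homotopy with $\Phi^\pm$, and this, combined with the displayed homotopy, yields that $\Theta^{-1}\circ\Delta_+$ commutes with $\Phi^+_+$ on cohomology. Since continuation maps are quasi-isomorphisms, both invariants are canonically identified.

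For the second stage, an exact symplectomorphism $\psi\colon W_1\to W_2$ with $\psi^*\lambda_2=\lambda_1+dF$ extends uniquely to a diffeomorphism $\widehat\psi\colon\widehat W_1\to\widehat W_2$ identifying the Reeb dynamics on the two boundaries and intertwining the Liouville forms up to an exact term, which does not affect the symplectic action of contractible loops. Pulling back any admissible quintuple from $\widehat W_2$ yields an admissible quintuple on $\widehat W_1$, and $\widehat\psi$ induces bijections between the corresponding moduli spaces $\cM_{*,*}$, $\cM^{\Delta}_{*,*}$, $\cN_{*,*}$ and all other moduli spaces used in \S\ref{s4}. The resulting strict isomorphism of cochain complexes intertwines $d$, $\Delta$, $\Delta_+$, $\delta$, $\Theta$, and $d_{+,0}$ on the nose, hence also $\Theta^{-1}\circ\Delta_+$ and $\phi$. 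Composing with the first stage yields invariance under the identification in Proposition~\ref{prop:cascades}.

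The principal technical point is executing the first stage at the level of the truncated subcomplexes $C^{D_i}_{(+)}$, in the spirit of \S\ref{ss:natural} and Propositions~\ref{prop:lowreg}, \ref{prop:lowreg2}, \ref{prop:lowreg3}, so that only dimension-zero transversality is required to define each operator and each homotopy, and so that the chain-level identity of Proposition~\ref{prop:welldefined} holds compatibly across truncations. Once this filtered picture is verified, passing to direct limits yields the statement with no additional work.
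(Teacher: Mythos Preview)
Your approach diverges from the paper's, and the second stage contains a genuine gap. The paper does not transport data directly through an exact symplectomorphism. Instead it (i) defines $\Delta_+$ and $\phi$ as direct limits over finite-slope non-degenerate Hamiltonians, (ii) proves that the Viterbo transfer map preserves them (Proposition~\ref{prop:viterbo}, appealing to functoriality of positive $S^1$-equivariant symplectic cohomology), and (iii) uses that exactly symplectomorphic Liouville domains admit nested exact embeddings into one another \cite[Proposition 11.8]{cieliebak2012stein} to conclude invariance. The remainder of \S\ref{sub:nondeg} is devoted to showing that the $\bH_\pm$-model agrees with this direct-limit definition, via an intermediate family $H_\pm^{D_i^\pm}$ of finite-slope Hamiltonians and a homotopy-of-homotopies argument.

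The problem with your second stage is the assertion that $\widehat\psi$ ``identifies the Reeb dynamics on the two boundaries''. An exact symplectomorphism $\psi$ with $\psi^*\lambda_2=\lambda_1+dF$ does not intertwine the Liouville vector fields: they differ by the Hamiltonian vector field of $F$. Restricting to the boundary gives $\psi^*\alpha_2=\alpha_1+d(F|_{\partial W_1})$, which in general is a different contact form with a different Reeb flow. Consequently the extension $\widehat\psi$ does not respect the radial coordinate near $\partial W$, and the pullback of admissible data $(\bH,J)$ from $\widehat{W_2}$ is \emph{not} admissible for the cylindrical end of $\widehat{W_1}$ in the sense of \S\ref{s2}. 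You therefore do not obtain ``a strict isomorphism of cochain complexes'' between two instances of the setup on $W_1$ and $W_2$, and your first stage (which fixes the cylindrical end structure) cannot close this discrepancy. This is precisely why the paper routes through Viterbo transfer: nested embeddings absorb the mismatch between cylindrical-end structures.

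There is also a smaller issue in the first stage: \S\ref{ssb:comp} only varies the almost complex structures with $\bH_\pm$ fixed, so it does not directly supply continuation maps $\Phi^\pm$ between two different choices of $\bH_\pm$; and for an arbitrary pair $(\bH_\pm^1,\bH_\pm^2)$ there need be no monotone homotopy with $\partial_s H\le 0$. The paper sidesteps this by passing through the cofinal family of finite-slope Hamiltonians rather than comparing two Hamiltonians of the form $\bH$ directly.
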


Hence, from now on, we will proceed the construction using $\bH_-,\bH_+$. 
\begin{proposition}\label{prop:dilation}
	$W$ admits a symplectic dilation iff $1\in \Ima \phi(\Ima(SH(W)\to SH_+(W))\cap \ker \Delta_+)= \Ima \phi(\ker \delta\cap \ker \Delta_+)$.  
\end{proposition}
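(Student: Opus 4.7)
The proof proposal is essentially a cochain-level diagram chase using the decomposition $C(\bH_+,J_+,f)=C_+(\bH_+,J_+)\oplus C_0(f)$ and the induced block forms of the differential $d=d_++d_0+d_{+,0}$ and of the operator $\Delta=\Delta_++\Delta_{+,0}$ (noting that $\Delta$ vanishes on $C_0(f)$ because constant orbits carry no free $S^1$-symmetry, consistently with the absence of $\cM^\Delta_{x,p}$ for $x\in\cP^*(\bH_-),p\in\cC(f)$). The equality $\Ima(SH^*(W)\to SH^*_+(W))=\ker\delta$ is just exactness of the long exact sequence in Proposition \ref{prop:cascades}, so the two formulations in the statement are the same set and it suffices to prove the iff with $\ker\delta\cap\ker\Delta_+$.

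For the direction $(\Rightarrow)$, suppose $[\tilde{x}]\in SH^1(W)$ is a dilation. Using Proposition \ref{prop:cascades} I represent it by a cocycle $\tilde{x}=x+c$ with $x\in C_+^1(\bH_+,J_+)$ and $c\in C_0^1(f)$. Closedness of $\tilde{x}$ gives $d_+(x)=0$ and $d_{+,0}(x)+d_0(c)=0$; the second identity is exactly a cochain witness that $\delta[x]=0$, so $[x]\in\ker\delta$. By Proposition \ref{prop:BV}, the hypothesis $\Delta[\tilde x]=1$ in $SH^0(W)$ becomes $[\Delta(\tilde x)]=[1]$ in $H^0(C(\bH_-,J_-,f))$, so there is $b+c'\in C_+^0(\bH_-)\oplus C_0^0(f)$ with $\Delta_+(x)+\Delta_{+,0}(x)-1=d_-(b+c')$. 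Splitting into the $C_+(\bH_-)$ and $C_0$ components yields
\begin{equation*}
\Delta_+(x)=d_{+,-}(b),\qquad \Delta_{+,0}(x)-d_{+,0,-}(b)=1+d_{0,-}(c').
\end{equation*}
The first equation says $[x]\in\ker\Delta_+$, and the second is precisely the defining relation $\phi([x])=[1]$ in $\coker\delta^-=H^*(W)/\Ima\delta$.

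For $(\Leftarrow)$, reverse the chase. Given $[x]\in\ker\delta\cap\ker\Delta_+$ with $\phi([x])=[1]$, choose $c\in C_0(f)$ so that $d_{+,0}(x)+d_0(c)=0$ (witnessing $[x]\in\ker\delta$) and $b\in C_+(\bH_-)$ with $\Delta_+(x)=d_{+,-}(b)$. The condition $\phi([x])=[1]$ in $\coker\delta^-$ means $\Delta_{+,0}(x)-d_{+,0,-}(b)-1\in\Ima d_{0,-}+\Ima\delta^-$; absorbing an exact-in-$d_+$ correction into $b$ (which does not affect $d_{+,-}(b)$) I arrange $\Delta_{+,0}(x)-d_{+,0,-}(b)=1+d_{0,-}(c')$. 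Then $\tilde x:=x+c\in C^1(\bH_+,J_+,f)$ is closed and a direct computation gives $\Delta(\tilde x)=1+d_-(b+c')$, so $[\Delta(\tilde x)]=[1]$ in $H^0(C(\bH_-,J_-,f))$. Applying Proposition \ref{prop:BV} and the fact that the continuation $\Theta$ is the identity on $C_0$ (hence fixes the unit class $[1]$), one concludes that $\Theta^{-1}\Delta[\tilde x]=1$ in $SH^0(W)$, so $[\tilde x]$ is a symplectic dilation.

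The only mildly subtle point, which is really just bookkeeping, is to reconcile two uses of the symbol $\Delta_+$: the cochain map $C_+(\bH_+)\to C_+(\bH_-)[-1]$ appearing in the definition of $\phi$, and the intrinsic BV operator $\Theta^{-1}\circ\Delta_+$ on $H^*(C_+(\bH_+))$ promised by Proposition \ref{prop:inv}. Since $\Theta$ is a quasi-isomorphism that preserves the splitting $C_+\oplus C_0$ and maps $C_+$ into $C_+$ (see \S\ref{ssb:cont}), the kernels of these two versions of $\Delta_+$ coincide under the canonical identification, so the statement is unambiguous. No new analysis or moduli-space count is needed beyond what Propositions \ref{prop:cascades}, \ref{prop:welldefined} and \ref{prop:BV} already provide.
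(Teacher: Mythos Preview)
Your proof is correct and follows essentially the same cochain-level diagram chase as the paper; the only difference is that in the $(\Leftarrow)$ direction the paper observes directly that a closed $c\in C_0$ with $[c]\in\Ima\delta^-$ vanishes in $H^*(C(\bH_-,J_-,f))$ (so $\Delta(x+y)=d(b)+1+c$ represents $1$), rather than absorbing it into $b$ as you do. One small wording slip: the correction you absorb into $b$ is a $d_+$-\emph{closed} element (a cochain representative of a class whose $d_{+,0}$-image lands in $\Ima\delta^-$), not a $d_+$-exact one---both leave $d_+(b)$ unchanged, but only the former is what $\Ima\delta^-$ actually supplies.
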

\begin{proof}
	If $SH^*(W) = 0$, then $\coker \delta = 0$, hence $1$ is always in the image and there exists a dilation. In the following, we will consider the case when $SH^*(W) \ne 0$.  Using Proposition \ref{prop:BV}, assume $x'\in C(\bH_+,J_+,f)$ closed satisfies $\Delta([x']) = 1$ on cohomology, where $1$ is represented by the unique local minimum of $f$. Since $x'$ can be written as $x+y$, where $x\in C_+(\bH_+,J_+),y\in C_0(f)$. Therefore we have $\Delta_+(x)+\Delta_{+,0}(x) = 1 + d(b)$ for $b \in C_+(\bH_-,J_-)$. Then we have $\Delta_+(x) = d_+(b)$, hence $\phi(x) = \Delta_{+,0}(x) -d_{+,0}(b) = 1$. Since $\Theta$ preserves $1$, we have $1$ is in the image of $\phi:\Ima(SH(W)\to SH_+(W))\cap \ker \Delta_+ \to \coker \delta$.
	
	On the other hand, if $\phi(x) = 1$ for $x\in C_+(\bH_+,J_+)$ and $[x]\in \Ima(H^*(C(\bH_+,J_+,f))\to H^*(C_+(\bH_+,J_+)))\cap \ker \Delta_+$. That is there exists $y\in C_0(f)$, such that $x+y$ is closed. We have $\Delta_{+,0}(x) - d_{+,0}(b) = 1 + c$, where $c\in C_0(f)$ is closed and $[c] \in \Ima \delta^{-}$ and $d_+(b)=\Delta_+(x)$.  Since $[c] \in \Ima \delta^{-1}$, we have $[c]$ in $H^*(C(\bH_-,J_-,f))$ is zero. Therefore we have $\Delta(x+y)=\Delta(x) = \Delta_{+,0}(x) + \Delta_+(x) = d(b)+1+c$, which is $1$ in cohomology. Therefore $x+y$ is a dilation.
\end{proof}

By composing with the restriction map $H^*(W) \to H^*(\partial W)$, we define a map 
\begin{equation}\label{eqn:delta}
\Delta_{\partial}:\ker \Delta_+\to \coker \delta_{\partial}.
\end{equation}
And this is the second structure map we are interested in. 

\subsection{Shrinking the gradient flow}
In this subsection, we will apply the same idea in \S \ref{s3} to $\Delta_{\partial}$ and rewrite it without using the Morse function $f$. As in \S \ref{s3}, we choose a Morse function $h$ on $\partial W\times \{1-\epsilon\}$ along with a metric $g_{\partial}$, so that Proposition \ref{prop:restriction} holds. On the complex level, the composition of $\ker \Delta_+ \to \coker \delta \to \coker \delta_{\partial}$ is represented  by counting the moduli space $\cR_{p,q}\times \cM^{\Delta}_{q,x}$ and $\cR_{p,q}\times\cM^-_{q,b}$ for $x\in \cP^*(\bH_+), b \in \cP^*(\bH_-), q\in \cC(f),p\in \cC(h)$. In particular, if we define $\Delta_{+,0,\partial}(x)$ by counting $\cR_{p,q}\times \cM^{\Delta}_{q,x}$ and $d_{+,0,\partial}$ by counting $\cR_{p,q}\times \cM^-_{q,b}$. Then we define a map $\Delta_+^{-1}(\Ima d_+)\cap \ker d_+ \to \coker \delta_{\partial}$ representing $\Delta_{\partial}$ by 
\begin{equation}\label{eqn:phib}
x  \mapsto \Delta_{+,0,\partial}(x) - d_{+,0,\partial}(b),
\end{equation}
for $d_+(b)=\Delta_+(x)$. To bypass the middle $C^{*-1}(f)$, we define $\cP^\Delta_{p,x}$ to be the compactification of the following moduli space
$$\left\{\begin{array}{l}u:\C \to \widehat{W},\\
\theta\in S^1,\\
\gamma:(-\infty,0]\to \partial W\times \{1-\epsilon\}. \end{array}\left| \begin{array}{l}\partial_s u + J^\theta_{s,t}(\partial_t u -X_{\bH^\theta_{s,t}})=0, \frac{\rd}{\rd s} \gamma + \nabla_{g_\partial} h=0,\\
\displaystyle\lim_{s\to -\infty}\gamma =p, u(0)=\gamma(0), \lim_{s\to \infty} u = x
\end{array}\right.\right\}, \quad p \in \cC(h), x \in \cP^*(\bH_+).
$$
The space makes sense, since we push $\partial W$ into interior, where $\bH^{\theta}_{s,t}=0$. Then $\cP^\Delta_{p,x}$ defines a map from $P^{\Delta}:C^*_+(\bH_+,J_+)) \to C^{*-1}(h)$. To show it defines the same thing as \eqref{eqn:phib}, we define $\cH^{\Delta}_{p,x}$ to be the compactification of the following moduli space for $p \in \cC(h), x \in \cP^*(\bH_+)$,
$$\left\{\left.\begin{array}{l}u:\C \to \widehat{W},\\ \theta \in S^1, \\ l\in \R_+, \\\gamma_1:(-\infty,0]\to \partial W \times \{1-\epsilon\},\\ \gamma_2:[0,l] \to W. \end{array}\right| \begin{array}{l}\partial_s u + J^{\theta}_{s,t}(\partial_t u -X_{\bH^{\theta}_{s,t}})=0, \\
\frac{\rd}{\rd s} \gamma_1 + \nabla_{g_{\partial}} h=0, \frac{\rd}{\rd s} \gamma_2 + \nabla_{g} f=0,\\
\displaystyle \lim_{s\to -\infty}\gamma_1 =p,\gamma_1(0) = \gamma_2(0), \\
\gamma_2(l)=u(0), \lim_{s\to \infty} u = x.
\end{array}\right\}.
$$
Then we have following regularity property.
\begin{proposition}\label{prop:boundary2}
	For generic choices of $J_-,J_+$ and $J^{\theta}_{s,t}$, for any $x,p$ with $|p|-|x|\le 0$, $\cH^{\Delta}_{p,x}$ is a manifold with boundary of dimension $|p|-|x|+1$ and	
	$$\partial \cH^{\Delta}_{p,x} = -\cP^{\Delta}_{p,x}+\sum_{q\in \cC(f)}\cR_{p,q}\times \cM^{\Delta}_{q,x} + \sum_{y\in \cP^*(\bH_-)} \cH_{p,y}\times \cM^{\Delta}_{y,x} + \sum_{q\in \cC(h)} \cM_{p,q}\times \cH^{\Delta}_{q,x} -\sum_{y\in \cP^*(\bH_+)} \cH^{\Delta}_{p,y}\times \cM_{y,x}.$$
\end{proposition}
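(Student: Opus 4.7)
The proof plan combines the gradient-flow shrinking setup of $\cH_{p,y}$ from Proposition \ref{prop:boundary1} with the BV analysis of Proposition \ref{prop:delta}. First I would establish transversality for the uncompactified $\cH^{\Delta}_{p,x}$. By a universal moduli space argument paralleling Proposition \ref{prop:transversality}, for generic $J^{\theta}_{s,t}\in \cJ_{s,\theta,J_-,J_+}(W)$ the evaluation map $(u,\theta)\mapsto u(0)\in W^0$ from the universal BV moduli space is submersive; intersecting with the two-parameter family swept out by $(\gamma_2(l),l)$ (where $\gamma_1$ and $\gamma_2$ descend along $-\nabla_{g_\partial}h$ and $-\nabla_g f$ respectively, starting from $p$) cuts out $\cH^{\Delta}_{p,x}$ transversely, with dimension $|p|-|x|+1$.

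Next I would analyze the Gromov--Floer compactification and identify all codimension-one degenerations. Compactness arguments run parallel to Propositions \ref{prop:compact} and \ref{prop:delta}: the homotopy $\bH^{\theta}_{s,t}$ was arranged precisely so that $\partial_s\bH^{\theta}_{s,t}\le 0$ and $\partial_s(r\partial_r\bH^{\theta}_{s,t}-\bH^{\theta}_{s,t})\le 0$ on each $(a_i,b_i)$, so Lemma \ref{lemma:max} rules out escape to infinity; exactness together with $c_1(W)=0$ rules out sphere bubbles; and the $r=1$ application of Lemma \ref{lemma:max}, combined with removal of singularities and exactness, rules out nontrivial components of $u$ breaking into $W$, exactly as in Proposition \ref{prop:compact}\eqref{compact3}. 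The remaining codimension-one strata are precisely the five listed: (a) the collapse $l\to 0$ yields $\cP^{\Delta}_{p,x}$; (b) the Morse breaking of $\gamma_2$ as $l\to\infty$ at a critical point $q\in\cC(f)$ yields $\cR_{p,q}\times\cM^{\Delta}_{q,x}$; (c) Morse breaking of $\gamma_1$ at $q\in\cC(h)$ yields $\cM_{p,q}\times\cH^{\Delta}_{q,x}$; (d) Floer breaking of $u$ at an orbit $y\in\cP^*(\bH_-)$ inside the region where $\bH^{\theta}_{s,t}=\bH_-(t+\theta)$ yields $\cH_{p,y}\times\cM^{\Delta}_{y,x}$; and (e) Floer breaking of $u$ at the positive end at $y\in\cP^*(\bH_+)$ yields $\cH^{\Delta}_{p,y}\times\cM_{y,x}$. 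Signs are read from the coherent orientation conventions of Appendix \ref{app} and match the patterns already established for $\partial \cH_{p,y}$ and $\partial \cM^{\Delta}_{*,*}$.

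The main technical obstacle is coordinating the BV parameter $\theta$, the length parameter $l$, and the matching condition $u(0)=\gamma_2(l)$ so that transversality holds while remaining compatible with the low-regularity requirements in force elsewhere in this section. As in Propositions \ref{prop:lowreg2} and \ref{prop:lowreg3}, only regularity for moduli spaces of dimension $\le 1$ is needed, and submersivity of the universal evaluation map lets us pick a generic $J^{\theta}_{s,t}$ inside any neighborhood of a preassigned regular choice. No gluing analysis is needed beyond that already developed for $\cM^{\Delta}$ and $\cH$.
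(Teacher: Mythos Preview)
Your proposal is correct and follows exactly the approach the paper intends: the paper does not give an explicit proof of Proposition~\ref{prop:boundary2}, treating it as the evident combination of the gradient-shrinking analysis of Proposition~\ref{prop:boundary1} with the BV moduli space analysis of Proposition~\ref{prop:delta}, and your sketch fills in precisely those details. One minor imprecision: exactness alone rules out sphere bubbles here (the $c_1(W)=0$ hypothesis is only used for the $\Z$-grading), but this does not affect the argument.
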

\begin{proposition}\label{prop:equal}
	For generic choices of $J_-,J_+$ and $J^{\theta}_{s,t}$ as in Proposition \ref{prop:boundary2}. Let $x\in \ker d_+ \subset C_+^*(\bH_+,J_+)$ such that $\Delta_+(x)=d_+(b)$, then $P^{\Delta}(x)-P(b)$ represents the same class in $\coker (H^{*-2}(C_+(\bH_-,J_-))\to H^{*-1}(\partial W))$ as \eqref{eqn:phib}. Hence on cohomology, $\Delta_{\partial}$  in \eqref{eqn:phib} equals to the difference of counting $\cP^{\Delta}_{x,q}$, $\cP_{b,q}$.
\end{proposition}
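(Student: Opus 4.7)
The plan is to extract two cochain-level identities from the boundary formulas of Propositions \ref{prop:boundary1} and \ref{prop:boundary2}, and to combine them to compute $P^{\Delta}(x)-P(b)$ modulo exact cochains and the image of $P$.

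First, I would unpack Proposition \ref{prop:boundary2}. Let $H^{\Delta}$ be the operator $C_+^*(\bH_+,J_+) \to C^{*-2}(h)$ obtained by counting rigid points of $\cH^{\Delta}_{p,x}$, and let $H$ be the degree-zero operator $C_+^*(\bH_-,J_-) \to C^*(h)$ obtained by counting rigid $\cH_{p,y}$ from Proposition \ref{prop:boundary1} applied to $\bH_-$. Write $R_*$ for the restriction chain map of Proposition \ref{prop:restriction}, and let $P$ and $d^-_{+,0}$ denote the corresponding maps built from $\bH_-$. Reading off the five boundary strata of Proposition \ref{prop:boundary2} with the coherent orientations yields
\begin{equation*}
d_{C(h)}\circ H^{\Delta} + H^{\Delta}\circ d_+ \;=\; R_*\circ \Delta_{+,0} \;-\; P^{\Delta} \;+\; H\circ \Delta_+
\end{equation*}
as maps $C_+^*(\bH_+,J_+)\to C^{*-1}(h)$, while Proposition \ref{prop:boundary1} applied to $\bH_-$ gives
\begin{equation*}
d_{C(h)}\circ H + H\circ d_+ \;=\; R_*\circ d^-_{+,0} \;-\; P.
\end{equation*}

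Next, I would evaluate the first identity on $x\in \ker d_+$ and substitute $\Delta_+(x)=d_+(b)$ followed by the second identity applied to $b$, eliminating $H\circ \Delta_+(x)=H\circ d_+(b)$. Rearranging gives
\begin{equation*}
P^{\Delta}(x) - P(b) \;=\; R_*\bigl(\Delta_{+,0}(x) - d^-_{+,0}(b)\bigr) \;-\; d_{C(h)}\bigl(H^{\Delta}(x) + H(b)\bigr).
\end{equation*}
The parenthesized Floer term on the right is precisely the cochain representative of $\phi(x)$ from \eqref{eqn:phib}, so $P^{\Delta}(x)-P(b)$ and $R_*(\phi(x))$ define the same class in $H^{*-1}(\partial W)$. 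Since composing $R_*$ with the inclusion $C_0(f)\hookrightarrow C(\bH_-,J_-,f)$ induces the geometric restriction $H^*(W)\to H^*(\partial W)$, this common class is exactly $\Delta_{\partial}(x)$.

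Finally, I would address the indeterminacy so that the equality holds in the cokernel from the statement. The class $\phi(x)$ is well-defined only modulo $\Ima \delta^-$, reflecting the freedom to replace $b$ by $b+c$ with $d_+(c)=0$; such a replacement changes $P(b)$ by $P(c)$, and by Proposition \ref{prop:lowreg3} the class $[P(c)]\in H^{*-1}(\partial W)$ factors through $H^{*-1}(W)$, realizing precisely the image of $H^{*-2}(C_+(\bH_-,J_-))\to H^{*-1}(\partial W)$ that is being quotiented out. The only real obstacle is correctly reading the signs of the five boundary strata of $\cH^{\Delta}_{p,x}$ from the coherent-orientation conventions of the appendix; the analytic input (compactness and transversality) is already handled by Proposition \ref{prop:boundary2}, and the remaining verification is bookkeeping.
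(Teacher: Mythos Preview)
Your approach is essentially identical to the paper's: both extract the two cochain-level relations from the boundary formulas of Propositions \ref{prop:boundary1} and \ref{prop:boundary2}, introduce the homotopy operators $H$ and $H^{\Delta}$ counting rigid $\cH$- and $\cH^{\Delta}$-configurations, and then use $d_+(x)=0$ together with $\Delta_+(x)=d_+(b)$ to cancel the $H\circ\Delta_+(x)$ and $H\circ d_+(b)$ terms, leaving $P^{\Delta}(x)-P(b)$ equal to $R\bigl(\Delta_{+,0}(x)-d^-_{+,0}(b)\bigr)$ modulo an exact term. Your extra paragraph on the $b\mapsto b+c$ indeterminacy is a nice clarification the paper leaves implicit.

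One small caution: the signs in your two displayed identities are not quite consistent with the boundary formulas as stated in the paper (for instance, from Proposition \ref{prop:boundary2} one gets $d_h H^{\Delta}-H^{\Delta}d_+ = P^{\Delta}-R\Delta_{+,0}-H\Delta_+$ rather than your version), and if you carry \emph{your} identities through literally you land on $P^{\Delta}(x)+P(b)$ rather than $P^{\Delta}(x)-P(b)$. The correct bookkeeping gives the exact term $+d_h(H^{\Delta}(x)-H(b))$ rather than $-d_h(H^{\Delta}(x)+H(b))$. Since the discrepancy is confined to exact terms this does not affect the cohomological statement, and you yourself flag sign-reading as the only real obstacle; just be aware that the displayed formulas need a pass.
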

\begin{proof}
	By Proposition \ref{prop:boundary1} and \ref{prop:boundary2},  we have $	P^{\Delta}(x)-P(b)-\Delta_{+,0,\partial}(x) + d_{+,0,\partial}(b)$ is the following,
	$$\sum(\sum \# \cH_{p,y} \# \cM^{\Delta}_{y,x}+\sum \#\cM_{p,q}\#\cH^{\Delta}_{q,x}-\sum \#\cH^{\Delta}_{p,y}\# \cM_{y,x} - \sum \# \cH_{p,y} \# \cM_{y,b}-\sum \#\cM_{p,q}\# \cH_{q,b})p.$$
	It is $H\circ \Delta_+(x)+d_{\partial W}\circ H^{\Delta}(x)-H^{\Delta}\circ d_+(x)- H\circ d_+(b)-d_{\partial W} \circ H(b)$, where $H,H^{\Delta}$ are defined by counting $\cH$ and $\cH^{\Delta}$ and $d_{\partial W}$ is the Morse differential on $\partial W$. Since $d_+(x)=0$ and $\Delta_+(x)=d_+(b)$, we have the above term is $d_{\partial W}\circ H^{\Delta} (x)-d_{\partial W}\circ H(b)$, which is exact. Hence the claim is proven.
\end{proof}

\subsection{Naturality}
To keep track of the regularity of almost complex structures, we introduce the following.
\begin{enumerate}
	\item $\cJ^{\le 1}_{reg,\Delta}(f,g,J_-,J_+)\subset \cJ_{s,\theta,J_-,J_+}(W)$ with ends $J_-\in \cJ_{reg}(\bH_-,f,g), J_+ \in \cJ_{reg}(\bH_+,f,g)$ is set of regular almost complex structures for moduli spaces $\cM^{\Delta}$ of dimension up to $1$. $\cJ^{D^+_i}_{reg,\Delta,J_-,J_+}(f,g) \subset \cJ_{s,\theta,J_-,J_+}(W)$ with ends $J_-\in \cJ^{D^-_i}_{reg}(\bH_-f,g), J_+\in \cJ^{D^+_i}_{reg}(\bH_+,f,g)$ is the set of regular almost complex structures for $\cM^{\Delta}_{*,x}$ of dimension up to $0$ and action of $x$ down to $-{D^+_i}$. Similarly for $\Delta_+^{D_i}$. 
	\item $\cJ^{\le 1}_{reg,P^\Delta}(h,g_{\partial},J_-,J_+)$ with ends $J_-\in \cJ_{reg,+}(\bH_-)\cap \cJ_{reg,P}(\bH_-,h,g_{\partial})$ and $J_+\in \cJ_{reg,+}(\bH_+)$ for moduli space $\cP^{\Delta}_{p,x}$ of dimension up to $1$.  $\cJ^{D^{+}_i}_{reg,P^{\Delta}}(h,g_{\partial},J_-,J_+)$ is the set of regular admissible homotopy $J^{\theta}_{s,t}$ with ends $J_-\in \cJ^{D^{-}_i}_{reg,+}(\bH_-)\cap \cJ^{D^{-}_i}_{reg,P}(\bH_-,h,g_{\partial}), J_+\in \cJ^{D^+_i}_{reg,+}(\bH_+)$ for moduli spaces $\cP^{\Delta}_{p,x}$ of dimension up to $0$ and action of $x$ down to $-D^{+}_i$.  
	\item $\cJ^{\le 1}_{reg,H^{\Delta}}(f,g,h,g_{\partial},J_-,J_+)\subset \cJ_{s,\theta,J_-,J_+}(W)$ with ends $J_-\in \cJ_{reg,+}(\bH_-)\cap \cJ_{reg,H}(\bH_-,f,g,h,g_{\partial})\cap \cJ_{reg,P}(\bH_-,h,g_{\partial}), J_+\in \cJ_{reg,+}(\bH_+)$ is the set of regular $J^{\theta}_{s,t}$ for moduli spaces $\cH^{\Delta}_{p,x}$ of dimension up to $1$.
	\item $\cJ^{\le 1}_{reg,N,+}(J_-,J_+)\subset \cJ_{s,\theta,J_-,J_+}(W)$ with ends $J_-\in \cJ_{reg,+}(\bH_-), J_+\in \cJ_{reg,+}(\bH_+)$ is set of regular $J_s$ for moduli spaces $\cN_{x,y}$ in \S \ref{ssb:cont} of dimension up to $1$.  $\cJ^{D^+_i}_{reg,N,+}(J_-,J_+)\subset \cJ_{s,\theta,J_-,J_+}(W)$ with ends $J_-\in \cJ^{D^-_i}_{reg,+}(\bH_-), J_+\in \cJ^{D^+_i}_{reg,+}(\bH_+)$ is set of regular $J_s$ for moduli spaces $\cN_{x,y}$ up to dimension $0$ with action of $y$ down to $-D^+_i$.
\end{enumerate}
As usual, $\cJ_{reg}$ is of second category, and $\cJ^D_{reg}$ is open dense.
\begin{proposition}\label{prop:BVnatural}
	We summarize naturality properties in the following.
	\begin{enumerate}
		\item\label{bvns1} For $J^{\theta}_{s,t}\in \cJ^{D^+_i}_{reg,\Delta}(f,g,J_-,J_+)$ with $J_-\in \cJ^{D_i^-}_{reg}(\bH_-,f,g), J_+\in \cJ^{D_i^+}_{reg}(\bH_+,f,g)$, $\Delta^{D^+_i}$ is a well-defined cochain map.  Similarly for $\Delta^{D^+_i}_+$.
		\item\label{bvns2} For $J^{\theta}_{s,t} \in \cJ^{D^+_i}_{reg,\Delta,+}(J_-,J_+)$ with $J_-\in \cJ^{D^-_i}_{reg,+}(\bH_-), J_+\in \cJ^{D^+_i}_{reg,+}(\bH_+)$, $\phi$ is well-defined on $\ker \Delta^{D^+_i}_+$.
		\item\label{bvns3} For $J^{\theta}_{s,t}\in \cJ^{D^+_i}_{reg,\Delta,+}(J_-,J_+)\cap \cJ^{D^+_i}_{reg,P^{\Delta}}(h,g_{\partial},J_-,J_+)$ for $J_-\in \cJ^{D^-_i}_{reg,+}(\bH_-)\cap \cJ^{D^{-}_i}_{reg,P}(\bH_-,h,g_{\partial}), J_+\in \cJ^{D^+_i}_{reg,+}(\bH_+)$, $\Delta_{\partial}$ is same as counting the difference between  $\cP^{\Delta}$ and $\cP$, when restricted to $C^{D_i^+}_+$.
	\end{enumerate} 
\end{proposition}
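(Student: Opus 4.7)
The plan is to mirror the naturality arguments established in Propositions \ref{prop:lowreg}, \ref{prop:lowreg2}, and \ref{prop:lowreg3}, now upgraded to the family of homotopies $J^\theta_{s,t}$ used in the BV construction. The key mechanism is the same in every case: the a priori action inequality $\cA_{\bH_-}(u(-\infty)(\cdot-\theta)) \ge \cA_{\bH_+}(u(\infty))$ from \eqref{eqn:actioncontrol}, combined with Lemma \ref{lemma:max} applied on each interval $(a_i,b_i)$, gives a uniform energy bound on every $\cM^\Delta$, $\cP^\Delta$, $\cH^\Delta$ of a given action window. Gromov compactness then ensures the regularity sets $\cJ^{D^+_i}_{reg,\Delta}$, $\cJ^{D^+_i}_{reg,\Delta,+}$, $\cJ^{D^+_i}_{reg,P^\Delta}$ are open in $\cJ_{s,\theta,J_-,J_+}(W)$.

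For \eqref{bvns1}, I would start from a given $J^\theta_{s,t} \in \cJ^{D^+_i}_{reg,\Delta}(f,g,J_-,J_+)$ and choose an open neighborhood $\cU$ entirely contained in this regularity class. By density of the full regularity set $\cJ^{\le 1}_{reg,\Delta}$, pick $J'^\theta_{s,t} \in \cU \cap \cJ^{\le 1}_{reg,\Delta}$. Proposition \ref{prop:delta} then gives that $\Delta_{J'}$ is a genuine cochain map on $C^{D^+_i}$. To identify $\Delta_{J^\theta}$ with $\Delta_{J'^\theta}$, run a generic path $J^\theta_{s,t,\tau}$, $\tau \in [0,1]$, in $\cU$ and consider the parametric moduli space $\cup_\tau \cM^\Delta_{a,b,\tau}$ for $|a|-|b|=0$: its boundary can only involve $\cM^\Delta$ at the endpoints plus strata of the form $\cM^-_{a,z}\times \cM^\Delta_{z,b,\tau}$ or $\cM^\Delta_{a,z,\tau}\times \cM^+_{z,b}$, where the middle moduli space has non-positive virtual dimension, hence is empty by regularity of each $J^\theta_{s,t,\tau} \in \cU$. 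The same argument gives the $\Delta^{D^+_i}_+$ statement.

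For \eqref{bvns2}, the definition \eqref{eqn:phi} of $\phi$ needs $\Delta_+$ defined by $J^\theta_{s,t}$ and $d_{+,0}$ defined by the endpoints $J_\pm$; the former is supplied by \eqref{bvns1} applied to $\Delta^{D^+_i}_+$ and the latter by Proposition \ref{prop:lowreg2}. The well-definedness check for $\phi$ in Proposition \ref{prop:welldefined} is purely algebraic once $d_+^2=0$, $d_+\circ \Delta_+ = \Delta_+\circ d_+$ (modulo chain homotopy), and $d_0 d_{+,0} + d_{+,0}d_+=0$ hold on the truncated complex. Each of these identities on $C^{D^+_i}_+$ follows from the low-regularity versions of Propositions \ref{prop:lowreg} and \ref{prop:lowreg2}, with the commutation $d\Delta+\Delta d = 0$ on the truncation obtained by the neighborhood argument of \eqref{bvns1}. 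Hence $\phi$ is well-defined on $\ker \Delta^{D^+_i}_+$ up to elements in the image of $\delta^-$.

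For \eqref{bvns3}, invoke the neighborhood trick once more: pick $\cU$ inside $\cJ^{D^+_i}_{reg,\Delta,+}\cap \cJ^{D^+_i}_{reg,P^\Delta}$ and choose $J'^\theta_{s,t} \in \cU \cap \cJ^{\le 1}_{reg,H^\Delta}(f,g,h,g_\partial,J'_-,J'_+)$ with endpoints $J'_\pm$ in the appropriate high-regularity $\cJ^{\le 1}_{reg,H}\cap\cJ^{\le 1}_{reg,P}\cap\cJ^{\le 1}_{reg}$. For such $J'$, Propositions \ref{prop:boundary2} and \ref{prop:equal} show directly that on $C^{D^+_i}_+$ the map $\phi$ followed by the restriction agrees with $P^\Delta - P$. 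The low-regularity statement then follows by transporting both descriptions back from $J'$ to $J$ via parametric moduli in $\cU$, where all relevant boundary strata are cut out transversely or have negative virtual dimension. The main technical obstacle throughout is verifying that in the parametric cobordism arguments, no new boundary stratum appears where the moduli space leaves the action window $[-D^-_i,0]\times[-D^+_i,0]$ at the two ends; this is handled by the monotonicity $\partial_s \bH^\theta_{s,t}\le 0$ on $\widehat W$ together with the integrated maximum principle on each $(a_i,b_i)$, which together pin the curves inside $W^i$ and exclude escape to infinity.
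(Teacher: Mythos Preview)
Your treatment of \eqref{bvns1} and \eqref{bvns3} is essentially what the paper does: openness of the truncated regularity sets, local constancy of the zero-dimensional counts via a parametric cobordism inside a small neighborhood, and then borrowing the cochain-map or homotopy identity from a nearby fully regular $J^{\theta}_{s,t}$. For \eqref{bvns3} the paper phrases it slightly more economically (since $P$, $P^\Delta$, $\Delta_+$, $d_+$ are all locally constant, one simply invokes Proposition~\ref{prop:equal} at a nearby regular point and then appeals to \eqref{bvns2}), but your version is fine.

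The gap is in \eqref{bvns2}. You write that $\phi$ requires $\Delta_+$ and $d_{+,0}$, and that Proposition~\ref{prop:welldefined} then takes care of well-definedness ``purely algebraically''. But the formula \eqref{eqn:phi} reads $\phi(x)=\Delta_{+,0}(x)-d_{+,0}(b)$: it also requires $\Delta_{+,0}$, and under the hypothesis $J^{\theta}_{s,t}\in\cJ^{D^+_i}_{reg,\Delta,+}$ the map $\Delta_{+,0}$ is \emph{not} defined directly---only $\Delta_+$ is. One must perturb both $J_-$ (to some $J'\in\cJ^{D^-_i}_{reg}(\bH_-,f,g)$, so that $d_{+,0,J'}$ exists) and $J^\theta_{s,t}$ (to some $J^{\theta,'}_{s,t}\in\cJ^{\le 1}_{reg,\Delta}(f,g,J',J_+)$, so that $\Delta_{+,0,J',J_+}$ exists), and then show that $\Delta_{+,0,J',J_+}(x)-d_{+,0,J'}(b)$ is independent of these coupled choices up to exact terms and elements of $\Ima\delta^-$. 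Proposition~\ref{prop:welldefined} handles independence of the representatives $x$ and $b$, not independence of the auxiliary analytic data.

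The paper carries this out explicitly: comparing two perturbations $J',J''$ of $J_-$ with corresponding $J^{\theta,'}_{s,t},J^{\theta,''}_{s,t}$, one uses the continuation relation $d_{+,0,J''}-d_{+,0,J'}=d_0\Phi_{+,0}-\Phi_{+,0}d_+$ (with $\Phi_+=\Id$ by Lemma~\ref{lemma:local}) together with the homotopy relation from \S\ref{ssb:comp}, namely $\Delta_{+,0,J'',J_+}+\Phi_{+,0}\Delta_+-\Delta_{+,0,J',J_+}=d_0\eta_{+,0}+d_{+,0}\eta_+$. The crucial step---absent from your outline---is that $\eta_+$ can be taken to be \emph{zero}: the homotopy of homotopies $J^{\theta,r}_{s,t}$ interpolating between $J^{\theta,'}_{s,t}$ and $J^{\theta,''}_{s,t}$ can be chosen to lie entirely in $\cJ^{D^+_i}_{reg,\Delta,+}$ for every $r$, so the $(-1)$-dimensional $\cT$-moduli spaces defining $\eta_+$ are empty. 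Without this observation the term $d_{+,0}\eta_+(x)$ survives and you cannot conclude that the two versions of $\phi(x)$ agree in $\coker\delta^-$.
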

\begin{proof}
	For \eqref{bvns1}, there exists a neighborhood $\cU$ of $J^{\theta}_{s,t}\in \cJ_{s,\theta,J_-,J_+}(W)$ contained in $\cJ^{D^{+}_i}_{reg,\Delta,J_-,J_+}(f,g)$. Then $\Delta^{D^+_i}_+$ is locally constant by the proof of Proposition \ref{prop:lowreg}. The claim follows from that $\Delta^{D^+_i}_+$ is a cochain map for more regular $J^{\theta}_{s,t}$ nearby.
	
	For \eqref{bvns2}, we have $\Delta_+$ and $d_{+}$ are locally constant with respect to the almost complex structures. Since $d_{+,0}$ is defined using two nearby $J'',J'\in \cJ^{D^-_i}_{reg}(\bH_-,f,g)$ of $J_-$. Then there are two homotopy $J^{\theta,''}_{s,t}\in \cJ^{\le 1}_{reg,\Delta}(f,g,J'',J_+)$, $J^{\theta,'}_{s,t}\in \cJ^{\le 1}_{reg,\Delta}(f,g,J',J_+)$ close to $J^{\theta}_{s,t}$ in $\cJ_{s,\theta}(W)$. It is sufficient to prove for $x\in C^{D^+_i}_+(\bH_+)$ that $d_+(x)=0,\Delta_+(x)=d_+(b)$ for $b\in C^{D^-_i}_+(\bH_-)$, we have $\Delta_{+,0,J',J_+}(x)-d_{+,0,J'}(b)$ and $\Delta_{+,0,J'',J_+}(x)-d_{+,0,J''}(b)$ are differed by exact form. Since we have a continuation map $\Phi=\Phi_++\Phi_{+,0}+\Id_{C_0(f)}:C^{D^-_i}(\bH_-,J',f)\to C^{D^{-}_i}(\bH_-,J'',f)$, such that $\Phi_+$ is identity by the proof of Lemma \ref{lemma:local}. Then we have
	$$d_{+,0,J''}(b)-d_{+,0,J'}(b)=d_0\circ \Phi_{+,0}(b)-\Phi_{+,0}\circ d_{+}(b)=d_0\circ \Phi_{+,0}(b)-\Phi_{+,0}\circ \Delta_{+}(x)$$
	On the other hand,  by \S \ref{ssb:comp},  we have
	$$\Delta_{+,0,J'',J_+}(x)+\Phi_{+,0}\circ \Delta_{+}(x)-\Delta_{+,0,J',J_+}(x)=d_0\circ \eta_{+,0}(x)+d_{+,0}\circ \eta_+(x).$$
	We claim $\eta_+$ could be chosen to be zero. By assumption, the almost complex structure $J^{\theta,r}_{s,t}$ in the construction of $\eta$ can be chosen such that for every $r$, $J^{\theta,r}_{s,t}\in \cJ^{D^+_i}_{reg,\Delta,+}(J_-,J_+)$.  As a consequence, $\eta_+ = 0$. Combining them together proves the claim.
	
	For \eqref{bvns3}, $P,P^{\Delta},\Delta_+,d_+$ are all locally constant, for a nearby $J^{\theta}_{s,t}$, we have (the perturbed) $\phi$ defines the same thing as counting $\cP,\cP^{\Delta}$ by Proposition \ref{prop:equal}. By claim \eqref{bvns2}, $\phi$ is independent of the perturbation. This finish the proof.
\end{proof}

Combining Proposition \ref{prop:natrualBV}, Proposition \ref{prop:inv} and Proposition \ref{prop:BVnatural} and the compatibility of continuation maps with $\Delta$ in \S \ref{ssb:comp}, we have the following.
\begin{proposition}\label{prop:BVnatural2}
	Let $J^i_-\in \cJ^{D^-_i}_{reg,+}(\bH_-)\cap \cJ^{D^-_i}_{reg,P}(\bH_-,h,g_{\partial}), J^i_+\in \cJ^{D^+_+}_{reg,+}(\bH_+)$, $J^{\theta,i}_{s,t}\in \cJ^{D_i^+}_{reg,\Delta,+}(J^i_-,J^i_+)\cap J^{D^+_i}_{reg,P^{\Delta}}(h,f_{\partial}, J^i_-.J^i_+)$ and $J^{i}_{s,t} \in \cJ^{D_i^+}_{reg,N,+}(J^i_-,J^i_+)$
	then we have the following 
	$$
	\xymatrix@C=0.3cm{
	H^*(C^{D^+_1}_+(\bH_+,J_+^1)) \ar[r] \ar[d]^{\Delta_+} &  H^*(C^{D^+_2}_+(\bH_+,J_+^2)) \ar[r]\ar[d]^{\Delta_+} & \ldots & 	H^*(C^{D^+_1}_+(\bH_+,J_+^1)) \ar[r] \ar[d]^{\Theta_+} &  H^*(C^{D^+_2}_+(\bH_+,J_+^2)) \ar[r]\ar[d]^{\Theta_+} & \ldots\\
	H^{*-1}(C^{D^-_1}_+(\bH_-,J_-^1)) \ar[r]  &  H^{*-1}(C^{D^-_2}_+(\bH_-,J_-^2)) \ar[r] &\ldots & H^{*}(C^{D^-_1}_+(\bH_-,J_-^1)) \ar[r]  &  H^{*}(C^{D^-_2}_+(\bH_-,J_-^2)) \ar[r] &\ldots }	
	$$
	and $ \varinjlim H^*(C^{D^+_i}_+(\bH_+,J_+^i))\xrightarrow{\varinjlim \Delta_+}\varinjlim H^*(C^{D^-_i}_+(\bH_-,J-^i)) \xrightarrow{(\varinjlim \Theta_+)^{-1}} \varinjlim H^*(C^{D^+_i}_+(\bH_+,J_+^i))$ computes $\Delta_+:SH^*_+(W)\to SH^{*-1}_+(W)$. Similarly for $\Delta_{\partial}$, which can either be computed using $\Delta_{+,0},d_{+,0}$ or $P^{\Delta},P$.
\end{proposition}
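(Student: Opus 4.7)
The plan is to assemble the naturality and invariance results already established, checking at each step that the action-filtered, low-regularity framework from Proposition \ref{prop:BVnatural} is compatible with continuation maps. First, I would verify that every arrow in the displayed diagrams is well-defined. The vertical maps $\Delta_+$ and $\Theta_+$ are defined on $C_+^{D^+_i}$ by Proposition \ref{prop:BVnatural}(1) and by the discussion of $\Theta$ in \S\ref{ssb:cont} respectively, using only the low regularity $J^i_\pm\in \cJ^{D^\pm_i}_{reg,+}$ and $J^{\theta,i}_{s,t}\in \cJ^{D^+_i}_{reg,\Delta,+}$; the horizontal continuation maps are defined on $C_+^{D^+_i}$ (respectively $C_+^{D^-_i}$) by Proposition \ref{prop:lowreg} followed by passage to the larger complex $C_+^{D^+_{i+1}}$, as in the proof of Theorem \ref{thm:A}.

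Next, for commutativity of each square up to chain homotopy, I would invoke the parametrized moduli spaces $\cT_{*,*}$ from \S\ref{ssb:comp} that interpolate between $J^{\theta,i}_{s,t}$ and $J^{\theta,i+1}_{s,t}$ along the continuation homotopy $J^{i}_{s,t}$. These produce an operator $\eta = \eta_+ + \eta_{+,0}$ realizing the relation $\Phi^-\circ \Delta^i - \Delta^{i+1}\circ \Phi^+ = d\circ \eta - \eta\circ d$. As in Proposition \ref{prop:BVnatural}(2), the regularity assumption on the parametric family can be arranged so that $\eta_+\equiv 0$, which forces the diagrams on $C_+^{D^+_i}$ to commute up to the cochain homotopy $\eta_+$, hence in cohomology. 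The same argument applied to $\Theta_+$ uses the standard homotopy-of-homotopies argument (Proposition \ref{prop:natrualBV}) and does not require the $\theta$-parameter at all.

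With the diagrams commuting, I would take direct limits. By Proposition \ref{prop:cascades} together with Proposition \ref{prop:lowreg} and the discussion preceding Proposition \ref{prop:compute}, we have
\[
\varinjlim_i H^*\bigl(C_+^{D^+_i}(\bH_+,J^i_+)\bigr) \;\simeq\; SH^*_+(W),
\]
and similarly for $\bH_-$. The compatibility of $\Delta$ with continuation maps from \S\ref{ssb:comp} shows the limit of $\Delta_+$ is a well-defined degree $-1$ map $SH^*_+(W)\to SH^{*-1}_+(W)$, while the limit of $\Theta_+$ is an isomorphism (Proposition \ref{prop:inv} implies invertibility at each finite stage after passing to $SH^*_+$ in the limit). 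Composing $(\varinjlim\Theta_+)^{-1}$ with $\varinjlim\Delta_+$ yields the BV operator on $SH^*_+(W)$ by Proposition \ref{prop:inv} (which in turn relies on Proposition \ref{prop:BV}).

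For the $\Delta_\partial$ part, I would repeat the argument using Proposition \ref{prop:BVnatural}(3) to identify $\Delta_\partial$ on $C_+^{D^+_i}$ with the difference $P^\Delta - P\circ \Theta_+^{-1}$ of the counts of $\cP^\Delta_{p,x}$ and $\cP_{p,y}$, which exists provided $J^{\theta,i}_{s,t}\in \cJ^{D^+_i}_{reg,P^\Delta}$. Compatibility with continuation maps in $i$ follows from a parametric version of the $\cH^\Delta_{p,x}$ moduli space of Proposition \ref{prop:boundary2} carrying an extra homotopy parameter on the almost complex structure, which supplies the chain homotopy between the two compositions $C_+^{D^+_i}\to C^{*-1}(h)$. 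The main obstacle is to arrange all the parametric data simultaneously so that the required low-regularity transversality holds on every square of the diagram and on every interpolating homotopy; this is handled exactly as in the proof of Proposition \ref{prop:BVnatural}(2), where the freedom to perturb within $\cJ^{D^+_i}_{reg,\Delta,+}\cap \cJ^{D^+_i}_{reg,P^\Delta}$ is used to force the obstructing components of the homotopy operator to vanish.
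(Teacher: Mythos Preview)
Your proposal is correct and follows the same route as the paper: the paper states the proposition as an immediate consequence of Proposition~\ref{prop:natrualBV}, Proposition~\ref{prop:inv}, Proposition~\ref{prop:BVnatural}, and the compatibility of $\Delta$ with continuation maps from \S\ref{ssb:comp}, and you have simply spelled out how these ingredients assemble. Two minor remarks: you do not need $\eta_+\equiv 0$ for the squares to commute in cohomology (the relation $\Phi^-\Delta^i-\Delta^{i+1}\Phi^+=d\eta-\eta d$ already gives that), and the description of $\Delta_\partial$ as ``$P^\Delta-P\circ\Theta_+^{-1}$'' is not quite the formula in Proposition~\ref{prop:equal}, which is $P^\Delta(x)-P(b)$ for $b$ with $\Delta_+(x)=d_+(b)$; but neither point affects the overall argument.
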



\subsection{Independence}
The following statement follows from the same proof of Proposition \ref{prop:ind}.
\begin{proposition}\label{prop:ind2}
	With the same setup in Proposition \ref{prop:ind}, there exist admissible $J^1_{\pm,1},J^2_{\pm,1},\ldots$, $J^1_{\pm,2},J^2_{\pm,2},\ldots$ on $\widehat{W_1}$ and $\widehat{W_2}$ respectively and admissible homotopies $J^1_{s,1},J^2_{s,1},\ldots,J^1_{s,2},J^2_{s,2},\ldots$ with ends $J^i_{\pm,*}$ and admissible $S^1$ families of homotopies $J^{\theta,1}_{s,t,1},J^{\theta,2}_{s,t,1},\ldots$,$J^{\theta,1}_{s,t,2},J^{\theta,2}_{s,t,2},\ldots$ with ends $J^i_{\pm,*}$ and positive real number $\epsilon_1, \epsilon_2,\ldots$ such that the following holds.
	\begin{enumerate}
		\item For $R<\epsilon_i$ and any $R'$, we have $$NS_{i,R}(J^i_{\pm,*}), NS_{i+1,R'}(NS_{i,R}(J^i_{\pm.*}))\in \cJ^{D^{\pm}_i}_{reg,+}(\bH_{\pm})\cap \cJ^{D^{\pm}_i}_{reg,P}(\bH_{\pm},h,g_{\partial}),$$
		$$NS_{i,R}(J^{\theta,i}_{s,t,*}), NS_{i+1,R'}(NS_{i,R}(J^{\theta,i}_{s,t,*}))\in  \cJ^{D^{+}_i}_{reg,P^{\Delta}}(h,g_{\partial})\cap J^{D_i^+}_{reg, \Delta,+},$$
		$$NS_{i,R}(J^{i}_{s,*}), NS_{i+1,R'}(NS_{i,R}(J^{i}_{s,*}))\in  \cJ^{D^{+}_i}_{reg,N,+},$$
		such that all zero dimensional $\cM_{x,y}$,$\cP_{p,x}, \cM^{\Delta}_{x,y}$ and $\cP^{\Delta}_{p,x}$, $\cN_{x,y}$ are the same for both $W_1,W_2$ and contained outside $Y_i$ for $x,y\in C^{D^{\pm}_i}_+,p\in C(h)$. 
		\item $J^{i+1}_{s,*}=NS_{i,\frac{\epsilon_i}{2}}(J^i_{s,*})$ on $W^i_*$. $J^{\theta,i+1}_{s,t,*}=NS_{i,\frac{\epsilon_i}{2}}(J^{\theta,i}_{s,t,*})$ on $W^i_*$. Note that they imply that $J^{i+1}_{\pm.*}=NS_{i,\frac{\epsilon_i}{2}}(J^i_{\pm,*})$ on $W^i_*$.
	\end{enumerate}
\end{proposition}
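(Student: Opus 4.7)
The plan is to mirror the inductive construction in the proof of Proposition~\ref{prop:ind}, but now carrying along five families of moduli spaces simultaneously at each stage~$i$: the cylinders $\cM_{x,y}$ and boundary spaces $\cP_{p,x}$ attached to $\bH_\pm$, the BV moduli $\cM^{\Delta}_{x,y}$ and their boundary variants $\cP^{\Delta}_{p,x}$, and the continuation moduli $\cN_{x,y}$ from $\bH_+$ to $\bH_-$. For each I would define the analogue of $\cJ^{D_i}_{reg,SFT}$ from \S\ref{sub:ind}: an open dense subset of the constrained Banach manifold of admissible families (respectively $\cJ(W)$, $\cJ_{s,\theta,J_-,J_+}(W)$, or its $s$-only counterpart with prescribed cylindrical ends, and, for $\bH^{\theta}_{s,t}$ and $\bH_s$, the monotonicity conditions imposed in \S\ref{s4}) whose fully stretched $NS_{i,0}$-versions are transverse for the corresponding SFT-type moduli spaces up to dimension $\le 0$ and action bounded below by $-D_i^{\pm}$. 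Density of each such class follows from the universal moduli space argument behind Proposition~\ref{prop:transversality}.

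First I would build the base case $i=1$ in the order dictated by the structure: choose $J^1_{\pm,*}$ with SFT-regular $NS_{1,0}$-versions for $\cM,\cP$, then inside this open set choose the $S^1$-family $J^{\theta,1}_{s,t,*}$ (with ends $J^1_{\pm,*}$) and the continuation homotopy $J^1_{s,*}$ (with the same ends) so that their $NS_{1,0}$-versions are SFT-regular for the remaining parametric moduli $\cM^{\Delta},\cP^{\Delta},\cN$. The index-dropping argument of Proposition~\ref{prop:ind} applies uniformly: a hypothetical limit curve with a piece escaping into $Y_1$ has its outside component $u$ negatively asymptotic on $Y_1$ to Reeb orbits $\gamma_j$ contractible in $W_*$ (by topological simplicity) of period $<D_1^{\pm}$, hence of strictly positive degree by ADC. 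The Fredholm index of $u$ therefore drops by $\sum_j \deg(\gamma_j)\ge 1$, and after accounting for the appropriate quotient structure (an $\R$-quotient for $\cM$; none for $\cN,\cM^{\Delta},\cP,\cP^{\Delta}$, with the extra $\theta\in S^1$ contributing one parametric dimension to the BV-type spaces), the expected dimension becomes strictly negative in all five cases. This contradicts SFT-regularity of $NS_{1,0}$.

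Second, combining this with the integrated maximum principle (Lemma~\ref{lemma:max}), applied on each collar $\partial W\times(a_i,b_i)$ to $\bH_-,\bH_+$ and the monotone families $\bH^{\theta}_{s,t},\bH_s$, yields an a priori confinement of curves. An open-interval compactness argument on $R'\in[0,1]$ as in Proposition~\ref{prop:ind} then produces $\epsilon_1<\tfrac12$ such that conclusion~(1) of Proposition~\ref{prop:ind2} holds for $NS_{1,R}$ and $NS_{2,R'}\bigl(NS_{1,R}\bigr)$ whenever $R<\epsilon_1$ and $R'\in[0,1]$. Since each rigid moduli space then lives entirely in the exterior of $Y_1$, it is intrinsic to the contact cobordism from $Y$ to $Y_1$ and is identified between $W_1$ and $W_2$. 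For the induction step, one perturbs $NS_{1,\epsilon_1/2}$ of each datum only in the annular region between $Y_1$ and $Y_2$, upgrading SFT-regularity to level $D_2^{\pm}$; Lemma~\ref{lemma:max} ensures that the level-$D_1^{\pm}$ regularity is preserved by the perturbation, and restricting to $W^1_*$ gives the compatibility required by~(2). Iterating produces the entire sequence.

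The main obstacle is really bookkeeping rather than new analysis: the five moduli problems must be regularized coherently within constrained classes of families, and the $S^1$-parameter $\theta$ in the BV construction forces the stretching operation $NS_{i,R}$ to be carried out uniformly in $\theta$ so that the index estimate applies fiberwise. Once this is arranged, the critical estimate that the single extra dimension contributed by $\theta$ is dominated by $\sum_j\deg(\gamma_j)\ge 1$ is automatic, since the virtual dimensions in play satisfy $|x|-|y|+1\le 0$ for the rigid $\cM^{\Delta}_{x,y}$ of interest (so the outside component has expected dimension at most $-\sum_j\deg(\gamma_j)<0$), and similarly for $\cP^{\Delta}$. No new analytic input is needed beyond what appears in the proof of Proposition~\ref{prop:ind}.
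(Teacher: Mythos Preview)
Your proposal is correct and follows exactly the approach the paper intends: the paper's own proof is the single sentence ``follows from the same proof of Proposition~\ref{prop:ind},'' and you have written out what that means in detail, carrying the neck-stretching induction through the five families of moduli spaces simultaneously.

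One small slip worth fixing: you list $\cP$ among the moduli spaces with no $\R$-quotient, but $\cP_{p,y}$ is defined with the $\R$-dilation quotient on the $u$-component (see \S\ref{sub:shrink}). This matters for the dimension count: a rigid $\cP_{p,y}$ has $|p|-|y|-1=0$, so without the $\R$-quotient the outside SFT component would have expected dimension $1-\sum_j\deg(\gamma_j)$, which is only $\le 0$ rather than $<0$ when a single $\gamma_j$ of degree $1$ appears. With the $\R$-quotient (as in the paper's argument for Proposition~\ref{prop:ind}) the expected dimension drops to $-\sum_j\deg(\gamma_j)<0$, and the contradiction goes through. The remaining cases $\cM^{\Delta},\cP^{\Delta},\cN$ are as you describe.
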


Now, we are ready to prove Theorem \ref{thm:E}.

\begin{proof}[Proof of Theorem \ref{thm:E}]
	By Proposition \ref{prop:ind2}, we have the following two diagrams for both $W_1,W_2$.
	$$\xymatrix{
		H^*(C^{D^+_1}_+(\bH_+,NS_{1,\frac{\epsilon_1}{2}}J^1_{+,*})) \ar[r] \ar[d]^{\Delta_+} &  H^*(C^{D^+_2}_+(\bH_+,NS_{2,\frac{\epsilon_2}{2}}J^2_{+,*})) \ar[r]\ar[d]^{\Delta_+} & \ldots \\
		H^{*-1}(C^{D^-_1}_+(\bH_-,NS_{1,\frac{\epsilon_1}{2}}J^1_{-,*})) \ar[r]  & H^{*-1}(C^{D^-_2}_+(\bH_-,NS_{2,\frac{\epsilon_2}{2}}J^2_{-,*})) \ar[r] &\ldots}	
	$$
	$$\xymatrix{
		H^*(C^{D^+_1}_+(\bH_+,NS_{1,\frac{\epsilon_1}{2}}J^1_{+,*})) \ar[r] \ar[d]^{\Theta_+} &  H^*(C^{D^+_2}_+(\bH_+,NS_{2,\frac{\epsilon_2}{2}}J^2_{+,*})) \ar[r]\ar[d]^{\Theta_+} & \ldots \\
		H^{*}(C^{D^-_1}_+(\bH_-,NS_{1,\frac{\epsilon_1}{2}}J^1_{-,*})) \ar[r]  & H^{*}(C^{D^-_2}_+(\bH_-,NS_{2,\frac{\epsilon_2}{2}}J^2_{-,*})) \ar[r] &\ldots}	
	$$
	where $\Theta_+$ is continuation defined using $NS_{i,\frac{\epsilon_i}{2}}(J^i_{s,*})$ and $\Delta_+$ is the positive BV operator defined by $NS_{i,\frac{\epsilon_i}{2}}(J^{\theta,i}_{s,*})$. In particular, they are identified on $W^1,W^2$. The horizontal arrows are continuation maps. As proved in \ref{thm:A}, they are represented by inclusions hence are also independent of fillings. Moreover, $P,P^{\Delta}$ on them are independent of the filling. That is the diagrams can be identified for $W_1,W_2$. Proposition \ref{prop:BVnatural2} implies the claim. Note that the diagram contain the part used in the proof of Theorem \ref{thm:A}. Hence the proof gives a (non-conical) isomorphism $\Gamma$, identifying $\delta_{\partial},\Delta_+$ and $\Delta_{\partial}$.
\end{proof}

\begin{remark}
	It is a more delicate question to give a ``natural" isomorphism in Theorem \ref{thm:A} and Theorem \ref{thm:E}. Nevertheless, the isomorphism can be chosen so that match up all the structures. 
\end{remark}

The following corollary will imply Corollary \ref{cor:F} in the introduction.
\begin{corollary}\label{cor:dilation}
	Let $Y$ be an ADC manifold with a topologically simple filling admitting a symplectic dilation, then for any other topologically simple filling $W'$ such that $H^2(W')\to H^2(Y)$ is injective, we have $W'$ also admits a dilation. In particular, it holds for $W'$ being Weinstein of dimension at least $6$.
\end{corollary}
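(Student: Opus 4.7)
The plan is to combine the algebraic characterization of symplectic dilations from Proposition \ref{prop:dilation} with the invariance of $\Delta_{\partial}$ from Theorem \ref{thm:E}. By Proposition \ref{prop:dilation}, $W$ admits a dilation precisely when $1\in\phi(\ker\delta\cap\ker\Delta_+)\subset\coker\delta$, so I may fix a class $x\in SH^1_+(W)\cap\ker\Delta_+\cap\ker\delta$ with $\phi(x)=1$. Letting $\Gamma:SH^*_+(W)\to SH^*_+(W')$ denote the isomorphism from Theorem \ref{thm:E} and setting $x':=\Gamma(x)$, the task reduces to verifying $x'\in\ker\Delta_+\cap\ker\delta_{W'}$ together with $\phi(x')=1$, and then invoking Proposition \ref{prop:dilation} on $W'$.

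The containment $x'\in\ker\Delta_+$ is immediate from $\Delta_+\circ\Gamma=\Gamma\circ\Delta_+$. The main step, and the only place the $H^2$ hypothesis is used, is to show $x'\in\ker\delta_{W'}$: since $\delta(x)=0$ we have $\delta_{\partial}(x)=0\in H^2(Y)$, and Theorem \ref{thm:E} gives $\delta_{\partial}(x')=\delta_{\partial}(x)=0$. But $\delta_{\partial}(x')$ is by construction the restriction of $\delta_{W'}(x')\in H^2(W')$ to $Y$, so the hypothesis that $H^2(W')\to H^2(Y)$ is injective forces $\delta_{W'}(x')=0$. Now $\phi(x')\in H^0(W')/\Ima\delta$ is well-defined, and Theorem \ref{thm:E} together with $\Delta_{\partial}=\mathrm{res}\circ\phi$ yields $\mathrm{res}(\phi(x'))=\Delta_{\partial}(x')=\Delta_{\partial}(x)=1\in H^0(Y)/\Ima\delta_{\partial}$. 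Because $Y$ and $W'$ are connected, $\mathrm{res}:H^0(W')\to H^0(Y)$ is an isomorphism $\Z\to\Z$, and $\delta_{\partial}=\mathrm{res}\circ\delta$ identifies $\Ima\delta\cap H^0(W')$ with $\Ima\delta_{\partial}\cap H^0(Y)$; hence $\mathrm{res}$ induces an isomorphism $H^0(W')/\Ima\delta\to H^0(Y)/\Ima\delta_{\partial}$, so $\phi(x')=1$.

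For the ``in particular'' statement, a Weinstein domain $W'$ of dimension $2n\ge 6$ has the homotopy type of a CW complex of dimension $\le n$, so $H_{2n-2}(W')=0$ since $2n-2>n$. Lefschetz duality $H^2(W',Y)\cong H_{2n-2}(W')=0$ combined with the cohomology long exact sequence of the pair $(W',Y)$ yields the required injectivity $H^2(W')\hookrightarrow H^2(Y)$.
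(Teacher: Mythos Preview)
Your proof is correct and follows essentially the same route as the paper: use Proposition \ref{prop:dilation} to produce $x\in\ker\delta\cap\ker\Delta_+$ with $\phi(x)=1$, transport it via Theorem \ref{thm:E}, use the $H^2$ injectivity to upgrade $\delta_{\partial}(x')=0$ to $\delta(x')=0$, and conclude via Proposition \ref{prop:dilation} again. Your degree-$0$ argument that $\mathrm{res}$ induces an isomorphism $H^0(W')/\Ima\delta\to H^0(Y)/\Ima\delta_{\partial}$ is in fact cleaner than the paper's, which splits off the case $SH^*(W)=0$ separately and then asserts both cokernels are ``spanned by $1$''; your version works uniformly. One small omission in the ``in particular'' clause: you verified the $H^2$ injectivity for Weinstein $W'$ of dimension $\ge 6$, but you should also note that the same handle-index argument (handles of index $\ge n\ge 3$ when viewed from $Y$) gives $\pi_1(Y)\to\pi_1(W')$ an isomorphism and, combined with $c_1(\xi)=0$ and the $H^2$ injectivity, forces $c_1(W')=0$, so that $W'$ is indeed topologically simple.
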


\begin{proof}
	We assume $SH^*(W)\ne 0$, for otherwise, it is proven by Corollary \ref{cor:B}. By Proposition \ref{prop:dilation}, $W$ admits a dilation implies that there exists $x\in \ker \delta\cap \ker \Delta_+ \subset SH^*_+(W)$, such that $\phi(x) = 1$. In particular $\Delta_{\partial}(x) = 1$. By Theorem \ref{thm:E}, we have an identification and $\Delta_{\partial}(x) = 1$ for $W'$. In particular $\phi(x)=1$ on $W'$, because in degree $0$, we have both $\coker \delta_{\partial}$ and $\coker \delta$ spanned by $1$ for both $W$ and $W'$. We only need to prove $x$ is from $SH^*(W')$. Since $\delta_{\partial}(x)=0$ on $W$, we have $\delta_{\partial}(x)=0$ on $W'$. Since $H^2(W')\to H^2(Y)$ is injective, then $\delta(x) = 0$ on $W'$. Then by Proposition \ref{prop:dilation}, we have that $W'$ admits a dilation.
\end{proof}

The existence of symplectic dilations puts strong restrictions on the symplectic topology. Let $W^{2n}$ be a Liouville domain with $c_1(W)=0$ and $n>1$, assume $W$ admits a dilation, Seidel \cite{seidel2014disjoinable} showed that there are at most finitely many Lagrangian spheres in $W^{2n}$ that are pairwise disjoint. Given Corollary \ref{cor:dilation}, we can ask the following natural question.
\begin{question}
	Let $W$ be a ADC manifold with a symplectic dilation, is there a universal upper bound for the maximal collections of pairwise disjoint Lagrangian spheres for all Weinstein fillings of $\partial W$?
\end{question}

Similar to Corollary \ref{cor:ob}, we have following obstruction to Weinstein fillings.
\begin{corollary}\label{cor:ob2}
	Let $Y$ be a $2n-1$ dimensional ADC contact manifold and $n\ge 3$. If $Y$ admits a topologically simple exact filling $W$ and $\Ima \Delta_{\partial}$ contains an element of grading greater than $n$, then $Y$ does not admit Weinstein filling.
\end{corollary}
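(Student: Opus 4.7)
The plan is to run the argument of Corollary \ref{cor:ob} with Theorem \ref{thm:E} in place of Theorem \ref{thm:A}. First I would suppose for contradiction that $Y$ admits a Weinstein filling $W'$, and verify that $W'$ is topologically simple: since $n \ge 3$, the Weinstein handles attached to $Y \times [0,1]$ in building $W'$ have index $\ge n \ge 3$, which forces $c_1(W') = 0$ and $\pi_1(Y) \xrightarrow{\simeq} \pi_1(W')$ by standard cellular considerations. This is exactly the topological input already used in Corollary \ref{cor:ob}, so Theorem \ref{thm:E} applies to $W$ and $W'$.

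Next I would exploit that a Weinstein filling has vanishing cohomology above the middle dimension. By its very definition, $\Delta_\partial$ factors as
\[
\ker \Delta_+ \xrightarrow{\phi} \coker \delta \subset H^*(W')/\Ima \delta \xrightarrow{\mathrm{res}} H^*(Y)/\Ima \delta_\partial = \coker \delta_\partial.
\]
Since $W'$ is Weinstein of real dimension $2n$, $H^k(W') = 0$ for $k > n$; hence $\phi$, and therefore $\Delta_\partial$, vanishes in gradings $> n$ when computed from $W'$. In particular $\Ima \Delta_\partial$ for $W'$ contains no element of grading exceeding $n$.

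Finally I would transport this vanishing back to $W$ using Theorem \ref{thm:E}. The isomorphism $\Gamma: SH^*_+(W) \to SH^*_+(W')$ commutes with $\Delta_+$ and satisfies $\Delta_\partial \circ \Gamma = \Delta_\partial$, with both sides landing in the same quotient of $H^*(Y)$ by the invariance of $\delta_\partial$ (Theorem \ref{thm:A}). Consequently $\Gamma$ identifies $\ker \Delta_+$ on the two sides and carries $\Ima \Delta_\partial$ to $\Ima \Delta_\partial$, contradicting the hypothesis that the latter contains a nontrivial class of grading greater than $n$ on the $W$ side.

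I do not expect any genuine obstacle, since all the Floer-theoretic content is already packaged into Theorem \ref{thm:E}. The only point meriting a careful sentence is checking that the target $\coker \delta_\partial$ is canonically the same for $W$ and $W'$, so that $\Delta_\partial \circ \Gamma = \Delta_\partial$ is literally an equality of subsets of the same group; this is immediate from Theorem \ref{thm:A}.
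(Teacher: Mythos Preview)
Your proposal is correct and follows essentially the same route as the paper: assume a Weinstein filling $W'$, note it is topologically simple since $n\ge 3$, use that $\Delta_\partial$ factors through $\coker\delta$ which lives in (a quotient of) $H^*(W')$ and hence vanishes in degrees $>n$, and then invoke Theorem~\ref{thm:E} to identify $\Ima\Delta_\partial$ for $W$ and $W'$. The paper's proof is just a terser version of exactly this argument.
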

\begin{proof}
	Since $\Delta_{\partial}$ factors through $\coker \delta = \coker (SH^*_+(W)\to H^{*+1}(W))$, $\Ima \Delta_{\partial}$ can not contain an element of grading greater than $n$ if $W$ is Weinstein. By Theorem \ref{thm:E}, $\Delta_{\partial}$ is independent of topologically simple exact fillings, in particular Weinstein fillings, the claim follows. 
\end{proof}

We apply Corollary \ref{cor:ob} and Corollary \ref{cor:ob2} in \S \ref{s6} to construct many exactly fillable, but not Weinstein fillable manifolds. Although Corollary \ref{cor:ob}  and Corollary \ref{cor:ob2} use the topology of $W$, those obstructions as contact invariants of the ADC boundary is not topological.  In particular, we will show that they are very different from the topological criterion in \cite{bowden2014topology} by proving Theorem \ref{thm:I}. 

\subsection{$\Delta_+,\phi$, $\Delta_{\partial}$ as invariants}\label{sub:nondeg} In this subsection, we will explain that $\Delta_+$ and $\phi$ are invariants of the exact domains up exact symplectomorphisms. It is clear that $\Delta_+,\phi$ can be defined on Hamiltonians with finite slope and $C^2$-small on $W$. Using Proposition \ref{prop:welldefined}, they commute with continuation maps, hence the direct limit of them define $\Delta_+:SH^*_+(W)\to SH^{*-1}_+(W)$ and $\phi:\ker \Delta_+ \to \coker \delta$. The following proposition implies that $\Delta_+,\phi$ are invariants of the exact domains. 

\begin{proposition}\label{prop:viterbo}
	Let $V\subset W$ be a subdomain, then Viterbo transfer map preverse $\Delta_+$ and $\phi$. In particular, we have the following commutative diagram,
	$$\xymatrix{\ker \Delta_+\subset SH^*_+(W) \ar[r] \ar[d] & \coker \delta_W \ar[d]\\
		\ker \Delta_+\subset SH^*_+(V) \ar[r]& \coker \delta_V,}
	$$
	where $\coker \delta_W\to \coker \delta_V$ is induced by 	$$\xymatrix{SH^*_+(W) \ar[r] \ar[d] & H^{*+1}(W) \ar[d]\\
		SH^*_+(V) \ar[r]& H^{*+1}(V).}
	$$
\end{proposition}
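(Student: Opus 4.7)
The plan is to realize the Viterbo transfer at the cochain level and then invoke Proposition~\ref{prop:welldefined}. I will choose admissible Hamiltonian pairs $(\bH_{W,\pm}, \bH_{V,\pm})$ on $\widehat{W}$ and $\widehat{V}$ satisfying the conventions of \S\ref{s2}, together with an $S^1$-family of admissible almost complex structures, arranged so that on the collar of $\partial V$ inside $W$ the slopes of $\bH_{W,\pm}$ and of the family $\bH^{\theta}_{s,t,W}$ used to define $\Delta$ exceed the filtration level under consideration. The integrated maximum principle (Lemma~\ref{lemma:max}) then confines all Floer and BV cylinders with asymptotes in the low-action part (that is, orbits inside $V$ together with orbits near $\partial V$ of period below the chosen slope) to $V \cup \partial V \times [1,R]$. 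This produces a cochain map $\psi : C(\bH_{W,\pm}, J_W, f_W) \to C(\bH_{V,\pm}, J_V, f_V)$ respecting the splitting \eqref{eqn:short}, decomposing as $\psi = \psi_+ + \psi_0 + \psi_{+,0}$, with $\psi_0$ the Morse-theoretic restriction along $V \subset W$.

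Next I will verify the chain-level relation $\Delta \circ \psi - \psi \circ \Delta = d \circ \eta - \eta \circ d$ via a one-parameter family of moduli spaces interpolating between ``BV first, then transfer'' and ``transfer first, then BV'', realized by concatenating and stretching the $S^1$-family data of \S\ref{s4} with the Viterbo homotopy. The boundary of the one-dimensional strata furnishes the homotopy $\eta = \eta_+ + \eta_{+,0}$, with no $\eta_0$ component since the BV operator vanishes on the Morse part by action and index considerations. Compactness again follows from Lemma~\ref{lemma:max} along the collar of $\partial V$, and transversality is by the standard universal moduli space argument of Proposition~\ref{prop:transversality}.

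Given these relations, Proposition~\ref{prop:welldefined} applies and yields $\Delta_+ \circ \psi_+ = \psi_+ \circ \Delta_+$ and $\phi \circ \psi_+ = \psi_0 \circ \phi$ on cohomology. Passing to the direct limit over the length/action filtration and Hamiltonian slopes, as in Proposition~\ref{prop:BVnatural2}, identifies $\psi_+$ with the Viterbo transfer $SH^*_+(W) \to SH^*_+(V)$ and $\psi_0$ on $H^*(C_0)$ with the ordinary pullback $H^{*+1}(W) \to H^{*+1}(V)$. The cochain-level identity $\psi_0 \circ \delta_W = \delta_V \circ \psi_+$ then descends to cokernels, giving the asserted commutative square relating $\coker \delta_W \to \coker \delta_V$ with restriction on ordinary cohomology.

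The main obstacle is confirming that the homotopy $\eta$ has no $C_0 \to C_0$ component and that both $\psi$ and $\eta$ are well-defined up to higher homotopy under passage to direct limits. Both points reduce to careful choices of monotone homotopies in slope and $s$-variable, parallel to the naturality discussions of \S\ref{ss:natural} and Proposition~\ref{prop:BVnatural}, applied with the additional constraint that the confinement provided by Lemma~\ref{lemma:max} is preserved throughout the interpolation, so that all moduli spaces appearing in the construction remain within the region where $\bH_{W,\pm}$ and $\bH_{V,\pm}$ agree up to translation.
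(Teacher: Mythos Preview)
Your approach is correct in outline but takes a different route from the paper. The paper's proof is two sentences: it observes that the BV operator is the second term of the differential in the $S^1$-equivariant symplectic cochain complex (citing Zhao), and then invokes the functoriality of positive $S^1$-equivariant symplectic cohomology under Viterbo transfer, which is proven in Gutt's paper. Since $\Delta$ and the homotopy witnessing $\Delta^2=0$ are exactly what appear in the $S^3$-approximation to $ES^1$, that approximation already encodes $\Delta_+$ and $\phi$, and Gutt's functoriality gives the result immediately.

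Your proposal instead builds the compatibility directly at the cochain level in the paper's Morse--Bott framework, constructing the transfer map $\psi$ and the homotopy $\eta$ by hand via parametrized moduli spaces, and then feeding them into Proposition~\ref{prop:welldefined}. This is more self-contained and avoids the external reference, at the cost of reproducing a special case of Gutt's argument. One point to tighten: you write that $\bH_{W,\pm}$ should ``satisfy the conventions of \S\ref{s2}'' while also having large slope on the collar of $\partial V$ inside $W$. These are incompatible as stated, since the \S\ref{s2} convention forces $\bH\equiv 0$ on $W$. What you actually need is the standard stepped Hamiltonian for the Viterbo transfer (small on $V$, large slope near $\partial V$, then linear of large slope near $\partial W$), and you must check that the Morse--Bott analysis of \S\ref{s2} and the integrated maximum principle still apply in that shape. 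This is routine but should be said explicitly rather than folded into ``the conventions of \S\ref{s2}''.
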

Since the BV operator is the second term in the differential of the $S^1$-equivariant symplectic (co)chain complexes \cite{zhao2014periodic}, this proposition follows from the functoriality of positive $S^1$-equivariant symplectic cohomology, which was proven in \cite{gutt2017positive}. In fact, it is sufficient to consider the approximation $ES^1$ by $S^3$ in the $S^1$-equivariant symplectic (co)homology. Using the fact that Viterbo transfer $SH^*_+(W\cup \partial W \times (1,r))\to SH^*_+(W)$ is an isomorphism and exactly symplectomorphic exact domains have nested embeddings into each other \cite[Propsition 11.8]{cieliebak2012stein}, Proposition \ref{prop:viterbo} implies that $\Delta_+$ and $\phi$ are invariants of exact domains up to exact symplectomorphisms.

Next we verify that $\Delta_+,\phi$ defined on $H^*(C(\bH_+,J_+))$ match with $\Delta_+,\phi$ on $SH^*_+(W)$ naturally.
\begin{proof}[Proof of Proposition \ref{prop:natrualBV},\ref{prop:BV}, \ref{prop:inv}]
We first prove Proposition \ref{prop:BV} and \ref{prop:inv}. We need to relate the structure maps defined using $\bH_{\pm}$ to those defined as limits of non-degenerate linear Hamiltonians. Let $H_{\pm}$ be the non-degenerate perturbation to $\bH_{\pm}$ in the proof of Proposition \ref{prop:cascades}. Let $H_{\pm,s}$ be the decreasing homotopies between them. Then $\Delta,\Delta_+,\Phi$ are defined similarly using $H_{\pm}$ as the integrated maximum principle holds for the moduli spaces. We pick two sequences of Hamiltonians with finite slope $H^{D^{\pm}_i}_{\pm}$ such that the following holds.
\begin{enumerate}
	\item $H^{D^{\pm}_i}_{\pm}=H_{\pm}$ on $W^i$.
	\item $H^{D^{\pm}_i}_{\pm}=f'_{\pm}(c_i)r$ for $r\ge c_i$, where $c_i\in (a_i,b_i)$.
	\item $H^{D^{\pm}_i}_{\pm}\le H^{D^{\pm}_{i+1}}_{\pm}$.
	\item $H^{D^+_i}_+\le H^{D^-_i}_-$. 
\end{enumerate}
Then by the continuation maps on finite slope Hamiltonians and the compatibility with $\Delta$ following \cite[\S 2.2.3]{abouzaid2013symplectic}, we have the following commutative diagram.
$$
\xymatrix{
	H^*(C^{D^+_1}_{(+)}(H_+)) \ar[r]\ar@/_2.0pc/@[red][ddd]_{\Delta_{(+)}} & H^*(C^{D^+_2}_{(+)}(H_+)) \ar[r]\ar@/_2.0pc/@[red][ddd]_{\Delta_{(+)}} & \ldots \ar[r] & H^*(C_{(+)}(H_+))\ar@/^2.0pc/@[red][ddd]^{\Delta_{(+)}} \\
	H^{*}(C_{(+)}(H^{D^+_1}_+)) \ar[r]\ar[u]\ar[d]^{\Delta_{(+)}} & H^{*}(C_{(+)}(H^{D^+_2}_+)) \ar[r]\ar[u]\ar[d]^{\Delta_{(+)}} & \ldots \ar[r] & \varinjlim  H^{*}(C_{(+)}(H^{D^+_i}_+))\ar[u]\ar[d]_{\Delta_{(+)}} \\
	H^{*-1}(C_{(+)}(H^{D^-_1}_-)) \ar[r]\ar[d] & H^{*-1}(C_{(+)}(H^{D^-_2}_-)) \ar[r]\ar[d] & \ldots \ar[r] & \varinjlim  H^{*-1}(C_{(+)}(H^{D^-_i}_-))\ar[d] \\	
	H^{*-1}(C^{D^-_1}_{(+)}(H_-)) \ar[r] & H^{*-1}(C^{D^-_2}_{(+)}(H_-)) \ar[r] & \ldots \ar[r] & H^{*-1}(C_{(+)}(H_-))
	}
$$
the unmarked arrows are continuation maps, and the horizontal arrows on top and bottom row are inclusions, which are continuation maps induced by the trivial homotopy. By Proposition \ref{prop:welldefined}, the digram also induces an commutative diagram of $\phi$ for different pairs of Hamiltonians $H_{\pm}, H^{D^\pm_i}$.  This proves Proposition \ref{prop:BV} and \ref{prop:inv} for $H_{\pm}$.

Therefore to prove the claim, it is sufficient to prove commutativity of following two diagrams (and the $+$ version). 
$$
\xymatrix{
	H^*(C(H_+)) \ar[r]\ar[d] & H^*(C(\bH_+))\ar[d]^{\Theta} & 	H^*(C(H_+)) \ar[r]\ar[d]^{\Delta} & H^*(C(\bH_+))\ar[d]^{\Delta}\\
	H^*(C(H_-)) \ar[r] & H^*(C(\bH_-)) & H^{*-1}(C(H_-)) \ar[r] & H^{*-1}(C(\bH_-))
	}
$$
where the unmarked arrows are continuation maps, and the horizontal ones are those in Proposition \ref{prop:cascades}. They can be shown using a homotopy argument. The only new thing we need to verify is compactness and regularity for moduli spaces, since it involves degenerate Hamiltonians $\bH$, the moduli spaces have some cascades part. For the first diagram, let $\bH_s=\rho(s)\bH_-+(1-\rho(s))\bH_+, H_s=\rho(s)H_-+(1-\rho(s))H_+$, they are used to define continuation maps $\Theta$ and $H^*(C(H_+))\to H^*(C(H_-))$. Then we define a smooth homotopy of homotopy $H^a_{s,t}$ in the following. Let $\chi$ be increasing function such that $\chi(a)=a$ for $a\gg 0$ and $\chi(a)=2$ near $a=1$.
\begin{enumerate}
	\item For $a>1$, $H^a_{s,t}=H_{+,s-\chi(a)}$ for $s>0$ and $H^a_{s,t}=\bH_{s+\chi(a)}$ for $s\le 0$.
	\item For $a<0$, $H^a_{s,t}=H_{s-\chi(1-a)}$ for $s>0$ and $H^a_{s,t}=H_{-,s+\chi(1-a)}$ for $s\le 0$.
	\item For $0\le a \le 1$, $H^{a}_{s,t}=\rho(a)H^0_{s,t}+(1-\rho(a))H^1_{s,t}$.
\end{enumerate}
It is clear that we have $\partial_s H^a_{s,t}\le 0$. Moreover, on $(a_i,b_i)$, we have the following three cases.
\begin{enumerate}
	\item For $a>1$, $H^{a}_{s,t}=f_+(r)$ when $s>0$ and $H^a_{s,t}=\rho(s+\chi(a))f_-(r)+(1-\rho(s+\chi(a)))f_+(r)$ when $s\le 0$. Then $\partial_s(r\partial_rH^a_{s,t}-H^a_{s,t})\le 0$.
	\item For $a<0$, $H^{a}_{s,t}=f_-(r)$ when $s\le 0$ and $H^a_{s,t}=\rho(s-\chi(1-a))f_-(r)+(1-\rho(s-\chi(1-a)))f_+(r)$ when $s> 0$. Then $\partial_s(r\partial_rH^a_{s,t}-H^a_{s,t})\le 0$.
	\item For $0\le a \le 1$, $H^{a}_{s,t}=\rho(a)(\rho(s-2)f_-(r)+(1-\rho(s-2))f_
	+(r))+(1-\rho(a))(\rho(s+2)f_-(r)+(1-\rho(s+2))f_+(r))$, then 
	$\partial_s(r\partial_rH^a_{s,t}-H^a_{s,t})=(\rho(a)\rho'(s-2)+(1-\rho(a))\rho'(s+2))(rf'_-(r)-f_-(r)-rf'_+(r)-f_+(r))\le 0$.
\end{enumerate}
In particular, Lemma \ref{lemma:max} can be applied to get compactness. The regularity is standard, except when $a\gg 0$ and breaks at $W$. In principle, it is a Morse-Bott breaking at $W$. However, in our special case, for $a\gg 0$, solutions near such breaking is isomorphic to solutions to the equation using $H_{+,s}$ shifted by $\chi(a)$. Hence such type of breaking is again a boundary corresponding to composing with $\Theta$ on $C_0(\bH_+)$, which is identity. The proof for compatibility with $\Delta$ is the same. Since all the maps respect the $0,+$ splitting, the proof above also yields the identification on $\Delta_+$ and $\phi$ by Proposition \ref{prop:welldefined}. 

To prove Proposition \ref{prop:natrualBV}, we can view $H_{\pm,s}\equiv \bH_{\pm}$. Then the above construction yields a homotopy of homotopy of Hamiltonians $H^{a}_{s,t}$, such that Lemma \ref{lemma:max} can be applied. Then Proposition \ref{prop:natrualBV} follows from the standard homotopy argument.
\end{proof}

\section{Uniruledness}\label{s5}
A variety is uniruled iff it is covered by a family of rational curves. Similarly, an affine variety is uniruled if it is covered by a family of rational curves possibly with punctures. In the symplectic setup, one can replace rational curve by pseudo-holomorphic rational curves. The uniruledness for Liouville domains was introduced in \cite{mclean2014symplectic}.

\begin{definition}[{\cite[\S 2]{mclean2014symplectic}}]\label{def:convexgeneral}
	A $\rd\lambda$-compatible almost complex structure $J$ on $W$ is convex iff there is a function $\phi$ such that
	\begin{enumerate}
		\item $\phi$ attains its maximum on $\partial W$ and $\partial W$ is a regular level set,
		\item $\lambda \circ J=\rd \phi$ near $\partial W$.
	\end{enumerate}
\end{definition}
This is more general than the cylindrical convex almost complex structure used in Definition \ref{def:convex}, where $\phi=r$ near the boundary. A maximal principle still holds for holomorphic curves near $\partial W$ using the function $\phi$. Proposition \ref{prop:extension} relates a general convex almost complex structure with a cylindrical convex almost complex structure.

\begin{definition}[{\cite[Definition 2.2]{mclean2014symplectic}}]\label{def:unirule}
	Let $k>0$ be an integer and $\Lambda>0$ a real number. We say that a Liouville domain $(W,\lambda)$ is $(k,\Lambda)$-uniruled if, for every convex almost complex structure $J$ on $W$ and every point $p \in  W^0$ where $J$ is integrable on a neighborhood of $p$, there is a proper $J$-holomorphic map $u: S \to W^0$ to the interior $W^0$ of $W$ passing through
	this point. We require that $S$ is a genus $0$ Riemann surface, the rank of $H_1(S;\Q)$ is at most $k-1$, and the area of $u$ is at most $\Lambda$.
\end{definition}

\begin{proposition}\label{prop:extension}
	Let $Y$ be a contact manifold with a contact form $\alpha$, and there is a function $\phi$ on $Y\times [1,3]_r$, such that $\partial_r \phi >0$ and $Y\times \{3\}$ is a level set. Assume $J$ is $\rd(r\alpha)$ compatible almost complex structure, such that $r\alpha \circ J = \rd \phi$ on $Y\times [1,2]$. Then there exists an extension of $\rd(r\alpha)$ compatible almost complex structure $\widetilde{J}$, such that  $r\alpha \circ \widetilde{J} = \rd \phi$ on $Y\times [1,3]$.
\end{proposition}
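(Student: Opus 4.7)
The plan is to interpret the condition $r\alpha\circ J=\rd\phi$ as a pointwise constraint on $J$ that defines a subbundle of the bundle of $\omega$-compatible almost complex structures with contractible fibers, and then to obtain $\widetilde{J}$ by a standard section-extension result.

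First, I would reformulate the condition geometrically. Since $V:=r\partial_r$ is the Liouville vector field dual to $\lambda:=r\alpha$ (i.e.\ $\iota_V\omega=\lambda$ where $\omega=\rd(r\alpha)$), the identity $r\alpha\circ J=\rd\phi$ is the same as $\omega(V,J\cdot)=\rd\phi$, equivalently $g_J(V,\cdot)=\rd\phi$ for the compatible metric $g_J(\cdot,\cdot)=\omega(\cdot,J\cdot)$. Thus the condition says that $V$ is the $g_J$-gradient of $\phi$; equivalently, $(\omega,J,\phi)$ form a Stein datum with $\omega=-\rd(\rd\phi\circ J)$. This frames the problem as extending a Stein-type complex structure from $Y\times[1,2]$ to $Y\times[1,3]$ with a prescribed psh function.

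Second, I would establish the pointwise non-emptiness and contractibility of the target space $\mathcal A_p:=\{J\in\mathcal J_{\omega_p}:\lambda_p\circ J=\rd\phi_p\}$ at each $p\in Y\times[1,3]$. Let $X_\phi$ be the Hamiltonian vector field of $\phi$, defined by $\iota_{X_\phi}\omega=\rd\phi$. The hypothesis $\partial_r\phi>0$ yields $\omega(V,X_\phi)=\rd\phi(V)=r\,\partial_r\phi>0$, so the 2-plane $L_p:=\text{span}(V_p,X_{\phi,p})$ is symplectic, with $\omega$-orthogonal complement $E_p$. Since $\lambda$ and $\rd\phi$ both vanish on $E_p$, the constraint decouples along $T_pM=L_p\oplus E_p$, and a block-matrix computation exhibits $\mathcal A_p$ as a smooth bundle over the contractible Siegel upper half space of $\omega|_{E_p}$-compatible structures, whose fiber is the (contractible) affine slice of $\omega|_{L_p}$-compatible structures plus off-diagonal terms satisfying a single linear equation. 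In particular $\mathcal A_p$ is non-empty and contractible for every $p$.

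Third, the union $\mathcal A=\bigcup_p\mathcal A_p$ forms a smooth fiber subbundle of the endomorphism bundle of $T(Y\times[1,3])$ with contractible fibers. The given $J$ is a smooth section of $\mathcal A$ over the closed set $Y\times[1,2]$, hence by the standard extension principle for sections of fiber bundles with contractible fibers over a paracompact base, it extends to a smooth section $\widetilde J$ over all of $Y\times[1,3]$. Explicitly, one can choose any global section $J'$ of $\mathcal A$ on $Y\times[2,3]$ and interpolate between $J$ and $J'$ on a collar of $Y\times\{2\}$ via a partition of unity along geodesics in the contractible fibers with respect to a fixed bundle metric. The hardest part is the contractibility claim of Step 2: non-emptiness is a direct consequence of the symplectic splitting $T_pM=L_p\oplus E_p$, but verifying that the constraints cut the Siegel space transversally and leave a contractible slice requires a careful linear-algebra accounting of the block components of $J$ relative to this splitting.
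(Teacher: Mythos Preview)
Your proposal is correct and uses essentially the same splitting as the paper: your $L_p=\mathrm{span}(V,X_\phi)$ is the paper's 2-plane $E_x$ (the $\omega$-complement of the contact distribution on the $\phi$-level set), and your $E_p$ is the paper's $\xi_a=\ker\alpha\cap T\phi^{-1}(a)$.

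The one place you make your life harder than necessary is Step~2. The constraint $\lambda\circ J=\rd\phi$ together with $\omega$-compatibility already forces $JV=-X_\phi$ (from $\omega(V,J\cdot)=\omega(X_\phi,\cdot)$ and $\omega(Ju,Jv)=\omega(u,v)$), hence $JX_\phi=V$; so $J$ automatically preserves $L_p$, and by $\omega$-orthogonality it preserves $E_p$ as well. There are no off-diagonal terms and no ``affine slice'' on $L_p$: the restriction $J|_{L_p}$ is uniquely determined, and $\mathcal A_p$ is literally the space of $\omega|_{E_p}$-compatible structures on $E_p$. This is exactly what the paper asserts when it says the condition ``determines a complex structure on $E_x$'' and leaves $J|_{\xi_a}$ as an arbitrary compatible structure. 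With this observation your block-matrix analysis collapses and the extension is immediate. (Minor sign: with your convention $\iota_{X_\phi}\omega=\rd\phi$ one gets $\omega(V,X_\phi)=-r\,\partial_r\phi$, not $+r\,\partial_r\phi$; this does not affect the argument.)
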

\begin{proof}
	The Liouville vector field on $Y\times [1,3]$ is $r\partial r$, since $\partial_r \phi >0$, every level set of $\phi$ is of contact type. On each level surface $\phi^{-1}(a)$, let $\xi_{a}$ denote the contact structure $\ker \alpha\cap T\phi^{-1}(a)$ and $E_x$ be the $\rd(r\alpha)$ complement of $\xi_a$ for $x\in \phi^{-1}(a)$. Then $J$ being compatible with $\rd(r\alpha)$ and $r\alpha \circ J = \rd \phi$ determines a complex structure on $E_x$ and $J|_{\xi_a}$ is a $\rd(r\alpha)$ compatible almost complex structure, and those two descriptions are equivalent. It is clear we can extend $J$ to $\widetilde{J}$ maintaining such properties.
\end{proof}

McLean proved the symplectic uniruledness is equivalent to the algebraic uniruledness for affine varieties. In particular, the algebraic uniruledness is rather a symplectic property. In the following, we use the constructions in \S \ref{s3} and \S \ref{s4} to prove that uniruledness is implied by the existence of symplectic dilation, which is, of course, a symplectic property. Unlike the results in other sections, we do not assume $c_1=0$ in this section. Hence all gradings should be understood as $\Z/2$ gradings.

\begin{theorem}\label{thm:unirule}
	Let $R$ be a ring and $W$ a Liouville domain such that $SH^*(W;R) = 0$ or there exists a symplectic dilation in $SH^1(W;R)$. Then $W$ is $(1,\Lambda)$-uniruled for some $\Lambda$.
\end{theorem}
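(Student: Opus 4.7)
The plan is to translate the algebraic hypothesis---either $SH^*(W;R)=0$ or the existence of a symplectic dilation---into a non-vanishing count of Floer planes passing through an arbitrary generic interior point $q\in W^0$, and then extract the required proper $J$-holomorphic curve by restricting such a plane to $u^{-1}(W^0)$. The crucial observation is that, because $\bH\equiv 0$ on $W$, the Floer equation degenerates to the Cauchy--Riemann equation on this preimage, producing a genuine $J$-holomorphic piece.

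First I would modify the moduli spaces $\cP_{p,y}$ and $\cP^{\Delta}_{p,x}$ of \S\ref{s3}--\S\ref{s4} by replacing the Morse flow-line constraint with a point constraint $u(0)=q$ for a fixed generic $q\in W^0$ near which $J$ is integrable. Using Proposition \ref{prop:extension} to extend $J$ to a cylindrical $\widetilde J$ on $\widehat{W}$, the compactness and transversality setups of \S\ref{s2}--\S\ref{s4} go through and produce rigid moduli spaces $\cP^{q}_{y}$ and $\cP^{\Delta,q}_{x}$ whose virtual dimensions drop by $2n$, the codimension of the constraint, relative to the unconstrained case. The analog of Proposition \ref{prop:boundary2}, with the flow line $\gamma$ replaced by the point $q$, then yields a chain-level identity analogous to Proposition \ref{prop:equal}.

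In the vanishing case, $1\in H^0(W)$ lies in $\Ima\delta$, so there exists $x'\in C_+^{-1}(\bH)$ with $d_{+,0}(x') = 1 + d_0(\eta)$. Taking $f$ with its unique local minimum at $q$ and extracting the coefficient at $q$, and noting that $d_0$ cannot produce the minimum of a Morse function, gives $\sum_{y}(\textrm{coefficient of $y$ in $x'$})\cdot \#\cP^{q}_{y}=1\in R$. In the dilation case, Proposition \ref{prop:dilation} produces closed $x$ and $b$ with $\Delta_+(x)=d_+(b)$ and $\phi(x)=1\in\coker\delta^{-}$; the point-constrained analog of Proposition \ref{prop:equal} identifies the weighted count of $\cP^{\Delta,q}_{x}$ minus that of $\cP^{q}_{b}$ with the value of $\phi(x)\in H^0(W)=R$ at $q$, which differs from $1$ by an element of $\Ima\delta^{-}$. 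Since $\Ima\delta^{-}$ is the kernel of the unital ring map $H^0(W)\to SH^0(W)$, it is a proper ideal of $R$ whenever $SH^*(W)\neq 0$, so $1+\Ima\delta^{-}$ does not contain $0$. In either case the count is non-zero in $R$, producing a Floer plane $u\colon\C\to\widehat{W}$ with $u(0)=q$ asymptotic to a Hamiltonian orbit whose action is bounded by a constant depending only on the chosen cochains.

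Let $S\subset\C$ be the connected component of $u^{-1}(W^0)$ containing $0$. Since $\bH\equiv 0$ on $W$, the restriction $u|_S\colon S\to W^0$ is $J$-holomorphic and proper (the ends of $S$ map to $\partial W$, and $u$ escapes to infinity in $r$). To control the topology, extend $r$ by $1$ on $W$: then $r\circ u\colon\C\to[1,\infty)$ is subharmonic by plurisubharmonicity of $r$ on the cylindrical end, and any bounded component $V$ of $\C\setminus u^{-1}(W)$ nested inside $u^{-1}(W)$ would satisfy $r\circ u>1$ on $V^0$ and $r\circ u=1$ on $\partial V$, contradicting the maximum principle. Hence each component of $u^{-1}(W)$ is simply connected, so $S$ has the topology of a disk and $\rank H_1(S;\mathbb{Q})=0$. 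The area of $u|_S$ is bounded by the Floer energy of $u$, which is bounded by the action of the asymptotic orbit, giving a universal constant $\Lambda$.

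The main obstacle is accommodating the arbitrary convex $J$ of Definition \ref{def:unirule}, which need not be cylindrical at $\partial W$ nor generic for transversality of point-constrained moduli. I would handle this by approximating $J$ by a sequence $J_n$ of generic convex almost complex structures with cylindrical extensions, obtaining $J_n$-holomorphic planes $u_n$ through $q$ with the uniform area bound $\Lambda$. Exactness of $(\widehat W, \widehat\lambda)$ rules out sphere bubbles, and the integrated maximum principle (Lemma \ref{lemma:max}) applied at the levels $\partial W\times\{a_i\}$ prevents escape to infinity, so a Gromov--Floer limit exists; one component of the limiting building is a plane through $q$, whose restriction to $W^0$ yields the desired proper $J$-holomorphic curve with $\rank H_1 \le 0$ and area $\le\Lambda$. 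This compactness analysis for point-constrained Floer planes with converging $J_n$ is the technical heart of the proof.
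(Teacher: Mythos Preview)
Your strategy matches the paper's core idea: translate the hypothesis into a non-vanishing count of Floer planes through an arbitrary interior point, then restrict to the preimage of $W^0$ where the Floer equation becomes Cauchy--Riemann. The treatment of the dilation case via $\phi(x)=1$ and the proper-ideal observation is also essentially what the paper does. The substantive divergence is in how you handle the arbitrary convex $J$ of Definition~\ref{def:unirule}.

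The paper avoids your limit argument entirely. It first passes to the smaller domain $W_\delta = W \setminus \partial W \times (1-\delta,1]$, invoking \cite[Theorem~2.3]{mclean2014symplectic} to reduce uniruledness of $W$ to that of $W_\delta$. Given a convex $J_\delta$ on $W_\delta$, one extends $\phi$ to $\widehat\phi$ with $\widehat\phi = r$ for $r \ge 1$ and then uses Proposition~\ref{prop:extension} to extend $J_\delta$ to an admissible $J \in \cJ(W)$. Because all non-constant periodic orbits of $\bH$ lie in $r > 1$, one can perturb $J$ only near those orbits to achieve transversality \emph{without touching $J$ on $W_\delta$}; the resulting Floer plane is therefore genuinely $J_\delta$-holomorphic on $W_\delta^0$, and no Gromov limit is needed. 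Your approach via approximating $J_n \to J$ should also work, but you would need to justify that the limiting broken building retains a non-constant plane component through $q$ (this follows from exactness and $q$ not lying on any orbit) and arrange compatible extensions of the $J_n$ to $\widehat W$; the paper's reduction to $W_\delta$ sidesteps both issues. For the universal energy bound, the paper does not rely on fixed cochains but instead appeals to Proposition~\ref{prop:cascades} to compare different admissible Morse functions $f$ (with minimum at varying $q$) against a single non-degenerate model, so that the threshold $D_i$ is manifestly independent of $q$. Finally, your subharmonicity argument for $\rank H_1(S;\Q)=0$ is imprecise on the region where $X_\bH \ne 0$; the paper instead argues in two steps, first using Lemma~\ref{lemma:max} at a level $r = 1-\epsilon$ (inside $W$, where $\bH = 0$) to show the relevant preimage is a disk, and then the ordinary maximum principle for the plurisubharmonic $\widehat\phi$ to rule out nontrivial loops in $S$.
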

\begin{remark}
	For Liouville domain with vanishing symplectic cohomology, a version of uniruledness different from Definition \ref{def:unirule} was obtained, see \cite{albers2017local}. However, to obtain Theorem \ref{thm:nonvanishing}, we need to achieve the more refined version of uniruledness in Definition \ref{def:unirule}.
\end{remark}
\begin{proof}
	We write $W_{\delta}=W \backslash \partial W \times(1-\delta,1]$. Then by \cite[Theoerem 2.3]{mclean2014symplectic}, there exists a $\Lambda>0$ such that $W$ is $(1,\Lambda)$ uniruled iff $W_{\delta}$ is $(1,\Lambda')$ uniruled for some $\Lambda'>0$. Assume $SH^*(W;R)=0$, then $1\in \Ima(SH_+^{-1}(W;R) \to H^0(W;R))$. That is $1\in \Ima (H^{-1}(C_+(\bH,J))\to H^0(C_0(f)))$, in particular this means that for some $i>0$, $1\in \Ima(H^{-1}(C^{D_i}_+(\bH,J))\to H^0(C_0(f)))$. Since $1$ is generated by the unique local minimum point $m$ of $f$. Therefore for a generic almost complex structure $J$, we have a dimension zero compact manifold $\cM_{m,x} \ne \emptyset$ for some $x\in \cP^*(\bH)$ with grading $|x|=-1$ and action $\ge -D_i$. Since $m$ is a minimum, an element of $\cM_{m,x}$ is simply a solution to the following,
	\begin{equation}\label{eqn:jcurve}
	u:\C \to \widehat{W}, \quad \partial_su+J(\partial_tu-X_{\bH})=0, \quad u(0)=m, \lim_{s\to \infty}u=x.
	\end{equation}
	For any convex almost complex structure $J_{\delta}$ on $W_{\delta}$. Assume $\phi$ is the function in Definition \ref{def:convexgeneral}, we may assume $\phi|_{\partial W_{\delta}}=1-\delta$. Then we can extend $\phi$ to $\widehat{\phi}$ on $\widehat{W}$ such that $\partial_r\widehat{\phi}>0$ on $\partial W \times [1-\delta, \infty)$ and $\widehat{\phi}=r$ when $r\ge 1$. By Proposition \ref{prop:extension}, $J_{\delta}$ can be extended to a $J\in \cJ(W)$. We may assume $J\in \cJ^{\le 1}_{reg}(\bH,f,g)$, since we perturb $J$ near $\cP^*(\bH)$ to achieve transversality. Therefore we have a curve $u$ solving \eqref{eqn:jcurve} and when restricted to $W_{\delta}$ it is a $J_{\delta}$-curve. Such curve has an energy bound by $D_i$. Let $S$ denote the connected component of $0$ of $u^{-1}(W_{\delta}^0)$. We claim $H_1(S;\Q)$ must be rank $0$. We can find a small $0<\epsilon<\delta$, such that $S'\subset u^{-1}(W \backslash \partial W \times (1-\epsilon,1] )$ is the connected compact Riemann surface containing $0$ with boundary and $u(\partial S')\subset \partial W \times \{1-\epsilon\}$ and $J$ is cylindrical convex on $\partial W \times \{1-\epsilon\}$. Then $S'$ is a disk, for otherwise, there is a domain $D\subset \C$ diffeomorphic to a disk, such that $u|_D$ solves \eqref{eqn:jcurve} and $u(\partial D) \subset  \partial W \times \{1-\epsilon\}$, this contradicts Lemma \ref{lemma:max}. Note that $S\subset S'$. If $H_1(S;\Q)$ is not rank $0$, then there is a loop $\gamma \subset S$ bounding a disk $D \subset S'$ and $D$ is not contained in $S$. Since we have $\widehat{\phi}\circ u|_\gamma < 1-\delta$ and $\max \widehat{\phi}\circ u|_{D\backslash S}\ge 1-\delta$, which contradicts the maximal principle for holomorphic curves. Therefore $u|_S$ is the $J_{\delta}$-curve we are looking for in $W_{\delta}$ passing through $m$.
	
	Next, we need to show such construction can be applied to any $m$ with a universal energy bound $D_i$. What we need is a universal $D_i$, such that $1\in \Ima(H^0(C^{D_i}_+(\bH,J,f))\to H^{-1}(C_0(f)))$ for any admissible Morse function $f$. This can be seen from Proposition \ref{prop:cascades}, since there is a $D^i$, such that $1\in \Ima(H^{-1}(C^{D_i}_+(H))\to H^{0}(C_0(H)))$ where $H$ is the perturbation of $\bH$. Therefore $W_{\delta}$ is $(1,D_i)$-uniruled.	

	Next, we assume $SH^*(W;R)\ne  0$ and admits a dilation. Then by Proposition \ref{prop:dilation}, we have $1\in \Ima \phi$. By the same argument as above, there is a $D^+_i$, such that
	$$\phi^{D^+_i}: \ker \Delta^{D^+_i}_+ \subset H^1(C^{D_i^+}_+(\bH_+,J_+))\to H^{0}(C_0(f))/\Ima \delta^{D_i^{-}}$$ 
	contains $1$ in the image. Since $SH^*(W;R)\ne 0$, we have $\Ima \delta^{D_i^{-}}$  does not contain $1$. Therefore we have $x\in C^{D^+_i}_+(\bH_+,J_+), b \in C^{D_i^{-}}_+(\bH_-,J_-)$, such that $\Delta_{+,0}(x)+d_{+,0}(b) = m+c$ for $c\in C^*_0(f)$ is a cochain representing class in $\Ima \delta^{D_i^{-}}$, where $m$ is the minimum point of $f$.\footnote{When $c_1(W)=0$, we have $c=0$ by degree reasons.} Similar to the argument above, since $m$ is a minimum, for generic $J_+,J_-$ and $J^{\theta}_{s,t}$, either there is a $x\in \cP^*(\bH_+)$ and $\theta \in S^1$ and a solution to 
	$$\partial_su+J^\theta_{s,t}(\partial_tu-X_{\bH^{\theta}_{s,t}})=0, u(0)=m, \lim_{s\to \infty}u=x.$$
	Or there is a $b\in \cP^*(\bH_-)$ and a solution to 
	\begin{equation}\label{eqn:b}
		\partial_su+J_{-}(\partial_tu-X_{\bH_-})=0, u(0)=m, \lim_{s\to \infty}u=b.
	\end{equation}
	We may assume $J_{\pm}, J^{\theta}_{s,t}$ restrict to $W_{\delta}$ is $J_{\delta}$ as before. Then in either case, when $u$ restricted to the preimage of $W_{\delta}^0$ is a $J_{\delta}$-curve passing through $m$, with energy bound $D_i^{-}$. Note that integrated maximal principle can also be applied to $r=1-\epsilon$ for $\bH^{\theta}_{s,t}$, since $\bH^{\theta}_{s,t}\equiv 0$ there and $\partial_s \bH^{\theta}_{s,t}\le 0$. Hence the component of $u^{-1}(W_{\delta}^0)$ containing $0$ has rank $0$ in rational first homology in either case as before. Then by the functoriality in \S \ref{sub:nondeg}, we can change $f$ as before to find curves passing though any point with a universal energy bound. 	 
\end{proof}

\begin{remark}
	If one considers the holomorphic planes with one marked point in the completion. Then virtual dimension of such space is given by $\mu_{CZ}(\gamma)+n-1$, when the plane is asymptotic to a Reeb orbit $\gamma$. To hope the evaluation map at the marked point covers every point in $W$, we need $\mu_{CZ}(\gamma)\ge n+1$. If we expect the evaluation map behave like a covering map then we need $\mu_{CZ}(\gamma)= n+1$. In many cases, the vanishing of symplectic cohomology is due to a Reeb orbit of index $n+1$. Note that a non-degenerate Reeb orbit of index $n+1$ can be perturbed into two non-degenerate Hamiltonian orbits with indices $n+1$, $n+2$. In our grading convention, the index $n+1$ Hamiltonian orbits is of grading $-1$, which is often responsible for the vanishing. This is the case for the standard symplectic ball. On the other hand, when we consider dilation $x\in SH^1(W)$, the associated Reeb orbit is of index $n-1$ or $n-2$, hence the uniruledness should not be provided by such Reeb orbits. However, the $b\in C^{-1}_+(\bH_-,J_-)$ in the proof of Theorem \ref{thm:unirule} provides the Reeb orbit of the right index. This suggests that the uniruledness should be from a solution to \eqref{eqn:b}. In fact, this is the case for $T^*S^n$. We will investigate them in more detail in \cite{higher}.
\end{remark}
\begin{remark}\label{rmk:counter}
	On the other hand, $1$-uniruledness does not imply the existence of symplectic dilation. For example, we take a smooth cubic hypersurface $V$ in $\CP^6$ and $W$ be the complement of the intersection of $V$ with a generic hyperplane. Then $W$ is an affine variety and $W$ does not admit a symplectic dilation by degree reasons, see \cite[Example 2.7]{seidel2014disjoinable}. However, by \cite[Corollary 5.4]{keel1999rational}, $W$ is $1$-uniruled.
\end{remark}
\begin{remark}
	In fact, the vanishing of symplectic cohomology and existence of symplectic dilation are the first two simplest conditions implying uniruledness in a whole family. The next one is whether $1$ is in the image of the map $\Delta^2: \ker \Delta \to \coker \Delta$, which is defined using $\Delta$ and the homotopy operator in the $(\Delta)^2=0$ relation and is with a degree shift by $3$. One way of packaging all the homotopic relations is using the $S^1$-equivariant theory as in \cite{zhao2014periodic}, then the spectral sequence from the $u$-filtration (on any $S^1$-cochain complex) induces maps	 $\Delta^{i+1}:\ker \Delta^i\to \coker \Delta^{i}$ with $\Delta^1$ is the BV operator considered here.  We say $W$ admits a $k$-dilation iff $1\in \Ima \Delta^k$. In fact, the counterexample in Remark \ref{rmk:counter} satisfies that $1\in \Ima \Delta^2$, hence it has a $2$-dilation but has no $1$-dilation. Every such structure has a related map $\phi^i$ defined on a subspace of $SH^*_+(W)$ to a quotient space of $H^*(W)$ generalizing $\phi$ in \eqref{eqn:phi}. And there is a boundary version for all $i$ in a similar way to $\Delta_\delta,\delta_{\partial}$. These structures have similar property to $\delta_{\partial}, \Delta_{\partial}$ in \S \ref{s3}, \S \ref{s4}. That is they are independent of fillings for ADC manifolds and $1$ being in the image of any of them implies uniruledness. They can be used to develop more obstructions to Weinstein fillability. Moreover, they give an infinity hierarchy on the complexity of symplectic manifolds, and also an infinity hierarchy on the complexity of ADC contact manifolds. Details of such construction will appear in the sequel paper \cite{higher}. The existence of $k$-dilation for some $k$ is equivalent to the existence of cyclic dilation for $h=1$, which is defined independently by Li \cite{li2019exact}, where the open string implication of cyclic dilations is studied. 
\end{remark}	
\begin{remark}
	From the proof of Theorem \ref{thm:unirule}, it is clear that $1\in \Ima  \Delta_{\partial}$ would also imply uniruledness. By Proposition \ref{prop:dilation}, $1\in \Ima \Delta_{\partial}$ is potentially weaker than existence of dilation.
\end{remark}

Using Theorem \ref{thm:unirule}, non-uniruledness is an implication of non-existence of symplectic dilation in any coefficient. Since log Kodaira dimension provides an obstruction to uniruledness, we have the following. 
\begin{theorem}\label{thm:nonvanishing}
	Let $W$ be an affine variety. In either of the following cases, we have $SH^*(W;R) \ne 0$ and there is no symplectic dilation using any coefficient ring $R$.
	\begin{enumerate}
		\item The log Kodaira dimension is not $-\infty$. 
		\item $W$ admits a projective variety $V$ compactification, such that $V$ is not uniruled, for example the Kodaira dimension of $V$ is not $-\infty$.
	\end{enumerate}
\end{theorem}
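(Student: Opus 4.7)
The plan is to argue by contrapositive using Theorem \ref{thm:unirule}. Suppose, toward contradiction, that either $SH^*(W;R)=0$ or $W$ admits a symplectic dilation over some coefficient ring $R$. Then Theorem \ref{thm:unirule} produces a constant $\Lambda>0$ such that $W$ is $(1,\Lambda)$-uniruled in the sense of Definition \ref{def:unirule}. By McLean's theorem equating symplectic uniruledness with algebraic uniruledness for smooth affine varieties \cite{mclean2014symplectic}, $W$ is then algebraically uniruled, i.e.\ there is a dominant rational map $\mathbb{P}^1\times Z \dashrightarrow W$ for some variety $Z$.

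For case (1), this is incompatible with the hypothesis. Indeed, algebraic uniruledness of an affine variety forces its log Kodaira dimension to equal $-\infty$: given any log smooth compactification $(\bar W,D)$, the pluri-log-canonical sections of $K_{\bar W}+D$ restrict trivially to a general rational curve in $W$ (whose compactification meets $D$ only in a finite set), so $\bar\kappa(W)\ge 0$ would obstruct such a family. This yields the desired contradiction to the assumption $\bar\kappa(W)\ne -\infty$.

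For case (2), let $V\supset W$ be the given projective compactification. Given the dominant family of rational curves in $W$, taking Zariski closures in $V$ produces a family of rational curves in $V$ whose union contains a Zariski dense subset of $W$, hence of $V$. Thus $V$ is itself uniruled, contradicting the hypothesis. The parenthetical remark that $\kappa(V)\ne -\infty$ suffices follows from the elementary fact that any uniruled projective variety has Kodaira dimension $-\infty$ (pluricanonical forms vanish on the general $\mathbb{P}^1$ in the ruling).

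The only nontrivial input is McLean's equivalence between algebraic and symplectic uniruledness, which is precisely what allows the passage from the Floer-theoretic output of Theorem \ref{thm:unirule} to the algebro-geometric obstructions coming from log Kodaira dimension or non-uniruledness of the compactification. The rest of the argument is formal, and the same proof handles both the vanishing case and the dilation case uniformly since Theorem \ref{thm:unirule} treats them on equal footing.
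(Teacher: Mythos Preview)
Your proof is correct and follows essentially the same approach as the paper: both argue by contrapositive, invoking Theorem~\ref{thm:unirule} to obtain $(1,\Lambda)$-uniruledness and then McLean's equivalence \cite{mclean2014symplectic} between symplectic and algebraic uniruledness to contradict the hypotheses on log Kodaira dimension or on the compactification. The paper is terser, citing \cite[Theorem~2.5, Lemma~7.1]{mclean2014symplectic} directly for the implication that non-negative log Kodaira dimension obstructs $(1,\Lambda)$-uniruledness, whereas you spell out the underlying algebro-geometric reason; but the structure is the same.
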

\begin{proof}
	By \cite[Theorem 2.5, Lemma 7.1]{mclean2014symplectic}, if $W$ does not have log Kodaira dimension $-\infty$, $W$ can be not $(1,\Lambda)$-uniruled for any $\Lambda$. Then by Theorem \ref{thm:unirule}, we have $SH^*(W,R) \ne 0$ and there is no symplectic dilation for any coefficient ring $R$.
	
	Let $V$ be projective compactification of $W$. If we have $SH^*(W,R) = 0$ or there is a symplectic dilation. Then $W$ is $(1,\Lambda)$-uniruled. By \cite[Theorem 2.5]{mclean2014symplectic}, $W$ is $1$-algebraically uniruled. Hence $V$ is algebraically uniruled and the Kodaira dimension of $V$ is $-\infty$, hence it is a contradiction.
\end{proof}
Now, Corollary \ref{cor:H} follows from Theorem \ref{thm:nonvanishing} directly. Symplectic exotic $\C^{n}$ was constructed in \cite{abouzaid2010altering,mclean2009lefschetz,seidel2005symplectic} for all $n\ge 3$. The proofs of exoticity were based no non-vanishing of symplectic cohomology for some coefficient, but the mechanisms for non-vanishing were very different. In view of Corollary \ref{cor:H}, the exoticity can also be explained by the failure of uniruledness. 
\begin{example}
	Seidel-Smith \cite{seidel2005symplectic} proved that product $M^{m}, m \ge 2$ is an exotic $\C^{2m}$ for Ramanujam surface $M$ \cite{ramanujam1971topological} by showing that $M$ contains an essential Lagrangian, hence $SH^*(M) \ne 0$ and $M^{m}$ is exotic. Since $M$ has log Kodaira dimension $2$, we have $M^m$ has log Kodaira dimension $2m$. Therefore by Theorem \ref{thm:nonvanishing}, $SH^*(M^m)\ne 0$ and $SH^*(M^m)$ does not admit symplectic dilations. This reproves Seidel-Smith's result. 
\end{example}	

\begin{remark}
	There are many algebraic exotic $\C^n$ with log Kodaira dimension $-\infty$, e.g. $M\times \C$ for Ramanujam surface $M$. However, $M\times \C$ is symplectically standard by h-principle, since it is subcritical and diffeomorphic to $\C^3$.  The atomic exotic $\C^4$ considered by McLean \cite{mclean2009lefschetz} from Kaliman modifcation also has negative log Kodaira dimension. Therefore log-Kodaira dimension being non-negative is not the criterion of being symplectic exotic.
\end{remark}

By Corollary \ref{cor:H}, one can look for symplectic exotic $\C^n$ in complex exotic $\C^n$. They exist in abundance when $n\ge 3$, see \cite{zaidenberg1996exotic}. 

Non-vanishing of symplectic cohomology also implies non-displaceablity \cite{kang2014symplectic}. Hence we have the following.
\begin{example}
	Let $V$ be a projective variety with non negative Kodaira dimension. Let $W=V\backslash D$ be an affine variety for a divisor $D$. Then $SH^*(W) \ne 0$, and $W$ is not displaceable. That is when we view $W$ as a Liouville domain, there is no Hamiltonian $F \in C^\infty_c(S^1\times \widehat{W})$ such that the generated time one Hamiltonian diffeomorphism $\phi_F$ displaces $W$, i.e. $\phi_F(W)\cap W = \emptyset$. This holds in particular, if $V$ is Calabi-Yau, since the Kodaira dimension is $0$.
\end{example}

As showed in \S \ref{s3} and \S \ref{s4}, the vanishing of symplectic cohomology and the existence of symplectic dilations are properties independent of fillings for ADC contact manifolds. A natural question is whether uniruledness shares the similar property.
\begin{question}
	Is uniruledness independent of the (topologically simply) filling for ADC contact manifold?
\end{question}
\section{Constructions of ADC manifolds}\label{s6}
As explained in Example \ref{ex:ADC}, ADC contact manifolds exist in abundance. Moreover, a flexible handle attachment does not change the ADC property \cite{lazarev2016contact}. In this section, we provide two other simple constructions of ADC manifolds, which provides many examples, where Corollary \ref{cor:ob} and Corollary \ref{cor:ob2} can be applied. In particular we will prove Theorem \ref{thm:I}.

\subsection{Product with the complex plane}
In this subsection, we show that the boundary of $V\times \C$ is $0$-ADC for any Liouville domain $V$ such that $c_1(V) = 0$ and $\dim V>0$. Moreover by \cite{oancea2006kunneth}, $V\times \C$ has vanishing symplectic cohomology. Since there are examples of $V\times \C$ such that it can not be a Weinstein domain, such construction proves many non-flexible examples where results from \S \ref{s3} can be applied. In the following, we fix the symplectic form on $\C$ by $\rd(r^2\rd \theta)$.

\begin{definition}
	Let $V$ be a connected manifold with non-empty boundary, the Morse dimension $\dim_M V$ is defined to be the minimum of the max index of an admissible Morse function on $V$. Then $\dim_M V \le \dim V -1$.
\end{definition}

We also introduce the notion of tamed asymptotically dynamically convex contact manifold, which is ADC and $\alpha_i$ does not collapse to $0$. This will be important for our discussion on non-exact fillings.

\begin{definition}\label{def:TADC}
	$(Y,\xi)$ is $k$-TADC if there exist contact forms $\alpha_1>\alpha_2>\ldots $, positive numbers $D_1<D_2<\ldots \to \infty$ and a contact form $\alpha$, such that $\alpha_i>\alpha$ and all elements of $\cP^{<D_i}(\alpha_i)$ are non-degenerate and have degree greater than $k$. We have similar definition for exact domains.
\end{definition}

Basic examples of ($0$-)TADC manifolds are index-positive contact manifolds, e.g. cotangent bundles when dimension of the base is at least $4$. The other case is index-positive in the Morse-Bott sense, e.g. Example \ref{ex:ADCdilation}. That they are TADC follows from that up to action $D$, there exist small perturbations into non-degenerate contact forms with prescribed Conley-Zehnder indexes \cite[Lemma 2.3, 2.4]{bourgeois2002morse}, and the perturbation can be made arbitrarily small so that the perturbed contact form does no collapse to $0$. The following result will provide some more examples of TADC manifolds.
\begin{theorem}\label{thm:product}
	Let $V$ be a $2n$-dimensional Liouville domain such that $c_1(V) = 0$ and $n>0$. We define $Y$ to be the contact boundary of $V\times \C$,  then we have the following. 
	\begin{enumerate}
		\item $Y$ is $0$-ADC.
		\item If $\partial V$ supports a non-degenerate contact form such that all Reeb orbits are contractible in $V$, then $Y$ is strongly $0$-ADC.  
	\end{enumerate}
In fact, $Y$ is (strongly) $(2n-\dim_MV-1)$-ADC.  If $V$ is $k$-TADC for some $k$, then $Y$ is TADC. 
\end{theorem}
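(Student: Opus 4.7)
The plan is to realise $Y = \partial(V\times \C)$ concretely as a smooth contact-type hypersurface
\[
Y_c = \bigl\{\tilde f(v) + \tfrac{1}{2}|z|^2 = c\bigr\} \subset \widehat V \times \C,
\]
where $\tilde f$ is an admissible Morse function on $V$ (extended to $\widehat V$ by $\tilde f = r_V$ on the cylindrical end) whose maximum critical index realises $\dim_M V$. A direct computation with the Liouville form $\lambda_V + \tfrac{1}{2} r^2 \rd\theta$ gives the Reeb vector field
\[
R \;=\; \bigl(\lambda_V(X_{\tilde f}) + r^2/2\bigr)^{-1}\bigl(\partial_\theta + X_{\tilde f}\bigr),
\]
so its closed orbits split cleanly into two families: \emph{fibre orbits} $\gamma_{p,m}$ sitting over each critical point $p$ of $\tilde f$ with winding number $m\ge 1$ and period $2\pi m(c-\tilde f(p))$, and \emph{base orbits} $\gamma_\beta\times\{0\}$ lying over closed Reeb orbits $\beta$ of $(\partial V,\lambda_V|_{\partial V})$ at the top locus $\{r=0\}$, with period proportional to $c\cdot T_\beta$.

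I would first compute Conley--Zehnder indices. The linearised Reeb flow along a fibre orbit $\gamma_{p,m}$ decomposes, under the splitting $\xi|_{\gamma_{p,m}} \simeq T_p V$, into the rotation in the $\C$-direction contributing $2m$ and the linearisation of the Hamiltonian flow of $\tilde f$ at the critical point $p$ contributing $n-k_p$, yielding
\[
\mu_{CZ}(\gamma_{p,m}) \;=\; 2m + n - k_p,
\]
so the degree satisfies $\deg(\gamma_{p,m}) = 2m + n - k_p + n - 2 \ge 2n - \dim_M V$, strictly greater than $2n-\dim_M V - 1$. For base orbits the linearised Reeb flow splits as the linearised $\partial V$-Reeb flow on the $\xi_{\partial V}$-factor plus a small twist in the $\C$-factor whose rotation number tends to zero as $c\to\infty$, while the period is bounded below by $c\cdot T_{\min}(\partial V)$.

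I would then assemble the ADC sequence. Fix a generic perturbation of $\tilde f$ making every fibre orbit non-degenerate, choose $c_i\nearrow\infty$ and $D_i = \tfrac12 c_i\,T_{\min}(\partial V)$, and rescale each $Y_{c_i}$ to a common model of $(Y,\xi)$ by the Liouville flow to obtain a pointwise decreasing sequence $\alpha_1>\alpha_2>\cdots$. Any Reeb orbit of $\alpha_i$ of period $\le D_i$ is necessarily a fibre orbit, so by the calculation above has degree $>2n-\dim_M V-1$; this yields the $(2n-\dim_M V-1)$-ADC property and, in particular, $0$-ADC since $\dim_M V \le 2n-1$ always. For the strong version every fibre orbit bounds the visible disc $\{p\}\times\overline{\D}\subset V\times\C$, and the hypothesis that short $\partial V$-Reeb orbits be contractible in $V$ makes the base orbits contractible in $V\times\C$. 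For the TADC refinement one does \emph{not} rescale; instead, given a TADC sequence of contact forms $f_j\lambda_V|_{\partial V}$ on $\partial V$ uniformly bounded below by some fixed $\alpha_V$, one incorporates the $f_j$ into the cylindrical end of $\tilde f$ so that base orbits inherit high degree from the TADC property of $V$, and the resulting sequence on $Y$ stays uniformly bounded below.

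The main obstacle will be the Conley--Zehnder calculation for fibre orbits: the unperturbed Reeb foliation has the whole of $V\times S^1$ as a highly degenerate Bott family, well beyond the standard $S^1$-Morse--Bott framework of \cite{bourgeois2002morse}, so one must justify the Morse--Bott perturbation scheme in this degenerate case and verify that the two natural contributions (the $\C$-rotation and the Morse function $\tilde f$) combine to give the index formula $\mu_{CZ}(\gamma_{p,m}) = 2m+n-k_p$, extending Yau's analysis \cite{yau2004cylindrical} from the subcritical Weinstein setting to general Liouville domains with $c_1=0$. A secondary difficulty is the TADC refinement, where the rescaling trick is unavailable and one must rely on the index-positivity of $\partial V$ together with a careful matching of the $\tilde f$-perturbation at the cylindrical end so that both the fibre and base contributions remain simultaneously controllable.
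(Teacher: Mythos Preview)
Your handling of the base orbits contains a genuine gap. You propose to exclude them by pushing their periods past $D_i$ as $c_i\to\infty$, but this is incompatible with the monotonicity in the ADC definition: since the hypersurfaces $Y_{c_i}$ expand, identifying them with a fixed model via the Liouville flow produces an \emph{increasing} sequence of contact forms, not the required $\alpha_1>\alpha_2>\cdots$. Taking $c_i\searrow$ instead would give decreasing forms, but then $D_i=\tfrac12 c_iT_{\min}$ cannot tend to infinity. Moreover, both fibre-orbit periods $2\pi m\bigl(c-\tilde f(p)\bigr)$ and base-orbit periods $cT_\beta$ scale linearly in $c$, so no rescaling separates them. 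The deeper point is that your model has no index-lifting mechanism: with $\tilde f=r_V$ on the cylindrical end one computes the Reeb field on the cap to be $\tfrac1c(\partial_\theta+R_{\alpha_V})$, so along a base orbit over $\gamma$ of $\partial V$-period $D$ the $\C$-rotation is $e^{iD}$, \emph{independent of $c$}. Thus $\mu_{CZ}(\text{base})=\mu_{CZ}(\gamma)+2\lfloor D/2\pi\rfloor+1$, which for a general Liouville $V$ can be arbitrarily negative and never improves. (There is also a secondary issue: on the body $V\times S^1$ your Reeb field has a non-trivial $X_{\tilde f}$-component, and without making $\tilde f$ nearly constant you have not excluded short non-fibre orbits there.)

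The paper's argument supplies the missing mechanism via an extra twist parameter $\rho$: the cap is shaped as $\{r_V=2-g(\rho r^2)\}$ for a convex $g$, so the $\C$-rotation number along a base orbit over $\gamma$ becomes $D\rho/2\pi$ and contributes $2\lfloor D\rho/2\pi\rfloor+1$ to the index. Sending $\rho\to\infty$ now makes the hypersurfaces \emph{shrink} toward $V\times\{0\}$ (hence the contact forms decrease), while every base-orbit index is lifted above any prescribed bound; the thresholds $D_i$ are chosen beforehand and do not scale with $\rho$. On the body $V\times S^1$ the paper separately arranges the defining function to be $C^2$-close to a constant and shows that $C^1$-small Hamiltonians have no short non-constant periodic orbits, so only the fibre orbits survive there. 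Your fibre-orbit index formula $\mu_{CZ}(\gamma_{p,m})=2m+n-k_p$ is correct and matches the paper, but the essential ingredient you are missing is the twist parameter controlling the base orbits.
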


We will prove the theorem by setting up the following propositions. 
\begin{proposition}\label{prop:reeb}
	Assume $Y$ is a contact manifold with a contact form $\alpha$ and $V$ is a Liouville domain with a Liouville form $\lambda_V$. Let $f$ be a strictly positive function on $V$, then $\alpha_f:=f\alpha+\lambda_V$ is a contact form on $Y\times V$ iff $\rd(\frac{1}{f}\lambda_V)$ is a symplectic form on $V$. The Reeb vector field is given by $R_f:=\frac{1}{f}R_\alpha+\widetilde{X_{\frac{1}{f}}}$. Here $\widetilde{X_\frac{1}{f}}-X_{\frac{1}{f}}\in \la R_\alpha \ra$ and $\alpha_f(\widetilde{X_{\frac{1}{f}}})=0$ and $X_{\frac{1}{f}}$ is the Hamiltonian vector field for $\frac{1}{f}$ using the symplectic form $\rd(\frac{1}{f}\lambda_V)$. 
\end{proposition}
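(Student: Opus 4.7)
The plan is to verify both claims by direct computation, rewriting $\alpha_f$ in a form that separates the $Y$ and $V$ factors. First I would exploit the identity $\alpha_f = f\bigl(\alpha + \tfrac{1}{f}\lambda_V\bigr)$, which reduces the contact condition to asking that $\alpha + \tfrac{1}{f}\lambda_V$ be contact on $Y\times V$, since $f>0$. Writing $\dim Y = 2k-1$ and $\dim V = 2n$, the top form is
\[
\bigl(\alpha+\tfrac{1}{f}\lambda_V\bigr)\wedge \bigl(\rd\alpha+\rd(\tfrac{1}{f}\lambda_V)\bigr)^{k+n-1}.
\]
Expanding by the binomial theorem, the crucial observation is that $\rd\alpha$ is pulled back from $Y$ (so $(\rd\alpha)^{k}=0$) while $\rd(\tfrac{1}{f}\lambda_V)$ is pulled back from $V$ (so $(\rd(\tfrac{1}{f}\lambda_V))^{n+1}=0$). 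Hence only the $j=k-1$ term survives, and the wedge becomes a constant multiple of
\[
\bigl(\alpha+\tfrac{1}{f}\lambda_V\bigr)\wedge (\rd\alpha)^{k-1}\wedge (\rd(\tfrac{1}{f}\lambda_V))^{n}.
\]
Since $\tfrac{1}{f}\lambda_V\wedge (\rd(\tfrac{1}{f}\lambda_V))^{n}=0$ by degree reasons on $V$, this further simplifies to $\alpha\wedge(\rd\alpha)^{k-1}\wedge (\rd(\tfrac{1}{f}\lambda_V))^{n}$, which is nonvanishing precisely when $\rd(\tfrac{1}{f}\lambda_V)$ is a symplectic form on $V$, giving both directions of the iff.

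For the Reeb vector field, I would check the two defining properties $\alpha_f(R_f)=1$ and $\iota_{R_f}\rd\alpha_f = 0$ on the ansatz $R_f = \tfrac{1}{f}R_\alpha + \widetilde{X_{1/f}}$. Writing $\widetilde{X_{1/f}} = X_{1/f}+gR_\alpha$, the normalization $\alpha_f(\widetilde{X_{1/f}})=0$ forces $g=-\tfrac{\lambda_V(X_{1/f})}{f}$, and then $\alpha_f(R_f)=\tfrac{1}{f}\cdot f = 1$ is immediate. For the interior product, I would use the expansion $\rd\alpha_f = f\,\rd\alpha + f\,\rd(\tfrac{1}{f}\lambda_V)+\tfrac{1}{f}\rd f\wedge \alpha_f$. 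Contractions with $R_\alpha$ kill the $V$-type forms and contractions with $X_{1/f}$ kill the $Y$-type forms, so $\iota_{R_f}(f\,\rd\alpha)=0$ and $\iota_{R_f}\bigl(f\,\rd(\tfrac{1}{f}\lambda_V)\bigr) = f\cdot \iota_{X_{1/f}}\rd(\tfrac{1}{f}\lambda_V) = \tfrac{1}{f}\rd f$ by the Hamiltonian convention $\iota_{X_H}\omega = -\rd H$.

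The last term yields $\iota_{R_f}(\tfrac{1}{f}\rd f\wedge \alpha_f) = \tfrac{1}{f}(R_f(f))\,\alpha_f - \tfrac{1}{f}\rd f$, and here the key point is that $R_f(f)=0$: indeed $R_\alpha(f)=0$ since $f$ is a function on $V$, and $X_{1/f}(f) = -f^2 X_{1/f}(\tfrac{1}{f})=0$ because the Hamiltonian vector field preserves its Hamiltonian. Summing the three contributions gives $\tfrac{1}{f}\rd f - \tfrac{1}{f}\rd f = 0$, as required. The only mildly subtle point is keeping signs and conventions straight in the Hamiltonian identity and the $\iota$-Leibniz rule; the rest is bookkeeping separating $Y$-directions from $V$-directions.
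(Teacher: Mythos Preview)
Your proof is correct and follows essentially the same approach as the paper: both reduce the contact condition to computing $(\alpha+\tfrac{1}{f}\lambda_V)\wedge(\rd\alpha+\rd(\tfrac{1}{f}\lambda_V))^{m+n}$ via the binomial expansion and degree constraints, and both verify the Reeb condition by expanding $\rd\alpha_f$ as a product and checking that the contributions cancel using $\iota_{R_f}\rd f=0$ and $\iota_{X_{1/f}}\rd(\tfrac{1}{f}\lambda_V)=-\rd(\tfrac{1}{f})$. Your version is slightly more explicit in justifying $R_f(f)=0$, which the paper simply asserts.
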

\begin{proof}
	Assume $\dim Y = 2m+1$ and $\dim V = 2n$, to show $\alpha_f$ is a contact form it is sufficient to show $\frac{1}{f}\alpha_f$ is a contact form. Note that we have
	\begin{eqnarray*}
		\frac{1}{f}\alpha_f \wedge (\rd (\frac{1}{f}\alpha_f))^{m+n} & = & (\alpha+\frac{1}{f}\lambda_V) \wedge (\rd \alpha + \rd(\frac{1}{f}\lambda_V))^{m+n}\\
		& = & \binom{m+n}{m} (\alpha+\frac{1}{f}\lambda_V) \wedge (\rd \alpha)^m \wedge (\rd(\frac{1}{f}\lambda_V)^{n} \\
		& = &  \binom{m+n}{m} \alpha \wedge (\rd \alpha)^m\wedge (\rd(\frac{1}{f}\lambda_V))^n
	\end{eqnarray*}
	Hence $\alpha_f$ is a contact form iff $\rd(\frac{1}{f}\lambda)$ is non-degenerate. Note that by construction, we have $\alpha_f(R_f)=1$. Moreover, we have
	\begin{eqnarray*}
		\iota_{R_f} \rd(\alpha_f) & =& \iota_{R_f} \rd (f(\alpha+\frac{1}{f}\lambda_V))\\
		& = & \iota_{R_f} \rd f \wedge (\alpha+\frac{1}{f}\lambda_V) + \iota_{R_f} f \rd(\alpha+\frac{1}{f}\lambda_V)
	\end{eqnarray*} 
	Since $\alpha_f(\widetilde{X_{\frac{1}{f}}}) = 0$, we have $\iota_{R_f}(\alpha+\frac{1}{f}\lambda_V) = \frac{1}{f}$. Since $\iota_{R_f} \rd f = 0$, $\iota_{R_f} \rd \alpha = 0$ and $\iota_{R_f}\rd(\frac{1}{f}\lambda_V) = -\rd \frac{1}{f}$ by definition, we have
	$\iota_{R_f} \rd(\alpha_f) = -\frac{\rd f}{f}-f \rd \frac{1}{f}=0$. Hence $R_f$ is the associated Reeb vector.
\end{proof}
\begin{remark}\label{rmk:convex}
	Note that $Y\times V$ can be viewed as a hypersurface $r = f$ in the Liouville cobordism $Y\times \R_+ \times V$ with Liouville form $r\alpha + \lambda_V$. Then this hypersurface is of contact type if the Liouville vector $r\partial_r + X_{\lambda_V}$ is transverse to the surface and pointing out, that is $(r\partial_r + X_{\lambda_V})(r-f)=f-X_{\lambda_V}f>0$.
\end{remark}

\begin{remark}\label{rmk:contact}
	Proposition \ref{prop:reeb} can be viewed as a generalization of the computation carried out for prequantization bundles. Let $\gamma$ be a Reeb trajectory on $Y$, then over $\gamma \times V$ we have a connection given by $\alpha+\frac{1}{f}\lambda_V$ such that the curvature is the symplectic form $\rd(\frac{1}{f}\lambda_V)$. The contact structure is given by the horizontal subspace direct sum with the contact structure of $Y$, the splitting holds in the symplectic sense.
\end{remark}
We will divide the boundary of $V\times \C$ into two parts, one of them is diffeomorphic to $\partial V \times \D$ and the other is diffeomorphic to $V\times S^1$, where $\D\subset \C$ is a disk. In the following, we discuss the Reeb dynamics on the $\partial V \times \D$  part.
\begin{proposition}\label{prop:cap}
	Let $(Y,\xi)$ be a contact manifold with a contact form $\alpha$. We fix a small $\epsilon>0$. Assume $g$ is a smooth function on $[0,1]$ such that the following holds.
	\begin{enumerate}
		\item $g(x) = x$ for $x$ near $0$, $g(1-\epsilon)=1$.
		\item $g(x)$ is increasing and $g''(x)>0$ unless $g(x)=x$.
	\end{enumerate}
	Then for $\rho\in \R_+$, $(2-g(\rho r^2))\alpha+r^2 \rd\theta$ is a contact form on $Y\times \D_{\sqrt{1-\epsilon/\rho}}$. And Reeb orbits of it are of the form $(\gamma(At),re^{Bi t+\theta_0})$, where $\gamma$ is a Reeb orbit of $R_\alpha$ on $Y$ and 
	$$A=\frac{1}{2-g(\rho r^2)+\rho r^2 g'(\rho r^2)}, \quad B = \frac{\rho g'(\rho r^2)}{2-g(\rho r^2)+\rho r^2 g'(\rho r^2)}.$$ 
	If $\gamma$ is non-degenerate of period $D$, then we have the following. 
	\begin{enumerate}
		\item\label{r1} When $r=0$, then the Reeb orbit $(\gamma(\frac{1}{2}t),0,0)$ is non-degenerate iff $\frac{D\rho}{2\pi} \notin \N$ and the Conley-Zehnder index is given by $\mu_{CZ}(\gamma)+2\lfloor \frac{D\rho}{2\pi} \rfloor+1$.
		\item\label{r2} When $\rho r^2$ is in the domain where $g(x)\ne x$, then $(\gamma(At),re^{Bi t+\theta_0})$ is an orbit iff $D \rho g'(\rho r^2) \in 2\pi \N$. It is a Morse-Bott  $S^1$-family of orbits  with period $\frac{D}{A}$, and the generalized Conley-Zehnder index is given by $\mu_{CZ}(\gamma)+\frac{D\rho g'(\rho r^2)}{\pi} +\frac{1}{2}$. 
	\end{enumerate}
\end{proposition}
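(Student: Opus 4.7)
The plan is to apply Proposition \ref{prop:reeb} with $V = \D$ (equipped with the Liouville form $\lambda_V = r^2 \, d\theta$) and $f(r) = 2 - g(\rho r^2)$, then compute the Reeb dynamics and Conley--Zehnder indices explicitly. First, I would verify the contact condition: since $0 \le g(\rho r^2) \le 1$ on the domain $r^2 \le (1-\epsilon)/\rho$, the function $f$ satisfies $f \ge 1 > 0$, and one computes
\[
d\!\left(\frac{\lambda_V}{f}\right) \;=\; \partial_r\!\left(\frac{r^2}{2-g(\rho r^2)}\right) dr \wedge d\theta \;=\; \frac{2r\bigl(2-g(\rho r^2) + \rho r^2 g'(\rho r^2)\bigr)}{(2-g(\rho r^2))^2}\, dr \wedge d\theta,
\]
which is symplectic on $\D \setminus \{0\}$ because $g' \ge 0$ and $f > 0$, and extends smoothly across $r=0$ (where $g(x)=x$ makes the numerator $2r \cdot 2$). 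By Proposition \ref{prop:reeb}, $\alpha_f = (2-g(\rho r^2))\alpha + r^2\, d\theta$ is then a contact form.

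Next I would read off the Reeb vector field from the formula $R_f = \frac{1}{f} R_\alpha + \widetilde{X_{1/f}}$ of Proposition \ref{prop:reeb}. Since $1/f$ is a function of $r$ only, its Hamiltonian vector field with respect to $d(\lambda_V/f)$ is proportional to $\partial_\theta$; writing $\widetilde{X_{1/f}} = B\, \partial_\theta + C R_\alpha$ and imposing $\alpha_f(\widetilde{X_{1/f}}) = 0$ gives the stated coefficients
\[
A = \frac{1}{2-g(\rho r^2) + \rho r^2 g'(\rho r^2)}, \qquad B = \frac{\rho\, g'(\rho r^2)}{2-g(\rho r^2) + \rho r^2 g'(\rho r^2)},
\]
so the flow lines are exactly $(\gamma(At),\, re^{i(Bt+\theta_0)})$ for $\gamma$ a Reeb trajectory of $\alpha$.

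For the two closing-up conditions, case \eqref{r1} is the central fiber $r=0$, where $g(x)=x$ near zero forces $A=1/2$ and $B=\rho/2$; the orbit $(\gamma(t/2), 0)$ closes at time $2D$ and the linearized flow in the transverse $T_0\D$ direction is rotation by $B\cdot 2D = D\rho$. Non-degeneracy is then equivalent to this angle not being a multiple of $2\pi$, i.e. $D\rho/(2\pi) \notin \N$. In case \eqref{r2} one has $g'(\rho r^2)>0$, and the closure condition in $\theta$ reads $B\cdot (D/A) = D\rho\, g'(\rho r^2) \in 2\pi \N$; the $S^1$-action by translation in $\theta_0$ then gives a Morse--Bott family over $\gamma$.

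The main obstacle is the Conley--Zehnder computation. The plan is to choose a trivialization of $\xi = \ker \alpha_f$ compatible with the splitting $\xi_Y \oplus T\D$ along the orbit so that the linearized Reeb flow decomposes accordingly: on $\xi_Y$ it contributes $\mu_{CZ}(\gamma)$, while on $T\D$ the flow is a rigid rotation whose rate is controlled by $B$. In case \eqref{r1} this rotation is by angle $D\rho$, and the standard formula for a small perturbation of a rotation gives the index shift $2\lfloor D\rho/(2\pi)\rfloor + 1$; in case \eqref{r2} one must instead use the Robbin--Salamon / generalized Conley--Zehnder index for the Morse--Bott family, where the $\partial_\theta$-direction and the direction normal to the family in $\D$ contribute half-integer pieces adding to $\frac{1}{2}$, and the rotation rate $B/A = \rho g'(\rho r^2)$ over time $D/A$ yields the transgression term $D\rho g'(\rho r^2)/\pi$. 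Putting these together gives the stated indices. The rest reduces to a direct verification in the two asymptotic regimes, analogous to the prequantization bundle computation alluded to in Remark \ref{rmk:contact}.
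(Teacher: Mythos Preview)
Your approach is essentially the same as the paper's: both apply Proposition \ref{prop:reeb} to compute the Reeb vector field, then use the splitting of the contact structure into $\xi_Y$ and a horizontal 2-plane (as in Remark \ref{rmk:contact}) to reduce the Conley--Zehnder computation to the 2-dimensional piece. The paper verifies the contact condition via the transversality criterion of Remark \ref{rmk:convex} rather than your direct check that $\rd(\lambda_V/f)$ is symplectic, but this is cosmetic.

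One point where you are vaguer than the paper deserves a flag. In case \eqref{r2} you assert the family is Morse--Bott and describe the index contribution as ``half-integer pieces adding to $\frac{1}{2}$'' from the $\partial_\theta$ and normal directions. The paper instead computes the linearized return map on the horizontal 2-plane explicitly: it is a rotation by $D\rho g'(\rho r^2)$ composed with a symplectic shear $\left[\begin{smallmatrix}1&0\\Ct&1\end{smallmatrix}\right]$ with $C = \frac{\rd}{\rd r}B > 0$ (this positivity uses $g''>0$). This explicit form is what actually verifies the Morse--Bott condition, since it shows $\ker(\phi_T - \Id)$ is exactly $\la \partial_\theta\ra$, and it cleanly decomposes the Robbin--Salamon index as $2k$ from the rotation plus $\frac{1}{2}$ from the positive shear. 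Your sketch points in the right direction but skips this computation, which is where the sign of $g''$ enters and where the Morse--Bott nondegeneracy is genuinely established.
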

\begin{proof}
	Note that $(2-g(\rho r^2)) - \frac{r}{2}\partial_r(2-g(\rho r^2)) = 2-g(\rho r^2) + \rho r^2 g'(\rho r^2) > 0 $ on $Y\times \D_{\sqrt{1-\epsilon/\rho}}$. Then by Remark \ref{rmk:convex},  $\alpha_g:=(2-g(\rho r^2))\alpha+r^2 \rd\theta$ is a contact form. By Proposition \ref{prop:reeb}, the Reeb vector field is given by
	$$R:= \frac{1}{2-g(\rho r^2)}R_\alpha + \widetilde{X}_{1/(2-g(\rho r^2))}.$$
	By a direct computation, we have 
	$$\widetilde{X}_{1/(2-g(\rho r^2))}= \frac{\rho g'(\rho r^2)}{2-g(\rho r^2)+\rho r^2 g'(\rho r^2)} \partial_\theta + \frac{-\rho r^2 g'(\rho r^2)}{(2-g(\rho r^2)+\rho r^2 g'(\rho r^2))(2-g(\rho r^2))}R_\alpha.$$
	Therefore the Reeb vector is 
	$$R= \frac{1}{2-g(\rho r^2)+\rho r^2 g'(\rho r^2)}R_\alpha +  \frac{\rho g'(\rho r^2)}{2-g(\rho r^2)+\rho r^2 g'(\rho r^2)} \partial_\theta = A R_\alpha + B \partial_\theta.$$
	Hence all Reeb orbits are in the form prescribed in the proposition.
	
	As explained in Remark \ref{rmk:contact}, the contact structure on $Y\times \D_{\sqrt{1-\epsilon/\rho}}$ is the direct sum of $\xi$ with kernel of $\alpha+\frac{r^2}{2-g(\rho r^2)}\rd \theta$ in the space $\la R_{\alpha}\ra\oplus T\D_{\sqrt{1-\epsilon/\rho}}$. Since the Reeb vector is tangent to $Y\times S_r^1$ and the $R_\alpha$ component is a constant, we know that Reeb flow of $R$ preserves $\xi$ and the splitting. Therefore when computing Conley-Zehnder index, it is sufficient to figure out the Conley-Zehnder index of the horizontal direction. In the $r=0$ case, the linearized return map is given by the linearized return map of $\gamma$ direct sum with the rotation by $e^{D\rho i}$ in the horizontal direction. Therefore it is non-degenerate iff $\frac{D\rho }{2\pi} \notin \N$ and the Conley-Zehnder index in the horizontal direction is $1+2\lfloor \frac{D \rho}{2\pi}\rfloor$. 
	
	When $\rho r^2$ is in the domain where $g(x)\ne x$, then periodic orbits are in the form of $(\gamma(At),re^{Bit+\theta_0})$, hence they always come in an $S^1$ family for $\theta_0\in S^1$. To verify that it is a Morse-Bott in the sense of \cite[Definition 1.7]{bourgeois2002morse}, note that the linearized return map $\phi$ is the direct sum of the linearized return map of $\gamma$ and the return map on the horizontal direction is given by 
	$$\phi_{T}=\left[\begin{array}{cc}
	1 &  0 \\
	T\frac{\rd}{\rd r} \frac{\rho g'(\rho r^2)}{2-g(\rho r^2)+\rho r^2 g'(\rho r^2)} & 1
	\end{array}\right],$$
	here we use $\partial_r,\partial_\theta$ as the basis and $T$ is the period $D \rho g'(\rho r^2)$. Note that we have
	$$\frac{\rd}{\rd r} \frac{\rho g'(\rho r^2)}{2-g(\rho r^2)+\rho r^2 g'(\rho r^2)} = \frac{2\rho r g''(\rho r^2)(2-g(\rho r^2))+2\rho^2 r (g'(\rho r^2))^2}{(2-g(\rho r^2)+\rho r^2 g'(\rho r^2))^2} > 0.$$ 
	Therefore $\ker (\phi_T - \Id)$ is spanned by $\partial_\theta$. Hence the $1$-eigenspace of the total linearized return map is spanned by $\partial_{\theta}$ and $R_\alpha$, which is the tangent space of $\Ima \gamma \times S_r^1$. Moreover $\rd \alpha_g=(2-g(\rho r^2))\rd\alpha-2\rho r g'(\rho r^2) \rd r \wedge \alpha + 2r\rd r \wedge \rd \theta$ is rank zero on $\Ima \gamma \times S^1_r$. Therefore such Reeb orbits are of Morse-Bott type. To compute the generalized Maslov index, it is enough to compute the generalized Maslov index in the horizontal direction. Note that the linearized map is given by
	$$\phi_t:=\left[\begin{array}{cc}
	\cos t & -\sin t\\
	\sin t & \cos t\\
	\end{array}
	\right]\cdot
	\left[\begin{array}{cc}
	1 &  0 \\
	Ct & 1
	\end{array}\right], \quad 0\le t \le T=\frac{D\rho g'(\rho r^2)}{2\pi},$$
	where $C=\frac{2\rho r g''(\rho r^2)(2-g(\rho r^2))+2\rho^2 r (g'(\rho r^2))^2}{(2-g(\rho r^2)+\rho r^2 g'(\rho r^2))^2}>0$. Therefore the generalized Conley-Zehnder index \cite{robbin1993maslov} is given by $\frac{D\rho g'(\rho r^2)}{\pi}$ from the rotation plus the generalized Conley-Zehnder index of 
	$$
	\left[\begin{array}{cc}
	1 &  0 \\
	Ct & 1
	\end{array}\right], \quad 0\le t \le T.$$
	The latter is  a symplectic sheer. Since $C>0$, the index is given by $\frac{1}{2}$. Then the total generalized Conley-Zehnder index is given by $\mu_{CZ}(\gamma)+\frac{D\rho g'(\rho r^2)}{\pi}+\frac{1}{2}$.
\end{proof}

On the complement $V\times S^1$, we will use the following lemma to guarantee all Reeb orbits are either over critical points or have very large period. 
\begin{lemma}\label{lemma:long}
	Let $(V,\omega)$ be a compact symplectic manifold possibly with boundary. For all $C>0$, there exists $\epsilon \in \R^+$, such that for all functions $H$ with $|\rd H|_{C^0}<\epsilon$ and all symplectic forms $\omega'$ such that $|\omega'-\omega|_{C^1}<\epsilon$, we have all non-constant periodic orbits of Hamiltonian vector field $X_H$ using symplectic form $\omega'$ have period at least $C$.
\end{lemma}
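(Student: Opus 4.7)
The plan is to prove this by contradiction via a compactness and blow-up argument in a Darboux chart.

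First, I would reduce to a $C^0$-bound on the Hamiltonian vector field. The defining relation $\omega'(\cdot,X_H)=\rd H$, together with the non-degeneracy of $\omega$ on the compact $V$, implies that once $\epsilon$ is small enough for $\omega'$ to remain non-degenerate with uniformly bounded inverse, one obtains $|X_H|_{C^0}\le K|\rd H|_{C^0}<K\epsilon$ for some constant $K=K(V,\omega)$ depending only on a lower bound for $|\omega|$. This uses the $C^1$-closeness of $\omega'$ to $\omega$ (in fact only $C^0$-closeness is needed here).

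Second, I would argue by contradiction and extract an accumulation point. Suppose the conclusion fails: then there exist sequences $\epsilon_n\to 0$, Hamiltonians $H_n$, symplectic forms $\omega_n$ satisfying the hypotheses, and non-constant closed orbits $\gamma_n$ of $X_{H_n}$ of periods $T_n<C$. By the first step, $|X_{H_n}|_{C^0}\to 0$, so $\mathrm{diam}(\gamma_n)\le T_n\cdot K\epsilon_n\to 0$. After passing to a subsequence, the orbits accumulate at a single point $p\in V$, and for $n$ large each $\gamma_n$ lies in a fixed Darboux chart $U$ around $p$, where $\omega_n\to \omega_{\mathrm{std}}$ in $C^1$.

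Third, I would perform a blow-up analysis inside the chart. Rescaling space by $\delta_n:=\mathrm{diam}(\gamma_n)$ and time by $T_n$, I set $\eta_n(s):=(\gamma_n(sT_n)-\gamma_n(0))/\delta_n$, which is a loop in $\R^{2n}$ of unit diameter satisfying $\dot\eta_n(s)=(T_n/\delta_n)X_{H_n}(p+\delta_n\eta_n(s))$. Using the relation $\delta_n\le T_n|X_{H_n}|_{C^0}$ and rescaling $H_n$ appropriately (exploiting the fact that $H_n$ is constant along the orbit), one can extract a subsequential limit: the rescaled Hamiltonian converges to at most a linear function on $\R^{2n}$, whose Hamiltonian vector field is constant. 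A non-constant closed orbit of a non-zero constant vector field on $\R^{2n}$ does not exist, and if the limit vector field is zero one gets $\eta_\infty\equiv 0$, contradicting unit diameter of the limit.

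The main obstacle will be making the blow-up rigorous. Only $|\rd H|_{C^0}$ is controlled, so the Hessian of $H_n$ is not uniformly bounded, and the standard Gronwall/Arzelà–Ascoli compactness does not directly apply to the rescaled curves $\eta_n$. The correct rescaling must be chosen simultaneously in space, time, and the Hamiltonian, using energy conservation $H_n\circ\gamma_n\equiv\mathrm{const}$ and the $C^1$-closeness of $\omega_n$ to extract a meaningful limit. As an alternative, one can avoid this blow-up and derive the contradiction directly from a symplectic-area estimate: Stokes applied to a primitive $\lambda_n$ of $\omega_n$ on the small disk bounded by $\gamma_n$ in $U$ gives $\int_{D_n}\omega_n=\int_0^{T_n}\lambda_n(X_{H_n})\,\rd t$, whose size is bounded by $T_n\cdot\delta_n\cdot K\epsilon_n$, while a non-constant Hamiltonian orbit of $H_n$ on a level set of $H_n$ cannot bound a disk of area this small compared to its diameter without forcing $X_{H_n}$ to vanish somewhere on $\gamma_n$.
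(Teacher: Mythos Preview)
Your proposal has a genuine gap that you yourself flag but do not close. The blow-up step requires extracting a limit of the rescaled curves $\eta_n$, but with only a $C^0$ bound on $\rd H_n$ (and no bound on the Hessian) there is no equicontinuity for $\dot\eta_n$, and the claim that the rescaled Hamiltonian ``converges to at most a linear function'' has no justification. Your alternative area argument is also not a proof: in $\R^{2n}$ with $n\ge 2$ the loop $\gamma_n$ does not bound a canonical disk, the signed symplectic area $\int_{\gamma_n}\lambda_n$ can vanish for non-constant loops, and the concluding sentence (``cannot bound a disk of area this small compared to its diameter without forcing $X_{H_n}$ to vanish'') is not a known fact and is in general false.

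The paper's proof is much more direct and avoids all of this. It covers $V$ by finitely many Darboux charts $\{U_i\}$ for $\omega$ with a Lebesgue number $\delta$, observes that for $\omega'$ sufficiently $C^1$-close each $U_i$ is also a Darboux chart for $\omega'$ (this is where the $C^1$-hypothesis is actually used, to control the Moser diffeomorphism in $C^1$ so that the induced $C^0$-norms on one-forms remain uniformly comparable), and then invokes the elementary local statement \cite[Proposition 6.1.5]{audin2014morse}: in standard $(\R^{2n},\omega_{\mathrm{std}})$, if $|\rd H|_{C^0}<2\pi/C$ then every $C$-periodic orbit of $X_H$ is constant. The bound $|\rd H|_{C^0}<\delta/C$ ensures any trajectory of length $\le C$ stays in a single chart, and then the local statement finishes. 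So the missing ingredient in your approach is precisely this local Wirtinger/Fourier-type estimate on $\R^{2n}$, which replaces your blow-up entirely.
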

\begin{proof}
	Let $g$ be a Riemannian metric on $V$. We can find a cover of $V$ by finitely many Darboux charts $\{U_i\}$, such that there exists $\delta>0$ so that every $\delta$-ball is contained in one of the Darboux charts. Moreover, for smaller enough $\epsilon'$, we can assume that each $U_i$ is a Darboux chart of $\omega'$ for $|\omega'-\omega|_{C^1}<\epsilon'$. Using the standard metric on each $U_i\subset \R^{2n}$ by viewing $U_i$ as a $\omega$ Darboux chart, we have a $C^0$ norm on $\Omega^1(V)$. Since $\omega'$ is $C^1$ close to $\omega$, we know that the induced $C^0$ norm on $\Omega^1(V)$ are uniformly equivalent for any $\omega'$ nearby and is equivalent to the $C^0$ norm we use to state the lemma. If $|\rd H|_{C^0}<\frac{\delta}{C}$ then any trajectory $\phi(t)$ of $X_H$ has the property that $\rd(\phi(0),\phi(C))<\delta$. By \cite[Proposition 6.1.5]{audin2014morse}, on each Darboux chart, if $|\rd H|_{C^0}<\frac{2\pi}{C}$, then any $C$-periodic orbit in the chart is a constant. Therefore the claim holds for $\epsilon = \min(\frac{\delta}{C},\frac{2\pi}{C},\epsilon')$.  	
\end{proof}

\begin{proof}[Proof of Theorem \ref{thm:product}]
	Since we have $c_1(V) = 0$, then $c_1(V\times \C) = 0$. Hence $c_1(Y)=0$, that is the Conley-Zehnder index is well-defined in $\Z$ for contractible orbits on $Y$. Now we pick a Liouville form $\lambda$ on $V$ such that the associated Reeb dynamic on $\partial V$ is non-degenerate. Then there exist positive real numbers converging to infinity $D_1 < D_2 <\ldots $ and integers $N_1,N_2,\ldots$, such that all periodic orbits of $R_\lambda$ contractible in $V$ with period smaller than $D_i$ have Conley-Zehnder index greater than $N_i$. We consider orbits contractible in $V$, since $(\gamma,p)$ for $\gamma \in \cL\partial V, p \in \C$ is homotopically trivial on $Y:=\partial(V\times C)$ if $\gamma$ is contractible in $V$.
	
	We view $Y$ as the hypersurface $Y_{\rho,f,g}$ in $\widehat{V} \times \C$ such that $Y^1_{\rho,f,g}:=Y_{\rho,f,g}\cap (\partial V \times [1,\infty)_t \times \C)$ is given by $t = 2-g(\rho r^2)$ for big $\rho \in \R_+$ and function $g$ as in Proposition \ref{prop:cap},  $Y^2_{\rho,f,g}:=Y_{\rho,f,g}\cap (V \times \C)$ is given by $r^2 = \frac{f}{\rho}$ for a Morse function $f$ on $V$ such that near the boundary $f = g^{-1}(2-t)$, where $t$ is the collar coordinate smaller than $1$. To see $Y_{\rho,f,g}$ is of contact type, it is sufficient to prove that the Liouville vector field
	$X_\lambda+\frac{1}{2}r\partial_r$ is transverse to $Y_{\rho}$ and points out. On $Y^1_{\rho,f,g}$, this is verified in Proposition \ref{prop:cap}. On $Y^2_{\rho,f,g}$, we have $(X_\lambda+\frac{1}{2}r\partial_r)( r^2-\frac{f}{\rho})=r^2-\frac{1}{\rho}X_\lambda f = \frac{1}{\rho}(f-X_\lambda f)$. Note that if $g'(1-\epsilon)$ is very big, $f'(1) = -\frac{1}{g'(1-\epsilon)}$ is very small, hence we can choose $f$ such that $X_\lambda f$ is very small and $f\ge 1-\epsilon$. Hence $Y_{\rho,f,g}$ is of contact type. And for $\rho_1>\rho_2$, there exists such $f_1,g_1,f_2,g_2$ so that we have the domain between $Y_{\rho_1,f_1,g_1}$ and $Y_{\rho_2,f_2,g_2}$ is a Liouville cobordism. Using the Liouville vector field, we have the induced contact form on $Y_{\rho_1,f_1,g_1}$ is smaller than the induced contact form on  $Y_{\rho_2,f_2,g_2}$.
	
	On $Y^1_{\rho,f,g}$, by Proposition \ref{prop:cap}, the Reeb orbits of type \eqref{r1} of action bounded by $D_i$ have Conley-Zehnder indices greater than $N_i+2\lfloor \frac{D_i\rho}{2\pi} \rfloor+1$. Since $A<1$ in Proposition \ref{prop:cap}, Reeb orbits of type \eqref{r2} of action bounded by $D_i$ is of Morse-Bott type in $S^1$-families with generalized Conley-Zehnder indices greater than $N_i+\lfloor \frac{D_i\rho}{\pi} \rfloor+\frac{1}{2}$, since $g'\ge 1$. 	On $Y^1_{\rho,f,g}$, it is possible that there are periodic orbits not described by \eqref{r1} and \eqref{r2} in Proposition \ref{prop:cap}. That is those orbits with $\rho r^2$ in the domain where $g(x)=x$. Then we have $A=\frac{1}{2}$ and $B = \frac{\rho}{2}$. Then if we pick $\rho$ such that $|\gamma|\rho \notin 2\pi \N$ for all all Reeb orbits $\gamma$ on $(\partial V,\lambda_V)$ with period smaller than $D_i$. Then there are no Reeb orbits with period smaller than $2D_i$ on this domain.
	
	$Y^2_{\rho,f,g}$ is contactomorphic to $V\times S^1$ with contact form $\lambda + \frac{\rho}{f} \rd \theta$. Then by Proposition \ref{prop:reeb}, the Reeb vector is given by $\frac{\rho}{f}\partial_\theta+\widetilde{X_{\frac{\rho}{f}}}$. Therefore there are two types Reeb orbits, one is of form $(p, \phi_k(t))$ where $p$ is a critical point of $\frac{\rho}{f}$ and $\phi_k(t)$ is a reparametrization of $e^{ikt},t\in [0,2\pi]$. The other type is $\gamma$ such that $\pi\circ \gamma$ is a non-constant periodic orbit of $X_{\frac{\rho}{f}}$ for the projection $\pi:V\times S^1\to V$. When $p$ is a non-degenerate critical point of $\frac{\rho}{f}$, we have $\mu_{CZ}(p,\phi_k(t)) = n-\ind p + 2k$. If we choose $f$ such that critical points of $\frac{1}{f}$ have max index $\dim_M V$, then we have the degree of $(p,\phi_k(t))$ is at least $n-\dim_MV+2k+n+1-3=2n-\dim_MV+2k-2\ge 2n-\dim_M V$. Since $\dim_M V \le 2n-1$, those orbits have positive degree. For the other type of periodic orbits, periodic orbits of $X_{\frac{\rho}{f}}$ using symplectic form $\rd(\frac{\rho}{f}\lambda)$ is the same as periodic orbits of $X_{\frac{1}{f}}$ using symplectic form $\rd (\frac{1}{f}\lambda)$. Since we can choose $g$ and $f$ such that $\frac{1}{f}-\frac{1}{1-\epsilon}$ is $C^2$ small, therefore $\rd(\frac{1}{f}\lambda)$ is $C^1$ close to $\frac{1}{1-\epsilon}\rd \lambda$. This can be achieved by choosing $g$ such that $g'(1-\epsilon)$ very large and $g''(1-\epsilon)$ very small. Therefore by Lemma \ref{lemma:long}, we can assume all non-constant periodic orbits of $X_{\frac{\rho}{f}}$ are of periods greater than $D_i$. 
	
	Therefore we can pick big enough $\rho_i$ and $f_i,g_i$, such that all contractible Reeb orbits on $Y_{\rho_i,f_i,g_i}$ with action smaller than $D_i$ are either non-degenerate with positive degrees or Morse-Bott in $S^1$-families with generalized Conley-Zehnder indices greater than $4-n$ (or any fixed large number). By \cite[Lemma 2.3,2.4]{bourgeois2002morse}, there exist a very small perturbation of the contact form on $Y_{\rho_i,f_i,g_i}$, such that all contractible Reeb orbits with periods smaller than $D_i$ have positive degree. For all $j > i$, we have a similar construction by choosing $\rho_j>\rho_i$, such that the contact form on $Y_{\rho_j,f_j,g_j}$ is smaller than that on $Y_{\rho_i,f_i,g_i}$. This finishes the proof of Theorem for ADC case.
	
	In the TADC case, since we do not need to use arbitrarily large $\rho$ to lift the Conley-Zehnder index on $Y^1_{\rho,f,g}$. Therefore $Y_{\rho,f,g}$ can be chosen without shrinking to $V\times \{0\}$.
\end{proof}

\begin{remark}
	By \cite{cieliebak2002subcritical}, every subcritical domain splits into the product of a Weinstein domain with $\C$. Theorem \ref{thm:product} implies that all $2n$-dimensional subcritical domains are $(n-2)$-ADC, which is a special case of \cite[Corollary 4.1]{lazarev2016contact}.
\end{remark}

\begin{corollary} \label{cor:iso}
	Let $V$ be a Liouville domain, such that $c_1(V) = 0$. Let $Y=\partial(V\times \C)$. Then for any topologically simple exact filling $W$ of $Y$, we have a ring isomorphism $\phi:H^*(W)\to H^*(V\times \C)$, such that following commutes
	$$\xymatrix{
		H^*(W)\ar[d]^{\phi} \ar[r] & H^*(Y)\\
		H^*(V\times \C) \ar[ru] & 
		}$$
\end{corollary}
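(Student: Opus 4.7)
The plan is to invoke Corollary \ref{cor:B} using $V\times\C$ itself as the reference topologically simple exact filling, and then promote the resulting additive isomorphism to a ring isomorphism by a purely topological injectivity argument.

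First I will verify that $V\times\C$ qualifies as a topologically simple exact filling of $Y$: exactness is built in, $c_1(V\times\C)=c_1(V)=0$ is immediate, and (when needed) the $\pi_1$-condition $\pi_1(Y)\to\pi_1(V\times\C)=\pi_1(V)$ follows from the fact that the hypersurface $V\times S^1$ sits inside $Y$ and its inclusion into $V\times\C$ factors through $Y$ and is homotopic to the projection onto $V$; under the hypotheses of Theorem \ref{thm:product} that make $Y$ strongly ADC, no $\pi_1$-assumption is needed at all. By Theorem \ref{thm:product}, $Y$ is (strongly) ADC, and by Oancea's K\"unneth vanishing \cite{oancea2006kunneth}, $SH^*(V\times\C)=0$. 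Corollary \ref{cor:B} then applies: for any topologically simple exact filling $W$ of $Y$, we get $SH^*(W)=0$ and an additive isomorphism $\phi:H^*(W)\to H^*(V\times\C)$ fitting into the commutative triangle of restriction maps.

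Next I will verify that the restriction $r_V:H^*(V\times\C)\to H^*(Y)$ is injective. For this, observe that the composite $V\times S^1\hookrightarrow Y\hookrightarrow V\times\C$ is, up to homotopy, the obvious inclusion $(v,\theta)\mapsto (v,0)$. On cohomology this composite becomes the K\"unneth inclusion
\[
H^*(V)=H^*(V\times\C)\longrightarrow H^*(V\times S^1)=H^*(V)\otimes H^*(S^1),\qquad v\mapsto v\otimes 1,
\]
which is injective. Since this composite factors through $r_V$, the map $r_V$ itself must be injective.

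Finally, the ring structure comes for free from the commutative triangle. Writing $r_W:H^*(W)\to H^*(Y)$ for the restriction, we have $r_W=r_V\circ\phi$, so $r_W$ is also injective (as $\phi$ is bijective). Both $r_W$ and $r_V$ are ring homomorphisms, so for any $a,b\in H^*(W)$,
\[
r_V(\phi(ab))=r_W(ab)=r_W(a)\,r_W(b)=r_V(\phi(a))\,r_V(\phi(b))=r_V(\phi(a)\phi(b)),
\]
and injectivity of $r_V$ forces $\phi(ab)=\phi(a)\phi(b)$. Hence $\phi$ is a ring isomorphism. The only mildly delicate step is ensuring $V\times\C$ is topologically simple, which is automatic in the strongly ADC setting and otherwise follows from the simple observation about $V\times S^1\subset Y$ above; the rest of the argument is essentially a two-line application of Corollary \ref{cor:B} together with the K\"unneth inclusion on cohomology.
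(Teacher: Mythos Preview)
Your proof follows essentially the same route as the paper's: invoke Theorem \ref{thm:product} and $SH^*(V\times\C)=0$ to apply Corollary \ref{cor:B}, then use injectivity of $H^*(V\times\C)\to H^*(Y)$ (via the factorization through $H^*(V\times S^1)$) together with the fact that both restriction maps are ring maps to upgrade the additive isomorphism to a ring isomorphism. The paper's proof is terser but structurally identical.

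One minor point: your verification of the $\pi_1$-condition for $V\times\C$ is not quite complete. The factorization $\pi_1(V\times S^1)\to\pi_1(Y)\to\pi_1(V)$ being the projection only shows that $\pi_1(Y)\to\pi_1(V)$ is \emph{surjective}; injectivity does not follow from this alone. To finish, you would also need that $\pi_1(V\times S^1)\to\pi_1(Y)$ is surjective, which one sees from the decomposition $Y=(\partial V\times\D)\cup_{\partial V\times S^1}(V\times S^1)$ and van Kampen (the $\partial V\times\D$ piece contributes nothing new to $\pi_1$ beyond what comes from $\partial V\times S^1\subset V\times S^1$). The paper does not spell this out either, so this is a shared informality rather than a defect in your argument.
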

\begin{proof}
	First note that $Y$ is always connected and $c_1(Y) = 0$. Then by Corollary \ref{cor:B} and Theorem \ref{thm:product} and that $SH^*(V\times \C) =0$, we have a group morphism $\phi:H^*(W)\to H^*(V\times \C)$ such that the diagram commutes. Since the composition $H^*(V\times \C) \to H^*(Y) \to H^*(V\times S^1)$ is injective, we have $H^*(V\times \C) \to H^*(Y)$ is injective and is a ring map, and $H^*(W)\to H^*(Y)$ is a ring map, they force $\phi$ to be a ring isomorphism.
\end{proof}

If $V$ is $k$-TADC for some $k$, then $\partial(V\times \C)$ is TADC. Hence by Remark \ref{rmk:exact}, the exactness assumption in Corollary \ref{cor:iso} can be dropped. As another instant application of Corollary \ref{cor:iso}, we have the following.
\begin{corollary}\label{cor:weinstein}
	If $V^{2n}$ is a Liouville domain, such that $n\ge 2$ and $c_1(V) = 0$. If one of the following conditions hold, then $\partial(V\times \C)$ is not Weinstein fillable. 
	\begin{enumerate}
		\item\label{c1} There exists $k>n+1$, such that $H^k(V)\ne 0$.
		\item\label{c2} There exists $k<n$, such that $H^k(V\times \C)\to H^k(\partial(V\times \C))$ is not isomorphism.
	\end{enumerate}
\end{corollary}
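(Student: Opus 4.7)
The plan is to apply Corollary~\ref{cor:iso} to any hypothetical Weinstein filling $W$ of $Y=\partial(V\times\C)$ and then derive contradictions from the standard topological constraints on Weinstein fillings. The first step is verifying that such $W$ is topologically simple so that Corollary~\ref{cor:iso} is applicable. Since $\dim W=2n+2\ge 6$, the Weinstein handle decomposition has handles of index $\le n+1$; turning this handle decomposition upside down, $W$ is built from $Y\times[0,1]$ by attaching handles of index $\ge n+1\ge 3$. Consequently $\pi_1(Y)\to\pi_1(W)$ is an isomorphism, and the long exact sequence of $(W,Y)$ gives $H^j(W,Y)=0$ for $j<n+1$, so $H^2(W)\hookrightarrow H^2(Y)$. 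Since $c_1(TW|_Y)=c_1(\xi)=0$ and the restriction map on $H^2$ is injective, $c_1(W)=0$, i.e.\ $W$ is topologically simple. Corollary~\ref{cor:iso} therefore supplies a ring isomorphism $\phi:H^*(W)\to H^*(V\times\C)$ commuting with the restriction maps to $H^*(Y)$.

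Next I would extract two standard cohomological constraints on the Weinstein filling $W$. Because $W$ is homotopy equivalent to a CW complex of dimension at most $n+1$, we have $H^k(W)=0$ for all $k>n+1$. Moreover, from $H^j(W,Y)=0$ for $j<n+1$ and the long exact sequence of the pair, the restriction map $H^k(W)\to H^k(Y)$ is an isomorphism for every $k\le n-1$, i.e.\ for every $k<n$.

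For case~\eqref{c1}, assume there exists $k>n+1$ with $H^k(V)\ne 0$. Since $V\times\C$ is homotopy equivalent to $V$, $H^k(V\times\C)\ne 0$. But then $\phi:H^*(W)\to H^*(V\times\C)$ being a ring isomorphism forces $H^k(W)\ne 0$, contradicting the handle-index bound. For case~\eqref{c2}, assume there exists $k<n$ such that $H^k(V\times\C)\to H^k(Y)$ fails to be an isomorphism. By the commutative triangle of Corollary~\ref{cor:iso}, this map is identified with $H^k(W)\to H^k(Y)$, which by the second constraint above is an isomorphism for $k<n$; again a contradiction.

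I expect no serious obstacle in this argument: the key structural input—the filling-independence of the restriction map $H^*(W)\to H^*(Y)$ packaged in Corollary~\ref{cor:iso}—has already been established, and the remaining ingredients are the standard topological facts about Weinstein fillings. The only point requiring a brief check is the verification that Weinstein fillings of $Y$ are automatically topologically simple when $n\ge 2$, and this is handled cleanly by the dual handle decomposition argument above.
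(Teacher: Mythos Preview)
Your proof is correct and is exactly the intended approach: the paper presents Corollary~\ref{cor:weinstein} as an ``instant application of Corollary~\ref{cor:iso}'', and your argument carries this out precisely---verifying topological simplicity of a hypothetical Weinstein filling via the dual handle decomposition, invoking Corollary~\ref{cor:iso}, and reading off the contradictions from the standard handle-index constraints $H^k(W)=0$ for $k>n+1$ and $H^k(W)\xrightarrow{\sim}H^k(Y)$ for $k<n$.
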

\begin{corollary}\label{rmk:weinstein}
	If $Y$ is constructed by attaching subcritical (for 2-handles, conditions in \cite[Theorem 3.17]{lazarev2016contact} need to hold) or flexible handles onto $\partial(V\times \C)$ in Corollary \ref{cor:weinstein}, such that $c_1(Y)=0$. Then $Y$ can not be filled by a Weinstein domain. 
\end{corollary}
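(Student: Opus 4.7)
The plan is to exhibit a non-Weinstein exact filling $W$ of $Y$ with $SH^*(W)=0$ and then invoke Corollary \ref{cor:B} to transport the obstructions of Corollary \ref{cor:weinstein} across the handle attachment. First I would set $W_0:=V\times \C$, $Y_0:=\partial W_0$, and $W:=W_0\cup (\text{handles})$, the Liouville filling of $Y$ produced by the prescribed subcritical/flexible handle attachment. Using $c_1(V)=0$, the hypothesis $c_1(Y)=0$, and the 2-handle framing constraints of \cite[Theorem 3.17]{lazarev2016contact}, $c_1(W)=0$ can be arranged. The Viterbo transfer $SH^*(W)\to SH^*(W_0)$ is a unital ring map, and $SH^*(W_0)=SH^*(V\times \C)=0$ by the K\"unneth formula \cite{oancea2006kunneth}, so $SH^*(W)=0$. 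By Theorem \ref{thm:product}, $Y_0$ is (strongly) ADC, and by \cite{lazarev2016contact} subcritical/flexible surgery preserves ADC, so $Y$ is ADC. Finally, the attached handles have index $\le n+1$ and their duals from the $Y$-side have index $\ge n+1\ge 3$ (since $n\ge 2$), so $\pi_1(Y)\to \pi_1(W)$ is an isomorphism and $W$ is topologically simple.

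Next I would suppose for contradiction that $Y$ admits a Weinstein filling $W'$. The same handle-index argument shows $W'$ is topologically simple (with $c_1(W')=0$ inherited from $c_1(Y)=0$), so Corollary \ref{cor:B} applied to the ADC manifold $Y$ yields $SH^*(W')=0$ and a graded ring isomorphism $H^*(W)\simeq H^*(W')$ compatible with the restriction maps to $H^*(Y)$.

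The contradiction splits into the two cases of Corollary \ref{cor:weinstein}. In Case (1) there is $k>n+1$ with $H^k(V)\ne 0$; since the relative cohomology $H^*(W,W_0)$ is supported in the handle indices $\le n+1$, the long exact sequence of $(W,W_0)$ gives $H^k(W)\simeq H^k(W_0)=H^k(V)\ne 0$, whereas $\dim W'=2(n+1)$ forces $H^k(W')=0$, contradicting $H^*(W)\simeq H^*(W')$. In Case (2) there is $k<n$ with $H^k(V\times \C)\to H^k(\partial(V\times \C))$ not an isomorphism; Poincar\'e--Lefschetz duality on $(W',\partial W')$ gives that $H^k(W')\to H^k(Y)$ is an iso for $k<n$, hence by Corollary \ref{cor:B}'s restriction-compatibility so is $H^k(W)\to H^k(Y)$. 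Analysing the Weinstein cobordism $C:=W\setminus W_0^\circ$ via its dual handle decomposition from the $Y$-side (all critical points of index $\ge n+1$), one gets $H^k(C)\to H^k(Y)$ iso for $k<n$, and tracing this back through the inclusions $W_0\hookrightarrow W$ and $Y_0\hookrightarrow C$ should force $H^k(W_0)\to H^k(Y_0)$ to be an iso for $k<n$, contradicting Case (2).

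The main obstacle will be the topological bookkeeping in Case (2): carefully tracing the restriction map across the Weinstein handle cobordism and exploiting the index constraint on its dual Morse function to propagate the isomorphism from $(W,Y)$ back to $(W_0,Y_0)$. Case (1) and the construction of $W$ are direct consequences of the Floer-theoretic machinery already developed in the paper, so the whole argument boils down to the invariance statement of Corollary \ref{cor:B} together with this cohomological propagation.
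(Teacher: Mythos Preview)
Your overall strategy matches the paper's, but there is one genuine logical error and one incomplete step.

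\textbf{The Viterbo transfer step is backwards.} You write that $SH^*(W)\to SH^*(W_0)$ is unital and $SH^*(W_0)=0$, ``so $SH^*(W)=0$.'' A unital ring map into the zero ring says nothing about the source. What you actually need is that subcritical and flexible handle attachment preserves symplectic cohomology, i.e.\ the Viterbo transfer is an \emph{isomorphism}; this is \cite{cieliebak2002handle} for subcritical handles and \cite{bourgeois2012effect} for flexible ones, and is exactly what the paper invokes.

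\textbf{Case (1)} is correct and identical to the paper's argument.

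\textbf{Case (2)}: your ``trace back'' sketch is close, but as stated only yields that $H^k(W_0)\to H^k(Y_0)$ is an isomorphism for $k\le n-2$ and injective for $k=n-1$; injectivity is already known, so the case $k=n-1$ is left open. The fix (which is essentially how the paper argues) is to also use that a Weinstein $W'$ gives $H^{n}(W')\to H^{n}(Y)$ \emph{injective}, hence by Corollary~\ref{cor:B} also $H^{n}(W)\to H^{n}(Y)$ injective. Via excision $H^*(W,C)\cong H^*(W_0,Y_0)$ and the long exact sequence of $(W,C)$, together with $H^{j}(C)\to H^{j}(Y)$ iso for $j<n$ and injective for $j=n$ (from the dual handle decomposition of the Weinstein cobordism $C$), one gets $H^{n}(W_0,Y_0)=0$, which then gives surjectivity of $H^{n-1}(W_0)\to H^{n-1}(Y_0)$. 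The paper phrases this contrapositively: starting from $H^{k+1}(W_0,Y_0)\neq 0$ it shows that either $\delta_\partial$ fails surjectivity in degree $k$ or injectivity in degree $k+1\le n$, which no Weinstein filling can exhibit. Either way, the missing ingredient in your sketch is the injectivity at degree $n$, not just the isomorphisms below $n$.
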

\begin{proof}
	In other words, $Y$ is the boundary of $(V\times \C) \cup W$, where $W$ is a flexible Weinstein cobordism from $\partial(V\times \C)$ to $Y$. Since $\dim W =2n+2\ge 6$, $c_1(Y)=0$ implies that $c_1(W)=0$. Hence $c_1(\partial(V\times \C))=0$. Since $H^*(V\times \C) \to H^*(\partial(V\times \C))$ is injective, we have $c_1(V\times \C)=c_1(V)=0$. Therefore by Theorem \ref{thm:product}, $\partial(V\times \C)$ is ADC. Then by \cite[Theorem 3.15,3.17,3.18]{lazarev2016contact}, $Y$ is ADC and $SH^*((V\times \C) \cup W) = 0$ by \cite{bourgeois2012effect}. Therefore $SH^*_+((V\times \C) \cup W) \to H^{*+1}((V\times \C) \cup W)$ is an isomorphism. If $H^k(V)\ne 0$ for $k>n+1$, then $H^k((V\times \C)\cup W) \ne 0$. Hence $Y$ is not Weinstein fillable by Corollary \ref{cor:B}. If $H^k(V\times \C)\to H^k(\partial(V\times \C))$ is not isomorphism for $k<n$, then it is not surjective since it is always injective. Assume $k$ is the smallest among all such integers. Then we have the following exact sequence
	$$\ldots \to H^k(V\times \C, \partial(V\times \C)) \to H^k(V\times \C) \hookrightarrow H^k(\partial(V\times \C)) \to H^{k+1}(V\times \C, \partial(V\times \C))\to \ldots.$$
	Since $k$ is the first failure of subjectivity, we have $ H^k(V\times \C, \partial(V\times \C))=0$ and $H^{k+1}(V\times \C, \partial(V\times \C))\ne 0$. Since we have $H^*((V\times \C)\cup W, W) = H^*(V\times \C , \partial(V\times \C))$, there is another long exact sequence
	\begin{align*}
	\ldots \to H^k(V\times \C, \partial(V\times \C)) \to & H^k((V\times \C)\cup W) \to H^k(W) \to \\ &
	 H^{k+1}(V\times \C, \partial(V\times \C))\to H^{k+1}((V\times \C)\cup W) \to H^{k+1}(W) \to \ldots
	\end{align*}
	Therefore either $H^k((V\times \C)\cup W) \to H^k(W)$ is not surjective or $H^{k+1}((V\times \C)\cup W) \to H^{k+1}(W)$ is not injective. Since $H^k(W)\to H^k(Y)$ is isomorphism and $H^{k+1}(W)\to H^{k+1}(Y)$ is injective, by $k<n$ and $W$ being Weinstein. Hence either $\delta_{\partial}:SH_+^{k-1}((V\times\C)\cup W)\to H^k(Y)$ is not surjective, or $\delta_{\partial}:SH_+^{k}((V\times\C)\cup W)\to H^{k+1}(Y)$ is not injective. Then by Corollary \ref{cor:B} and  Theorem \ref{thm:A},  $Y$ can not be filled by a Weinstein domain, otherwise $\delta_{\partial}$ should be an isomorphism on $SH^{k-1}_+$ and injective on $SH^{k}_+$.
\end{proof}

 Corollary \ref{rmk:weinstein} provides many exactly fillable but not Weinstein fillable  manifolds. Exactly fillable but not Weinstein fillable manifolds were found in \cite{bowden2012exactly} in dimension $3$ and in \cite{bowden2014topology} for higher dimensions. Our construction makes very few topological requirements and roughly shows that most Liouville but not Weinstein manifolds give rise to exactly fillable but not Weinstein fillable contact manifolds after a product with $\C$ and attaching subcritical or flexible handles.

\begin{remark}
	In the case \eqref{c1} of Corollary \ref{cor:weinstein} and Corollary \ref{rmk:weinstein}, the obstruction in Corollary \ref{cor:ob} does not vanish. Using Proposition \ref{prop:many}, we have many examples of not Weinstein fillable manifolds, whose symplectic cohomology is nonzero for a filling.
\end{remark}

\begin{example}\label{ex:exact}
Liouville but not Weinstein domains were first constructed in \cite{mcduff1991symplectic} in dimension $4$, higher dimensional examples were constructed in  \cite{geiges1994symplectic,massot2013weak}.  All such examples are of a diffeomorphism type $M\times [0,1]$, where the diffeomorphism type of $M$ is the following.
\begin{enumerate}
	\item $ST^*\Sigma_g$, where $\Sigma_g$ is a genus $g\ge 2$ surface \cite{mcduff1991symplectic}.
	\item\label{ex2} $T^{n}$ bundle over $T^{n-1}$ for any $n\ge 1$ \cite{geiges1994symplectic,massot2013weak}. In fact, such examples are ADC, as the contact boundaries are hypertight, meaning that there is a contact form with all Reeb orbits non-contractible.
\end{enumerate}
Those Liouville domains have vanishing first Chern class. Then by attaching Weinstein handles to them without changing the first Chern class, or taking products among them yield many Liouville domains $V^{2n}$ such that $c_1(V)=0$ and $H^{k}(V)\ne 0$ for some $k> n+1$. Therefore the boundary of $V\times \C$ admits no Weinstein filling by Corollary \ref{cor:weinstein}. One may keep attaching subcritical or flexible handles to it preserving the ADC property, then the boundary is not Weinstein fillable by Corollary \ref{rmk:weinstein}.
\end{example} 
 
Given an almost contact manifold, the existence of an almost Weinstein filling is purely homotopy theoretical, this was solved by Bowden-Crowley-Stipsicz in \cite{bowden2014topology}. The obstruction was phrased using bordism theory. Bowden-Crowley-Stipsicz used them to construct exactly fillable but not almost Weinstein fillable contact manifolds in all dimensions $\ge 5$. In the following, we show that in dimension $4k+3,k\ge 1$, our construction yields examples where the homotopy obstruction vanishes, while the obstruction in Corollary \ref{cor:ob} does not vanish. Hence the obstruction is symplectic in nature.

We first recall the criterion of almost Weinstein fillability from \cite{bowden2014topology}. An almost contact structure on a closed oriented $2n+1$ manifold $Y$ is a reduction of the structure group of $TY$ to $U(n)$. Then an almost contact structure defines a map $\zeta:Y\to BU(n)\to BU$. The $n$th Postnikov factorization of $\zeta$ consists of a space $B^{n-1}_{\zeta}$ and maps
$$Y \stackrel{\overline{\zeta}}{\to} B^{n-1}_{\zeta} \stackrel{\eta^{n-1}_{\zeta}}{\to}BU,$$
such that $\eta^{n-1}_{\zeta}$ is a fibration, $\zeta=\eta^{n-1}_{\zeta}\circ \overline{\zeta}$, the map $\overline{\zeta}$ is a $n$-equivalence, i.e. $\overline{\zeta}$ induces isomorphism on $\pi_j$ for $j<n$ and surjective map on $\pi_n$, the map $\eta^{n-1}_{\zeta}$ is a coequivalence, i.e. it induces isomorphism on $\pi_j$ for $j>n$ and injective map on $\pi_n$. Then the pair $(Y,\overline{\zeta})$ defines a bordism class $[Y,\overline{\zeta}]$ in $\Omega_{2n+1}(B^{n-1}_{\zeta};\eta^{n-1}_{\zeta})$, for the definition of this bordism group, see \cite[\S 2.1]{bowden2014topology}.
\begin{theorem}[{\cite[Theorem 1.2]{bowden2014topology}}]\label{thm:BCE}
	A closed almost contact manifold $Y$ of dimension $2n+1\ge 5$ is almost Weinstein fillable iff $[Y,\overline{\zeta}]=0\in \Omega_{2n+1}(B^{n-1}_{\zeta};\eta^{n-1}_{\zeta})$.
\end{theorem}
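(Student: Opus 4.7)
The plan is to reduce both implications to a statement about the index distribution of handles in a suitable decomposition of the filling. An almost Weinstein filling of $Y$ is a compact almost complex manifold $X$ with $\partial X = Y$ admitting a handle decomposition in which every handle has index at most $n+1$, and the Postnikov factorization $\zeta = \eta^{n-1}_{\zeta}\circ \overline{\zeta}$ is tailored precisely to detect the homotopy-theoretic obstruction to achieving such a decomposition: the stage $B^{n-1}_{\zeta}$ retains exactly the data of the classifying map that can be seen by cells of dimension at most $n+1$, while the information only visible through higher-dimensional cells is collapsed by the coequivalence $\eta^{n-1}_{\zeta}$.

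For the forward direction, suppose $Y$ bounds an almost Weinstein filling $X$. Since $(X,Y)$ is built from $Y$ by attaching cells of dimension at most $n+1$, the behavior of the classifying map $X \to BU$ of $TX$ at Postnikov degrees above $n$ is determined (up to homotopy) by its restriction to $Y$. The universal property of the factorization $\zeta = \eta^{n-1}_{\zeta}\circ \overline{\zeta}$ then yields a compatible lift $X \to B^{n-1}_{\zeta}$ extending $\overline{\zeta}$, and this lift exhibits $X$ as a null-bordism of $(Y,\overline{\zeta})$ in $\Omega_{2n+1}(B^{n-1}_{\zeta};\eta^{n-1}_{\zeta})$.

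The reverse direction is the main obstacle and occupies the bulk of the argument. Starting from a null-bordism $X^{2n+2}$ of $(Y,\overline{\zeta})$ together with a compatible lift $\widetilde{\zeta}: X \to B^{n-1}_{\zeta}$, the goal is to perform surgery rel.\ boundary to kill all handles of index $\geq n+2$, so that the resulting almost complex manifold is built from $Y$ using only handles of index at most $n+1$. The lift $\widetilde{\zeta}$ provides exactly the homotopy data needed to represent generators of the (possibly $\pi_1$-twisted) relative homology groups $H_{k}(X,Y)$ for $k \geq n+2$ by framed embedded spheres whose normal bundles are trivialised compatibly with the almost complex structure; since $k \geq n+2$ lies strictly above the middle dimension of $X^{2n+2}$, general position yields disjoint embedded representatives and surgery along them trades high-index handles for lower-index ones without destroying the almost complex framing. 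Iterating this procedure cancels all handles of index $\geq n+2$. The technical heart of the argument lies in executing these surgeries simultaneously in an almost-complex-compatible manner, and this is precisely where the lift $\widetilde{\zeta}$ is indispensable, since it supplies the normal framings that extend across the traces of the surgeries.
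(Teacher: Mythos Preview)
The paper does not prove this theorem at all: it is quoted verbatim as \cite[Theorem 1.2]{bowden2014topology} and used as a black box in the proof of Theorem~\ref{thm:I}. There is therefore no ``paper's own proof'' to compare against; any assessment has to be of your sketch on its own merits and against the original Bowden--Crowley--Stipsicz argument.

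On the forward direction you have a genuine slip. An almost Weinstein filling $X^{2n+2}$ has a handle decomposition \emph{from the empty set} with handles of index $\le n+1$; dualising, $X$ is built \emph{from $Y$} by attaching handles of index $\ge n+1$, not $\le n+1$. The conclusion you want---that $Y\hookrightarrow X$ is an $n$-equivalence, hence the Postnikov factorisation of $\zeta_X:X\to BU$ agrees with that of $\zeta$ and $(X,\overline{\zeta_X})$ is a null-bordism---is still correct, but the justification as written is backwards.

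For the reverse direction your outline (surgery on a null-bordism to kill handles above the middle dimension, using the lift $\widetilde\zeta$ to control normal framings) is the right shape and is essentially what Bowden--Crowley--Stipsicz do, via Kreck's modified surgery theory. Your sketch suppresses the genuinely delicate points---why the normal bundle data furnished by a lift to $B^{n-1}_\zeta$ is exactly what is needed to perform compatible almost-complex surgeries, and why the procedure terminates---but as a high-level summary of the cited proof it is accurate.
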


\begin{proof}[Proof of Theorem \ref{thm:I}]
By \cite{massot2013weak}, there exist an exact domain $V$ diffeomorphic to $M \times [0,1]$, such that $M$ is a $T^{n+2}$ bundle over $T^{n+1}$ for $n\ge 0$.  Moreover, the contact structure on each  component of $\partial V$ is a trivial complex bundle, since the contact structure is an invariant distribution on a Lie group. Hence the complex structure of $TV$ on $\partial V$ is trivial. Since $M \times\{0\} \to V$ is a homotopy equivalence, we have the complex structure on $TV$ is trivial. Let $Y'$ denote the contact boundary of $V\times \C^n$. Then by Theorem \ref{thm:product}, $Y'$ is ADC and $Y'$ is diffeomorphic to $M\times S^{2n}$.  Assume now $n\ge 1$. Let $S=\{*\}\times S^{2n}$ be a sphere on $Y'$. The $T(V\times \C^n)|_S$ is a trivial complex bundle, i.e. the structure map $S\to BU(2n+2)$ is  trivial. Let $\xi$ be the contact structure on $Y'$, since $BU(2n+1)\to BU(2n+2)$ induces isomorphism on $\pi_{2n}$, we have $\xi|_S$ is trivial $\C^{2n+1}$ bundle. Therefore there exists a homotopy  of monomorphisms $F_s:TS \to TY'$, such that $F_0$ is the inclusion and $F_1$ is an inclusion to $\xi$ and isotropic. Hence by the h-principle of isotropic embedding \cite[Theorem 7.11]{cieliebak2012stein}, $S$ is homotopic to to an isotropy sphere $S'$ with trivial $(TS')^\perp/ TS'$. Then we can attach a $2n+1$ subcritical handle to $S'$. The resulted contact manifold is $Y$ with filling $W:=V\times \C^n \cup H_{2n+1}$. 

By \cite[Theorem 3.15]{lazarev2016contact}, $Y$ is $0$-ADC. By \cite{cieliebak2002handle}, we have $SH^*(W) = 0$.  Since $H^{2n+3}(V) = \Z$, we have $Y$ does not admits Weinstein filling by Corollary \ref{rmk:weinstein}. We claim $Y$ admits almost Weinstein fillings. Since the structure map $V\times \C^n \to BU$ is trivial, the structure map $W\to BU$ is represented by a class $\pi_{2n+1}(BU)=0$. Therefore the structure map $W\to BU$ is trivial. Assume the $2n+1$th Postnikov factorization of the trivial structure map $\zeta:W \to BU$ is given by
$$W \stackrel{\overline{\zeta}}{\to} B^{2n} \stackrel{\eta^{2n}}{\to} BU.$$
We claim that $Y \hookrightarrow W  \stackrel{\overline{\zeta}}{\to} B^{2n} \stackrel{\eta^{2n}}{\to} BU$ gives the $2n+1$th of Postnikov factorization of the trivial structure map $Y \to BU$. Then $Y$ is almost Weinstein fillable by Theorem \ref{thm:BCE} once the claim is proven, as the $[W,\overline{\zeta}]$ is a bordism to $\emptyset$. By definition, it is sufficient to prove $Y\hookrightarrow W \stackrel{\overline{\zeta}}{\to}B^{2n}$ is a $2n+1$ equivalence. Since $W\to BU$ is trivial, we know that $\pi_{2n+1}(B^{2n})=0$. Therefore $Y\hookrightarrow W \stackrel{\overline{\zeta}}{\to}B^{2n}$ induces epimorphism on $\pi_{2n+1}$. Since $\overline{\zeta}$ is a $2n+1$ equivalence, it suffices to show that $\pi_*(Y)\to \pi_*(W)$ is an isomorphism for $*\le 2n$. We consider the universal cover $\widetilde{W}$ of $W$. Since $V$ is a $K(\pi,1)$, we have $\widetilde{W}$ is constructed by attaching $|\pi|$ copies $2n+1$-handles to the boundary of $\R^{2n+3}\times B^{2n+1}$, where $B^{2n+1}\subset \R^{2n+1}$ is the unit ball. Then the boundary $\widetilde{Y}:=\partial \widetilde{W}$ is connected. Since $\widetilde{W}$ can be viewed as attaching $|\pi|$ copies $2n+3$ handles and one $2n+1$-handle to $\widetilde{Y}$. Then we have $\pi_*(\widetilde{Y}) \to \pi_*(\widetilde{W})$ is isomorphism for $*\le 2n-1$. In particular $\pi_1(\widetilde{Y})=0$, i.e. $\widetilde{Y}$ is the universal cover of $Y$. Therefore it is sufficient to show prove $\pi_{2n}(\widetilde{Y}) \to \pi_{2n}(\widetilde{W})$ is an isomorphism. We will use the relative Hurewicz theorem to prove the claim. First,  we have the following commutative diagram,
$$
\xymatrix{
	H_{2n+1}(\widetilde{W}) \ar[r] \ar[d] & H_{2n+1}(\widetilde{W},\widetilde{Y})=\Z\ar[ld] \\
	H_{2n+1}(\widetilde{W},\widetilde{W}\backslash(\R^{2n+3}\times B^{2n+1})) &}
$$
By excision, we have $H_{2n+1}(\widetilde{W},\widetilde{W}\backslash(\R^{2n+3}\times B^{2n+1}))=H_{2n+1}(\R^{2n+3}\times B^{2n+1},\R^{2n+3}\times S^{2n})=\Z$, and the map from $H_{2n+1}(\widetilde{W},\widetilde{Y})$ is an isomorphism. Note that $H_{2n+1}(\widetilde{W})\to 	H_{2n+1}(\widetilde{W},\widetilde{W}\backslash(\R^{2n+3}\times B^{2n+1}))$ is surjective, since any $2n+1$ handle in $\widetilde{W}$ attached to $\R^{2n+3}\times S^{2n}$ along with $\{*\}\times B^{2n+1}$ gives a chain that is mapped to the generator of  $H_{2n+1}(\widetilde{W},\widetilde{W}\backslash(\R^{2n+3}\times B^{2n+1}))$. Hence 	$H_{2n+1}(\widetilde{W}) \to H_{2n+1}(\widetilde{W},\widetilde{Y})$ is surjective. By the homology long exact sequence for $\widetilde{W},\widetilde{Y}$, we have $H_{2n}(\widetilde{Y})\to H_{2n}(\widetilde{W})$ is an isomorphism. Note that $(\widetilde{W},\widetilde{Y})$ is $2n-1$ connected and $\widetilde{Y}$ is simply connected, then by the relative Hurewicz theorem, $\pi_{2n}(\widetilde{W},\widetilde{Y})=0$. That is $(\widetilde{W},\widetilde{Y})$ is $2n$ connected. Then the relative Hurewicz theorem implies that $\pi_{2n+1}(\widetilde{W},\widetilde{Y}) \to  H_{2n+1}(\widetilde{W},\widetilde{Y})=\Z$ is an isomorphism. As we have seen, there is a $S^{2n+1}\in \widetilde{W}$ that is mapped to the generator in $H_{2n+1}(\widetilde{W},\widetilde{Y})$. Therefore $\pi_{2n+1}(\widetilde{W}) \to \pi_{2n+1}(\widetilde{W},\widetilde{Y})$ is surjective, hence $\pi_{2n}(\widetilde{W})\to \pi_{2n}(\widetilde{Y})$  is an isomorphism. Since the obstruction in Corollary \ref{cor:ob} does not vanish for $Y$, then by Proposition \ref{prop:many}, there are infinity many exactly fillable, almost Weinstein fillable but not Weinstein fillable manifolds. 
\end{proof}
\begin{remark}
	In dimension $4n+1$ for $n\ge 2$, one may consider $V_1,V_2$ from Example \ref{ex:exact}, such that $\dim V_1+\dim V_2=2n+4$. In particular $V_1\times V_2 \simeq M_1\times M_2\times [0,1]^2$. Then we consider $Y$ to the boundary of $W$, which is constructed by attaching a $2n$-handle to $V_1\times V_2\times \C^{n-1}$. Most of the arguments in Theorem \ref{thm:I} go through, in particular, $Y$ is exactly fillable but not Weinstein fillable. The only issue is showing the $2n$th Postnikov factorization of $W$ gives a bordism to $\emptyset$. In fact, one can show that everything boils down to whether the class in $\pi_{2n}(BU)$ classifying the structure map $W\to BU$ is trivial.
\end{remark}

\begin{remark}
	In dimension $5$, manifold $V\times \C$ always admits Morse function of index at most $3$, hence Corollary \ref{cor:weinstein} can not give obstructions to Weinstein fillings. In dimension $3$, $V\times \C$ is always a subcritical Weinstein domain.
\end{remark}

\subsection{Product of ADC manifolds}
In the following, we will show the product of two ADC domains is again ADC. In particular, such construction provides many examples where Corollary \ref{cor:ob2} can be applied. Before stating the precise theorem, we first prove a proposition on the Reeb orbits on the boundary of general products. In the following, we define a function for a fixed $N \in \N$.
 $$g_N(x) = 2 - \frac{1}{N(2-x)}, \quad x \in [1,2-\frac{1}{N}].$$
 Then $g_N$ takes values in $[1,2-\frac{1}{N}]$ and $g^{-1}_N = g_N$. Let $V$ and $W$ be two Liouville domains with Liouville forms $\lambda_V,\lambda_W$ that are non-degenerate as contact forms, such that $c_1(V) = c_1(W) = 0$. Then we define $V\times_N W$ to be the following subset of $\widehat{V} \times \widehat{W}$, 
 $$V\times W \cup (\partial V \times \{r_V < f_W\})\times W \cup V\times (\partial W \times \{r_W < f_V\}) \cup \partial V \times \partial W \times \{(r_V,r_W)|r_V < g_N(r_W) \},$$
 where we have
 \begin{enumerate}
 	\item $f_V,f_W$ are Morse functions on $V$ and $W$, such that near the collar of $V$ and $W$ they are $g_N$ and $\frac{1}{f_V},\frac{1}{f_W}$ realize the Morse dimensions of $V,W$ respectively.
 	\item $2-\frac{1}{N} \le |f_V|,|f_W| \le 2 $ and $|\rd f_V|_{C^1},|\rd f_W|_{C^1} \le  \frac{1}{N}$.
 \end{enumerate}

 \begin{proposition}\label{prop:prod}
 	Assume $V,W$ as above and $\dim V,\dim W \ge 4$. For $D > 0$, for $N \in \N$ big enough, the the contractible Reeb orbits of $V\times_N W$ of period at most $D$ are of the following three types.
 	\begin{enumerate}
 		\item\label{o1} Non-degenerate orbit $(p,\gamma)$ for $p\in \cC(\frac{1}{f_V})$ and $\gamma \in \cP^{<D}(W)$ with index $ \frac{1}{2}\dim V-\ind p + \mu_{CZ}(\gamma)$.
 		\item\label{o2} Non-degenerate orbit $(\gamma,p)$ for $p\in \cC(\frac{1}{f_W})$ and $\gamma \in \cP^{<D}(V)$ with index $\frac{1}{2}\dim W-\ind p + \mu_{CZ}(\gamma)$.
 		\item\label{3} Morse-Bott orbits $(\gamma_1(t),\gamma_2(t+t_0))$ in an $S^1$ family for $\gamma_1,\gamma_2$ in $\cP^{<D}(V),\cP^{<D}(W)$ respectively up to reparametrization. The generalized Conley-Zehnder index is $\mu_{CZ}(\gamma_1)+\mu_{CZ}(\gamma_2) + \frac{1}{2}$.
 	\end{enumerate}
 \end{proposition}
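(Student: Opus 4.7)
The plan is to analyze the Reeb dynamics face by face on $\partial(V\times_N W)$, using Proposition \ref{prop:reeb} to compute the Reeb vector field on each piece and Lemma \ref{lemma:long} to rule out short non-constant Hamiltonian orbits on the factor where the Reeb flow is not purely rotating. After smoothing the corners, $\partial(V\times_N W)$ has three distinguished pieces: (A) $\{r_V=f_W\}\subset\partial V\times W$, (B) $\{r_W=f_V\}\subset V\times\partial W$, and (C) $\{r_V=g_N(r_W)\}\subset\partial V\times\partial W$, joined by thin smoothing regions.

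First I would handle Face $B$, which will produce the orbits of type \eqref{o1}. The induced contact form is $\lambda_V + f_V\alpha_W$ with $\alpha_W=\lambda_W|_{\partial W}$. Proposition \ref{prop:reeb} (with the roles of $V$ and $Y$ there played here by $V$ and $\partial W$) gives the Reeb vector field $\tfrac{1}{f_V}R_{\alpha_W}+\widetilde{X}_{1/f_V}$, where $X_{1/f_V}$ is the Hamiltonian vector field of $1/f_V$ with respect to $d(\tfrac{1}{f_V}\lambda_V)$. For $N$ large, $d(\tfrac{1}{f_V}\lambda_V)$ is $C^1$-close to a fixed symplectic form on $V$ and $|d(1/f_V)|_{C^0}$ is small, so Lemma \ref{lemma:long} forces every non-constant orbit of $X_{1/f_V}$ to have period exceeding $D$. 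Hence any Reeb orbit on Face $B$ of period at most $D$ projects to a critical point $p\in\cC(1/f_V)$, and the Reeb flow reduces to $\tfrac{1}{f_V(p)}R_{\alpha_W}$, yielding the orbits $(p,\gamma)$ with $\gamma\in\cP^{<D}(W)$. The Conley--Zehnder index is computed exactly as in the proof of Proposition \ref{prop:cap}: along $\xi_W$ the linearized flow contributes $\mu_{CZ}(\gamma)$, while along the symplectic normal direction in $V$ the linearized flow is determined by the Hessian of $1/f_V$ at $p$, contributing $\tfrac12\dim V-\ind p$. Face $A$ is handled identically by symmetry, giving orbits of type \eqref{o2}.

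Next I would turn to the corner Face $C$. Parametrizing by $r_W$ via $r_V=g_N(r_W)$, the induced contact form is $g_N(r_W)\alpha_V+r_W\alpha_W$. The contact distribution splits as $\xi_V\oplus\xi_W\oplus H$, where $H$ is the rank-$2$ horizontal direction in $(r_V,r_W)$, and a direct computation in the spirit of Proposition \ref{prop:cap} shows that the Reeb vector field is a combination $A\,R_{\alpha_V}+B\,R_{\alpha_W}$ with $A,B$ explicit functions of $r_W$ involving $g_N$ and $g_N'$. Periodic orbits therefore appear precisely when $A,B$ are in rational resonance with the periods of Reeb orbits $\gamma_1\in\cP^{<D}(V),\gamma_2\in\cP^{<D}(W)$, and they come in $S^1$-families parametrized by the relative phase $t_0$. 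A computation analogous to Proposition \ref{prop:cap}\eqref{r2} identifies the Morse--Bott structure: the linearized return map on $H$ is a shear (whose coefficient is a positive multiple of $g_N''$) composed with a rotation, whose generalized Maslov index contributes $\tfrac12$; combining this with $\mu_{CZ}(\gamma_1)$ and $\mu_{CZ}(\gamma_2)$ on $\xi_V\oplus\xi_W$ gives the claimed generalized Conley--Zehnder index $\mu_{CZ}(\gamma_1)+\mu_{CZ}(\gamma_2)+\tfrac12$.

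The main obstacle will be handling the smoothing of the corners $A\cap C$ and $B\cap C$ carefully enough to guarantee that no spurious Reeb orbits of period at most $D$ appear in the transition zones. I would smooth so that the Liouville vector $r_V\partial_{r_V}+r_W\partial_{r_W}$ remains transverse and the smoothed hypersurface is $C^2$-close to the piecewise description; then the Reeb field depends continuously on the smoothing, and a compactness argument together with Lemma \ref{lemma:long} rules out any short Reeb orbit entering the smoothing regions for $N$ sufficiently large. Contractibility in $V\times_N W$ is automatic since $V\times_N W$ retracts to $V\times W$, so a product loop is contractible iff each factor is; this forces $\gamma$, $\gamma_1$, $\gamma_2$ to be contractible in the corresponding factors and confirms that their periods are bounded by $D$, exhausting all contractible Reeb orbits of period at most $D$ and giving the three cases of the proposition.
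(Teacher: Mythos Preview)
Your plan is essentially the paper's: decompose $\partial(V\times_N W)$ into three faces, apply Proposition~\ref{prop:reeb} and Lemma~\ref{lemma:long} on the two ``graph'' faces to get the orbits of types~\eqref{o1},\eqref{o2}, and compute the Reeb vector explicitly on the corner face to get the Morse--Bott $S^1$-families of type~\eqref{3}. Two points deserve correction, though neither is fatal.

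First, your main obstacle is not one. There are no corners to smooth: the choice $g_N^{-1}=g_N$ together with the requirement that $f_V,f_W$ equal $g_N$ on the collar is precisely what makes the three pieces fit together as one smooth hypersurface. Face~$C$ is $r_V=g_N(r_W)$; since $g_N$ is an involution this is the same as $r_W=g_N(r_V)$, and the graph conditions $r_W=f_V$, $r_V=f_W$ agree with this near the boundaries. So the ``transition-zone'' analysis you anticipate simply does not arise.

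Second, your description of the linearized return map on the rank-two summand $H$ on face~$C$ is not quite right. It is a \emph{pure} shear, not a shear composed with a rotation: the Reeb vector $A\,R_{\alpha_V}+B\,R_{\alpha_W}$ is tangent to the level sets of $r_W$, so the flow fixes $r_W$ and the only contribution to the linearization in $H$ comes from $\partial_{r_W}(B/A)$. Concretely, using $g_N'(r_W)=\frac{r_V-2}{2-r_W}$ one rewrites the Reeb vector symmetrically as
\[
\frac{r_W-2}{2r_Vr_W-2r_V-2r_W}\,R_{\alpha_V}+\frac{r_V-2}{2r_Vr_W-2r_V-2r_W}\,R_{\alpha_W},
\]
and the return map in the basis $(\partial_{r_W},-r_WR_{\alpha_V}+r_VR_{\alpha_W})$ is $\begin{psmallmatrix}1&0\\ cT&1\end{psmallmatrix}$ with $c=\frac{4-2r_V}{(2r_Vr_W-2r_V-2r_W)^2}>0$. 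The index $\tfrac12$ you state is correct, but it comes from this shear alone; there is no rotation term here (contrast with Proposition~\ref{prop:cap}, where the $\partial_\theta$ component genuinely produces a rotation). The paper also uses the hypothesis $\dim V,\dim W\ge 4$ at this step, to ensure the trivialization of the contact structure over the orbit splits as $\xi_V\oplus\xi_W\oplus H$ compatibly with the trivializations used for $\mu_{CZ}(\gamma_1),\mu_{CZ}(\gamma_2)$; you should make that explicit.
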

 \begin{proof}
 	The boundary of $V\times_N W $ is given by three components $(\partial V \times \{r_V = f_W\})\times W \cup V\times (\partial W \times \{r_W = f_V\}) \cup \partial V \times \partial W \times \{(r_V,r_W)|r_V = g_N(r_W)\}$. Using Remark \ref{rmk:convex}, we have the Liouville vector of $\widehat{V}\times \widehat{W}$ is transverse to the boundary. Hence $\partial (V\times_N W)$ is indeed of contact type. By Proposition \ref{prop:reeb} and Lemma \ref{lemma:long}, for $N$ big enough, the Reeb orbits on $V\times (\partial W \times \{r_W = f_V\})$ and $(\partial V \times \{r_V = f_W\})\times W$ are those described in \eqref{o1} and \eqref{o2}. 
 	
 	It remains to study the Reeb orbits on $\partial V \times \partial W \times \{(r_V,r_W)|r_V = g_N(r_W)\}$. It can be either viewed as graph of function on $\widehat{W}$ or $\widehat{V}$. We choose to view it as graph on $\widehat{W}$. Then by Proposition \ref{prop:reeb}, the Reeb vector field is given by
 	$$\frac{1}{g_N(r_W)-g'_N(r_W)r_W} R_V + \frac{-g'_N(r_W)}{g_N(r_W)-g'_N(r_W)r_W} R_W.$$
 	Note that $g'_N(r_W) = \frac{r_V - 2}{2-r_W}$, we write the Reeb orbit in a more symmetric way as below,
 	\begin{equation}\label{eqn:reeb}
 	\frac{r_W-2}{2r_Vr_W-2r_V-2r_W} R_V+\frac{r_V-2}{2r_Vr_W-2r_V-2r_W} R_W.
 	\end{equation}
 	Here the coefficient varies in $[\frac{1}{2N},\frac{1}{2}]$. Therefore the Reeb orbit of \eqref{eqn:reeb} are in the form of $$(\gamma_1(\frac{r_W-2}{2r_Vr_W-2r_V-2r_W} t), \gamma_2(\frac{r_V-2}{2r_Vr_W-2r_V-2r_W} t)),$$ 
 	where $\gamma_1,\gamma_2$ are Reeb orbits of $\partial V$ and $\partial W$ with periods $D_1$ and $D_2$ such that $T:=D_1/\frac{r_W-2}{2r_Vr_W-2r_V-2r_W} = D_2/\frac{r_V-2}{2r_Vr_W-2r_V-2r_W} \ge 2D_1,2D_2$ is the period. By Remark \ref{rmk:contact}, the contact restructure is given by $\xi_V\oplus \xi_W \oplus \la \partial_{r_W}, -r_W R_V+r_V R_W \ra$. The linearized return map preserves the decomposition. Therefore the linearized return maps on $\xi_V$ and $\xi_W$ do not have $1$ as eigenvalue by non-degenerate assumptions on $V$ and $W$. By taking derivative of \eqref{eqn:reeb}, the linearized return map on the remaining part is given by 
 	\begin{equation}\label{eqn:return}
 	\left[ 
 	\begin{array}{cc}
 	1, & 0 \\
 	\frac{4-2r_V}{(2r_Vr_W-2r_V-2r_W)^2} T, & 1
 	\end{array}
 	\right]
 	\end{equation}
 	Let $\phi_T$ denote the linearized return map on the whole tangent space, then $\ker (\phi_T  - \Id)$ is spanned by the Reeb vector field \eqref{eqn:reeb} and $-r_W R_V + r_V R_W$, which is the $1$-eigenspace of \eqref{eqn:return}. Equivalently, $\ker (\phi_T  - \Id)$ is spanned by  $R_V$ and $R_W$, which is the tangent of $\Ima \gamma_1 \times \Ima \gamma_2$. Moreover $\rd (r_V \lambda_V + r_W \lambda_W)$ is rank zero on the tangent of $\Ima \gamma_1 \times \Ima \gamma_2$. Hence such Reeb orbits are of Morse-Bott type. To compute the generalized Conley-Zehnder index, since we have the decomposition of contact structure, it is sufficient to compute the contribution in the $\la \partial_{r_W}, -r_W R_V+r_V R_W \ra$ direction. Since $\dim V,\dim W \ge 4$, here the complex trivialization of the tangent bundles over the Reeb orbits splits into trivializations of $\xi_V$ and $\xi_W$ used in defining Conley-Zehnder index for $V$ and $W$ and a trivial complex bundle spanned by $\partial_{r_W}$ and $-r_W R_V + r_V R_W$.  The total generalized Conley-Zehnder index is the generalized Conley-Zehnder index of 
 	$$\left[ 
 	\begin{array}{cc}
 	1, & 0 \\
 	\frac{4-2r_V}{(2r_Vr_W-2r_V-2r_W)^2} t, & 1
 	\end{array}
 	\right],$$
 	which is $\frac{1}{2}$ plus $\mu_{CZ}(\gamma_1)+\mu_{CZ}(\gamma_2)$.
 \end{proof}

 \begin{theorem}\label{thm:prod}
 	Let $V,W$ be two Liouville domains of dimension $\ge 4$ respectively, such that $c_1(V) = c_1(W)=0$. Assume $V$ is $p$-ADC and $W$ is $q$-ADC, then we have $V\times W$ is $\min\{p+q+4,p+\dim W - \dim_M W,q+\dim V -\dim_MV\}$-ADC. In particular, if $V,W$ are both ADC, then $V\times W$ is ADC. If $V,W$ are both TADC, then $V\times W$ is TADC.
 \end{theorem}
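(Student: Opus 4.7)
The plan is to apply Proposition \ref{prop:prod} as the main geometric input and then verify the three degree estimates one by one. For each $D_i$ in the eventual ADC sequence, choose $N_i$ large enough that every contractible Reeb orbit on $\partial(V\times_{N_i} W)$ of period at most $D_i$ falls into one of the three types described in Proposition \ref{prop:prod}. Note that a type (1) orbit $(p,\gamma)$ is contractible in $V\times_N W$ iff $\gamma$ is contractible in $W$, a type (2) orbit $(\gamma,p)$ iff $\gamma$ is contractible in $V$, and a type (3) orbit $(\gamma_1,\gamma_2)$ iff both $\gamma_1$ and $\gamma_2$ are contractible in the respective factors. Set $2n_V=\dim V$, $2n_W=\dim W$ and $n=n_V+n_W$.

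Next I would just compute degrees. For a type (1) orbit, choosing the Morse function $f_V$ so that $\tfrac{1}{f_V}$ realizes $\dim_M V$, the degree formula gives
\[
\deg(p,\gamma)=n_V-\ind p+\mu_{CZ}(\gamma)+n-3 \ > \ q+\dim V-\dim_M V,
\]
using $\ind p\le \dim_M V$ and the $q$-ADC of $W$. The type (2) bound is symmetric. For a type (3) Morse--Bott $S^1$-family, the $p$-ADC and $q$-ADC hypotheses force the integers $\mu_{CZ}(\gamma_1)\ge p+4-n_V$ and $\mu_{CZ}(\gamma_2)\ge q+4-n_W$, so the generalized CZ index is at least $p+q+8-n-\tfrac12$; applying the Morse--Bott perturbation of \cite[Lemma 2.3, 2.4]{bourgeois2002morse}, which splits the family into two non-degenerate orbits whose CZ indices differ from the generalized index by $\pm\tfrac12$, the minimum CZ index among perturbed orbits is $\ge p+q+8-n$, giving
\[
\deg \ \ge\ p+q+5\ >\ p+q+4.
\]
Taking the minimum of the three bounds yields the claimed ADC parameter.

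Finally, I would organise the $N_i$ into a sequence $N_1<N_2<\cdots$ with $D_i\to\infty$. Since $V\times_{N} W\subset V\times_{N'} W$ for $N<N'$ as star-shaped domains in $(\widehat V\times\widehat W,\ r_V\lambda_V+r_W\lambda_W)$, identifying all boundaries with $\partial(V\times W)$ via the Liouville flow produces a \emph{decreasing} sequence of contact forms $\alpha_i$; the Morse--Bott perturbations from \cite{bourgeois2002morse} can be made arbitrarily $C^\infty$-small and so do not destroy monotonicity. This verifies ADC, and hence the ADC case of the theorem. For TADC, the crucial observation is that every $V\times_N W$ sits inside the compact region $\{r_V,r_W\le 2\}$, so the sequence $\alpha_i$ is uniformly bounded below by the contact form obtained by Liouville-flowing $\partial(V\times W)$ slightly outward (any $\alpha$ induced by a hypersurface strictly inside $\{r_V,r_W\le 2\}$ works), which is exactly the condition in Definition \ref{def:TADC}.

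The main obstacle is the bookkeeping around the Morse--Bott perturbation in type (3): one must simultaneously preserve the monotonicity $\alpha_i>\alpha_{i+1}$, keep all other contractible orbits in $\cP^{<D_i}$ non-degenerate, and ensure the perturbed CZ indices remain controlled by the estimate above. This is handled in the standard way by making the perturbations supported in small tubular neighborhoods of the $S^1$-families and choosing them much smaller than the gap $\alpha_i-\alpha_{i+1}$, but is the only place where care is required.
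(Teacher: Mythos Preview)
Your proposal has a genuine gap: you fix $V$ and $W$ throughout and only vary the parameter $N$, but the $p$-ADC (resp.\ $q$-ADC) hypothesis does \emph{not} give any Conley--Zehnder bound for Reeb orbits of the fixed contact form $\lambda_V|_{\partial V}$ (resp.\ $\lambda_W|_{\partial W}$). By Definition~\ref{def:ADCfilling} the input is a sequence of positive functions $f_1>f_2>\cdots$ on $\partial V$ so that orbits of $(\partial V,f_i\lambda_V)$ with period $<D_i$ have degree $>p$; there is no claim about orbits of $(\partial V,\lambda_V)$. So when you write ``the $q$-ADC of $W$ gives $\mu_{CZ}(\gamma)\ge q+4-n_W$'' for a type~(1) orbit, that inequality is simply not available: $\gamma$ is a Reeb orbit of $\lambda_W$, not of one of the $g_i\lambda_W$. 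The same problem occurs for types~(2) and~(3). The paper fixes this by first replacing $V,W$ by the shrunken domains $V_i=\{r_V\le f_i\}$, $W_i=\{r_W\le g_i\}$ coming from the ADC data, and only then forming $V_i\times_N W_i$; Proposition~\ref{prop:prod} is applied to $V_i,W_i$, so the orbits $\gamma$ live on $(\partial V_i,f_i\lambda_V)$ where the index bound does hold.

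A related consequence is that your monotonicity argument goes the wrong way. You correctly observe $V\times_N W\subset V\times_{N'}W$ for $N<N'$, but under the Liouville identification this produces an \emph{increasing} sequence of contact forms, not the decreasing sequence required in Definition~\ref{def:ADC}. In the paper the decrease comes from the shrinking factors $V_i\supset V_{i+1}$, $W_i\supset W_{i+1}$: since $V_i\times_N W_i$ is close to $\{r_V\le 2f_i\}\times\{r_W\le 2g_i\}$, one can choose $N$ at step $j$ large enough that $(V\times W)_j\subset(V\times W)_i$ for $j>i$. Once you incorporate the shrinking $V_i,W_i$, your degree computations and the Morse--Bott perturbation step are the same as the paper's.
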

 \begin{proof}
 	Assume the dimensions of $V$ and $W$ are $2n,2m$ respectively. Since $V,W$ are ADC, there exist positive functions $f_1>f_2>\ldots >f_k>\ldots$,  $g_1>g_2>\ldots >g_k>\ldots$ and real numbers $D_1<D_2<\ldots<\ldots D_k<\ldots$ converge to $\infty$, such that all orbits in $\cP^{<D_i}(f_i\lambda_V|_{\partial V})$ are non-degenerate and have Conley-Zehnder indices at least $p-n+4$ and all orbits in $\cP^{<D_i}(g_i\lambda_W|_{\partial W})$ are non-degenerate and have Conley-Zehnder indices at least $q-m+4$. Let $V_i:=\{r_V\le f_i\}$ and $W_i:=\{r_W\le g_i\}$, then for $N\in \N$ big enough $V_i\times_N W_i\subset \widehat{V}\times \widehat{W}$ is close to $\{r_V \le 2f_i\}\times \{r_W\le 2g_i\}$ and the Reeb orbits of period smaller than $D_i$ are decried by Proposition \ref{prop:prod}. Using \cite[Lemma 2.3,2.4]{bourgeois2002morse}, there exist a very small perturbation $(V\times W)_i$ of $V_i\times_N W_i$, such that all Reeb orbits in \eqref{3} of Proposition \ref{prop:prod} are perturbed into non-degenerate orbits with index $\mu_{CZ}(\gamma_1)+\mu_{CZ}(\gamma_2), \mu_{CZ}(\gamma_1)+\mu_{CZ}(\gamma_2)+1$. Then we have all elements in $\cP^{<D_i}((V\times W)_i)$ are non-degenerate with Conley-Zehnder indices at least $\min\{p+q-n-m+8,p-n+4-\dim_MW+m,q-m+4-\dim_MV+n\}$. Hence the degree is at least $\min\{p+q+5,p+2m-\dim_MW+1,q+2n-\dim_MV+1\}$. Since for $j>i$, we can choose $N$ in the construction big enough such that $(V\times W)_j\subset (V\times W)_i$, which implies that the contact form on $(V\times W)_j$ is smaller than that on $(V\times W)_i$. Hence $\partial(V\times W)$ is $\min\{p+q+4,p+\dim W - \dim_M W,q+\dim V -\dim_MV\}$-ADC. The TADC case is similar. $(V\times W)_i$ will not collapse, because $V_i,W_i$ do not collapse.  
 \end{proof}
 
 The following Theorem provides more examples of exactly fillable, but not Weinstein fillable contact manifolds using the obstruction in Corollary \ref{cor:ob2}.  
 \begin{corollary}
 	Let $V^{2n}$ be the Liouville manifold in \eqref{ex2} of Example \ref{ex:exact}, $W^{2m}$ be any ADC Liouville domain with dilations, see Example \ref{ex:ADCdilation}. If $n-1>m$, then $\partial(V\times W)$ can not be filled by Weinstein domains.
\end{corollary}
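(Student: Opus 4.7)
The plan is to verify that $V\times W$ is a topologically simple exact filling of $Y:=\partial(V\times W)$ that is ADC, and for which either $\Ima\delta_\partial$ or $\Ima\Delta_\partial$ contains a nontrivial class of degree greater than $n+m = \tfrac12\dim(V\times W)$; the conclusion will then follow from Corollary~\ref{cor:ob} or Corollary~\ref{cor:ob2}.

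First I would verify the hypotheses on $V\times W$. By Example~\ref{ex:exact}(\ref{ex2}), $V$ is hypertight and therefore trivially $k$-ADC as a Liouville domain for every $k$ (its set of contractible Reeb orbits is empty); together with $W$ being ADC and $c_1(V)=c_1(W)=0$, Theorem~\ref{thm:prod} gives that $V\times W$ is ADC. Since $V\simeq M$ is aspherical while the Milnor fiber $W$ is simply connected, a direct check via Van Kampen shows $\pi_1(Y)\to\pi_1(V\times W)$ is an isomorphism, so $V\times W$ is topologically simple. Pulling back the dilation $x_W\in SH^1(W)$ via the projection yields a symplectic dilation on $V\times W$.

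Next I would produce the candidate high-degree class. Let $\mu\in H^{2n-1}(V)\cong H^{2n-1}(M)$ be the orientation generator of $M$, pulled back to $H^{2n-1}(V\times W)$. Its restriction $\mu|_Y$ is nonzero in $H^{2n-1}(Y)$: the boundary $Y$ contains the codimension-zero piece $\partial V\times W=(M\sqcup M)\times W$, and by K\"unneth $\mu$ restricts there to $([M]^*,[M]^*)\otimes 1$, which is nonzero. Under the hypothesis $n-1>m$ the degree $2n-1$ strictly exceeds $n+m$. To show $[\mu|_Y]$ lies in $\Ima\Delta_\partial$, I would use the BV identity
$$\Delta(x_W\smile\mu) \;=\; (\Delta x_W)\,\mu \;-\; x_W(\Delta\mu) \;+\; \{x_W,\mu\} \;=\; \mu \;+\; \{x_W,\mu\},$$
together with $\Delta\mu=0$ (since $\mu$ lies in the image of $H^*\to SH^*$, where $\Delta$ vanishes) and $\{x_W,\mu\}=0$ (since $x_W$ is pulled back from the $W$-factor while $\mu$ is from $V$, so they are supported on disjoint factors of a K\"unneth-type product). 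Thus $\mu\in\Ima\Delta$ in $SH^{2n-1}(V\times W)$. Passing to cochains, I would write $x_W\smile\mu = z_0+z_+$ with $z_0\in C_0,\ z_+\in C_+$; naturality of $\pi\colon SH^*\to SH^*_+$ with respect to $\Delta$ gives $\Delta_+[z_+]=\pi\Delta[x_W\mu]=\pi[\mu]=0$ since $\mu\in\ker\pi$. Separating the $C_0$ and $C_+$ components of the chain-level identity $\Delta(x_W\mu)=\mu$ then produces $\phi([z_+])=[\mu]$ in $\coker\delta$, and restricting to $Y$ gives $\Delta_\partial([z_+])=[\mu|_Y]$ in $\coker\delta_\partial$.

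I would conclude by a case split. If $[\mu|_Y]\neq 0$ in $\coker\delta_\partial$, then $\Ima\Delta_\partial$ contains a nontrivial element of degree $2n-1>n+m$, so Corollary~\ref{cor:ob2} rules out Weinstein fillings. Otherwise $\mu|_Y\in\Ima\delta_\partial$, but $\mu|_Y\neq 0$, so $\Ima\delta_\partial$ itself contains a nontrivial element of degree $2n-1>n+m$, and Corollary~\ref{cor:ob} applies. The hard part will be making the vanishing $\{x_W,\mu\}=0$ and the chain-level decomposition of $\Delta(x_W\mu)=\mu$ fully precise: this requires a K\"unneth-compatible product on the symplectic cochains, and the interaction between the Morse-type $C_0$ part and the positive $C_+$ part of the complex needs to be tracked carefully.
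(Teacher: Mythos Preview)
Your proposal is correct and follows the same overall strategy as the paper: produce a class of degree $2n-1>n+m$ in $\Ima\Delta_\partial$ (or $\Ima\delta_\partial$) by applying $\Delta$ to the product of the dilation on $W$ with the top-degree class of $V$, then invoke Corollary~\ref{cor:ob} or~\ref{cor:ob2}.

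The main difference is in how the computation is organized. The paper bypasses your BV-identity-with-bracket step entirely by citing the K\"unneth formula $SH^*(V\times W)\cong SH^*(V)\otimes SH^*(W)$ together with $\Delta(a\otimes b)=\pm\Delta a\otimes b\pm a\otimes\Delta b$ from \cite{oancea2006kunneth,seidel2014disjoinable}; since $V$ is hypertight, $SH^*(V)=H^*(V)$ and $\Delta\alpha=0$, so $\Delta(\alpha\otimes\beta)=\pm\alpha\otimes 1$ immediately. This resolves exactly the ``hard part'' you flag (the vanishing of $\{x_W,\mu\}$) without further work. The paper then splits cases on whether $SH^*(W)=0$: if so, Corollary~\ref{cor:ob} applies directly; if not, a second K\"unneth formula $SH^*_+(V\times W)=H^*(V)\otimes SH^*_+(W)$ with $\delta(\alpha\otimes\gamma)=\pm\alpha\otimes\delta_W(\gamma)$ shows $\alpha\otimes 1\notin\Ima\delta$ (else $1\in\Ima\delta_W$), so Corollary~\ref{cor:ob2} applies.

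Your case split (on whether $\mu|_Y$ vanishes in $\coker\delta_\partial$) is a genuine simplification over the paper's second K\"unneth argument: it avoids any computation of $SH^*_+$ and works uniformly, subsuming the paper's $SH^*(W)=0$ case. On the other hand, your bracket argument is harder to make rigorous without eventually invoking the same K\"unneth compatibility the paper uses; once you cite that, the two proofs essentially coincide.
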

\begin{proof}
 By \cite{oancea2006kunneth,seidel2014disjoinable}, $\Delta(a\otimes b)=\pm \Delta(a)\otimes b \pm a\otimes \Delta(b)$ on $SH^*(V\times W) = SH^*(V)\otimes SH^*(W)$. Since those $V$ are hypertight, i.e. they allow a Reeb flow without contractible Reeb orbits. Therefore $V$ is automatically ADC. Since we only use contractible orbits to generate symplectic cohomology, we have $SH^*(V)=H^*(V)$. 	We may assume $SH^*(W)\ne 0$, otherwise Corollary \ref{cor:ob} can be applied to obtain the conclusion. Let $\alpha \in SH^{2n-1}(V)$ be a generator of  $H^{2n-1}(V)$ and $\beta \in SH^1(W)$ a dilation. Then we have $\Delta(\alpha\otimes \beta)=\pm \alpha \otimes 1 \in \Ima (H^{2n-1}(V\times W) \to SH^{2n-1}(V\times W))$. Similar to the proof of Proposition \ref{prop:dilation}, if we view $\alpha \otimes 1\in H^{2n-1}(V\times W)$, then it represents a class in the image of $\phi: \Ima(SH^*(V\times W)\to SH^*_+(V\times W))\cap \ker \Delta_+\to \coker \delta$. Since $H^*(V\times W)\to H^*(\partial(V\times W))$ is injective and $2n-1>n+m$. To show $\Ima \Delta_{\partial}$ contains a class in grading $2n-1$, it is enough to show that $\alpha\otimes 1$ is not in  $\Ima \delta$. Since $V$ is hypertight, the K{\"u}nneth formula implies that $SH^*_+(V\times W)=H^*(V)\otimes SH^*_+(W)$, and $\delta(\alpha \otimes \beta)=\pm \alpha\otimes \delta_{W}(\beta)$. Hence if $\alpha \otimes 1$ is in the image of $\delta$, then $1\in \Ima \delta_W$, which implies $SH^*(W)=0$, contradicting the assumption.
\end{proof}
\section{Obstructions to exact cobordisms}\label{s7}
A natural question in symplectic geometry is understanding the ``size" or ``complexity" of exact domains. One simple way of comparing complexity is by asking if one exact domain can be embedded into another. Due to the Viterbo transfer map, the vanishing of symplectic cohomology and existence of symplectic dilation are two levels of complexity, which are in fact the first two of the infinity many structures in \cite{higher}.  For example, one can not embed an exact domain $W$ with $SH^*(W)\ne 0$ to a flexible Weinstein domain. In particular,  flexible Weinstein domain does not contain closed exact Lagrangians. On the other hand, one can always embed the standard ball into any domain. Moreover, one can not embed an exact domain $W$ with no dilation into $T^*S^n$. In particular $T^*S^n$ contains no exact tori. 

The same question can be asked for contact manifolds, and the comparison is based on existence of symplectic cobordism. By putting different adjectives in front of the symplectic cobordism, we get several comparisons in different flavors. In this section, we will restrict to exact cobordisms. Similar to the discussion above, one can use the functoriality of contact invariants like contact homology and symplectic field theory \cite{eliashberg2000introduction} to study this problem. For example, an overtwisted contact contact manifold has vanishing contact homology \cite{yau2004vanishing}, hence there is no exact cobordism from a contact manifold with non-vanishing contact homology to an overtwisted one. In particular, there is no cobordism from $\emptyset$. A more general obstruction in a similar spirit using the full SFT was constructed in \cite{latschev2011algebraic}. However, unlike symplectic cohomology, such invariants are difficult to define \cite{hofer2017application,pardon2019contact} and notoriously hard to compute.  

Since we have shown that for ADC manifolds, the vanishing of symplectic cohomology and existence of symplectic dilation is independent of the filling, hence can be understood as contact invariants. In particular, we can use them to define obstructions to the existence of exact cobordisms to some ADC manifolds. Such obstructions using symplectic cohomology are relatively easy to define and compute.  
\begin{theorem}\label{thm:corb}
	Let $Y$ be an ADC contact manifold with a topologically simple exact filling $W$. Let $V$ be an exact domain with $c_1(V)=0$. If one of the following holds, then there is no exact cobordism $U$ from $\partial V$ to $Y$, such that $c_1(U)=0$ and $H^1(V)\oplus H^1(U) \mapsto H^1(\partial V), (a,b)\mapsto a|_{\partial V}-b|_{\partial V}$ is surjective and $\pi_1(Y) \to \pi_1(U\cup V)$ is injective.
	\begin{enumerate}
		\item\label{corb1} If $1\in \Ima \delta_{\partial}$ for $W$, and $1\notin \Ima \delta_{\partial}$ for $V$.
		\item\label{corb2} If $1\in \Ima \Delta_{\partial}$ for $W$, and $1\notin \Ima \Delta_{\partial}$ for $V$.
	\end{enumerate} 
\end{theorem}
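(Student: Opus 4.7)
The plan is to assume, for contradiction, that such a cobordism $U$ exists and to form $W' := V \cup_{\partial V} U$, an exact filling of $Y$ (gluing the Liouville forms along $\partial V$). To invoke the invariance Theorems~\ref{thm:A} and~\ref{thm:E}, I must first verify that $W'$ is topologically simple: $\pi_1(Y) \to \pi_1(W')$ is injective by hypothesis, while $c_1(W') = 0$ follows from the Mayer--Vietoris exact sequence
$$H^1(V) \oplus H^1(U) \to H^1(\partial V) \to H^2(W') \to H^2(V) \oplus H^2(U),$$
whose first arrow is exactly the map assumed surjective. Surjectivity makes the connecting map vanish, so the last arrow is injective; since $c_1(W')$ maps to $(c_1(V), c_1(U)) = 0$ by $c_1(V) = c_1(U) = 0$, this forces $c_1(W') = 0$.

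For case~\eqref{corb1}, recall that $1 \in \Ima \delta_\partial$ is equivalent to $SH^*(W) = 0$. Corollary~\ref{cor:B} transfers this vanishing to $SH^*(W') = 0$. Since the Viterbo transfer $SH^*(W') \to SH^*(V)$ is a unital ring map, we obtain $SH^*(V) = 0$, i.e.\ $1 \in \Ima \delta_\partial$ for $V$, contradicting the hypothesis.

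For case~\eqref{corb2}, Theorem~\ref{thm:E} provides a canonical isomorphism $SH^*_+(W) \cong SH^*_+(W')$ intertwining $\Delta_\partial$, so $1 \in \Ima \Delta_\partial$ for $W'$. Connectedness of $W'$ and $Y$ makes the natural map $\coker \delta \to \coker \delta_\partial$ the identity on $\Z/\Ima \delta$ in degree zero (restriction $H^0(W') \to H^0(Y)$ is the identity on $\Z$, and $\Ima \delta$ agrees with $\Ima \delta_\partial$ there), so this is equivalent to $1 \in \Ima \phi$ for $W'$. By Proposition~\ref{prop:viterbo}, the Viterbo transfer intertwines $\phi$ and preserves the unit, giving $1 \in \Ima \phi$ for $V$; postcomposing with the (degree-zero identity) restriction to $\partial V$ yields $1 \in \Ima \Delta_\partial$ for $V$, again contradicting the hypothesis. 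The main, though mild, obstacle is precisely this bookkeeping step relating $1 \in \Ima \Delta_\partial$ to $1 \in \Ima \phi$, which works cleanly in degree zero thanks to connectedness but would demand more care in any higher-degree generalization; the rest is a routine application of the invariance theorems and the Viterbo functoriality already established in the paper, with no new Floer-theoretic input required.
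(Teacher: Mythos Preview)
Your proof is correct and follows essentially the same approach as the paper: form $W'=U\cup V$, use Mayer--Vietoris to get $c_1(W')=0$ and hence topological simplicity, then in case~\eqref{corb1} apply Corollary~\ref{cor:B} plus unitality of the Viterbo transfer, and in case~\eqref{corb2} use Theorem~\ref{thm:E} together with the degree-zero identification of $\Ima\Delta_\partial$ and $\Ima\phi$ and the compatibility of $\phi$ with the Viterbo transfer from Proposition~\ref{prop:viterbo}. One minor remark: the isomorphism in Theorem~\ref{thm:E} is not claimed to be canonical (the paper explicitly flags this), but only the existence of such an isomorphism is needed here, so your argument is unaffected.
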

\begin{proof}
	If there is a such cobordism. Since $H^1(V)\oplus H^1(U)\to H^1(\partial V)$ is surjective, we have $H^2(U\cup V) \to H^2(U)\oplus H^2(V)$ is injective. Since $c_1(V)=c_1(U)=0$, we have $c_1(U\cup V)=0$. Therefore $U\cup V$ is a topologically simple filling. Since $1\in \Ima \delta_{\partial}$  for $W$, then $SH^*(W)=0$. In particular, by Corollary \ref{cor:B}, we have $SH^*(U\cup V)=0$.  That $1\notin \Ima \delta_{\partial}$ for $V$ implies that $SH^*(V)\ne 0$, we have a contradiction by that the Viterbo transfer map preserves the unit, hence there is no such $U$. 
	
	When $1\in \Ima \Delta_{\partial}$ for $W$,  we know that  $\phi:\ker\Delta_+ \to \coker \delta$ contains $1$ in the image for $W$. Assume there is a such cobordism $U$. By Theorem \ref{thm:E}, we have that $1\in \Ima \Delta_{\partial},\Ima \phi$  for $U\cup V$. If  $1\notin \Ima \Delta_{\partial}$ for $V$, we know that $1\notin \Ima \phi$ for $V$. The Viterbo transfer map commutes with $\Delta_+$, $\delta$ and $\phi$ by Proposition \ref{prop:viterbo}, hence the Viterbo transfer map induces the following commutative diagram
	$$
	\xymatrix{
	\ker \Delta_+\subset SH^*_+(U\cup V) \ar[rr]^{\phi\hspace{1cm}} \ar[d]^{\text{Viterbo transfer}} && \coker(SH_+^{*-2}(U\cup V)\to H^{*-1}(U\cup V))\ar[d]^{\text{Viterbo transfer}}\\
	\ker \Delta_+\subset SH^*_+( V) \ar[rr]^{\phi\hspace{1cm}} && \coker(SH_+^{*-2}(V)\to H^{*-1}(V))
    }	
	$$
	Hence we have a contradiction. 
\end{proof}

\begin{proof}[Proof of Corollary \ref{cor:J}]
Assume otherwise, there is a Weinstein cobordism $U$, then $U\cup V$ is another Weinstein filling of $Y$. In particular, $\pi_1(Y)\to \pi_1(U\cup V)$ is an isomorphism and $c_1(U\cup V)=0$. Then we reach a contradiction by the same argument in the proof of Theorem \ref{thm:corb} for the first two conditions. The third one follows from Corollary \ref{cor:F}.
\end{proof}

As a direct application of Corollary \ref{cor:J}, if an ADC Weinstein domain $V$ of dimension $\ge 6$ admits a dilation and $c_1(V)=0$. Then not only $V$ does not contain $K(\pi,1)$ as closed exact Lagrangian, there is no Weinstein cobordism form $S^*K(\pi,1)$ to $\partial V$, since $T^*K(\pi,1)$ has no dilation.

On the other hand, the existence of almost Weinstein cobordism is purely homotopical and was studied in \cite{bowden2015topology}. In particular, Bowden-Crowley-Stipsicz \cite[Theorem 1.2]{bowden2015topology} showed that for dimension $2n-1\ge 5$, there exists an almost contact manifold $(M_{\max},\phi_{\max})$ such that for any almost contact manifold $(M,\phi)$ there is an almost Weinstein cobordism from $(M,\phi)$ to $(M_{\max},\phi_{\max})$. The maximal element also exists when restricted to the class of almost contact manifolds with vanishing first Chern class. In the latter case, when dimension is $5$ or $7$, the maximal element can be chosen as the standard $(S^{2n-1},\phi_{std})$. If we take the maximal element $(M_{\max},\phi_{\phi})$ for the class of contact manifolds with vanishing Chern class, since there is a Weinstein cobordism from $\emptyset$ to  $(M_{\max},\phi_{\max})$. In particular, in the homotopy class of $(M_{\max},\phi_{\max})$, there is a contact manifold $(M,\xi)$ admitting a flexible filling. Then for any contact manifold $Y$ with vanishing first Chern class and a Weinstein filling $V$ such that $SH^*(V)\ne 0$. There is an almost Weinstein cobordism from $Y$ to $M$, but there is no Weinstein cobordism by Corollary \ref{cor:J}.

There are also many examples with non-vanishing symplectic cohomology. For example one takes the Milnor fiber $W$ of the singularity $z_1^9+z_2^2+z_3^2+z_4^2=0$, then $\partial W$ as an almost contact manifold is $(S^5,\phi_{std})$. By \cite[Example 2.13]{seidel2014disjoinable}, we have $SH^*(W)\ne 0$ and admits a dilation and $\partial W$ is ADC by \cite[Lemma 4.2]{ustilovsky1999infinitely}.  Hence by Corollary \ref{cor:J}, for any Weinstein domain $V$ with $c_1(V)=0$ and no dilation. There is an almost Weinstein cobordism from $\partial V$ to $\partial W$, but there is no Weinstein cobordism from $\partial V$ to $\partial W$. In higher dimensions, let $W$ be an ADC Weinstein domain with non-vanishing symplectic cohomology and a symplectic dilation. Let $V$ be an exotic $\C^n$ with non-negative log Kodaira dimension, in particular $V$ has no symplectic dilations and the almost contact structure on $\partial V$ is the standard almost contact structure on $S^{2n-1}$. Then there is no Weinstein cobordism from $\partial V$ to $\partial W$, but there is an almost Weinstein cobordism from $\partial V=(S^{2n-1},\phi_{std})$ to $\partial W$ as $W$ is almost Weinstein fillable.
	
\section{Nonexact fillings}\label{s8}
In this section, we consider a strong filling $W$ of a contact manifold $Y$, such that $c_1(W) = 0$. Floer cohomology is still well-defined using the Novikov field $\Lambda = \{\sum_{i=1}^\infty q_i T^{\lambda_i}\st q_i \in \Q, \lambda_i\in \R, \lim \lambda_i = \infty\}$ with a $\Z$-grading, c.f. \cite{hofer1995floer}, similarly for symplectic cohomology \cite{ritter2014floer}. In particular sphere bubbles can be avoided by dimension reasons. In this section we show that positive symplectic cohomology is still defined to the extent that constructions in \S \ref{s2}, \S \ref{s3} can be generalized to strong fillings to finish the proof of Theorem \ref{thm:D}. 

\subsection{Positive symplectic cohomology}
In the nonexact case, the action functional \eqref{eqn:action} is not well-defined. In particular, there is no action separation of the symplectic cohomology into zero length part and positive length part. However, positive symplectic cohomology can still be defined due to the following lemma by Bourgeois-Oancea \cite[Proof of Proposition 5]{bourgeois2009exact}. We first recall it from \cite{cieliebak2018symplectic}. In this section, by strong filling of a contact manifold $(Y,\alpha)$, we mean a symplectic manifold $(W,\omega)$ with collar neighborhood of the boundary symplectomorphic to $(Y\times [1-\delta,1], \rd(r\alpha))$ for some $\delta>0$. The completion $\widehat{W}$ is defined to be $W\cup Y\times (1,\infty)$, with a symplectic form $\widehat{\omega}$ such that $\widehat{\omega}|_W=\omega,\widehat{\omega}|_{Y\times (1,\infty)}=\rd(r\alpha).$

\begin{lemma}{\cite[Lemma 2.3]{cieliebak2018symplectic}}\label{lemma:asm}
	Let $H=h(r)$ be a Hamiltonian on the symplectization $(Y\times \R_+,\rd(r\alpha))$. Let $u:\R_-\times S^1 \to Y\times \R_+$ be a finite energy solution to the Floer equation and $r_0:=\lim_{s\to -\infty} r\circ u(s,t)$. Assume $h''(r_0)>0$, $J$ is cylindrical convex on a neighborhood of $\overline{u(\R_-\times S^1)}$, then either there exists $(s_0,t_0)$ such that $r\circ u(s_0,t_0) > r_0$ or $r\circ u\equiv r_0$.
\end{lemma}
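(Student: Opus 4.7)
The plan is as follows. We are in the nontrivial case $\rho := r\circ u \le r_0$ everywhere (else the first alternative holds), and the goal is to show $\rho\equiv r_0$. The strategy is to derive a quasilinear elliptic inequality for $\rho$ from cylindrical convexity, apply the strong maximum principle, and finally handle the asymptotic ``boundary'' at $s=-\infty$ via exponential decay of Floer trajectories.

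First I would derive the PDE. Writing $u=(v,\rho)$ with $v\colon\R_-\times S^1\to Y$ and decomposing $T(Y\times\R_+) = \la\partial_r\ra\oplus\la R_\alpha\ra\oplus \xi$, the relations $J\partial_r = R_\alpha/\rho$, $JR_\alpha = -\rho\,\partial_r$, $J\xi = \xi$, combined with $X_H = h'(\rho)R_\alpha$, decompose the Floer equation into
\[
\partial_s\rho = \rho\bigl(\alpha(\partial_t v)-h'(\rho)\bigr),\qquad
\partial_t\rho = -\rho\,\alpha(\partial_s v),\qquad
\pi_\xi\partial_s v = -J|_\xi\,\pi_\xi\partial_t v.
\]
Differentiating and using $d\alpha$-compatibility of $J|_\xi$ on $\xi$, a short calculation yields $\Delta\log\rho = |\pi_\xi\partial_t v|^2_g - h''(\rho)\,\partial_s\rho$ where $g = d\alpha(\cdot, J|_\xi\cdot)$, which rearranges to
\[
\Delta\rho + \rho\,h''(\rho)\,\partial_s\rho - \frac{|\nabla\rho|^2}{\rho} \;\ge\; 0.
\]
Since $h''(r_0)>0$ this operator is uniformly elliptic on a collar $\{|\rho-r_0|<\delta\}$ for $\delta>0$ small. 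If $\rho$ attains $r_0$ at any interior point, the strong maximum principle for quasilinear operators (Gilbarg--Trudinger, Chapter 10) forces $\rho$ to be locally constant at $r_0$, and an open-closed continuation argument propagates $\rho\equiv r_0$ to all of $\R_-\times S^1$.

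The main obstacle will be the remaining subcase in which $\rho<r_0$ strictly throughout, yet $\rho\to r_0$ as $s\to -\infty$. Here I would invoke standard Floer asymptotic analysis (Robbin--Salamon): finite energy together with convergence to the asymptotic loop $x$ at level $r_0$ yields an exponential estimate $|u(s,\cdot)-x(\cdot)| = O(e^{\lambda s})$ for some $\lambda>0$ given by the spectral gap of the asymptotic operator. Setting $w := r_0-\rho > 0$, the inequality above rewrites as a supersolution inequality for a uniformly elliptic operator on the collar $\{\rho>r_0-\delta\}$, which contains a tail $(-\infty,S]\times S^1$. Comparing $w$ against the exponential barrier $Ce^{\lambda s}$ on this tail and applying a Hopf-type lemma at the asymptotic end $s=-\infty$ forces $w\equiv 0$, contradicting strict positivity; equivalently, one can pass to translates $u_n(s,t):=u(s-n,t)$, extract a Floer-compactness limit equal to the constant cylinder $x$ on $\R\times S^1$, and reapply the strong maximum principle on arbitrary compact subregions of $\R_-\times S^1$. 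This ``Hopf at infinity'' step is the delicate point where the finite-energy hypothesis enters essentially.
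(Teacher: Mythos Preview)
The paper does not supply its own proof of this lemma; it is quoted from Cieliebak--Oancea (who in turn attribute it to Bourgeois--Oancea), so there is no in-paper argument to compare against.

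Your PDE derivation and the interior strong maximum principle step are correct. The gap is precisely where you flag it: neither of your two proposals for the case $\rho<r_0$ strictly with $\rho\to r_0$ at $-\infty$ actually closes. The translate-and-limit approach yields only the trivial cylinder in the limit, which says nothing about the original $u$; you cannot ``reapply the strong maximum principle on arbitrary compact subregions'' because by hypothesis $\rho$ does not attain $r_0$ on any such region. The barrier/Hopf sketch is too vague to be a proof: on a half-infinite cylinder a positive supersolution tending to zero at the asymptotic end need not vanish, and there is no finite boundary point at which to invoke Hopf.

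A clean fix, using only what you have already computed, is to integrate over $S^1$. Set $\phi(s)=\int_{S^1}\log\rho\,dt$ and $\psi(s)=\int_{S^1}h'(\rho)\,dt$. Since $h''(\rho)\,\partial_s\rho=\partial_s h'(\rho)$, your identity $\Delta\log\rho=|\pi_\xi\partial_t v|^2-h''(\rho)\,\partial_s\rho$ integrates to $(\phi'+\psi)'=\int_{S^1}|\pi_\xi\partial_t v|^2\,dt\ge 0$. As $s\to-\infty$ one has $\phi'\to 0$ and $\psi\to h'(r_0)$, so $\phi'+\psi\ge h'(r_0)$ for all $s$. On the tail where $\rho$ stays in the region $\{h''>0\}$, the hypothesis $\rho\le r_0$ gives $h'(\rho)\le h'(r_0)$, hence $\psi\le h'(r_0)$ and therefore $\phi'\ge 0$. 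But $\phi\le\log r_0$ while $\lim_{s\to-\infty}\phi=\log r_0$, forcing $\phi\equiv\log r_0$ on the tail; thus $\rho\equiv r_0$ there, and your interior argument propagates this to all of $\R_-\times S^1$.
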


The combination of Lemma \ref{lemma:max} and Lemma \ref{lemma:asm} yields the following.
\begin{proposition}\label{prop:exclude}
Consider a Hamiltonian $H:\widehat{W}\to \R$, such that $H=0$ on $r\le r_0> 1$ and $H=h(r)$ for $r\ge r_0$ such that when $h'(r)\in \cS$ we have $h''(r)>0$, assume $J$ is cylindrical convex near all non-constant periodic orbits of $X_H$. Then there is no Floer solution $u$, such that $\displaystyle \lim_{s\to -\infty}u\in \cP^*(H)$ and $\displaystyle\lim_{s\to \infty}u\in W\cup Y\times (1,r_0)$.     
\end{proposition}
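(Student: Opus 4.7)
My plan is to argue by contradiction. Suppose such a Floer solution $u$ exists. The $-\infty$-asymptote $\gamma:=\lim_{s\to-\infty}u\in\cP^*(H)$ is non-constant, so it sits on some level $Y\times\{r_*\}$ with $r_*>r_0$ and $h'(r_*)\in\cS$. By hypothesis then $h''(r_*)>0$ and $J$ is cylindrical convex on an open neighborhood $Y\times(r_*-\eta,r_*+\eta)$ of this level; shrinking $\eta$ we may assume $h''>0$ holds throughout this neighborhood. Meanwhile $\lim_{s\to+\infty}u\in W\cup Y\times(1,r_0)=\{H=0\}$, so $\lim_{s\to+\infty}r\circ u\le r_0<r_*$.

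By the standard asymptotic analysis of finite-energy Floer cylinders, I would pick $s_0$ so negative that $u((-\infty,s_0]\times S^1)\subset Y\times(r_*-\eta,r_*+\eta)$ and apply Lemma \ref{lemma:asm} to $u|_{(-\infty,s_0]\times S^1}$. This yields the dichotomy: either $r\circ u\equiv r_*$ on this half-cylinder, or there exists $(s_1,t_1)\in(-\infty,s_0]\times S^1$ with $r\circ u(s_1,t_1)>r_*$. The first alternative is ruled out by unique continuation: if $r\circ u\equiv r_*$ on an open subset of $\R\times S^1$, then decomposing the Floer equation along and normal to the level set $Y\times\{r_*\}$ (using that $H$ is autonomous there and $J$ cylindrical convex) identifies $u$ locally with the trivial cylinder $(s,t)\mapsto(\gamma(t+\theta_0),r_*)$, and unique continuation for the elliptic Floer equation propagates this to all of $\R\times S^1$, contradicting $\lim_{s\to+\infty}r\circ u\le r_0<r_*$.

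The main obstacle is the second alternative, in which $r\circ u$ must exceed $r_*$ somewhere yet eventually fall below $r_0$. My plan here is to fix a regular value $\epsilon\in(0,\eta)$ of $r\circ u$ with $\epsilon<r\circ u(s_1,t_1)-r_*$, and set
\[V_\epsilon:=\{(s,t)\in\R\times S^1:r\circ u(s,t)>r_*+\epsilon\}.\]
By the asymptotic convergence at $-\infty$, $r\circ u(s,t)<r_*+\epsilon$ for $s$ sufficiently negative; combined with $r\circ u\to r_\infty\le r_0<r_*+\epsilon$ as $s\to+\infty$, this shows $\overline{V_\epsilon}$ is a compact subsurface of $\R\times S^1$ with smooth boundary in $u^{-1}(Y\times\{r_*+\epsilon\})$, while by construction $(s_1,t_1)\in V_\epsilon$.

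Since $J$ is cylindrical convex and $h''>0$ on a one-sided neighborhood of $Y\times\{r_*+\epsilon\}$, the integration-by-parts argument behind Lemma \ref{lemma:max} (which, as observed in the paragraph following that lemma, only requires exactness of the symplectic form outside the hypersurface $\{r=r_*+\epsilon\}$ and this is automatic since $\overline{V_\epsilon}$ maps into the symplectization end where $\widehat\omega=\rd(r\alpha)$) applied to $u|_{\overline{V_\epsilon}}$ forces $u(\overline{V_\epsilon})\subset Y\times\{r_*+\epsilon\}$, i.e.\ $V_\epsilon=\emptyset$. This contradicts $(s_1,t_1)\in V_\epsilon$, completing the proof. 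The hardest step is this last one, where one must verify that the integrated maximum principle can indeed be run on the compact region $\overline{V_\epsilon}$ in our nonexact setting; the key point is that $\overline{V_\epsilon}$ sits in the exact end of $\widehat{W}$, so the boundary-integral estimate of \cite[Lemma 2.2]{cieliebak2018symplectic} is unaffected by the nonexactness of the filling.
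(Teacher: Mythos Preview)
Your proof is correct and follows the paper's intended approach: the paper simply states that the proposition is a ``combination of Lemma~\ref{lemma:max} and Lemma~\ref{lemma:asm}'' without further detail, and you have supplied those details accurately.

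Two minor remarks. First, in ruling out the second alternative you construct the compact region $\overline{V_\epsilon}$ and run the integration-by-parts argument there. This is fine, but you can apply Lemma~\ref{lemma:max} more directly to the \emph{entire} Floer cylinder $u$ with hypersurface $r=r_*$: both asymptotic ends lie at $r\le r_*$, and $J$ is cylindrical convex on $(r_*,r_*+\eta)$, so the lemma (as formulated in the paper, requiring the hypotheses only on a one-sided neighborhood) immediately gives $r\circ u\le r_*$ everywhere, contradicting the existence of $(s_1,t_1)$. Second, in the first alternative your claim that the decomposition of the Floer equation ``identifies $u$ locally with the trivial cylinder'' is correct but deserves one more sentence: the decomposition alone only gives $\alpha(\partial_s a)=0$, $\alpha(\partial_t a)=h'(r_*)$, and a holomorphic equation on the $\xi$-component; the quickest way to conclude $\partial_s u=0$ is to compute the energy of $u|_{(-\infty,s_0]\times S^1}$ via Stokes (everything lies on the level $r=r_*$, so both boundary terms are $r_*h'(r_*)-h(r_*)$ and the energy vanishes). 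Then unique continuation applies as you say.
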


Motivated by Proposition \ref{prop:exclude}, instead of using the perturbed Hamiltonian $\bH$ in \S \ref{s2}. We will use the autonomous Hamiltonian before perturbation, denote the Hamiltonian by $\bF$. Then it has the following property,
\begin{enumerate}
	\item $\bF = 0$ on $W$,
	\item $\bF = h(r)$ on $\partial W \times (1,\infty)$ with $h''(r)>0$.
	\item The non-constant periodic orbits of $\bF$ are contained in levels $1<r_1<r_2<\ldots$ we will use $W^i$ to denote the domain inside $r=r^i$.
\end{enumerate}

Then by Proposition \ref{prop:exclude}, there are no Floer cylinders with negative end asymptotic to a non-constant orbit and positive end asymptotic to a point in $W$. In fact, the following holds.

\begin{proposition}\label{prop:increase}
	Using $\bF$ as the Hamiltonian, if there is a non-trivial Floer cylinder $u$ with the negative end asymptotic to an orbit inside $r=r_i$ and with the  positive end asymptotic to an orbit inside $r=r_j$, then $i<j$.
\end{proposition}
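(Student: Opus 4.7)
The plan is to argue by contradiction: assume $i \geq j$ for a non-trivial Floer cylinder $u$, and deduce that $u$ must in fact be a trivial cylinder over a Reeb orbit, contradicting the hypothesis. The argument combines the integrated maximum principle (Lemma \ref{lemma:max}) with the Bourgeois--Oancea asymptotic dichotomy (Lemma \ref{lemma:asm}) and a Stokes-theorem computation of the energy at a fixed symplectization level.

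First I would apply Lemma \ref{lemma:asm} to $u$ at the negative end: since $\bF = h(r)$ with $h''(r_i) > 0$ and $J$ is cylindrical convex near the level $r_i$, the dichotomy gives either (a) $r\circ u(s_0,t_0) > r_i$ for some interior point, or (b) $r\circ u \equiv r_i$. To rule out (a) I would fix a regular value $r_0 \in (r_i, r_{i+1})$ of $r\circ u$; the preimage $u^{-1}(Y\times[r_0,\infty))$ is a compact surface with boundary on $Y\times\{r_0\}$, because both asymptotes sit at levels $r_i, r_j \leq r_i < r_0$. Running the integration-by-parts computation from Lemma \ref{lemma:max} on this compact piece (the setup is local to the symplectization end, where $\widehat{\omega} = \rd(r\alpha)$ is exact and $\bF$ is $s$-independent) forces $\partial_s u \equiv 0$ on it, so $u$ does not enter $r > r_0$; letting $r_0 \downarrow r_i$ through regular values gives $r\circ u \leq r_i$ everywhere, and hence (b) $r\circ u \equiv r_i$.

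If $i > j$, this already contradicts $\lim_{s\to\infty} r\circ u = r_j < r_i$, so only $i = j$ remains. In that case $u$ factors as $u = (v, r_i)$ with $v:\R\times S^1 \to Y$, and I would compute the energy directly. Because $\bF$ is autonomous,
\[
E(u) \;=\; \int_{\R\times S^1} u^*\widehat{\omega} \;+\; \int_0^1 \bigl[\bF(u(+\infty,t)) - \bF(u(-\infty,t))\bigr]\,\rd t,
\]
and the bracket vanishes since both endpoints lie at $r = r_i$ where $\bF \equiv h(r_i)$. On the other hand $u^*\widehat{\omega} = r_i\, v^*\rd\alpha$, so Stokes combined with the asymptotic identity $\alpha(\partial_t x_\pm) = h'(r_i)$ (the Reeb condition at level $r_i$) gives $\int v^*\rd\alpha = h'(r_i) - h'(r_i) = 0$. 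Hence $E(u) = 0$, forcing $\partial_s u \equiv 0$, so $u$ is a trivial cylinder over a Reeb orbit at level $r_i$. This contradicts non-triviality and finishes $i = j$.

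The main obstacle is organizing step two so that the integrated maximum principle genuinely applies in the non-exact setting: $\widehat{\omega}$ is only exact on the symplectization end $Y\times(0,\infty)$, but the compact piece $u^{-1}(Y\times[r_0,\infty))$ is entirely contained in that region, and on it $\bF$, $J$ and $\lambda = r\alpha$ all have the special cylindrical form required for the Abouzaid--Seidel integration. The remaining steps are essentially mechanical once this localization is set up cleanly.
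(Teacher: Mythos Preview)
Your proposal is correct and follows essentially the same route as the paper: combine Lemma~\ref{lemma:asm} at the negative end with Lemma~\ref{lemma:max} just above $r_i$ to force $r\circ u\equiv r_i$, then use the positive asymptote (for $i>j$) or a zero-energy computation (for $i=j$) to reach a contradiction. One small point: you should take your barrier level $r_0$ in $(r_i,r_i+\epsilon_i)$ rather than in all of $(r_i,r_{i+1})$, since the admissible $J$ in this section is only assumed cylindrical convex on the intervals $(r_k-\epsilon_k,r_k+\epsilon_k)$; with that choice Lemma~\ref{lemma:max} applies directly and you need not re-run the integration by hand.
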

\begin{proof}
	If $i>j$, then the combination of Lemma \ref{lemma:max} and Lemma \ref{lemma:asm} yields a contradiction. If $i=j$, then by Lemma \ref{lemma:max} and Lemma \ref{lemma:asm}, we have $u$ is contained in $r=r_i$. Then the energy of $u$ must be zero, contradicting that $u$ is non-trivial.
\end{proof}

Using $\bF$ to define symplectic cohomology will result in a more Morse-Bott situation than $\bH$, we will use cascades to deal with degenerate $S^1$-family obits, such construction was studied in \cite{bourgeois2009exact,bourgeois2009symplectic} and more degenerate Morse-Bott cases were studied in \cite{bourgeois2002morse,diogo2019symplectic}. We will follow their constructions.  

\subsubsection{Notations and setups}
We first fix the following notations. 
\begin{enumerate}
	\item First note that every Reeb orbit of $R_{\alpha}$ corresponds to a $S^1$-family of periodic orbits of $X_{\bF}$. The set of non-constant (contractible) periodic orbits of $\bF$ has a decomposition into the union $\cup_{x\in \cP(\alpha)} S_x$, each $S_x$ can be viewed as an embedded circle in $\widehat{W}$. For each $S_x$, we fix a metric $g_x$ and a Morse function $f_x$ with a unique local maximum $\hat{x}$ and a unique local minimum $\check{x}$. With a little abuse of language, we denote $\cP^*(\bF):=\cup_x \{\hat{x},\check{x}\}$.
	\item For each $x\in \cP(\alpha)$, we fix a disk $u_x:\D \to Y$ extending one $\gamma\in S_x$ up to homotopy, which give extension to any element in $S_x$ by rotation. 
	\item $\Lambda := \{\sum_{i=1}^\infty q_i T^{\lambda_i}\st q_i \in \Q, \lambda_i\in \R, \lim \lambda_i = \infty\}$ is the Novikov field and $\Lambda_{0} := \{\sum_{i=1}^\infty q_i T^{\lambda_i}\st q_i \in \Q, \lambda_i\ge 0, \lim \lambda_i = \infty\}$ is the Novikov ring. Note that $\Lambda$ is the fraction field of the integral domain $\Lambda_{0}$. In particular $\Lambda$ is a flat $\Lambda_{0}$-module.
	\item We formally define $\cA_{\bF}(T^{a}\hat{x})=\cA_{\bF}(T^a\check{x})=a-\int_{S^1}\gamma^*\widehat{\lambda}+\int_{S^1}\bF\circ \gamma \rd t$, for $\gamma \in S_x$. And $\cA_{\bF}(T^ap)=a$ for $p\in \cC(f)$, where $f$ is an admissible Morse function on $W$.
\end{enumerate}

\subsubsection{Moduli spaces and cochain complexes}
In view of Proposition \ref{prop:exclude}, we need to modify the definition of admissible almost complex structures to the following. We first fix $\epsilon_i>0$ such that $r_i+\epsilon_i<r_{i+1}-\epsilon_{i+1}$ and $\epsilon_0>0$ such that $1+\epsilon_0<r_1-\epsilon_1$.
\begin{definition}
	A time-dependent almost complex structure $J:S^1\to \End(T\widehat{W})$ is admissible iff the following holds.
	\begin{enumerate} 
		  \item $J$ is compatible with $\widehat{\omega}$ on $\widehat{W}$.
		  \item $J$ is cylindrical convex on $\partial W \times (r_i-\epsilon_i,r_i+\epsilon_i)$ and on $(1-\epsilon_0,1+\epsilon_0)$.
		  \item $J$ is $S^1$-independent on $W$.
		  \end{enumerate}
	The class of admissible almost complex structure is again denoted by $\cJ(W)$
\end{definition}

Let $J$ be an admissible almost complex structure. First note that for any $t\in S^1$, any nontrivial closed $J_t$-holomorphic curve must be contained in $W^0$, where in particular $J_t$ does not depend $t$. This is because the curve can not be contained outside $W$ by exactness there and then we apply Lemma \ref{lemma:max} to $r=1-\epsilon_0$. We will be interested in the following uncompactified moduli spaces.
\begin{enumerate}
	\item\label{moduli1} $S_{A}(J):=\{u:\CP^1 \to \widehat{W}|  \overline{\partial}_Ju=0, [u]=A, u \text{ is simple}\}$ for $A\in H_2(W)$. It has an evaluation map $ev_0(u)=u(0)$.
	\item\label{moduli2} $M_{n,A}(J):=\{(u_1,\ldots,u_n) | u_i:\CP^1\to W, \overline{\partial}_Ju_i=0, \sum[u_i]=A, u_i \text{ is simple, and } u_i(\CP^1)\ne u_j(\CP^1), u_i(\infty)=u_{i+1}(0)\}$, it is equipped with two evaluation maps $ev_0((u_i))=u_1(0)$ and $ev_\infty((u_i))=u_n(\infty)$.
	\item\label{moduli3} $M_{S_x,S_y,A}(J):=\{u:\R\times S^1 \to \widehat{W}|\partial_su+J(\partial_t-X_{\bF})=0, \displaystyle \lim_{s\to -\infty} u \in S_x, \lim_{s\to +\infty} u \in S_y, [u\#(-u_y)\#u_x]=A \}$. It is equipped with evaluation maps $ev_-,ev_+$ to $S_x,S_y$ respectively
	\item \label{moduli4} $B_{S_x,A}(J):=\{u:\C \to \widehat{W}|\partial_su+J(\partial_t-X_{\bF})=0, u(0)\in W^0, \displaystyle \lim_{s\to +\infty} u \in S_x, [u\#(-u_x)]=A \}$. It is equipped with evaluations maps $ev_+,ev_0$ to $S_x,W$ respectively. 
\end{enumerate} 
The energy $E(u):=\frac{1}{2}\int_{\R\times S^1} ||\rd u-X_{\bF}\rd t||^2\rd s \wedge \rd t$ of $u\in M_{S_x,S_y,A}$ is given by $\omega(A)+\cA_{\bF}(S_x)-\cA_{\bF}(S_y)$. The energy for $u\in B_{s_x,A}$ is given by $\omega(A)-\cA_{\bF}(S_x)$.

To study the regularization of \eqref{moduli1}-\eqref{moduli4}, we need to consider the following universal moduli spaces. 
\begin{enumerate}
	\item Let $\bm{x}:=(x_{1}, \ldots, x_{n+1}\in \cP(\alpha)),\bm{A}:=(A_1,A_2, \ldots, A_n\in H_2(W))$ and $B\in H_2(W)$, we define $\cU^l_{\bm{x},\bm{A},B}$ be the set of $(u_1,\ldots, u_n,u,p,J)$ such that 
	\begin{enumerate}
		\item $J$ is a $C^l$ admissible almost complex structure; 
		\item $u_i\in M_{S_{x_i},S_{x_{i+1}},A_i}(J)$;
		\item $u\in S_B(J)$;
		\item $p$ is a point on domains of $u_i$.
	\end{enumerate}
	Then $\cU^l_{\bm{x},\bm{A},B}$ is equipped with an evaluation map $$EV:=ev_p\times ev_0(u) \times \prod_i ev_-(u_i)\times ev_+(u_i) \in \widehat{W}\times\widehat{W} \times S_{x_1}\times (S_{x_2})^2 \times \ldots \times (S_{x_n})^2\times S_{x_{n+1}}.$$
	\item  $\bm{x}:=(x_{1}, \ldots, x_{n}\in \cP(\alpha)),\bm{A}:=( A_1,A_2, \ldots, A_n\in H_2(W))$ and $B\in H_2(W)$, we define $\cU^l_{\bm{x},\bm{A},B,k}$ be the set of $(u_1,\ldots, u_n,u,J)$ such that 
	\begin{enumerate}
		\item $J$ is a $C^l$ admissible almost complex structure;
		\item $u_i\in M_{S_{x_{i-1}},S_{x_{i}},A_i}(J)$ if $i>1$;
		\item $u_1\in B_{S_{x_1},A_1}(J)$;
		\item $u\in M_{k,B}(J)$;
	\end{enumerate}
	Then we define $EV$ on $\cU^l_{\bm{x},\bm{A},B,k}$ by 
	\begin{align*}
	ev_0(u)\times ev_\infty(u)\times ev_0(u_1)\times ev_+(u_1) \times & \prod_{i>1} (ev_-(u_i)\times ev_+(u_i))  \\ & \in \widehat{W} \times \widehat{W} \times \widehat{W} \times(S_{x_1})^2 \times \ldots \times (S_{x_{n-1}})^2 \times S_{x_n}.
	\end{align*}
	\item Let $\bm{x}:=(x_{1}, \ldots, x_{n}\in \cP(\alpha)),\bm{A}=( A_1,A_2, \ldots, A_n\in H_2(W))$ and $B\in H_2(W)$, we define $\cV^l_{\bm{x},\bm{A},B}$ be the set of $(u_1,\ldots, u_n,u,p,J)$ such that
	\begin{enumerate}
		\item $J$ is a $C^l$ admissible almost complex structure; 
		\item $u_i\in M_{S_{x_{i-1}},S_{x_{i}},A_i}(J)$ for $i>1$;
		\item $u_1\in B_{S_{x_1},A_1}(J)$;
		\item $u\in S_B(J)$;
		\item $p$ is a point on domains of $u_i$.
	\end{enumerate}
    Then we define $EV$ on $\cV^l_{\bm{x},\bm{A},B}$ by 
    \begin{align*}
    ev_p\times ev_0(u) \times ev_0(u_1) \times ev_+(u_1)\times & \prod_{i>1} (ev_-(u_i)\times ev_+(u_i))\\ 
    & \in \widehat{W}\times\widehat{W} \times \widehat{W} \times (S_{x_1})^2 \times \ldots \times (S_{x_{n-1}})^2 \times S_{x_n}.
    \end{align*}
\end{enumerate}
The point of considering such moduli spaces is that they are cut out transversely, moreover the evaluation map is a submersion. This will allow us to perturb $J$ only without changing $(f,g)$ and $(f_x,g_x)$. The following proposition is derived from the same argument in \cite[\S 3.4]{mcduff2012j}.
\begin{proposition}\label{prop:transverseCY}
	The three universal moduli spaces considered above are cut out transversely as Banach manifolds and $EV$ are submersive.
\end{proposition}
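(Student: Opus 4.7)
The plan is to follow the standard universal moduli space approach of \cite[\S 3.4]{mcduff2012j}, adapted so that all three types of building blocks—simple $J$-holomorphic spheres, Floer cylinders with Morse-Bott asymptotics, and Floer planes with an interior marked point constraint—are handled simultaneously. First, I would set up the Banach manifold framework: choose Sobolev exponents $p>2$ and $k\ge 1$, work with $W^{k,p}$-maps (with exponentially weighted norms at cylindrical or planar ends encoding convergence to Reeb/Hamiltonian orbits along the $S^1$-family $S_x$), and view $\overline{\partial}_J$ as a smooth section of a Banach bundle over the product of the map spaces and $\cJ^l(W)$. The moduli spaces in question arise as fiber products of zero sets of such sections, together with the point/node incidence conditions.

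Second, I would verify transversality componentwise. For simple spheres the statement is exactly \cite[Proposition 3.4.2, Lemma 3.4.3]{mcduff2012j}: the universal space of simple $J$-holomorphic spheres is a Banach manifold and evaluation at any interior point is submersive, so the chain condition $u_i(\infty)=u_{i+1}(0)$ defining $M_{n,A}(J)$ (with distinct images) can be imposed by a transverse fiber product. For the Floer cylinders $M_{S_x,S_y,A}(J)$ and half-planes $B_{S_x,A}(J)$, one needs the analog of somewhere injectivity in the Morse-Bott setting, which is treated in \cite{bourgeois2009symplectic}: for a non-trivial solution there exists an injective point where $u$ is an immersion and $u(s,t)\notin u(\R\setminus\{s\},t)$, so that variations of $J$ supported in a small ball near the image of such a point surject onto the cokernel of the linearized Floer operator. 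The evaluations $ev_\pm$ at the asymptotic circles $S_x$ and $ev_0$ at an interior marked point are then submersive on the universal space by the same local perturbation argument (combined with reparametrization freedom in the $S^1$-direction of $S_x$).

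Third, I would combine the components into a single universal space. Because each $u_i$ and each sphere bubble in a chain is required to be simple with pairwise distinct image, and the Floer cylinders/planes in the configuration generically have image disjoint from the sphere images (a codimension argument on domain exceptional sets), variations of $J$ can be chosen with support near a chosen injective point on any single chosen curve without affecting the other curves. Given any tangent vector in the target of $EV$ one can achieve the required infinitesimal displacement at each evaluation point separately by such a local variation of $J$; summing these variations yields surjectivity of the full linearization and simultaneous submersivity of $EV$. A standard Sard-Smale argument then gives a dense set of regular $J$.

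The main obstacle will be the Morse-Bott asymptotic analysis required to make the somewhere-injective-point plus local $J$-perturbation machinery work for the Floer cylinders and, especially, the half-planes in $B_{S_x,A}(J)$: one has to rule out the possibility that every injective point is forced to coincide with the marked point $u(0)\in W^0$ or with a node shared with some sphere component, and one has to check that the evaluation $ev_\pm$ really is submersive along the $S^1$-direction of $S_x$ (this uses that asymptotic shifts of $u$ can be realized by perturbing $J$ near the end, together with the Morse-Bott exponential convergence estimates of \cite{bourgeois2002morse}). Once these points are established, the remainder of the proof is routine.
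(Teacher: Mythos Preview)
Your approach is correct and follows the same overall strategy as the paper (universal moduli spaces, somewhere injectivity, local $J$-variations, \cite[\S 3.4]{mcduff2012j} and \cite{bourgeois2009symplectic}), but you miss the key simplifying observation the paper uses for your ``combining'' step. By Proposition~\ref{prop:increase}, the $r$-levels of the asymptotic orbits $x_1,\ldots,x_{n+1}$ are strictly monotone along the chain of Floer cylinders; together with Lemma~\ref{lemma:max} this forces each $u_i$ to stay inside the slab $\{r\le r_{x_{i+1}}\}$, while the sphere components are confined to $W^0\subset\{r<1\}$. Consequently, near the positive end of the top cylinder $u_n$ no other component is present, and the entire configuration is somewhere injective \emph{as a single object}, not merely component-wise. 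This replaces your vague ``codimension argument on domain exceptional sets'' for separating Floer cylinders from each other and from the spheres by a direct geometric fact, and makes the standard McDuff--Salamon machinery apply immediately.

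On your flagged obstacle about submersivity of $ev_\pm$ onto $S_x$: the paper addresses this by noting that in the proof of \cite[Proposition 3.5]{bourgeois2009symplectic} the surjectivity of the universal linearized operator is obtained without using the tangent direction of $S_x$, so that tangent direction remains free and $ev_\pm$ is submersive on the universal space.
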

\begin{proof}[Sketch of the proof]
	Note that by Proposition \ref{prop:increase}, for $\bm{x}=(x_1,\ldots,x_n)$ in the universal moduli spaces, the $r$-coordinate must be strictly decreasing. Therefore the objects we consider in the universal moduli spaces are somewhere invective (not just component-wise, but as a whole object). Then the claim follows from the proof of \cite[Proposition 3.4.2]{mcduff2012j} and the proof of \cite[Proposition 3.5]{bourgeois2009symplectic}\footnote{\cite{bourgeois2009symplectic} did not claim the evaluation maps to $S_x$ is submersive, but the proof of \cite[Propoistion 3.5]{bourgeois2009symplectic} implies the fact. Since in the universal moduli space, the surjectivity of the linearized operator holds without using the tangent of $S_x$. }.
\end{proof}
The cochain complex model we will use is the cascades construction \cite{bourgeois2009exact,bourgeois2009symplectic}, in particular the cochain complex will be generated by $\cC(f)$ and $\cP^*(\bF)$ as before using the Novikov field $\Lambda$. We recall the cascades moduli spaces first. Let $\overline{x}$ denote an element in $\{\hat{x},\check{x}\}$. 
\begin{enumerate}
	\item $M_{\overline{x},\overline{y},A,m}$ denotes the set of $m$-cascades, i.e. tuples
	$$(u_1,l_{1,2},u_2,\ldots, l_{m-1,m}, u_m),$$
	where 
	\begin{enumerate}
		\item $u_i\in M_{S_{x_i},S_{x_{i+1}},A_i}/\R$ with $x_1=x, x_{m+1}=y$, and $\sum A_i=A$;
		\item $l_{i,i+1}>0$ and $\phi_{x_{i+1},l_{i,i+1}}ev_+(u_i)=ev_-(u_{i+1})$, where $\phi_{x_{i+1},t}$ is the \textit{negative} gradient flow on $S_{x_{i+1}}$;
		\item $\displaystyle \lim_{t\to -\infty}\phi_{x_1,t} ev_-(u_1)=\overline{x}$ and $\displaystyle \lim_{t\to \infty}\phi_{x_{m+1},t} ev_+(u_m)=\overline{y}.$
	\end{enumerate}
	\item For $p\in \cC(f)$, $M_{p,x,A,m}$ is defined similarly, except the first curve $u_1$ is in $B_{S_{x_1},A_1}$ with $u_1(0)$ in the stable manifold of $p$. 
\end{enumerate}
For every $x\in \cP(\alpha)$, we can assign the gradings $|\hat{x}|:= n-\mu_{CZ}(x)$ and $|\check{x}|:=n - 1 -\mu_{CZ}(x)$.  As in \S \ref{s2}, we fix a Morse-Smale pair $(f,g)$ and $\cM_{p,q}$ is a Morse moduli spaces for $p,q\in \cC(f)$. Let we denote $\cM_{\overline{x},\overline{y},A}=\cup_{m\ge 0} M_{\overline{x},\overline{y},A,m}$ and $\cM_{p,\overline{x},A}=\cup_{m\ge 0}M_{p,\overline{x},A,m}$. From Proposition \ref{prop:transverseCY}, we have the following transversality and compactness result.
\begin{proposition}\label{prop:regularCY}
	There exist a second category set $\cJ_{reg}(f,g)$ such that $\cM_{\overline{x}/p,\overline{y},A}$ is compact smooth manifold of dimension $|\overline{x}/q|-|\overline{y}|-1$ whenever it is $\le 0$.
\end{proposition}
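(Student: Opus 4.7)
The plan is to reduce the proposition to Proposition~\ref{prop:transverseCY} via a standard Sard--Smale argument, with two wrinkles: the Morse--Bott matching on each $S_x$ and the possibility of sphere bubbles carrying nontrivial Novikov weight. I will first establish transversality, then compactness, then the dimension count.

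For transversality, fix the tuple of Reeb orbits $x=x_1,\ldots,x_{m+1}=y$ and homology classes $A_1,\ldots,A_m$ with $\sum A_i = A$, together with lengths $l_{i,i+1}\ge 0$ (allowing $0$ as a degeneration). The cascade moduli space is cut out by the fibered product of the Floer trajectories $u_i \in M_{S_{x_i},S_{x_{i+1}},A_i}(J)/\mathbb{R}$ with the gradient flows $\phi_{x_{i+1},l_{i,i+1}}$ and the unstable/stable manifolds of $\overline{x},\overline{y}$ (and, in the $p$-version, the stable manifold of $p$ under $-\nabla_g f$ via $ev_0(u_1)$). By Proposition~\ref{prop:transverseCY}, the universal parametrized moduli space of such tuples (with $J$ a $C^l$ admissible almost complex structure) is a smooth Banach manifold and the evaluation map $EV$ to $\widehat{W}\times (S_{x_i})^2 \times\cdots$ is submersive. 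Since $(f,g)$ and the $(f_x,g_x)$ are fixed and Morse--Smale on finite-dimensional manifolds, the fibered product against their (un)stable manifolds and against the flow $\phi_{x_{i+1},l_{i,i+1}}$ depends smoothly on $J$ alone, so Sard--Smale produces a second category set $\cJ_{reg}(f,g)\subset \cJ(W)$ for which $\cM_{\overline{x}/p,\overline{y},A}$ is a smooth manifold. A Taubes-type argument upgrades this to smooth admissible $J$.

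The main obstacle is ruling out sphere bubbles. A bubble tree $v\in M_{k,B}(J)$ attached at some interior point $p$ of a cascade component contributes, in the parametrized universal space $\cV^l_{\bm{x},\bm{A},B}$, a constraint $ev_p = ev_0(v)$. Because $EV$ in $\cV^l_{\bm{x},\bm{A},B}$ is submersive (Proposition~\ref{prop:transverseCY}), the locus where bubbling occurs has codimension equal to the dimension of the constraint, namely $2n - \dim_{\mathbb{R}} M_{k,B}$; since $c_1(W)=0$ the sphere moduli has dimension $2n-6+2k$, so the bubbled stratum has virtual dimension $\dim \cM_{\overline{x}/p,\overline{y},A} - 2 + 2k - 6 \le -2$, hence is empty for generic $J\in \cJ_{reg}(f,g)$ in dimension $\le 0$. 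The analogous $\cU^l_{\bm{x},\bm{A},B,k}$ argument rules out breaking off of a sphere chain at an asymptotic end. Compactness then follows from Gromov--Floer compactness combined with Lemma~\ref{lemma:max} applied to the convex regions $\partial W\times (r_i-\epsilon_i,r_i+\epsilon_i)$ and $\partial W\times (1-\epsilon_0,1+\epsilon_0)$: these confine every Floer cylinder to a fixed compact subset of $\widehat{W}$, and Proposition~\ref{prop:increase} prevents cylinders from traveling into $W$ from a non-constant orbit. Novikov convergence for the $T$-exponents is automatic from the energy identity $E(u)=\omega(A)+\cA_{\bF}(S_x)-\cA_{\bF}(S_y)$.

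Finally, for the dimension count, each Floer cylinder $u_i$ contributes $\mu_{CZ}(x_i)-\mu_{CZ}(x_{i+1})+1$ before quotienting by $\mathbb{R}$, each matching condition $ev_+(u_i)=\phi_{x_{i+1},l_{i,i+1}}ev_-(u_{i+1})$ cuts down by $\dim S_{x_{i+1}}-1 = 0$ while adding one parameter $l_{i,i+1}$, and the boundary conditions at $\overline{x},\overline{y}$ (or $p,\overline{y}$) account for the Morse indices in $f_x,f_y$ (resp. $f$). Summing and subtracting the global $\mathbb{R}$-action yields $|\overline{x}|-|\overline{y}|-1$ (resp. $|p|-|\overline{y}|-1$), matching the grading conventions $|\hat{x}|=n-\mu_{CZ}(x)$ and $|\check{x}|=n-1-\mu_{CZ}(x)$. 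This completes the sketch.
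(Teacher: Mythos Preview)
Your overall strategy matches the paper's: transversality via Sard--Smale from the submersiveness in Proposition~\ref{prop:transverseCY}, then exclusion of sphere bubbles by a dimension count. However, there is a genuine gap in the bubble argument.

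You write ``a bubble tree $v\in M_{k,B}(J)$ attached at some interior point'', but a priori a limit configuration produces an arbitrary bubble tree, which may contain multiply covered spheres or several spheres with the same image. The space $M_{k,B}(J)$ is by definition a chain of \emph{simple} spheres with \emph{pairwise distinct} images, and transversality (hence the dimension count) is only available there. The paper closes this gap by invoking \cite[Proposition~6.1.2]{mcduff2012j}: any stable map of spheres can be replaced by an underlying simple stable map, and from that one extracts a chain in $M_{k,B}(J)$ (or a single simple sphere) carrying the two relevant marked points. This reduction also forces a case split: when the two marked points land on distinct non-constant simple spheres one gets the chain configuration, while if they land on a single constant sphere one gets a single simple sphere attached to the cascade. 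The paper accordingly isolates three model configurations and computes their expected dimensions as $|p|-|\overline{y}|-3-2(k-1)$, $|\overline{x}|-|\overline{y}|-3$, and $|p|-|\overline{y}|-3$, all of which are negative when the ambient moduli space has dimension $\le 0$.

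Your dimension formula ``$\dim\cM - 2 + 2k - 6$'' does not match this and is not derived; in particular the $2k$ term has the wrong sign (more spheres in the chain should \emph{decrease} the expected dimension after imposing the matching constraints, as in the paper's $-2(k-1)$). You should redo this count once the reduction to simple configurations is in place. The remaining points (confinement via Lemma~\ref{lemma:max}, ruling out Morse and fiber-product breakings by the transversality of the $\Delta_{S_{x_i}}$ preimages, and the final index sum) are handled essentially as in the paper.
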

\begin{proof}
	On every $S_x$, there is submanifold $H_x:=\{(x,\phi_t(x))|t>0\}\subset S_x\times S_x$, where $\phi_t$ is the negative gradient flow on $S_x$. Let $S_p,U_p$ be the stable manifold and unstable manifold of $p\in \cC(f),\cC(f_x)$. Since $EV$ on universal moduli spaces are submersive, then the following space are Banach manifolds
	\begin{enumerate}
		\item\label{m1} $EV_{\cU^l_{\bm{x},\bm{A},B}}^{-1}(\Delta_{\widehat{W}}\times S_{\overline{x}_1}\times \prod_{i>1} H_{x_i}(\text{ or }\Delta_{S_{x_i}} )\times U_{\overline{x}_{n+1}})$;
		\item $EV_{\cU^l_{\bm{x},\bm{A},B,k}}^{-1}(S_p\times \Delta_{\widehat{W}} \times \prod_{i<n} H_{x_i}(\text{ or }\Delta_{S_{x_i}} )\times U_{\overline{x}_n})$, for $p\in \cC(f)$;
		\item\label{m3} $EV_{\cV^l_{\bm{x},\bm{A},B}}^{-1}(\Delta_{\widehat{W}}\times S_p \times \prod_{i<n} H_{x_i}(\text{ or }\Delta_{S_{x_i}} )\times U_{\overline{x}_n})$, for $p\in \cC(f)$.
	\end{enumerate}
	Then we can pick any $J$ in the regular value of the projections of above spaces to the space of $C^l$ admissible almost complex structure. Then transversality on $\cM_{*,*,A}$ is verified. It is sufficient to prove compactness. First by a dimension argument, it can not have a Morse breaking, or a fiber product breaking at $S_x$ when $|\overline{x}/p|-|\overline{y}|\le 1$, by the transversality of the preimage of $\Delta_{S_{x_i}}$  in \eqref{m1}-\eqref{m3} above.  For the general case in the compactification, we still need to consider is the following
	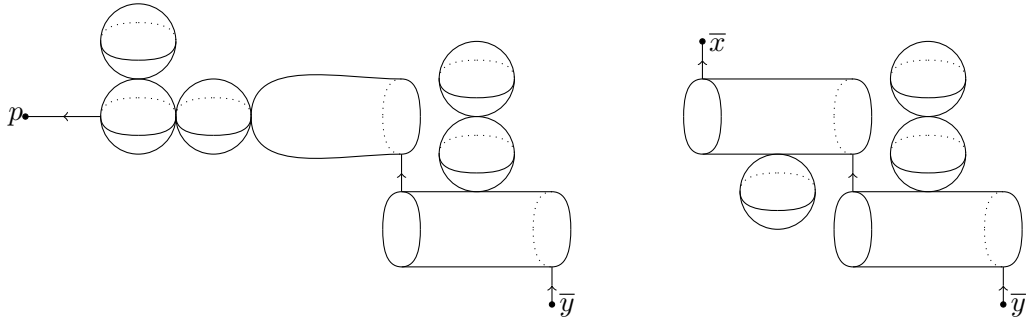
\begin{figure}[H]
		\begin{tikzpicture}
		\draw (0,0)--(0.5,0);
		\draw[<-] (0.5,0)--(1,0);
		\draw (0,0) circle[radius=1pt];
		\fill (0,0) circle[radius=1pt];
		\node at (-0.13,0) {$p$};
		
		\draw (1.5,0) circle [radius=0.5];
		\draw (1,0) to [out=270,in=180] (1.5,-0.25);
		\draw (1.5,-0.25) to [out=0,in=270] (2,0);
		\draw[dotted] (1,0) to [out=90,in=180] (1.5,0.25);
		\draw[dotted] (1.5,0.25) to [out=0,in=90] (2,0);
		
		\draw (2.5,0) circle [radius=0.5];
		\draw (2,0) to [out=270,in=180] (2.5,-0.25);
		\draw (2.5,-0.25) to [out=0,in=270] (3,0);
		\draw[dotted] (2,0) to [out=90,in=180] (2.5,0.25);
		\draw[dotted] (2.5,0.25) to [out=0,in=90] (3,0);
		
		\draw (1.5,1) circle [radius=0.5];
		\draw (1,1) to [out=270,in=180] (1.5,0.75);
		\draw (1.5,0.75) to [out=0,in=270] (2,1);
		\draw[dotted] (1,1) to [out=90,in=180] (1.5,1.25);
		\draw[dotted] (1.5,1.25) to [out=0,in=90] (2,1);
		
		\draw (3,0) to [out=90, in = 180] (5,0.5);
		\draw (3,0) to [out=270, in = 180] (5,-0.5);
		
		\draw[dotted] (5,0.5) to [out=180, in=90] (4.75, 0);
		\draw[dotted] (4.75, 0) to [out=270, in=180] (5,-0.5);
		\draw (5,0.5) to [out=0, in = 90] (5.25,0);
		\draw (5.25,0) to [out = 270, in = 0] (5,-0.5);
		
		\draw (5,-0.5)--(5,-0.75);
		\draw[<-] (5,-0.75)--(5,-1);
		
		\draw (5,-1) -- (7,-1);
		\draw (5,-2) -- (7,-2);
		
		\draw (5,-1) to [out=180, in=90] (4.75, -1.5);
		\draw (4.75, -1.5) to [out=270, in=180] (5,-2);
		\draw (5,-1) to [out=0, in = 90] (5.25,-1.5);
		\draw (5.25,-1.5) to [out = 270, in = 0] (5,-2);
		
		\draw[dotted] (7,-1) to [out=180, in=90] (6.75, -1.5);
		\draw[dotted] (6.75, -1.5) to [out=270, in=180] (7,-2);
		\draw (7,-1) to [out=0, in = 90] (7.25,-1.5);
		\draw (7.25,-1.5) to [out = 270, in = 0] (7,-2);
		
		\draw (7,-2)--(7,-2.25);
		\draw[<-] (7,-2.25)--(7,-2.5);
		
		\draw (7,-2.5) circle[radius=1pt];
		\fill (7,-2.5) circle[radius=1pt];
		\node at (7.2,-2.5) {$\overline{y}$};
		
		\draw (6,-0.5) circle [radius=0.5];
		\draw (5.5,-0.5) to [out=270,in=180] (6,-0.75);
		\draw (6,-0.75) to [out=0,in=270] (6.5,-0.5);
		\draw[dotted] (5.5,-0.5) to [out=90,in=180] (6,-0.25);
		\draw[dotted] (6,-0.25) to [out=0,in=90] (6.5,-0.5);
		
		\draw (6,0.5) circle [radius=0.5];
		\draw (5.5,0.5) to [out=270,in=180] (6,0.25);
		\draw (6,0.25) to [out=0,in=270] (6.5,0.5);
		\draw[dotted] (5.5,0.5) to [out=90,in=180] (6,0.75);
		\draw[dotted] (6,0.75) to [out=0,in=90] (6.5,0.5);
		
		\draw[->] (9,0.5)--(9,0.75);
		\draw (9,0.75)--(9,1);
		
		\draw (9,1) circle[radius=1pt];
		\fill (9,1) circle[radius=1pt];
		\node at (9.2,1) {$\overline{x}$};

		\draw (9,0.5) -- (11,0.5);
		\draw (9,-0.5) -- (11,-0.5);
		
		\draw (9,0.5) to [out=180, in=90] (8.75, 0);
		\draw (8.75, 0) to [out=270, in=180] (9,-0.5);
		\draw (9,0.5) to [out=0, in = 90] (9.25,0);
		\draw (9.25,0) to [out = 270, in = 0] (9,-0.5);
		
		\draw[dotted] (11,0.5) to [out=180, in=90] (10.75, 0);
		\draw[dotted] (10.75, 0) to [out=270, in=180] (11,-0.5);
		\draw (11,0.5) to [out=0, in = 90] (11.25,0);
		\draw (11.25,0) to [out = 270, in = 0] (11,-0.5);

		\draw (10,-1) circle [radius=0.5];
		\draw (9.5,-1) to [out=270,in=180] (10,-1.25);
		\draw (10,-1.25) to [out=0,in=270] (10.5,-1);
		\draw[dotted] (9.5,-1) to [out=90,in=180] (10,-0.75);
		\draw[dotted] (10,-0.75) to [out=0,in=90] (10.5,-1);
		
		\draw (11,-0.5)--(11,-0.75);
		\draw[<-] (11,-0.75)--(11,-1);

		\draw (11,-1) -- (13,-1);
		\draw (11,-2) -- (13,-2);
		
		\draw (11,-1) to [out=180, in=90] (10.75, -1.5);
		\draw (10.75, -1.5) to [out=270, in=180] (11,-2);
		\draw (11,-1) to [out=0, in = 90] (11.25,-1.5);
		\draw (11.25,-1.5) to [out = 270, in = 0] (11,-2);
		
		\draw[dotted] (13,-1) to [out=180, in=90] (12.75, -1.5);
		\draw[dotted] (12.75, -1.5) to [out=270, in=180] (13,-2);
		\draw (13,-1) to [out=0, in = 90] (13.25,-1.5);
		\draw (13.25,-1.5) to [out = 270, in = 0] (13,-2);
		
		\draw (13,-2)--(13,-2.25);
		\draw[<-] (13,-2.25)--(13,-2.5);
		
		\draw (13,-2.5) circle[radius=1pt];
		\fill (13,-2.5) circle[radius=1pt];
		\node at (13.2,-2.5) {$\overline{y}$};
		
		\draw (12,-0.5) circle [radius=0.5];
		\draw (11.5,-0.5) to [out=270,in=180] (12,-0.75);
		\draw (12,-0.75) to [out=0,in=270] (12.5,-0.5);
		\draw[dotted] (11.5,-0.5) to [out=90,in=180] (12,-0.25);
		\draw[dotted] (12,-0.25) to [out=0,in=90] (12.5,-0.5);
		
		\draw (12,0.5) circle [radius=0.5];
		\draw (11.5,0.5) to [out=270,in=180] (12,0.25);
		\draw (12,0.25) to [out=0,in=270] (12.5,0.5);
		\draw[dotted] (11.5,0.5) to [out=90,in=180] (12,0.75);
		\draw[dotted] (12,0.75) to [out=0,in=90] (12.5,0.5);
		\end{tikzpicture}	
		\caption{The general curves, the arrows stand for gradient flow.}\label{fig:general}
	\end{figure}
	Note that for the figure in the left, the link of spheres connecting Morse trajectory and Floer cylinder should be understood as Floer cylinders, hence they are $J$-curves without quotienting $S^1$. We will not emphasize the difference, since we will rule out such configuration altogether. Note that the existence of curves above in the compactification of $\cM_{*,*,A}$ will lead to existence of one of the following configurations.
	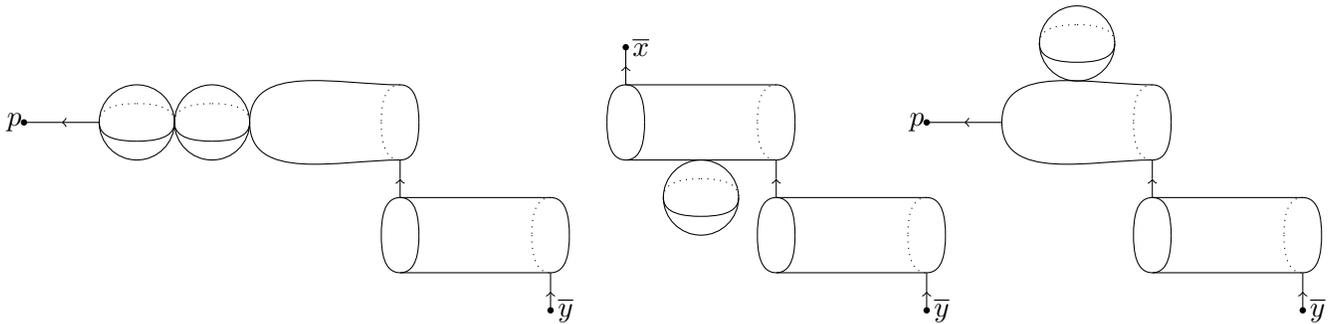
\begin{figure}[H]
	\begin{tikzpicture}
	\draw (1,0)--(1.5,0);
	\draw[<-] (1.5,0)--(2,0);
	\draw (1,0) circle[radius=1pt];
	\fill (1,0) circle[radius=1pt];
	\node at (0.87,0) {$p$};
	
	\draw (2.5,0) circle [radius=0.5];
	\draw (2,0) to [out=270,in=180] (2.5,-0.25);
	\draw (2.5,-0.25) to [out=0,in=270] (3,0);
	\draw[dotted] (2,0) to [out=90,in=180] (2.5,0.25);
	\draw[dotted] (2.5,0.25) to [out=0,in=90] (3,0);
	
	\draw (3.5,0) circle [radius=0.5];
	\draw (3,0) to [out=270,in=180] (3.5,-0.25);
	\draw (3.5,-0.25) to [out=0,in=270] (4,0);
	\draw[dotted] (3,0) to [out=90,in=180] (3.5,0.25);
	\draw[dotted] (3.5,0.25) to [out=0,in=90] (4,0);

	\draw (4,0) to [out=90, in = 180] (6,0.5);
	\draw (4,0) to [out=270, in = 180] (6,-0.5);
	
	\draw[dotted] (6,0.5) to [out=180, in=90] (5.75, 0);
	\draw[dotted] (5.75, 0) to [out=270, in=180] (6,-0.5);
	\draw (6,0.5) to [out=0, in = 90] (6.25,0);
	\draw (6.25,0) to [out = 270, in = 0] (6,-0.5);
	
	\draw (6,-0.5)--(6,-0.75);
	\draw[<-] (6,-0.75)--(6,-1);
	
	\draw (6,-1) -- (8,-1);
	\draw (6,-2) -- (8,-2);
	
	\draw (6,-1) to [out=180, in=90] (5.75, -1.5);
	\draw (5.75, -1.5) to [out=270, in=180] (6,-2);
	\draw (6,-1) to [out=0, in = 90] (6.25,-1.5);
	\draw (6.25,-1.5) to [out = 270, in = 0] (6,-2);
	
	\draw[dotted] (8,-1) to [out=180, in=90] (7.75, -1.5);
	\draw[dotted] (7.75, -1.5) to [out=270, in=180] (8,-2);
	\draw (8,-1) to [out=0, in = 90] (8.25,-1.5);
	\draw (8.25,-1.5) to [out = 270, in = 0] (8,-2);
	
	\draw (8,-2)--(8,-2.25);
	\draw[<-] (8,-2.25)--(8,-2.5);
	
	\draw (8,-2.5) circle[radius=1pt];
	\fill (8,-2.5) circle[radius=1pt];
	\node at (8.2,-2.5) {$\overline{y}$};
	\draw[->] (9,0.5)--(9,0.75);
	\draw (9,0.75)--(9,1);
	
	\draw (9,1) circle[radius=1pt];
	\fill (9,1) circle[radius=1pt];
	\node at (9.2,1) {$\overline{x}$};

	\draw (9,0.5) -- (11,0.5);
	\draw (9,-0.5) -- (11,-0.5);
	
	\draw (9,0.5) to [out=180, in=90] (8.75, 0);
	\draw (8.75, 0) to [out=270, in=180] (9,-0.5);
	\draw (9,0.5) to [out=0, in = 90] (9.25,0);
	\draw (9.25,0) to [out = 270, in = 0] (9,-0.5);
	
	\draw[dotted] (11,0.5) to [out=180, in=90] (10.75, 0);
	\draw[dotted] (10.75, 0) to [out=270, in=180] (11,-0.5);
	\draw (11,0.5) to [out=0, in = 90] (11.25,0);
	\draw (11.25,0) to [out = 270, in = 0] (11,-0.5);

	\draw (10,-1) circle [radius=0.5];
	\draw (9.5,-1) to [out=270,in=180] (10,-1.25);
	\draw (10,-1.25) to [out=0,in=270] (10.5,-1);
	\draw[dotted] (9.5,-1) to [out=90,in=180] (10,-0.75);
	\draw[dotted] (10,-0.75) to [out=0,in=90] (10.5,-1);
	
	\draw (11,-0.5)--(11,-0.75);
	\draw[<-] (11,-0.75)--(11,-1);

	\draw (11,-1) -- (13,-1);
	\draw (11,-2) -- (13,-2);
	
	\draw (11,-1) to [out=180, in=90] (10.75, -1.5);
	\draw (10.75, -1.5) to [out=270, in=180] (11,-2);
	\draw (11,-1) to [out=0, in = 90] (11.25,-1.5);
	\draw (11.25,-1.5) to [out = 270, in = 0] (11,-2);
	
	\draw[dotted] (13,-1) to [out=180, in=90] (12.75, -1.5);
	\draw[dotted] (12.75, -1.5) to [out=270, in=180] (13,-2);
	\draw (13,-1) to [out=0, in = 90] (13.25,-1.5);
	\draw (13.25,-1.5) to [out = 270, in = 0] (13,-2);
	
	\draw (13,-2)--(13,-2.25);
	\draw[<-] (13,-2.25)--(13,-2.5);
	
	\draw (13,-2.5) circle[radius=1pt];
	\fill (13,-2.5) circle[radius=1pt];
	\node at (13.2,-2.5) {$\overline{y}$};
	
	\draw (13,0)--(13.5,0);
	\draw[<-] (13.5,0)--(14,0);
	\draw (13,0) circle[radius=1pt];
	\fill (13,0) circle[radius=1pt];
	\node at (12.87,0) {$p$};
	
	\draw (14,0) to [out=90, in = 180] (16,0.5);
	\draw (14,0) to [out=270, in = 180] (16,-0.5);
	
	\draw (15,1.05) circle[radius=0.5];
	\draw (14.5,1.05) to [out=270,in=180] (15,0.8);
	\draw (15,0.8) to [out=0,in=270] (15.5,1.05);
	\draw[dotted] (14.5,1.05) to [out=90,in=180] (15,1.3);
	\draw[dotted] (15,1.3) to [out=0,in=90] (15.5,1.05);
	
	\draw[dotted] (16,0.5) to [out=180, in=90] (15.75, 0);
	\draw[dotted] (15.75, 0) to [out=270, in=180] (16,-0.5);
	\draw (16,0.5) to [out=0, in = 90] (16.25,0);
	\draw (16.25,0) to [out = 270, in = 0] (16,-0.5);
	
	\draw (16,-0.5)--(16,-0.75);
	\draw[<-] (16,-0.75)--(16,-1);
	
	\draw (16,-1) -- (18,-1);
	\draw (16,-2) -- (18,-2);
	
	\draw (16,-1) to [out=180, in=90] (15.75, -1.5);
	\draw (15.75, -1.5) to [out=270, in=180] (16,-2);
	\draw (16,-1) to [out=0, in = 90] (16.25,-1.5);
	\draw (16.25,-1.5) to [out = 270, in = 0] (16,-2);
	
	\draw[dotted] (18,-1) to [out=180, in=90] (17.75, -1.5);
	\draw[dotted] (17.75, -1.5) to [out=270, in=180] (18,-2);
	\draw (18,-1) to [out=0, in = 90] (18.25,-1.5);
	\draw (18.25,-1.5) to [out = 270, in = 0] (18,-2);
	
	\draw (18,-2)--(18,-2.25);
	\draw[<-] (18,-2.25)--(18,-2.5);
	
	\draw (18,-2.5) circle[radius=1pt];
	\fill (18,-2.5) circle[radius=1pt];
	\node at (18.2,-2.5) {$\overline{y}$};
	\end{tikzpicture}	
	\caption{Special components.}\label{fig:reduce}
\end{figure}
The single sphere bubble above is simple and the link of spheres in the left is in $M_{n,A}(J)$. To see that we can reduce to the above three case, if we have a curve in left of Figure \ref{fig:general}, then the bubble tree in the left can be reduced to a simple stable map by \cite[Proposition 6.1.2]{mcduff2012j}. We can pick out the link of spheres containing the two marked points and stabilize it by collapse unstable constant spheres. If the two marked points are on non-constant sphere(s), then we get the first case in Figure \ref{fig:reduce}. If the two marked points are on a constant sphere, then we get the third case in Figure \ref{fig:reduce}. The other cases in Figure \ref{fig:general} are similar.  

By construction, the moduli space in Figure \ref{fig:reduce} are cut out transversely. The expected dimensions after module the reparametrization action are
$|p|-|\overline{y}|-3-2(k-1),  |\overline{x}|-|\overline{y}|-3, |p|-|\overline{y}|-3$ respectively, where $k$ is the number of spheres. All of them are negative. Hence $\cM_{*,*,A}$ is compact when expected dimension  $\le 0$ (also holds for dimension $\le 1$ when adding Morse breaking and fiber product breaking). The proof of boosting $C^l$ almost complex structure to $C^\infty$ follows from the same argument in \cite[Theorem 5.1]{hofer1995floer}.
\end{proof}

Then for $J\in \cJ^{reg}(f,g)$, we can define cochain complexes $C(\bF,J,f),C_+(\bF,J),C_0(f)$ are free $\Lambda$-module generated by both $\cP^*(\bF)$ and $\cC(f)$, only $\cP^*(\bF)$, or only $C_0(f)$ respectively. And the differential is defined to be 
$$dy:= \sum_{\dim \cM_{x,y,A}=0}\#\cM_{x,y,A}T^{\omega(A)}x.$$
\begin{proposition}\label{prop:d}
	$d$ is a differential.
\end{proposition}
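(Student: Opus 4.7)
The plan is the standard one: extend the transversality and compactness arguments of Proposition \ref{prop:regularCY} one dimension higher, identify the codimension-one boundary strata of $\cM_{\overline{x}/p,\overline{y},A}$ when its virtual dimension equals $1$, and read off the relation $d^2=0$ from there. First I would enlarge the regular set so that for $J\in\cJ_{reg}(f,g)$ (possibly a smaller second-category subset) every moduli space $\cM_{\overline{x}/p,\overline{y},A}$ of virtual dimension $\le 1$ is cut out transversely. The universal moduli spaces in \eqref{m1}--\eqref{m3} of the proof of Proposition \ref{prop:regularCY} are already submersive under $EV$, so this is a Sard--Smale step; one only has to add, as cutting conditions, the higher dimensional preimages of $H_x$, $\Delta_{S_x}$, $S_p$, $U_{\overline{x}}$.

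Second, I would enumerate the boundary of a 1-dimensional component of $\cM_{\overline{x},\overline{y},A}$ (similarly with $p$ in place of $\overline{x}$). The potential contributions are: (i) breaking of a Floer cascade at an intermediate $\overline{z}\in\cP^*(\bF)$, giving $\cM_{\overline{x},\overline{z},A_1}\times\cM_{\overline{z},\overline{y},A_2}$ with $A_1+A_2=A$; (ii) a Morse breaking on $S_z$ at either $\hat z$ or $\check z$, which again produces a product of the same shape because cascades by definition concatenate through the critical points of $f_z$; (iii) a Morse breaking at a critical point of $f$ for the terms of the form $\cM_{p,\overline{y},A}$; (iv) sphere bubbling or fiber-product degenerations of the kind depicted in Figure \ref{fig:reduce}. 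The dimension count already performed in Proposition \ref{prop:regularCY} shows that (iv) has strictly negative expected dimension for generic $J$ (using $c_1(W)=0$ and the fact that limit objects are somewhere injective by Proposition \ref{prop:increase}), so these strata are empty. Items (i)--(iii) give precisely the terms of $d^2$, and the coherent orientations (set up as in Appendix \ref{app}) will ensure they cancel in pairs.

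Third, one must verify that $d$ and $d^2$ make sense as endomorphisms of the Novikov completion. For a fixed generator $y$ and a fixed real number $E$, Gromov--Floer compactness together with the energy identity $E(u)=\omega(A)+\cA_{\bF}(S_x)-\cA_{\bF}(S_y)$ implies that the set of $(x,A)$ with $\cM_{x,y,A}$ non-empty and $\omega(A)\le E$ is finite. Hence the formal sum $\sum_{A}\#\cM_{x,y,A}\,T^{\omega(A)}x$ has, for each $E$, only finitely many terms with exponent $\le E$; this is exactly the convergence required by $\Lambda$. The same bound applies to broken configurations contributing to $d^2$, so $d^2$ is a well-defined operator. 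Combining with step two gives $d^2=0$ on $C(\bF,J,f)$. By Proposition \ref{prop:increase}, Floer cylinders cannot exit $W$ to return to a constant orbit, so $C_0(f)\subset C(\bF,J,f)$ is a subcomplex (with $d|_{C_0(f)}$ the Morse--Novikov differential of $f$, weighted by $T^{\omega(A)}$ from possible sphere bubbles attached at an interior point, which in our generic setup are also eliminated by the same dimension argument), and $C_+(\bF,J)$ is the corresponding quotient, so $d^2=0$ on each of the three complexes.

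The main obstacle I expect is step two, specifically ruling out the bubbling strata of Figure \ref{fig:reduce} in codimension one. The dimension counts are tight: one must check that the evaluation maps on the universal spaces $\cU^l_{\bm x,\bm A,B}$, $\cU^l_{\bm x,\bm A,B,k}$ and $\cV^l_{\bm x,\bm A,B}$ remain submersive after intersecting with the extra incidence conditions needed to detect a codimension-one degeneration (one marked point on a ghost/non-ghost component plus matching conditions on asymptotics and on the Morse data on each $S_x$), and that the resulting virtual dimensions are strictly negative. This is where the hypothesis $c_1(W)=0$ together with the simpleness extracted via \cite[Proposition 6.1.2]{mcduff2012j} and the strict $r$-monotonicity from Proposition \ref{prop:increase} are crucial. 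Once this is in place, the algebraic conclusion $d^2=0$ is automatic.
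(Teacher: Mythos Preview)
Your proposal is correct in outline and follows the standard direct route, but the paper's proof is far more compressed: it observes that $d$ increases $\cA_{\bF}$ and preserves the length filtration from $r_i$ (Proposition \ref{prop:increase}), so it suffices to verify $d^2=0$ on generators inside a fixed $W^i$ and within a bounded action window; on such a window compactness is automatic, and the entire cascade boundary analysis (your steps one and two, including the $l\to 0$ and $l\to\infty$ degenerations and the gluing) is outsourced wholesale to \cite[Theorem 3.7]{bourgeois2009symplectic}. In effect, the paper uses the filtration to reduce to a finite problem and then cites Bourgeois--Oancea, whereas you re-derive the codimension-one boundary structure by hand and separately argue Novikov convergence. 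Both approaches work; the paper's buys brevity because the filtration eliminates the infinitary issues up front and the cascade gluing is already in the literature, while yours makes the mechanism explicit and checks that the sphere-bubble strata of Figure \ref{fig:reduce} stay excluded in codimension one (which indeed follows from the same dimension count, now with $|\overline{x}/p|-|\overline{y}|=2$).

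One small correction: your parenthetical about $d|_{C_0(f)}$ being a ``Morse--Novikov differential weighted by $T^{\omega(A)}$ from possible sphere bubbles'' is off. The moduli spaces $\cM_{p,q}$ for $p,q\in\cC(f)$ are pure gradient trajectories of $f$ on $W$ (equation \eqref{eqn:2}); no holomorphic curves enter, so there is no bubbling and no Novikov weight beyond $T^0$. The restriction of $d$ to $C_0(f)$ is the ordinary Morse differential tensored with $\Lambda$.
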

\begin{proof}
	Since the differential increases action $\cA_{\bF}$ and preserve the length filtration from $r_i$ by Proposition \ref{prop:increase}, it is sufficient to verify $d^2=0$ on orbits inside $W^i$ within a bounded action window. In such case, we have compactness and \cite[Theorem 3.7]{bourgeois2009symplectic} can be applied to verify $d^2=0$.  
\end{proof}
As usual, $C(\bF,J,f),C_+(\bF,J),C_0(f)$ form a short exact sequence, which induces a long exact sequence. We also have subcomplexes $C^{r_i}$, $C^{r_i}_+$ generated by orbits inside $W^i$. By definition, $C$, $C_+$ are direct limits of them. On the other hand, we have cochain complexes by action truncation. In particular, we will consider $C^{r_i,\ge 0}$ and $C^{r_i,\ge 0}_+$, i.e. the complex only with elements of non-negative action inside $W^i$. $C^{r_i,\ge 0}$ and $C^{r_i,\ge 0}_+$ are $\Lambda_0$ modules, and we have $C^{r_i,\ge 0}\otimes_{\Lambda_0}\Lambda = C^{r_i}$ and $C^{r_i,\ge 0}_+\otimes_{\Lambda_0}\Lambda = C^{r_i}_+$. Since $\Lambda$ is a flat $\Lambda_0$-module, the relation pass to cohomology. We may consider even smaller cochain complex $C^{r_i,[0,j]}_+$ of elements with action in $[0,j]$. On such complex, we have an energy control, hence neck-stretching can be applied.

On the other hand, like the exact case, symplectic cohomology can also be defined as a direct limit using non-degenerate Hamiltonian with finite slope when sphere bubbles can be avoided (e.g. $c_1=0$), see \cite{ritter2014floer}. 

\begin{proposition}\label{prop:cascadeCY}
	There is an isomorphism $SH^*(W;\Lambda)\to H^*(C(\bH,J,f))$ such that the following diagram commute
	$$\xymatrix{
		QH^*(W) \ar[r]\ar[d] & SH^*(W;\Lambda) \ar[d] \\
		H^*(C_0(f)) \ar[r] & H^*(C(\bF,J,f))
	}	
	$$
\end{proposition}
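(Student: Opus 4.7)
The plan is to compare the cascades complex $C(\bF,J,f)$ with the Floer complex for a non-degenerate linear Hamiltonian via a continuation argument, parallel to Proposition \ref{prop:cascades} in the exact setting, with appropriate modifications to handle the Novikov coefficient and the Morse-Bott situation. First, I will pick a Hamiltonian $H$ that agrees with $\bF$ outside a small neighborhood of each circle $S_x$ of non-constant periodic orbits and that is a $C^2$-small Morse function on $W$, and perturbed within each neighborhood of $S_x$ using a time-dependent profile (as in \cite{bourgeois2009symplectic}) so that each $S_x$ is replaced by a pair of non-degenerate orbits corresponding to the critical points $\hat{x}, \check{x}$ of $f_x$. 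Ensuring $H \le \bF$ everywhere and that both $H$ and $\bF$ agree on the cylindrical region where $h''(r)>0$, the Floer complex of $H$ (defined via the direct limit approach of \cite{ritter2014floer}, using $\Lambda$-coefficients and noting $c_1(W)=0$ precludes sphere bubbling by dimension) computes $SH^*(W;\Lambda)$ together with the decomposition $H^*(W)\oplus C_+$.

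Next, I would build a monotone homotopy $H_s$ with $\partial_s H_s \le 0$, interpolating $\bF$ (for $s \ll 0$) to $H$ (for $s \gg 0$), together with a homotopy of almost complex structures $J_s$, all of which are cylindrical convex on the narrow shells $(r_i-\epsilon_i, r_i+\epsilon_i)$. I define continuation moduli spaces of three types, following the cascades pattern in Proposition \ref{prop:cascades}: Floer cylinders connecting an orbit of $\bF$ (or a Morse point of $f$, glued via a half-gradient trajectory as in \eqref{eqn:mix}) to a non-constant orbit of $H$, plus cylinders landing in the $C^2$-small Morse region of $H$ on $W$. The integrated maximum principle (Lemma \ref{lemma:max}) applied on each shell $(r_i-\epsilon_i, r_i+\epsilon_i)$, combined with Lemma \ref{lemma:asm} (to exclude solutions dropping into an irrelevant lower sublevel), will confine these continuation trajectories to finite $r$-windows, so that with $\Lambda$-coefficients the relevant counts are finite (each truncation of action is $\Lambda_0$-finite, and $\Lambda$ is flat over $\Lambda_0$). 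Transversality is obtained exactly as in Proposition \ref{prop:regularCY}, using that the somewhere-injective property of continuation cylinders holds by Proposition \ref{prop:increase}-type monotonicity in $r$.

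The continuation map $\Phi \colon C_*(H) \to C(\bF,J,f)$ respects the splitting into the zero-action/Morse piece and the positive piece, and maps $C^{r_i,[0,j]}$ into $C^{r_i,[0,j']}$ for suitable $j'$; hence it induces a map of the direct systems (over both length and action filtrations). To show $\Phi$ is a quasi-isomorphism I will use a spectral sequence / two-sided filtration argument: on the positive-length portion between successive $r$-slabs the homotopy $H_s$ and $\bF$ agree on the cylindrical collar, so $\Phi$ is the identity on the associated graded (up to trivial cylinders over periodic orbits) by standard neck estimates, and on the zero-length portion (inside $W\cup \partial W\times[1,r_1]$) for $H$ sufficiently $C^2$-small the moduli spaces degenerate to ordinary Morse continuation in the sense of \cite{floer1995transversality}. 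Combining the two (via a five-lemma on the tautological long exact sequences) yields the isomorphism, and the square with $QH^*(W)\to SH^*(W;\Lambda)$ commutes by the fact that $\Phi$ is identity on $C_0(f)$.

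The main obstacle is the Morse-Bott compactness for the continuation trajectory: unlike in Proposition \ref{prop:cascades}, the source complex $C(\bF,J,f)$ is built from cascades, so a limiting continuation trajectory may acquire a cascade at the negative end along some $S_x$, and one must prove that the compactified moduli space carries a boundary structure matching the cascades differential on $C(\bF,J,f)$ together with the Floer differential on $C_*(H)$. This requires a careful gluing in the Morse-Bott regime, following \cite{bourgeois2009symplectic,bourgeois2002morse,diogo2019symplectic}, together with the same transversality scheme used in Proposition \ref{prop:regularCY} applied to universal moduli spaces of continuation cylinders and auxiliary sphere/plane components. Modulo this technical step, all remaining verifications (functoriality on the length filtration, compatibility with the long exact sequence in the proposition) are formal.
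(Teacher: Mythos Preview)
Your outline is broadly correct and parallel in spirit to the paper's argument, but the paper takes a somewhat different and more modular route. You perturb $\bF$ in one shot to a fully non-degenerate infinite-slope Hamiltonian $H$ (time-dependent near each $S_x$ and $C^2$-small Morse on $W$), then build a single continuation map $C_*(H)\to C(\bF,J,f)$. The paper instead perturbs \emph{only on $W$}: it introduces an intermediate $F$ with $F=\bF$ on the entire cylindrical end (so the non-constant orbits remain the Morse--Bott circles $S_x$ and the cascades structure is unchanged there), proves the cascades-to-cascades continuation $C(F)\to C(\bF)$ is a quasi-isomorphism via the $r_i$-filtration, then truncates $F$ to finite-slope $F_i$, and only at the very end compares each $F_i$ to a non-degenerate finite-slope Hamiltonian via the Bourgeois--Oancea cascade continuation.

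The payoff of the paper's ordering is twofold. First, keeping the cylindrical end autonomous means Lemma~\ref{lemma:asm} and Proposition~\ref{prop:increase} apply verbatim at every stage, so the $r_i$-filtration is manifestly preserved; in your approach the time-dependent perturbation near each $S_x$ means these statements need to be re-checked (they should survive for small enough perturbation, but it is extra work). Second, and more importantly, your claim that ``the Floer complex of $H$ \ldots\ computes $SH^*(W;\Lambda)$'' is not immediate: Ritter's definition is a direct limit over \emph{finite-slope} non-degenerate Hamiltonians, so you still owe a comparison of your infinite-slope $H$ to such a direct system. The paper's route through $F_i$ handles exactly this, landing directly on Ritter's definition. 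Your approach is not wrong, but it does not save work---the finite-slope comparison is hidden rather than avoided---and it makes the applicability of the asymptotic lemmas slightly less transparent.
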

\begin{proof}[Sketch of the proof]
	Similar to Proposition \ref{prop:cascades}, we first perturb $\bF$ to $F$, such that on $W$, $F$ is a $C^2$-small time-independent Hamiltonian but $F=\bF$ on the cylindrical end. Then there is a cascade continuation map \cite[\S 2]{bourgeois2009exact} from the cochain complex of $F$ to the cochain complex of $\bF$ preserving the $r_i$ filtration. As in Proposition \ref{prop:cascades}, it is quasi-isomorphism as it induces isomorphism on the first page of the spectral sequence (which is convergent). Then similar to the discussion in \S \ref{sub:nondeg}, we first get $F_i$ with finite slope, which is the same as $F$ on $W^i$ but linear afterwards. Then cohomology of $C(F)$ can be write as direct limit of $H^*(C(F_i))$.  The last step is perturbing $F_i$ into non-degenerate Hamiltonians, then using the cascade continuation map, we have that the direct limit using non-degenerate Hamiltonians with finite slop is the same as $\varinjlim_{i} H^*(C(F_i))$. Since the inclusion from $C_0(f) \to C(\bF,J,f)$ can also be viewed as continuation map from a homotopy of zero slope truncation of $\bF$ to $\bF$ by Lemma \ref{lemma:max}. Then all of the construction above is compatible with continuation maps from the $0$-length part\footnote{Note that for a general non-degenerate Hamiltonian $H$ with finite slope, Proposition \ref{prop:exclude} may not apply, the $0$-length part may not be a subcomplex.}, this finishes the proof.
\end{proof}

From here, we give an ad hoc definition of $SH^*_+(W;\Lambda)$ motivated from Proposition \ref{prop:cascadeCY}, 
$$SH^*_+(W) = \coker(QH^*(W)\to SH^*(W;\Lambda))\oplus \ker (QH^{*+1}(W)\to SH^{*+1}(W;\Lambda)),$$
since $\Lambda$ is field, Proposition \ref{prop:cascadeCY} implies that there is a (non-canonical) isomorphism $SH^*_+(W) \to H^*(C_+(\bF,J))$. We do not claim this definition is good in a functorial way, but it is sufficient for our application.

\subsection{Proof of the independence}
Similarly, we can define moduli spaces $\cP_{p,\overline{x},A}$ and $\cH_{p,\overline{x},A}$ as in \S \ref{sub:shrink}. Combining with the proofs in Proposition \ref{prop:transverseCY} and Proposition \ref{prop:d}, \S \ref{sub:shrink} can be generalized to strong fillings with vanishing first Chern class with the same statements. Hence we will use the same terminology, in particular we have various regular sets of almost complex structures. The following Proposition follows from the same proof of Proposition \ref{prop:compute}.

\begin{proposition}\label{prop:naturalCY}
	Let $J^{i,j}\in \cJ^{r_i,[0,j]}_{reg,+}(\bF)\cap \cJ^{r_i,[0,j]}_{reg,P}(\bF,h,g_{\partial})$, then we have a commutative diagram
	$$\xymatrix{
		H^*(C^{r_1,[0,1]}_+(J^{1,1})) \ar[r] & H^*(C^{r_2,[0,1]}_+(J^{2,1})) \ar[r] & \ldots \ar[r] & H^{*+1}(\partial W) \times \Lambda^{[0,1]}\\
		H^*(C^{r_1,[0,2]}_+(J^{1,2})) \ar[r]\ar[u] & H^*(C^{r_2,[0,2]}_+(J^{2,2})) \ar[r]\ar[u] & \ldots  \ar[r] & H^{*+1}(\partial W) \times \Lambda^{[0,2]}\ar[u] \\		
		\vdots \ar[u] & \vdots \ar[u] & & \vdots \ar[u]
	}$$
	where vertical and horizontal arrows are continuation maps except those mapped to the last column and $\Lambda^{[0,j]}$ consists of elements in $\Lambda$ with action in $[0,j]$. The arrow to the last column is defined by counting $\cP_{*,*}$. Then $\displaystyle{\varinjlim_{i\to \infty}\varprojlim_{j\to \infty}}$ of the digram computes $SH^{\ge 0}_+(W) \to H^{*+1}(\partial W;\Lambda_0)$. 
\end{proposition}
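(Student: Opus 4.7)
The plan is to adapt the proof of Proposition~\ref{prop:compute} to the strong filling setting, working with the combined length filtration (by $r_i$) and action filtration (by $[0,j]$) that replaces the single length filtration used in the exact case. The double limit structure arises because, in the non-exact setting, the symplectic action of an orbit at level $r_i$ is not controlled by $r_i$ alone: we need to separately truncate the Novikov action to get pieces with a priori energy bounds that admit the neck-stretching and transversality analysis of \S\ref{s3}.

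First I would verify that each entry of the diagram is well-defined. Since moduli spaces $\cM_{x,y,A}$ and $\cP_{p,\bar{x},A}$ contributing to $d_+$ and $P$ on $C_+^{r_i,[0,j]}$ have a priori bounds on both the asymptotic orbit length (by $r_i$) and on $\omega(A)+\cA_{\bF}(x)-\cA_{\bF}(y)$ (by the action window $[0,j]$), the regularity assumption $J^{i,j}\in \cJ^{r_i,[0,j]}_{reg,+}(\bF)\cap \cJ^{r_i,[0,j]}_{reg,P}(\bF,h,g_\partial)$ is enough to make $d_+$ and $P$ well-defined cochain maps up to homotopy. The low-regularity arguments of Proposition~\ref{prop:lowreg} and Proposition~\ref{prop:lowreg3} adapt verbatim since they rely only on compactness within fixed energy windows, which is preserved here by combining Lemma~\ref{lemma:max} on the length filtration with the Novikov truncation on the action. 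Sphere bubbles are avoided by dimension reasons using $c_1(W)=0$ as in \cite{hofer1995floer} and Proposition~\ref{prop:regularCY}.

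Next I would establish commutativity of the diagram. Horizontal continuation maps (increasing $i$) preserve both filtrations: length by Lemma~\ref{lemma:max}, action by the standard energy identity together with $\partial_s H_s\leq 0$ on the homotopy. The functoriality argument of Theorem~\ref{thm:A} carries over because, as in Proposition~\ref{prop:ind}, one can choose the family $J^{i,j}$ so that between consecutive $i$'s the homotopy pins down all contributing curves inside $W^i$, making horizontal arrows effectively inclusions up to homotopy. Vertical arrows (increasing $j$) correspond to the truncation $\Lambda^{[0,j+1]}\to \Lambda^{[0,j]}$ on Novikov coefficients and commute with $P$ and $d_+$ tautologically at the cochain level, since cochain maps are $\Lambda_0$-linear and preserve action.

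Finally, I would verify that $\varinjlim_i \varprojlim_j$ of the diagram recovers $SH^{\ge 0}_+(W)\to H^{*+1}(\partial W;\Lambda_0)$. For fixed $i$, the inner $\varprojlim_j$ reconstructs $H^*(C_+^{r_i,\ge 0}(J^i))$ with $\Lambda_0$-coefficients from its truncations: the truncation maps of cochain complexes are surjective, so the Mittag-Leffler condition holds and $\lim^1$ vanishes, giving $\varprojlim_j H^*(C_+^{r_i,[0,j]})\simeq H^*(C_+^{r_i,\ge 0})$, and similarly on the target $H^{*+1}(\partial W)\otimes\Lambda_0$. The outer $\varinjlim_i$ then recovers $SH^{\ge 0}_+(W)$ by the length filtration as in Proposition~\ref{prop:cascadeCY}. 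The main obstacle will be ensuring that the choice of $J^{i,j}$ can be made coherent across both $i$ and $j$ simultaneously so that all relevant moduli spaces (for $d_+$, $P$, and the various continuation maps) remain transverse and have the desired compactness properties; this requires a two-parameter inductive choice extending Proposition~\ref{prop:ind}, where at each step one perturbs only outside previously-fixed $W^{i'}\cap\{\text{action}\ge -j'\}$ regions to preserve earlier regularity via Lemma~\ref{lemma:max} and the action estimate.
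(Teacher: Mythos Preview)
Your approach is essentially the same as the paper's, which simply says the proposition follows from the same proof as Proposition~\ref{prop:compute}. Your expansion of the argument---well-definedness via the low-regularity propositions, commutativity via functoriality of continuation, and identification of the double limit---is correct in spirit, and your explicit mention of the Mittag--Leffler condition for the inverse limit in $j$ is a detail the paper leaves implicit.

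One point deserves correction. In your final paragraph you identify the ``main obstacle'' as making the family $J^{i,j}$ coherent across both indices. This is not needed here: the proposition is stated for \emph{arbitrary} $J^{i,j}$ in the indicated regular sets, and the proof (exactly as in Proposition~\ref{prop:compute}) proceeds by picking a single sufficiently regular $J$ for which the double limit manifestly computes $SH^{\ge 0}_+(W)\to H^{*+1}(\partial W;\Lambda_0)$, and then comparing each entry to the corresponding entry for $J$ via continuation maps, using Proposition~\ref{prop:lowreg3} and Proposition~\ref{prop:natural}. No two-parameter inductive construction of the $J^{i,j}$ is required. The coherence issue you describe is precisely the content of the \emph{next} proposition (Proposition~\ref{prop:ind3}), where one does need a careful inductive choice for the neck-stretching argument; you have conflated the two steps.
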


Now let $Y$ be a TADC contact manifold with two strong fillings $W_1,W_2$, we assume $\alpha_i$ in Definition \ref{def:TADC} is represented by  a nested sequences $Y_i\in Y\times [1,R]$. Then we view $W_*\cup Y\times [1,R]$ as the new $W_*$, in particular $\bF$ is zero on this new $W_*$. Then we can stretch $Y_i$ in the same way as in \S \ref{sub:ind}.
\begin{proposition}\label{prop:ind3}
	With the setup above, there exist admissible $J^{1}_1,J^2_1,\ldots $ and $J^{1}_2,J^2_2,\ldots $ on $\widehat{W_1}$ and $\widehat{W_2}$ respectively and positive real numbers $\epsilon_{i,j},i,j\in \N_+$ such that the following holds.
	\begin{enumerate}
		\item\label{nsCY1} We have $\epsilon_{i,j}>\epsilon_{i,j+1}$. For $R<\epsilon_{i,j}$ and any $R'$, $NS_{i,R}(J^{i}_*), NS_{i+1,R'}(NS_{i,R}(J^{i}_*))\in \cJ^{r_i,[0,j]}_{reg,+}\cap \cJ^{r_i,[0,j]}_{reg,P}(h,g_{\partial})$. Such that all zero dimensional $\cM_{x,y,A}$ and $\cP_{p,x,A}$ are the same for both $W_1,W_2$ and contained outside $Y_i$ for $x,y\in C^{r_i}_+,p\in C(h)$ with action change at most $j$.  
		\item\label{nscY2} For every $i$, there exists $i_j\in \N_+$ such that $J^{i+1}_*=NS_{i,\frac{\epsilon_{i,i_j}}{2}}(J^{i}_*)$ on $W^i_*$. 
	\end{enumerate}
\end{proposition}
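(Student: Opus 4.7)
The plan is to follow the inductive scheme of Proposition \ref{prop:ind} with two adaptations tailored to the strong filling setting: the filtration by action is replaced by the combined length/action truncation $C^{r_i,[0,j]}_+$ (which restores a uniform energy bound in the absence of a global primitive for $\omega$), and the ADC index-positivity is replaced by its TADC counterpart, which is precisely what prevents the $\alpha_i$ from collapsing so that the stretching surfaces $Y_i$ sit in a fixed neighborhood of $\partial W_*$ inside both completions $\widehat{W_1}$ and $\widehat{W_2}$.

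First I would define, in analogy with \S \ref{sub:ind}, a space $\cJ^{r_i,[0,j]}_{reg,SFT}(h,g_\partial)$ of admissible almost complex structures on the completion of the cobordism between $Y$ and $Y_i$, asymptotic to a fixed cylindrical $J_0$ on the negative end, which are regular for all analogues of the moduli spaces in Figure \ref{fig:SFT} whose positive asymptotics lie in $C^{r_i,[0,j]}_+$. Second category for this space follows from the same universal-moduli-space and submersion argument used in Proposition \ref{prop:transverseCY}: the strict $r$-monotonicity of Proposition \ref{prop:increase} guarantees that the top Floer cascade is simple, so the evaluation map on the universal moduli space is a submersion and Sard--Smale suffices.

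Having this, the induction proceeds as follows. For the base case, pick $J^1_*$ so that $NS_{1,0}(J^1_*)$ lies in $\bigcap_j \cJ^{r_1,[0,j]}_{reg,SFT}(h,g_\partial)$ (a countable intersection of second category sets), arranged to coincide for the two fillings. For each fixed $j$, I claim there exists $\epsilon_{1,j}>0$ such that whenever $R<\epsilon_{1,j}$ every zero-dimensional $\cM_{x,y,A}$ and $\cP_{p,x,A}$ with generators in $C^{r_1,[0,j]}_+$ lies outside $Y_1$, which together with SFT-regularity of $NS_{1,0}(J^1_*)$ yields the required regularity and identification on both fillings. The crux is the energy bound $E(u)\le j$ from the combined truncation, which bounds the total period of the negative asymptotics of any limit curve above $Y_1$ by some $D=D(1,j)$. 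Because such asymptotics are contractible in $W_*$ and $Y$ is TADC, we may arrange $\alpha_1$ on $Y_1$ so that every such orbit has positive degree; the SFT dimension formula (after quotienting the $\R$-action on the top cascade) then forces the expected dimension to be negative, contradicting the assumed transversality. The inductive step from $i$ to $i+1$ is identical to that in Proposition \ref{prop:ind}: perturb $NS_{i,\epsilon_{i,i_j}/2}(J^i_*)$ only outside $W^i_*$ near periodic orbits in $W^{i+1}_*\setminus W^i_*$ so that $NS_{i+1,0}$ is SFT-regular on the next level, with Lemma \ref{lemma:max} ensuring the previously established regularity inside $W^i_*$ is untouched, and monotonicity $\epsilon_{i,j}>\epsilon_{i,j+1}$ is automatic from the stricter control at larger $j$.

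The hard part will be keeping track of the interplay between the length truncation $r_i$ and the action truncation $[0,j]$. In contrast with the exact case, neither truncation alone gives a uniform energy bound: the length bound misses the Novikov exponent $\omega(A)$, while the action bound alone does not confine curves to a compact region. One must fix a doubly indexed family of regularity conditions, construct $\epsilon_{i,j}$ decreasing in $j$, and extract $J^{i+1}_*$ via a diagonal procedure compatible with all previously chosen truncations while keeping the construction identical on both fillings. The TADC hypothesis is what lets the diagonal close: it provides a uniform positive lower bound on the degree of Reeb orbits on $Y_i$ up to any prescribed period, so the SFT index obstruction persists as $j$ grows.
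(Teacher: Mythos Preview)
Your overall strategy matches the paper's: run the inductive neck-stretching scheme of Proposition~\ref{prop:ind}, using the action truncation $[0,j]$ to restore the energy bound needed for Gromov compactness in the absence of exactness. However, one step of your argument is imprecise in a way that, taken at face value, creates a genuine gap.

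You write that the energy bound $E(u)\le j$ ``bounds the total period of the negative asymptotics of any limit curve above $Y_1$ by some $D=D(1,j)$'', and then invoke TADC index-positivity on $Y_1$ for orbits of period below $D(1,j)$. If this bound really grew with $j$, the argument would fail: the hypersurface $Y_1$ and its TADC threshold $D_1$ are fixed before the induction, so you cannot arrange positivity for orbits of arbitrarily large period on a fixed $Y_1$. The point the paper uses (and which you need) is that the limit curve outside $Y_i$ lives entirely in the exact symplectization, where $\bF$ is zero near $Y_i$; hence the total period of its negative Reeb asymptotics is controlled by the periods of its positive Hamiltonian asymptotics, i.e.\ by the length filtration $r_i$ alone, \emph{independently of $j$}. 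This is why the paper works with a single SFT-regularity class $\cJ^{r_i}_{reg,SFT}(\bF,h,g_\partial)$ indexed only by $i$, rather than your doubly-indexed $\cJ^{r_i,[0,j]}_{reg,SFT}$.

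The same observation is what makes property~\eqref{nscY2} work cleanly: because the SFT limit curves in Figure~\ref{fig:SFT} have a universal energy bound depending only on $i$, a single stretching parameter $\epsilon_{i,i_j}$ suffices to place $NS_{i+1,0}(NS_{i,\epsilon_{i,i_j}/2}(J^i_*))$ in $\cJ^{r_i}_{reg,SFT}$, after which $J^{i+1}_*$ is obtained by perturbing outside $W^i_*$. Your ``diagonal procedure'' is vaguer than necessary here; once the $j$-independence of the SFT index obstruction is recognised, no diagonal is needed. The $j$-dependence of $\epsilon_{i,j}$ arises solely from the compactness required to run neck-stretching on the pre-limit moduli spaces (whose energy is indeed bounded by $j$), and the monotonicity $\epsilon_{i,j}>\epsilon_{i,j+1}$ then follows from $\cJ^{r_i,[0,j+1]}_{reg}\subset \cJ^{r_i,[0,j]}_{reg}$.
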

\begin{proof}
	The proof is exactly same as Proposition \ref{prop:ind}. We start with $J^1$ such that $NS_{1,0}(J^1)\in \cJ^{r_1}_{reg,SFT}(\bF,h,g_{\partial})$. The threshold $\epsilon_{i,j}$ will dependent on $j$, because we need compactness to apply neck-stretching. $\epsilon_{i,j+1}$ is smaller than $\epsilon_{i,j}$, as $\cJ^{r_i,[0,j+1]}_{reg}\subset \cJ^{r_i,[0,j]}_{reg}$.  To prove the second property, note that all curves in Figure \ref{fig:SFT} for $x,y\in C^{r_i}_+$ have a universal bound depending only on $i$. Therefore there exists $i_j$, such that $NS_{i+1,0}(NS_{i,\frac{\epsilon_{i,i_j}}{2}})(J^i)\in \cJ^{r_i}_{reg,SFT}(\bF, h,g_{\partial})$, since the related moduli spaces are contained outside $Y_i$ by the first property. Hence we can choose $J^{i+1}$ such that $NS_{i+1,0}(J^{i+1})\in \cJ^{r_{i+1}}_{reg,SFT}(\bF, h,g_{\partial})$ and $J^{i+1}=NS_{i,\frac{\epsilon_{i,i_j}}{2}}(J^i)$ on $W^i$ as in Proposition \ref{prop:ind}.
\end{proof}

\begin{theorem}\label{thm:CY}
Let $Y$ be a TADC manifold, then $SH^*_+(W;\Lambda)\to H^{*+1}(Y;\Lambda)$ is independent of the filling. 
\end{theorem}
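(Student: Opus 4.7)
The plan is to mirror the proof of Theorem \ref{thm:A}, using Proposition \ref{prop:naturalCY} as the computational model for $SH^{\ge 0}_+(W;\Lambda) \to H^{*+1}(Y;\Lambda)$ and Proposition \ref{prop:ind3} as the neck-stretching input. Since $Y$ is TADC, the contact hypersurfaces $Y_i$ carrying the ADC contact forms $\alpha_i$ can be arranged to sit inside a fixed collar $Y\times[1,R]$ without collapsing onto $Y\times\{0\}$. After enlarging each $W_*$ by $Y\times[1,R]$ (on which $\bF\equiv 0$), both completions $\widehat{W}_1$ and $\widehat{W}_2$ contain the entire nested family $\{Y_i\}$, so stretching along $Y_i$ makes sense on both fillings with the same geometric data near $Y_i$.

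Next, I would apply Proposition \ref{prop:ind3} on each filling to produce admissible $J^i_1, J^i_2$ and thresholds $\epsilon_{i,j}$ such that, for $R<\epsilon_{i,j}$, the zero-dimensional moduli spaces $\cM_{x,y,A}$ and $\cP_{p,x,A}$ with $x,y\in C^{r_i}_+$ and $\omega(A)\le j$ all lie outside $Y_i$ and coincide for $W_1$ and $W_2$. This is the TADC analogue of Proposition \ref{prop:ind}: the SFT-breaking argument via \cite[Lemma 2.4]{cieliebak2018symplectic} combined with the positive degree of contractible Reeb orbits on $Y_i$ forces all broken components outside $Y_i$ to be rigid and live in the symplectization, so nothing sees the interior of either filling.

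With these choices fixed, plug the stretched almost complex structures $NS_{i,\epsilon_{i,i_j}/2}(J^i_*)$ into the double-indexed diagram of Proposition \ref{prop:naturalCY}. For each fixed $(i,j)$, the cochain complex $C^{r_i,[0,j]}_+$ and the map $P$ to $H^{*+1}(\partial W)\otimes \Lambda^{[0,j]}$ (defined by counting $\cP_{p,x,A}$) are literally identified for $W_1$ and $W_2$, because all the relevant rigid curves coincide and land outside $Y_i$. The horizontal continuation maps (increasing $i$) factor as an identity part coming from Lemma \ref{lemma:local} on the overlapping region $W^i_*$ and an inclusion part coming from the integrated maximum principle Lemma \ref{lemma:max}, exactly as in the proof of Theorem \ref{thm:A}; the vertical continuation maps (decreasing $j$) are tautological action truncations. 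Hence the entire bi-indexed diagram is identified for $W_1$ and $W_2$, and taking $\varinjlim_i\varprojlim_j$ gives $SH^{\ge 0}_+(W_1;\Lambda)\to H^{*+1}(Y;\Lambda)\cong SH^{\ge 0}_+(W_2;\Lambda)\to H^{*+1}(Y;\Lambda)$. Extending to all of $SH^*_+(W;\Lambda)$ is obtained by tensoring $-\otimes_{\Lambda_0}\Lambda$, which preserves the identification since $\Lambda$ is a flat $\Lambda_0$-module.

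The main obstacle is the delicate interplay between the length filtration by $r_i$ and the action filtration by $j$. In the exact setting of Theorem \ref{thm:A}, the length filtration automatically gives an action filtration and a single index $i$ suffices, but in the strong setting the differential can shift action by arbitrary $\omega(A)$, so we must truncate the action window to get an a priori energy bound before stretching. Consequently the stretching threshold $\epsilon_{i,j}$ depends on $j$, and one needs the nested compatibility $\epsilon_{i,j}>\epsilon_{i,j+1}$ from Proposition \ref{prop:ind3} to make the diagram commute coherently before passing to the double limit; this is exactly why the TADC (rather than merely ADC) hypothesis is required, so that the $Y_i$ stay uniformly separated from $Y\times\{0\}$ inside both fillings throughout the stretching.
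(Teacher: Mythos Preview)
Your approach is essentially the paper's and is correct in outline, but there is one technical oversimplification worth flagging. You propose to plug a single stretched almost complex structure $NS_{i,\epsilon_{i,i_j}/2}(J^i_*)$ into column $i$ for all action windows $j$, and then declare the vertical maps to be ``tautological action truncations.'' This does not quite work: regularity of $NS_{i,R}(J^i_*)$ for the action window $[0,j]$ is only guaranteed when $R<\epsilon_{i,j}$, and since $\epsilon_{i,j}\searrow$ as $j\to\infty$ there is no single $R$ that works for all $j$. The paper instead populates entry $(i,j)$ with $NS_{i,\epsilon_{i,j}/2}(J^i_*)$, so the almost complex structure varies down each column. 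The vertical map from row $j+1$ to row $j$ then decomposes as the obvious quotient $C^{r_i,[0,j+1]}_+\to C^{r_i,[0,j]}_+$ (same stretching parameter $\epsilon_{i,j+1}/2$) followed by a continuation $C^{r_i,[0,j]}_+(NS_{i,\epsilon_{i,j+1}/2}J^i_*)\to C^{r_i,[0,j]}_+(NS_{i,\epsilon_{i,j}/2}J^i_*)$, which is homotopic to the identity by Lemma~\ref{lemma:local} along the path $NS_{i,s}(J^i_*)$ for $s\in[\epsilon_{i,j+1}/2,\epsilon_{i,j}/2]$. Relatedly, only the cofinal subsystem $j\ge i_j$ of the diagram is identified between the two fillings (this is where the compatibility $J^{i+1}_*=NS_{i,\epsilon_{i,i_j}/2}(J^i_*)$ on $W^i_*$ makes the horizontal maps inclusions), not the ``entire'' diagram; of course a cofinal subsystem computes the same double limit. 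With these adjustments your argument matches the paper's proof.
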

\begin{proof}
	Using the almost complex structures in Proposition \ref{prop:ind3},  by Proposition \ref{prop:naturalCY}, we have that the part where $j\ge i_j$ of the following diagram computes $SH_+^{\ge 0}(W) \to H^{*+1}(Y;\Lambda_0)$,
	$$\xymatrix{
	H^*(C^{r_1,[0,1]}_+(NS_{1,\frac{\epsilon_{1,1}}{2}}J^{1}_*)) \ar[r] & H^*(C^{r_2,[0,1]}_+(NS_{2,\frac{\epsilon_{2,1}}{2}}J^{2}_*)) \ar[r] & \ldots \ar[r] & H^{*+1}(\partial W) \times \Lambda^{[0,1]}\\
	H^*(C^{r_1,[0,2]}_+(NS_{1,\frac{\epsilon_{1,2}}{2}}J^{1}_*)) \ar[r]\ar[u] & H^*(C^{r_2,[0,2]}_+(NS_{2,\frac{\epsilon_{2,2}}{2}}J^{2}_*)) \ar[r]\ar[u] & \ldots  \ar[r] & H^{*+1}(\partial W) \times \Lambda^{[0,2]}\ar[u] \\		
	\vdots \ar[u] & \vdots \ar[u] & & \vdots \ar[u]
    }$$	
	By the same argument in the proof of Theorem \ref{thm:A}, the horizontal continuation maps are inclusions for $j\ge i_j$.  The vertical continuation map can be decomposed into continuation maps 
	$C_+^{r_i,[0,j+1]}(NS_{i,\frac{\epsilon_{i,j+1}}{2}}J^i_*)\to C_+^{r_i,[0,j]}(NS_{i,\frac{\epsilon_{i,j+1}}{2}}J^i_*)$ and $C_+^{r_i,[0,j]}(NS_{i,\frac{\epsilon_{i,j+1}}{2}}J^i_*)\to C_+^{r_i,[0,j]}(NS_{i,\frac{\epsilon_{i,j}}{2}}J^i_*)$. The former map from the trivial homotopy of almost complex structure is the obvious quotient, the latter map is homotopic to identity by Lemma \ref{lemma:local}. Therefore the part where $j\ge i_j$ of the diagram is identified for both $W_1,W_2$. Hence $SH^{\ge 0}_+(W)\to H^{*+1}(Y;\Lambda_0)$ is independent of the filling. Since $\Lambda$ is a flat $\Lambda_0$ module, the claim follows from tensoring $\Lambda$.
\end{proof}

\begin{corollary}\label{cor:vanish}
	If $Y$ is a TADC contact manifold, then $SH^*(W;\Lambda)=0$ is a property independent of topologically simple strong fillings.
\end{corollary}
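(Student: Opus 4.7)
The plan is to reduce the vanishing statement to a property of the image of the structure map $\delta_{\partial} \colon SH^*_+(W;\Lambda)\to H^{*+1}(Y;\Lambda)$ from Theorem~\ref{thm:CY}, in direct analogy with the exact case treated in Corollary~\ref{cor:B}. Let $W_1, W_2$ be two topologically simple strong fillings of $Y$, each with vanishing first Chern class, and suppose $SH^*(W_1;\Lambda)=0$. Since both fillings and $Y$ are assumed connected, the restriction maps $H^0(W_i;\Lambda) \to H^0(Y;\Lambda)$ are isomorphisms of $\Lambda$ sending $1 \mapsto 1$.

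First, I would argue that $SH^*(W_i;\Lambda)=0$ can be detected on the single element $1$. With $c_1(W_i)=0$, the pair-of-pants product endows $SH^*(W_i;\Lambda)$ with a unital $\Lambda$-algebra structure whose unit is the image of $1 \in QH^0(W_i;\Lambda)$ under the canonical map $QH^*(W_i;\Lambda) \to SH^*(W_i;\Lambda)$. Thus $SH^*(W_i;\Lambda)=0$ if and only if this unit vanishes, which via the tautological long exact sequence
\begin{equation*}
\ldots \to SH^{-1}_+(W_i;\Lambda) \xrightarrow{\ \delta\ } QH^0(W_i;\Lambda) \to SH^0(W_i;\Lambda) \to \ldots
\end{equation*}
is equivalent to $1 \in \operatorname{Im}(\delta)$. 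At the chain level this connecting map is realized through the short exact sequence $0 \to C_0(f) \to C(\bF,J,f) \to C_+(\bF,J) \to 0$ of the cascade model from Proposition~\ref{prop:cascadeCY}, after inverting the Novikov parameter to pass from $\Lambda_0$ to $\Lambda$.

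Next, composing with the isomorphism $H^0(W_i;\Lambda) \xrightarrow{\sim} H^0(Y;\Lambda)$ that sends $1 \mapsto 1$ translates the criterion into $1 \in \operatorname{Im}(\delta_{\partial})$, where $\delta_{\partial}$ is exactly the composition $SH^{-1}_+(W_i;\Lambda) \xrightarrow{\delta} H^0(W_i;\Lambda) \to H^0(Y;\Lambda)$ that Theorem~\ref{thm:CY} shows is independent of the filling. Concretely, at the cochain level this composition is computed by the moduli spaces $\cP_{p,x,A}$ used in the proof of Theorem~\ref{thm:CY}, so the identification between $W_1$ and $W_2$ established there carries the property ``$1 \in \operatorname{Im}(\delta_\partial)$'' from one filling to the other.

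Finally, I would conclude: by Theorem~\ref{thm:CY}, $\delta_{\partial}$ for $W_1$ and $\delta_{\partial}$ for $W_2$ fit into a commutative diagram with $H^0(Y;\Lambda)$ on the target, so the element $1$ lies in the image for one filling if and only if it lies in the image for the other. Therefore $SH^*(W_2;\Lambda)=0$ as well. There is no substantive obstacle here, since all the analytic work has been done in setting up Theorem~\ref{thm:CY}; the only thing to verify carefully is that the ad hoc definition of $SH^*_+(W;\Lambda)$ in the non-exact case agrees with the cascade model $H^*(C_+(\bF,J))$ on the class of interest, which follows from Proposition~\ref{prop:cascadeCY} and the flatness of $\Lambda$ over $\Lambda_0$.
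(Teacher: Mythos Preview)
Your proposal is correct and follows essentially the same approach as the paper's proof: both reduce the vanishing $SH^*(W;\Lambda)=0$ to the statement $1\in\Ima\delta_\partial$ via unitality of $QH^*(W)\to SH^*(W;\Lambda)$ and the tautological long exact sequence, and then invoke Theorem~\ref{thm:CY}. The paper's proof is simply more terse and cites \cite{ritter2014floer} for the unital ring structure, whereas you spell out the intermediate steps more explicitly.
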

\begin{proof}
	Since $SH^*(W;\Lambda)=0$ is equivalent to $QH^0(W)\to SH^0(W;\Lambda)$ maps  $1$ to $0$ since it a unital ring map \cite[Corollary 14]{ritter2014floer}, which is equivalent to $1\in \Ima(SH^{-1}_+(W;\Lambda)\to H^0(W;\Lambda)\to H^0(Y;\Lambda))$, which by Theorem \ref{thm:CY} is independent of such fillings.
\end{proof}

\begin{proof}[Proof of Theorem \ref{thm:D}]
	By Theorem \ref{thm:CY} and Corollary \ref{cor:vanish}, the map $H^*(W;\Lambda)\to H^*(Y;\Lambda)$ is independent of the filling. Hence for any other topologically simple strong filling $W'$, we have $H^2(W';\Q) \to H^2(Y;\Q)$ is injective and $H^1(W';\Q)\to H^1(Y;\Q)$ is surjective. They imply that there exists one form $\beta$ on $W'$, such that $\omega = \rd \beta $ and $\beta=\alpha$ near $Y$. That is $W'$ is exact.
\end{proof}

\begin{remark}\label{rmk:exact}
	Another similar case is for a TADC $Y:=\partial(V\times \C)$, then by the same argument for any topologically simple strong filling $W$, have $H^2(W;\Q)\to H^2(Y;\Q)$ is surjective. Therefore the symplectic form $\omega$ must be exact. It may not be true that there is a primitive restricted to a contact form. But such exactness is enough to rule out all sphere bubbles, hence the invariance can be lifted to $\Z$ coefficient by Corollary \ref{cor:B}. 
\end{remark}

Theorem \ref{thm:CY} also holds for symplectic cohomology with local systems (using Novikov field over $\C$), as a combination of twisted version of Theorem \ref{thm:CY} and Theorem \ref{thm:local}, we have the following.

\begin{theorem}\label{thm:final}
	Assume $Q$ is in Theorem \ref{thm:local} and $\dim Q\ge 4$. Then any strong filling $W$ of $ST^*Q$ with vanishing first Chern class satisfies other topological conditions in Theorem \ref{thm:local} is exact.
\end{theorem}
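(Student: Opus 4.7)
The plan is to establish the twisted (local system) analogue of Theorem \ref{thm:CY} in the strong filling setting with Novikov coefficients, then combine it with Theorem \ref{thm:local} to force vanishing of a twisted symplectic cohomology on $W$, and finally to extract exactness via a cohomological comparison with $T^*Q$.

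First, I would upgrade the construction of \S \ref{s8} to allow an admissible local system $\rho$ (Definition \ref{def:localsystem}) on $L_0W$: the cascades cochain complex of Proposition \ref{prop:cascadeCY} carries the weights $T^{\omega(A)}\rho_u$ along its differential, where $\rho_u$ is the parallel transport along the Floer trajectory. Since admissible local systems are trivial on constant loops, neither the transversality arguments of Proposition \ref{prop:transverseCY} nor the compactness arguments are affected, and the long exact sequence, the restriction map
$$\delta_{\partial,\rho}:SH^*_+(W;\rho;\Lambda)\longrightarrow H^{*+1}(Y;\Lambda)$$
(untwisted on the right, since $\rho$ is trivial on constants), and the unital ring structure all persist. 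The neck-stretching proof of Theorem \ref{thm:CY} applies verbatim in the twisted setting, because local systems only contribute multiplicative parallel transport factors that depend on the homotopy class of the Floer cylinder relative to its breakings, and this datum stabilizes under stretching along $Y_i$. Hence $\delta_{\partial,\rho}$ depends only on the boundary data $(Y,\rho|_{L_0Y})$ among topologically simple strong fillings $W$ with $c_1(W)=0$ for which $\rho|_{L_0Y}$ extends to $L_0W$. Note that $ST^*Q$ is TADC for $\dim Q\ge 4$ since the geodesic contact form is already index-positive, so no collapsing issue arises.

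Next, let $\rho_0$ be the local system on $L_0T^*Q$ produced by \cite{albers2017local} as in the proof of Theorem \ref{thm:local}: it is represented by a class in $H^2(T^*Q;\Z/p)=H^2(ST^*Q;\Z/p)$ (possibly twisted by $\sigma^{TQ}$ in the non-spin case) satisfying $SH^*(T^*Q;\rho_0;\Lambda)=0$. The topological hypotheses carried over from Theorem \ref{thm:local}, namely the surjectivity conditions on $H^2(W;\Z/p)\to H^2(ST^*Q;\Z/p)$ (and on $\pi^*w_2(Q)|_{ST^*Q}$ when relevant), guarantee that $\rho_0|_{L_0ST^*Q}$ lifts to an admissible local system on $L_0W$. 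By the twisted invariance above the property $1\in \Ima \delta_{\partial,\rho_0}$ passes from $T^*Q$ to $W$, so $SH^*(W;\rho_0;\Lambda)=0$. Since $\rho_0$ is trivial on constant loops, $H^*(W;\Lambda)$ is unchanged by twisting, and the twisted analogue of Corollary \ref{cor:B} yields a commuting isomorphism $H^*(W;\Lambda)\simeq H^*(T^*Q;\Lambda)$ identified with the restriction maps to $H^*(ST^*Q;\Lambda)$.

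Finally, for $\dim Q\ge 4$ the Gysin sequence of the $S^{\dim Q-1}$-bundle $ST^*Q\to Q$ collapses in low degrees to show that $H^1(T^*Q;\Q)\to H^1(ST^*Q;\Q)$ is an isomorphism and $H^2(T^*Q;\Q)\to H^2(ST^*Q;\Q)$ is injective. Through the isomorphism of the preceding step, these properties transfer to $W$: $H^2(W;\Q)\to H^2(ST^*Q;\Q)$ is injective and $H^1(W;\Q)\to H^1(ST^*Q;\Q)$ is surjective. Tensoring to $\R$ and arguing exactly as in the last paragraph of the proof of Theorem \ref{thm:D}, one obtains $[\omega]=0\in H^2(W;\R)$ and a primitive $\beta$ of $\omega$ that restricts to $\alpha$ near $\partial W$, so $W$ is exact. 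The main technical obstacle will be verifying that the twisted Floer theory with Novikov coefficients behaves coherently under the cascades setup and the sequential neck-stretching of Proposition \ref{prop:ind3}; in particular, one must check that parallel transport and $T^{\omega(A)}$-weights assemble compatibly along broken configurations. Once these naturalities are established following \S \ref{ss:natural}, the combined invariance and vanishing argument closes out the proof.
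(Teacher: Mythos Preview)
Your proposal is correct and follows essentially the same route the paper indicates: the paper only states that Theorem \ref{thm:final} is obtained ``as a combination of [the] twisted version of Theorem \ref{thm:CY} and Theorem \ref{thm:local}'' without spelling out a proof, and you have accurately reconstructed the intended argument---establishing the local-system analogue of the neck-stretching invariance (Theorem \ref{thm:CY}) over the Novikov field, transferring the vanishing $SH^*(T^*Q;\rho_0;\Lambda)=0$ to $W$ via the extension hypotheses on $H^2$, and then deducing exactness from the resulting invariance of $H^*(W;\Lambda)\to H^*(ST^*Q;\Lambda)$ together with the low-degree Gysin isomorphisms, exactly as in the proof of Theorem \ref{thm:D}. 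Your identification of $ST^*Q$ as TADC via index-positivity for $\dim Q\ge 4$ is also consistent with the paper's remarks preceding Theorem \ref{thm:product}.
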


\begin{remark}\label{rmk:SFT2}
	For general ADC manifolds, $Y_i$ may collapse to zero. Since non-exact filling $W$ does not contain $Y\times (0,1)$. Therefore we can only seek room in $Y\times (1,\infty)$. In particular, we need to accommodate the increasing of $\supp \bF$. However, in addition to the problem in Remark \ref{rmk:reverse}, it seems to be very difficult to arrange homotopies of Hamiltonians, so that integrated maximum principle can be applied to exclude contributions from the $0$-length generators to the positive length generators. Requiring $\supp \bF$ converges to $\infty$ indicates that it might be better to define the theory on the SFT level, as in SFT, the vanishing of contributions of the zero length generators to the positive length generators is automatic. In particular Theorem \ref{thm:CY} should generalize to ADC manifolds by considering the theory defined using SFT, see Remark \ref{rmk:SFT}.
\end{remark}

\begin{remark}
	For ADC but not TADC contact manifolds, it is still possible to get some information about the strong filling. For example, if we know that $1$ is killed by Hamiltonian orbits with action at least $-D$ in the exact filling. Then possibly after rescaling, we have a contact hypersurface $Y'$ inside the strong filling such that Reeb orbits with period smaller than $D$ are non-degenerate and have positive degree. Then the argument in Theorem \ref{thm:CY} implies that the symplectic cohomology of the strong filling with vanishing first Chern class is zero as we have found the primitive of $1$. In general, our argument proves that $\Ima(SH_+^*(W;\Lambda)\to H^{*+1}(Y;\Lambda))$ is an invariant, while the invariance of the map can not be obtained. 
\end{remark}
\appendix
\section{Orientations}\label{app}
\subsection{Coherent orientations in \S \ref{s2}, \S \ref{s3} and \S \ref{s8}}
Following \cite[\S 1.4]{abouzaid2013symplectic}, for every non-constant periodic orbit $x\in \cP^*(\bH)$, one can associate an orientation line $o_x$, which is the determinant line bundle of the following operator 
\begin{equation}\label{eqn:cap}
D_x:W^{1,p}(\C,\C^n)\to L^p(\C,\C^n), \quad X\mapsto \partial_sX + I(\partial_tX-B \cdot X),
\end{equation}
where $\C$ is equipped with the negative cylindrical end $\R\times S^1 \to \C, (s,t)\mapsto e^{-2\pi(s+ it) }$, $I$ is the complex structure on $\C^n$ and $B=\Psi'(t)\cdot \Psi(t)^{-1}$ when $s\ll 0$, here $\Psi(t)$ is the a path in $Sp(2n)$ determined by the linearization of the Hamiltonian flow of $X_{\bH}$ around $x$ and a trivialization of $x^*(TW)$. By \cite[Proposition 1.4.10]{abouzaid2013symplectic}, determinant bundles of $D_x$ using different choices of trivializations are conically isomorphic. Hence $o_x$ is well-defined. Moreover, we have $\ind D_x = |x|$.

On the other hand, if we equip $\C$ with the positive cylindrical end \cite[\S 1.4.3]{abouzaid2013symplectic}, then \eqref{eqn:cap} induces another determinant line $o^+_x$. Note that in this case, \eqref{eqn:cap} is the linearization of the equation of $B_x$ \eqref{eqn:B}, therefore we have a conical isomorphism $\la \partial_s \ra \otimes \det B_x = o^+_x$. By the gluing property of such determination bundles \cite[Lemma 1.4.5]{seidel2012symplectic}, we have a canonical isomorphism $o^+_x\otimes o_x = \det \C^n$. 

Let $u$ be a solution to the Floer equation with negative end asymptotic to $x\in \cP^*(\bH)$ and positive end asymptotic to $y\in \cP^*(\bH)$. After choosing a trivialization of $u^*TM$, the linearized operator of the Floer equation is in the form of 
\begin{equation}
D_u:W^{1,p}(\R\times S^1,\C^n)\to L^p(\R \times S^1,\C^n), \quad X\mapsto \partial_sX + I(\partial_tX-B \cdot X),
\end{equation}
where asymptotics of $B$ are determined by the linearization of the Hamiltonian flow of $X_{\bH}$ near two ends as before. Let $o_u$ denote the determinant line. When $\cM_{x,y}$ is a manifold, they form a continuous bundle $o_{x,y}$. Then with a regular $J$ in Proposition \ref{prop:transversality}, the gluing property of such bundles \cite[Lemma 1.4.5]{seidel2012symplectic} yields the following structures.
\begin{enumerate}
	\item Canonical isomorphisms $\rho_{x,y}:o_{x,y}\otimes o_y \to o_x$, $\rho_{x,y,z}:o_{x,y}\otimes o_{y,z}\to o_{x,z}$ on $\cM_{x,y}\times \cM_{y,z}\subset \cM_{x,z}$ such that $\rho_{x,z}\circ \rho_{x,y,z}=\rho_{x,y}\circ \rho_{y,z}$ on $o_{x,y}\otimes o_{y,z}\otimes o_z$. 
	\item\label{ori2} $o_{x,y} = \det (\langle \partial_s \rangle \oplus T\cM_{x,y})$ and the $\rho_{x,y,z}$ is induced by a map $(\langle \partial_{s_1} \rangle \oplus T\cM_{x,y})\oplus(\langle \partial_{s_2} \rangle \oplus T\cM_{y,z}) \to (\langle \partial_s \rangle \oplus T\cM_{x,z})$ with the property that $\partial_{s_1}+\partial_{s_2}$ is mapped to $\partial_{s}$ and $\partial_{s_2}-\partial_{s_1}$ is mapped to the out normal vector of $T\cM_{x,z}$, c.f. \cite[Lemma 1.5.7]{abouzaid2013symplectic}.
	\item\label{ori3} Canonical isomorphism $\rho^+_{x,y}:o_x^+\otimes o_{x,y} \to o^+_y$ over $\cB_x\times \cM_{x,y}\subset \cB_y$, which is induced by a map $\la \partial_{s_1} \ra \oplus T\cB_x \oplus \la \partial_{s_2} \ra \oplus T \cM_{x,y}\to \la \partial_s \ra \oplus T\cB_y$ with $\partial_{s_1}+\partial_{s_2}$ mapped to $\partial_s$ and $\partial_{s_2}-\partial_{s_1}$ mapped the out normal vector.
\end{enumerate}
If we fix orientations on $o_x$, then there are induced orientations on $o_{x,y}$ and $o^+_x$, which determine orientations of $\cM_{x,y}$ and $\cB_x$ by quotienting out the $\R $ factor from the left. 

Next we orient the Morse theory part. For a Morse function $f$, let $S_p$, $U_p$ denote the stable and unstable manifold of $\nabla_g f$. Then there is a conical isomorphism $T_p U_p\oplus T_pS_p = T_pW$. Moreover, we have $\la \partial_s \ra\oplus T\cM_{p,q} = T(S_p \cap U_q)$, the latter at $p$ has a natural isomorphism to $TS_p/TS_q$. Therefore if we fix orientations for every $S_p$, then there is an induced orientation on $\cM_{p,q}$. When $m$ is the unique local minimum, we orient $S_m$ such that the induced orientation $U_m$ coincide with orientation of $W$, this guarantees the identity is generated by $m$. Since $\cM_{p,y}$ is the fiber product $S_p \times_W \cB_y$, we orient $\cM_{p,y}$, such that the isomorphism $T\Delta_{W} \oplus T\cM_{p,y}\to TS_p \oplus T\cB_y$ preserves the orientation (it is actually twisted by $(-1)^{\dim S_p \times \dim W}$ for general fiber product). 
\begin{remark}
	Our convention is from the following consideration: if we view $f$ as a Hamiltonian, we can assign two line bundles $o_p,o^+_p$ as before. Then there are conical isomorphism $o_p=\det S_p$ and $o^+_p=\det U_p$, because $D_p$ is the linearization of an equation whose solution corresponds to the stable manifold $S_p$. Similarly for $o^+_p$. Then the gluing of determinant bundle gives an isomorphism $o^+_p\otimes o_p = \det \C^n$ corresponding to $T_pU_p\oplus T_pS_p=T_pW$. Moreover $o_{p,q}=\det(TS_p/TS_q)$ and the gluing map $\rho_{p,q}:o_{p,q}\otimes o_q\to o_p$ is induced from the obvious map. As for the orientation convention for fiber products, it is different from the one used in \cite{diogo2019symplectic} by a sign twisting for general fiber products, but they coincide in the special case considered here since $\dim W = 2n$. Our fiber product orientation rule also satisfies associativity. 
\end{remark}

Therefore we have oriented all $\cM_{*,*}$, the following proposition shows that orientations are coherent in the sense that they imply $d^2=0$.
\begin{proposition}\label{prop:ori}
	For $1$-dimensional $\cM_{*,*}$, with orientations above, we have $\partial \cM_{*,*}=\sum \cM_{*,*}\times \cM_{*,*}$.
\end{proposition}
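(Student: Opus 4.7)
The plan is to verify the coherence identity case by case, following the types of moduli spaces appearing in Propositions~\ref{prop:compact}, \ref{prop:transversality}, and \ref{prop:boundary1}. In each case the strategy is identical: the orientation on $\cM_{*,*}$ is defined by quotienting a canonical $\partial_s$ direction out of a determinant line equipped with a gluing isomorphism, and at a broken configuration one changes basis from the pair of translation directions $(\partial_{s_1},\partial_{s_2})$ on the two pieces to the pair (total $\partial_s$, outward normal) on the glued piece. The signs one picks up from this change of basis are precisely what is needed to identify the boundary orientation with the product orientation.

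First I would treat the pure Floer case $\cM_{x,y}$ with $|x|-|y|=2$. A boundary point $(u_1,u_2) \in \cM_{x,z}\times \cM_{z,y}$ comes equipped with the gluing isomorphism $\rho_{x,z,y}: o_{x,z}\otimes o_{z,y}\to o_{x,y}$, which by property~(\ref{ori2}) of the appendix sends $\partial_{s_1}+\partial_{s_2}\mapsto \partial_s$ and $\partial_{s_2}-\partial_{s_1}\mapsto \nu_{\text{out}}$. The determinantal identity
\[
\det\bigl(\langle \partial_{s_1},\partial_{s_2}\rangle \oplus T\cM_{x,z}\oplus T\cM_{z,y}\bigr) \;\cong\; \det\bigl(\langle \partial_s,\nu_{\text{out}}\rangle\oplus T\partial\cM_{x,y}\bigr)
\]
then gives exactly the product-orientation convention once one quotients out the $\partial_s$'s on each side, with no additional sign, because $(\partial_{s_1}+\partial_{s_2})\wedge(\partial_{s_2}-\partial_{s_1})=2\,\partial_{s_1}\wedge \partial_{s_2}$ is positive. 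The pure Morse case $\cM_{p,q}$ is then formally identical, using the exact sequence $0\to TS_q/TS_r\to TS_p/TS_r\to TS_p/TS_q\to 0$ at a broken trajectory, together with the identification $o_{p,q}=\det(TS_p/TS_q)$.

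Next I would treat the mixed moduli space $\cM_{p,y}$, presented as the fiber product $S_p\times_W \cB_y$ with the orientation convention from the appendix (the one \emph{without} the $(-1)^{\dim S_p\cdot\dim W}$ twist, which is allowed because $\dim W=2n$ is even). The boundary decomposes into Floer breakings $\cM_{p,z}\times \cM_{z,y}$ for $z\in \cP^*(\bH)$ and Morse breakings $\cM_{p,q}\times \cM_{q,y}$ for $q\in \cC(f)$. The Floer breaking is handled by combining the Floer gluing $\rho^+_{z,y}:o^+_z\otimes o_{z,y}\to o^+_y$ from property~(\ref{ori3}) with the fixed fiber-product convention; one checks that the same $(\partial_{s_1}+\partial_{s_2},\partial_{s_2}-\partial_{s_1})$ change of basis produces the product orientation. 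The Morse breaking is a standard Morse-theoretic gluing of stable manifolds fiber-producted with $\cB_y$, where again the convention on fiber products removes all extra signs. The analogous discussion applies to $\cR_{p,q}$, $\cP_{p,y}$, and $\cH_{p,y}$ once one notes these are again fiber products (of stable/unstable manifolds and Floer cap moduli spaces) assembled from the same building blocks.

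The main technical obstacle is bookkeeping the signs in the fiber-product boundaries, in particular for $\cH_{p,y}$ where two kinds of breakings (Morse and Floer) meet and where the sign $-\cP_{p,y}$ appearing in Proposition~\ref{prop:boundary1} arises. This sign must be traced to the orientation of the interval parameter $l\in (0,\infty)$ parametrizing the middle gradient segment: the $l=0$ boundary acquires a minus sign relative to the $l=\infty$ boundary because the outward normal on the $l=0$ end is $-\partial_l$. Once this single orientation convention for the parameter $l$ is fixed, all other signs in Propositions~\ref{prop:boundary1} and \ref{prop:boundary2} follow from the gluing isomorphisms above. I would organize the proof as a sequence of lemmas corresponding to the different moduli types, and a final verification that the chosen orientation of $l$ is consistent with the orientation conventions for $\cB_y$ and $\cP^{(\Delta)}_{p,y}$.
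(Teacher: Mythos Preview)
Your treatment of the three $\cM_{*,*}$ cases (pure Floer, pure Morse, mixed) is correct and follows the paper's own proof essentially verbatim: the paper also reduces to the change of basis $(\partial_{s_1}+\partial_{s_2},\partial_{s_2}-\partial_{s_1})\mapsto(\partial_s,\nu_{\text{out}})$ via properties~(\ref{ori2}) and~(\ref{ori3}) and then declares the Morse and mixed-Morse boundaries ``similar''. Note, however, that this proposition concerns only the moduli spaces $\cM_{*,*}$ from \S\ref{s2}; your discussion of $\cR_{p,q}$, $\cP_{p,y}$, $\cH_{p,y}$ and the $-\cP_{p,y}$ sign in your last two paragraphs belongs to the next proposition (Proposition~\ref{prop:ori1}), not here.
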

\begin{proof}
	For $x,y,z\in \cP^*(\bH)$, by property \eqref{ori3} above, we have an orientation preserving map  $(\langle \partial_{s_1} \rangle \oplus T\cM_{x,y})\oplus(\langle \partial_{s_2} \rangle \oplus T\cM_{y,z}) \to (\langle \partial_s \rangle \oplus T\cM_{x,z})$ over $\cM_{x,y}\times \cM_{y,z}$ with the property that $\partial_{s_1}+\partial_{s_2}$ is mapped to $\partial_{s}$ and $\partial_{s_2}-\partial_{s_1}$ is mapped to the out normal vector of $T\cM_{x,z}$. Hence the product orientation $\cM_{x,y}\times \cM_{y,z}$ is the boundary orientation. The situation for $\cM_{p,q}\times \cM_{q,r}$ for $p,q,r \in \cC(f)$ is similar. Next we consider $\cM_{p,x}\times \cM_{x,y}$ for $p\in \cC(f),x,y\in \cP^*(\bH)$. Then by property \eqref{ori3} above, we have an orientation preserving map $\la \partial_{s_1} \ra \oplus T\cB_x \oplus \la \partial_{s_2} \ra \oplus T \cM_{x,y}\to \la \partial_s \ra \oplus T\cB_y$ with $\partial_{s_1}+\partial_{s_2}$ mapped to $\partial_s$ and $\partial_{s_2}-\partial_{s_1}$ mapped the out normal vector. Then by our fiber product orientation rule, we have the product orientation $\cM_{p,x}\times \cM_{x,y}$ is the boundary orientation.  The situation for $\cM_{p,q}\times \cM_{q,x}$ for $p,q\in \cC(f),x\in \cP^*(\bH)$ is the same.
\end{proof}

To orient other moduli spaces in \S \ref{s2}, \S \ref{s3}, we need to assign orientation to $S_p$ for every $p\in \cC(h)$. Then for $\cR_{p,q}$, we have an isomorphism $T\cR_{p,q}= TS_p/TS_q$ at the intersection point on $Y$. Therefore orientations of $S_p$ for $p\in \cC(f)\cup \cC(h)$ determines orientation of $\cR_{p,q}$. The orientation of $\cP_{p,y}$ is determined by the fiber product rule. For $\cH_{p,y}$, the orientation is determined by fiber product rule and the orientation on the Morse part is given by $\la \partial_l \ra\oplus S_p$ for $p \in \cC(h)$. Then by the same argument in Proposition \ref{prop:ori}, we have the following. 

\begin{proposition}\label{prop:ori1}
	For moduli spaces up to dimension $1$, we have the induced orientations above satisfying the following,
	\begin{enumerate}
		\item $\partial \cR_{*,*} = \sum \cM_{*,*}\times \cR_{*,*}-\sum \cR_{*,*}\times \cM_{*,*}$;
		\item $\partial \cP_{*,*}= \sum \cM_{*,*}\times \cP_{*,*}+\sum \cP_{*,*}\times \cM_{*,*}$;
		\item\label{o4} $\partial \cH_{*,*}= \sum \cM_{*,*}\times \cH_{*,*}+\sum \cH_{*,*}\times \cM_{*,*} + \sum \cR_{*,*}\times \cM_{*,*}-\sum \cP_{*,*}$.
	\end{enumerate} 
\end{proposition}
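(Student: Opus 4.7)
The strategy is to reduce each boundary identification to the gluing property for determinant line bundles together with the fiber product orientation rule set up just before the statement. Proposition \ref{prop:ori} already handles pure Floer--Floer and pure Morse--Morse breakings inside $\cM_{*,*}$, so in each case I only need to analyze the new mixed breakings and the $l$-degenerations. In each case I will identify a boundary point of the relevant moduli space with a glued pair, read off the outward normal vector in the local model, and compare the resulting boundary orientation with the product orientation.

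First I would handle $\partial \cR_{*,*}$. A point of $\cR_{p,q}$ is an intersection of the forward flow of $U_p \subset \partial W$ with $S_q\subset W$, so $T\cR_{p,q}\cong TS_p/TS_q$ with the orientation induced from those of $S_p, S_q$. The two boundary types come from the $h$-flow breaking in $\partial W$ (giving $\cM_{p,*}\times \cR_{*,q}$ with $*\in\cC(h)$) or the $f$-flow breaking in $W$ (giving $\cR_{p,*}\times \cM_{*,q}$ with $*\in\cC(f)$). Locally each degeneration is parametrized by the Morse length $l\to\infty$. The sign in each case is computed by identifying the outward normal $-\partial_l$ and comparing with the product orientation; the asymmetry between the two cases comes from the fact that the $h$-trajectory is attached at the $s=-\infty$ end of $\cR$ while the $f$-trajectory is attached at the $s=+\infty$ end, producing the $+\sum\cM\times\cR-\sum\cR\times\cM$ discrepancy.

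Next I would treat $\partial \cP_{*,*}$. Since $\cP_{p,y}$ is the fiber product $S_p\times_{\widehat W}\cB_y$, its boundary splits into fiber products of $\partial S_p$ with $\cB_y$ (giving $\cM_{p,*}\times \cP_{*,y}$ for $*\in\cC(h)$) and of $S_p$ with $\partial \cB_y$ (Floer breakings giving $\cP_{p,*}\times \cM_{*,y}$ for $*\in\cP^*(\bH)$). On the Floer side one reads off the sign from property \eqref{ori3} applied to $\rho^+_{x,y}$; on the Morse side one uses the gluing $o_{p,*}\otimes o_*=o_p$. Combined with the fiber product rule (and the fact that the sign $(-1)^{\dim S_p\cdot \dim \widehat{W}}$ appears on both boundary contributions and therefore drops out of the comparison to product orientations), one gets the claimed $+$ sign in both cases.

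Finally, $\partial \cH_{*,*}$ is the most involved. The space $\cH_{p,y}$ is a fiber product incorporating a Morse trajectory on $\partial W$, a finite-time $\nabla_g f$-trajectory of length $l\in(0,\infty)$ in $W$, and a plane $u\in \cB_y$. Its degenerations come in five types: Morse breakings at critical points of $h$ or $f$, a Floer breaking at some $*\in\cP^*(\bH)$, the collision $l\to\infty$ (producing $\cR_{p,*}\times \cM_{*,y}$ for $*\in\cC(f)$), and the collision $l\to 0$ (producing $\cP_{p,y}$ on the nose). The first three are parallel to the $\cR$ and $\cP$ analyses. For the $l$-collisions, at $l=\infty$ the outward normal is $+\partial_l$ and the same fiber product comparison as for $\partial\cR$ delivers $+\cR\times\cM$; at $l=0$ the outward normal is $-\partial_l$, and after identifying $T\cH_{p,y}\cong \langle\partial_l\rangle\oplus T\cP_{p,y}$ this yields the $-\cP_{p,y}$ contribution in \eqref{o4}. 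The main obstacle will be the bookkeeping in the $\cH$ case: one has to work with a single, consistent local model for $T\cH$ so that all five contributions can be compared simultaneously, but once the model is fixed every sign is a mechanical consequence of \eqref{ori2}, \eqref{ori3}, and the fiber product rule.
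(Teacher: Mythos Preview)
Your approach is essentially the paper's: the paper gives no separate argument and simply says the result follows ``by the same argument in Proposition~\ref{prop:ori}'', i.e., via the gluing properties \eqref{ori2}, \eqref{ori3} together with the fiber product orientation rule, which is exactly what you spell out case by case. One small redundancy: your ``five types'' of degenerations for $\cH_{p,y}$ double-counts, since the Morse breaking at a critical point of $f$ \emph{is} the $l\to\infty$ collision (the finite $\nabla_g f$-segment $\gamma_2$ breaking), so there are only the four boundary strata listed in \eqref{o4}.
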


The orientations for the continuation map moduli spaces in Proposition \ref{prop:cascades} and \S \ref{ss:natural} is done similarly, but there is no $\R$ factor needed to be quotiented.

For the orientation in \S \ref{s8}, by \cite[Theorem 3.7]{bourgeois2009symplectic}, the moduli space encountered in \S \ref{s8} has a one-to-one correspondence to the moduli space using non-degenerate Hamiltonians nearby. Hence we can use the orientation from a perturbed Hamiltonian, then we have coherent orientations from Proposition \ref{prop:ori} and Proposition \ref{prop:ori1}. Alternative approaches to orient the cascades moduli space directly can be found in \cite[\S 4.4]{bourgeois2009symplectic}, but they require a twisting depending the number of cascades \cite[Proposition 3.9]{bourgeois2009symplectic}.

\subsection{Coherent orientations in \S \ref{s4}}
We follow the orientation convention for the BV operator in \cite{seidel2012symplectic}. If we consider a solution $(u,\theta)$ to the equation \eqref{eqn:BV1}, i.e. 
$$\partial_s u + J^{\theta}_{s,t}(\partial_t u - X_{\bH^\theta_{s,t}}) = 0, \quad \quad \lim_{s\to -\infty} u = x(\cdot + \theta), \lim_{s\to \infty} u = y, x\in \cP^*(\bH_-), y\in \cP^*(\bH_+).$$
If transversality for $\cM^{\Delta}_{x,y}$ holds, let $D_u$ denote the linearization of the equation above at $u$, then we have a short exact sequence
$$0\to T_{(u,\theta)}\cM^{\Delta}_{x,y}\to \ker D_u \oplus TS^1 \to \coker D_u \to 0.$$
Since $\det D_u = o_{x,y}$, the short exact sequence induces an isomorphism $\det T\cM^{\Delta}_{x,y}=o_{x,y}\otimes TS^1$. Hence given orientations of $o_x,o_y$ and $S^1$, we have induced orientation on $\cM^{\Delta}_{x,y}$.  Then we can similarly orient for $\cM^{\Delta}_{p,y}, p\in \cC(f),y\in \cP^*(\bH_+)$, $\cP^{\Delta}_{*,*}$ and $\cH^{\Delta}_{*,*}$ by tensoring $TS^1$ from the right. One last type is $\cT_{*,*}$ in the construction of homotopy in \S \ref{ssb:comp}, similar to the BV operator case, for every solution $(u,\theta, r)$ to \eqref{eqn:BVhomotopy}, there is a short exact sequence
$$0\to T_{(u,\theta,r)}\cT_{x,y} \to \ker D_u \oplus TS^1 \oplus T\R \to \coker D_u \to 0,$$
which yields an isomorphism $\det T\cT_{x,y}=o_{x,y}\otimes \det S^1 \otimes \det \R$ for $x \in \cP^*(\bH_-)$ and $y\in \cP^*(\bH_+)$. Hence $\cT_{x,y}$ is oriented, and the orientation for $\cT_{p,y}$ is similar. Then we have the following result with the same proof of Proposition \ref{prop:ori}.
\begin{proposition}
	For moduli spaces up to dimension $1$, the orientations above satisfy the following
	\begin{enumerate}
		\item $\partial \cM^{\Delta}_{*,*} = -\sum \cM^{\Delta}_{*,*}\times \cM_{*,*}-\sum \cM_{*,*}\times \cM^{\Delta}_{*,*}$;
		\item $\partial \cT_{*,*}= -\sum \cM_{*,*}\times \cT_{*,*}+\sum \cT_{*,*}\times \cM_{*,*} + \sum \cN_{*,*}\times \cM^{\Delta}_{*,*}-\sum \cM^{\Delta}_{*,*}\times \cN_{*,*}$;
		\item $\partial\cP^{\Delta}_{*,*} = -\sum \cM_{*,*}\times \cP^{\Delta}_{*,*}-\sum \cP^{\Delta}_{*,*}\times \cM_{*,*}-\sum \cP_{*,*}\times \cM^{\Delta}_{*,*}$;
		\item $\partial \cH^{\Delta}_{*,*} = \sum \cR_{*,*}\times \cM^{\Delta}_{*,*}-\cP^{\Delta}_{*,*} + \sum \cH_{*,*}\times \cM^{\Delta}_{*,*}+\sum \cM_{*,*}\times \cH^{\Delta}_{*,*}-\sum \cH^{\Delta}_{*,*}\times \cM_{*,*}$. 
	\end{enumerate} 
\end{proposition}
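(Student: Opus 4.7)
The plan is to follow the template of the proofs of Proposition~\ref{prop:ori} and Proposition~\ref{prop:ori1}, and track the extra $TS^1$ (respectively $TS^1\otimes T\mathbb{R}$) factor introduced by the $\theta$-parameter (and the $r$-parameter in the case of $\cT$). The orientation of each moduli space comes from the short exact sequence displayed in the appendix, and the orientation of a breaking stratum is obtained from the product orientation of its factors. The boundary orientation of the top-dimensional stratum is determined by the convention $n_{\mathrm{out}}\wedge o_{\mathrm{bd}}=o_{\cM}$, where $n_{\mathrm{out}}$ is the outward normal in the gluing parameter. Comparing the two then produces the signs in the claim.

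Concretely, for item (1) I would consider $\cM^\Delta_{x,y}$, whose orientation line is $o_{x,y}\otimes TS^1$. For a breaking $\cM_{x,z}\times \cM^\Delta_{z,y}$ at the negative end, the product orientation is $\det T\cM_{x,z}\otimes(o_{z,y}\otimes TS^1)$, while the gluing isomorphism analogous to (ori2) identifies $\det T\cM_{x,z}\otimes o_{z,y}$ with $\langle\partial_s\rangle^\vee\otimes o_{x,y}$ up to the outward normal $\partial_{s_2}-\partial_{s_1}$. Since the $TS^1$ factor has to be moved past a block of odd total degree to reach the rightmost position dictated by the orientation convention of $\cM^\Delta$, one picks up a global minus sign; similarly for $\cM^\Delta_{x,z}\times \cM_{z,y}$, where the parity of the moved factor accounts for the second $-1$. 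Item (2) is handled by exactly the same rearrangement, with four strata: the two Floer breakings behave as in item (1), while the $r\to\pm\infty$ strata give $\cN_{*,*}\times \cM^\Delta_{*,*}$ and $\cM^\Delta_{*,*}\times \cN_{*,*}$ with opposite outward-normal directions along the $r$-coordinate, producing the opposite signs.

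For items (3) and (4), the argument is the same with an extra Morse-theoretic fiber product. For $\cP^\Delta_{p,y}$, the orientation is obtained as the fiber product of the stable manifold $S_p\subset \partial W\times\{1-\epsilon\}$ with $\cB_y\times S^1$ (where the $S^1$ is the $\theta$-factor), using the fiber product convention of the appendix. Breakings at Floer ends of the $\cM^\Delta$ piece contribute via the same $TS^1$-commutation sign as in item (1), giving the two $-\sum \cM_{*,*}\times\cP^\Delta_{*,*}$ and $-\sum\cP^\Delta_{*,*}\times\cM_{*,*}$ terms; the $\cP_{*,*}\times\cM^\Delta_{*,*}$ stratum, corresponding to a Floer breaking between the cap and the BV cylinder, acquires its minus sign from the same $TS^1$-commutation. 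For $\cH^\Delta$, the same analysis applies to the four Floer-type breakings, while the two boundary strata coming from $l\to\infty$ and $l\to 0$ give $\cR_{*,*}\times \cM^\Delta_{*,*}$ and $-\cP^\Delta_{*,*}$ respectively; the relative sign is simply the difference in outward normal direction along the Morse length parameter $l$, exactly as in item (\ref{o4}) of Proposition~\ref{prop:ori1}.

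The main obstacle is the uniform and unambiguous bookkeeping of three competing sign sources: (i) the commutation of the odd-dimensional $TS^1$ (and, for $\cT$, also $T\mathbb{R}$) factor past other determinant factors of possibly odd total degree; (ii) the outward normal convention $n_{\mathrm{out}}=\partial_{s_2}-\partial_{s_1}$ versus $-\partial_L$ in the various parameters (Floer gluing length $L$, Morse length $l$, and homotopy parameter $r$); and (iii) the fiber-product orientation rule fixed in the appendix (which is normalized here so as to avoid the $(-1)^{\dim S_p\cdot\dim W}$ twist because $\dim W=2n$). Once these three inputs are pinned down consistently at the beginning, each of the four identities becomes a direct calculation parallel to Proposition~\ref{prop:ori}, and no new analytic input beyond the already established gluing isomorphisms of determinant lines is needed.
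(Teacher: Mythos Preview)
Your proposal is correct and takes essentially the same approach as the paper: the paper's entire proof is the single sentence ``with the same proof of Proposition~\ref{prop:ori}'', and what you have written is precisely a spelled-out version of that, tracking the extra $TS^1$ (and, for $\cT$, $T\R$) factor through the gluing isomorphisms and fiber-product conventions already established.
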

	
\bibliographystyle{plain} 
\bibliography{ref}

\end{document}